\def\@tocline#1#2#3#4#5#6#7{\relax
  \ifnum #1>\c@tocdepth 
  \else
    \par \addpenalty\@secpenalty\addvspace{#2}%
    \begingroup \hyphenpenalty\@M
    \@ifempty{#4}{%
      \@tempdima\csname r@tocindent\number#1\endcsname\relax
    }{%
      \@tempdima#4\relax
    }%
    \parindent\z@ \leftskip#3\relax \advance\leftskip\@tempdima\relax
    \rightskip\@pnumwidth plus4em \parfillskip-\@pnumwidth
    #5\leavevmode\hskip-\@tempdima
      \ifcase #1
       \or\or \hskip 1em \or \hskip 2em \else \hskip 3em \fi%
      #6\nobreak\relax
    \dotfill\hbox to\@pnumwidth{\@tocpagenum{#7}}\par
    \nobreak
    \endgroup
  \fi}
 \numberwithin{equation}{section}
\def\bB{{\mathbb{B}}}
\def\bC{{\mathbb{C}}}
\def\bR{{\mathbb{R}}}
\def\bS{{\mathbb{S}}}
\def\bH{{\mathbb{H}}}
\def\bZ{{\mathbb{Z}}}
\def\cB{{\mathscr{B}}}
\def\cF{{\mathscr{F}}}
\def\cG{{\mathscr{G}}}
\def\cH{{\mathscr{H}}}
\def\cI{{\mathscr{I}}}
\def\cL{{\mathscr{L}}}
\def\cM{{\mathscr{M}}}
\def\cU{{\mathscr{U}}}
\def\one{\mathds{1}}
\def\ve{\varepsilon}
\renewcommand{\d}{{\partial}}
\def\lec{\lesssim}
\def\gec{\gtrsim}
\def\diam{\mathop\mathrm{diam}}
\def\dist{\mathop\mathrm{dist}}
\def\supp{\mathop\mathrm{supp}}
\def\esssup{\mathop\mathrm{esssup}\;}
\newcommand{\ps}[1]{\left( #1 \right)}
\newcommand{\ck}[1]{\left\{#1 \right\}}
\newcommand{\av}[1]{\left| #1 \right|}
\newcommand{\ip}[1]{\left\langle #1 \right\rangle}
\newcommand{\floor}[1]{\left\lfloor #1 \right\rfloor}
\newcommand{\ceil}[1]{\left\lceil #1 \right\rceil}
\newcommand{\cnj}[1]{\overline{#1}}
\def\Xint#1{\mathchoice
{\XXint\displaystyle\textstyle{#1}}%
{\XXint\textstyle\scriptstyle{#1}}%
{\XXint\scriptstyle\scriptscriptstyle{#1}}%
{\XXint\scriptscriptstyle\scriptscriptstyle{#1}}%
\!\int}
\def\XXint#1#2#3{{\setbox0=\hbox{$#1{#2#3}{\int}$ }
\vcenter{\hbox{$#2#3$ }}\kern-.58\wd0}}
\def\avint{\Xint-}
\def\grad{\nabla}
\theoremstyle{plain}
\newtheorem{theorem}{Theorem}
\newtheorem{corollary}[theorem]{Corollary}
\newtheorem{lemma}[theorem]{Lemma}
\newtheorem{proposition}[theorem]{Proposition}
\theoremstyle{definition}
\newtheorem{definition}[theorem]{Definition}
\numberwithin{equation}{section}
\numberwithin{theorem}{section}
\newcommand\eqn[1]{\eqref{e:#1}}
\newcommand\Theorem[1]{Theorem \ref{t:#1}}
\newcommand\Lemma[1]{Lemma \ref{l:#1}}
\newcommand\Corollary[1]{Corollary \ref{c:#1}}
\newcommand\Definition[1]{Definition \ref{d:#1}}
\newcommand\Proposition[1]{Proposition \ref{p:#1}}
\begin{document}

  \def\tab{\qquad}

\title[Bi-Lipschitz parts of quasisymmetric mappings]{Bi-Lipschitz parts of Quasisymmetric mappings}
\author{Jonas Azzam}
\address{Departament de Matem\`atiques\\ Universitat Aut\`onoma de Barcelona \\ Edifici C Facultat de Ci\`encies\\
08193 Bellaterra (Barcelona) }
\email{jazzam "at" mat.uab.cat}
\thanks{Supported in part by the grants RTG DMS 08-38212  and DMS-0856687}
\keywords{quantitative differentiation, coarse differentiation, uniform approximation by affine property, quasisymmetric maps, quasiconformal maps, Carleson measures, affine approximation, uniform rectifiability, rectifiable sets, big pieces of bi-Lipschitz images, Dorronsoro's theorem}
\subjclass[2010]{Primary 30C65, 58C20. Secondary 28A75, 42B99.}
\begin{abstract}
A natural quantity
that measures how well a map $f:\mathbb{R}^{d}\rightarrow
\mathbb{R}^{D}$ is approximated by an affine transformation is
\[\omega_{f}(x,r)=\inf_{A}\ps{\avint_{B(x,r)}\ps{\frac{|f-A|}{|A'|r}}^{2}}^{\frac{1}{2}},\] 
where the infimum ranges over all non-zero affine transformations $A$. This is
natural insofar as it is invariant under rescaling $f$ in
either its domain or image. We show that if
$f:\mathbb{R}^{d}\rightarrow \mathbb{R}^{D}$ is quasisymmetric
and its image has a sufficient amount of rectifiable structure
(although not necessarily $\mathcal{H}^{d}$-finite), then
$\omega_{f}(x,r)^{2}\frac{dxdr}{r}$ is a Carleson measure on
$\mathbb{R}^{d}\times(0,\infty)$. Moreover, this is an
equivalence: if this is a Carleson measure, then, in every ball $B(x,r)\subseteq \mathbb{R}^{d}$, there is
a set $E$ occupying 90$\%$ of $B(x,r)$, say, upon which $f$ is
bi-Lipschitz (and hence guaranteeing rectifiable pieces in the
image).

En route, we make a minor adjustment to a theorem of Semmes to show that quasisymmetric maps of subsets of $\bR^{d}$ into $\bR^{d}$ are bi-Lipschitz on a large subset quantitatively.
\end{abstract}
 
\maketitle
  
  \tableofcontents

\section{Introduction}
\subsection{Background}

Recall that a non-constant map $f:\bR^{d}\rightarrow \bR^{D}$ is {\it $\eta$-quasisymmetric} if there is an increasing homeomorphism $\eta:(0,\infty)\rightarrow(0,\infty)$ such that, for all $x,y,z\in \bR^{d}$ distinct,
\[\frac{|f(x)-f(y)|}{|f(x)-f(z)|}\leq \eta\ps{\frac{|x-y|}{|x-z|}}.\]

The goal of this manuscript is to determine when one can detect or guarantee that a quasisymmetric embedding is bi-Lipschitz on some portion of its support.

Recall that a subset of $\bR^{D}$ is {\it $d$-rectifiable} if it may be covered up to a set of Hausdorff $d$-dimensional measure zero by Lipschitz images of $\bR^{d}$. In general, the image of a quasisymmetric map can be highly irregular. One example can be obtained as follows: by Assouad's theorem \cite{NN12}, for $\alpha\in (0,1)$ and $d\geq 1$, there are $L=L(d,\alpha)$, $D=D(d)$ and an  $L$-bi-Lipschitz mapping of $\bR^{d}$ equipped with the metric $d(x,y)=|x-y|^{\alpha}$ into $\bR^{D}$. Such a map can easily be checked to be quasisymmetric, and one can show that the image of such a map is purely $k$-unrectifiable for any $k=1,2,...,d$, in the sense that the image has Hausdorff $k$-measure zero intersection with any Lipschitz image of $\bR^{k}$. The dimension $D$ depends on $d$ and can be quite larger, but see also \cite{Bishop99} or David and Toro \cite{David-Toro-snowballs} for particular ``snowflake" embeddings of $\bR^{d}$ into $\bR^{1}$. In light of these examples, a priori conditions that rule out such examples is a natural question.

Most results in this vein typically deal with a codimension 1 situation. Speciifcally, they deal with functions that are restrictions of a globally defined quasiconformal map $f:\bR^{d}\rightarrow\bR^{d}$, $d\geq 2$, and give conditions that guarantee $f(\bS^{d-1})$ is $(d-1)$-rectifiable. Before discussing these results, we recall the definition of quasiconformality. For $x\in \bR^{d}$, define
\[K_{f}(x)=\max\left\{\frac{|Df(x)|^{d}}{J_{f}(x)},\frac{J_{f}(x)}{\inf_{y\in\bS^{d-1}}|Df(x) y|^{d}}\right\}.\]
For a domain $\Omega\subseteq \bR^{d}$, a map $f:\Omega \rightarrow \bR^{d}$ that is a homeomorphism onto its image with $f\in W^{1,d}_{\mbox{loc}}(\Omega)$ and $||K_{f}(x)||_{L^{\infty}(\Omega)}\leq K<\infty$ is said to be {\it $K$-quasiconformal}. A surjective $K$-quasiconformal map $f:\bR^{d}\rightarrow \bR^{d}$ is $K$-quasiconformal if and only if it is $\eta$ quasisymmetric, where $K$ and $\eta$ depend on each other (c.f. \cite{Vaisala}). Set $A_{t}=\{x\in \bR^{d}: 1-t<|x|<1+t\}$, $\tilde{K}_{f}(t)=\esssup\{K_{f}(x): x\in A_{t}\}-1$. The smaller this quantity is, the closer $f$ is to being conformal in the $t$-neighborhood of $\bS^{d-1}$.

In \cite{ABL88}, it is shown that if $d=2$, $f|_{\bB}$ is conformal, and $\int_{0}^{1}\tilde{K}_{f}(t)^{2}\frac{dt}{t}<\infty$, then $f(\bS)$ is rectifiable. This was subsequently generalized to higher dimensions (although with a stronger condition on the integral) in \cite{MV90}, where it is shown that for $f:\bR^{d}\rightarrow \bR^{d}$, $d\geq 2$, if $\int_{0}^{1}\tilde{K}_{f}(t)\frac{dt}{t}<\infty$, then $f(\bS^{d-1})$ is rectifiable. By the recent results in \cite{BGRT12}, it is only necessary that $\int_{0}^{1}\ps{\tilde{K}_{f}(t)\log\frac{1}{\tilde{K}_{f}(t)}}^{2}\frac{dt}{t}<\infty$. They derive this result from a similar result involving not the quasiconformal dilatation, but the quasisymmetry: if 
\[\tilde{H}_{f}(t)=\sup\ck{\frac{|f(x)-f(y)|}{|f(x)-f(z)|}: x,y,z\in A_{t} \mbox{ are distinct and }|x-z|\leq |x-y|},\]
then \cite{BGRT12} also shows that $\int_{0}^{1} \tilde{H}_{f}(t)^{2}\frac{dt}{t}<\infty$ implies $f(\bS^{d-1})$ is rectifiable. 

Reverse implications with these quantities are not possible, as the conditions are too stringent: most quasiconformal mappings with $f(\bS^{d-1})$ rectifiable don't have $\lim_{t\rightarrow 0}K_{f}(t)=0$. Moreover, a result due to Astala, Zinsmeister, and MacManus seems to suggest that loosening these conditions will result in only partial rectifiability of the image. Before stating this result, we review some terminology. 

Recall that a {\it bounded $C$-chord-arc domain} $\cU\subseteq\bC$ is a scaled copy of a $C$-bi-Lipschitz image of the unit ball $\bB$, and a {\it $K$-quasidisk} is any image of the ball under a $K$-quasiconformal mapping $f:\bC\rightarrow\bC$. A {\it Bishop-Jones domain} $\Omega\subseteq \bC$ is a simply connected domain where, for all $z\in \Omega$ there is a $C$-chord-arc domain $\cU\subseteq \Omega$ containing $z$ such that $\cH^{1}(\d \cU\cap \d \Omega) \geq a \dist(z,\d \Omega)\geq b\cH^{1}(\d \cU)$. Also recall that a measure $\sigma$ on $\bR^{d}\times (0,\infty)$ is a {\it Carleson measure} on $\bR^{d}\times (0,\infty)$ if there is an infimal constant $C=C(\sigma)$ (the {\it Carleson norm} of $\sigma$) such that for all $x\in \bR^{d}$ and $r>0$,
\[\sigma(B(x,r)\times (0,r))\leq C|B(x,r)|.\]

\begin{theorem}[\cite{AZ91,MacManus94}] If $\Omega\subseteq \bC$ is a quasidisk, then $\Omega$ is a Bishop-Jones domain if and only if there is $f:\bC\rightarrow \bC$ quasiconformal, such that  $f(\bH)=\Omega$ (where $\bH$ is the upper half plane in $\bC$), $f$ is conformal on the lower half plane, and $\frac{\mu_{f}(x+iy)^{2}}{y}dxdy$ is a Carleson measure on $\bR\times (0,\infty)$ where $\mu_{f}=\frac{f_{\cnj{z}}}{f_{z}}$.
\label{t:astala}
\end{theorem}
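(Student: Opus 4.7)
The plan is to prove both directions by passing through the boundary behavior of a conformal map from the lower half plane to the exterior of $\cnj{\Omega}$. Since $\Omega$ is a $K$-quasidisk, the Riemann map $\psi\colon \bH^{-}\to\bC\setminus\cnj{\Omega}$ extends to a quasisymmetric homeomorphism of $\bR$. I would construct the required $f$ by taking $\psi$ on $\bH^{-}$ and extending quasiconformally to all of $\bC$, for instance using an Ahlfors--Beurling-type extension of the boundary values $\psi|_{\bR}$; the resulting $f$ is automatically conformal on $\bH^{-}$ with $f(\bH)=\Omega$, and the explicit form of the extension makes $\mu_{f}(x+iy)$ comparable to suitable difference quotients of $\psi|_{\bR}$ at scale $y$ around $x$. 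This reduces the problem to comparing Carleson-type bounds on $\mu_{f}$ with local $BMO$-type bounds on $\log\psi'$.

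For the forward direction, suppose $\Omega$ is Bishop--Jones, and cover $\Omega$ by a Carleson-packed family of chord-arc subdomains $\cU_{j}$ as in the definition. A classical theorem of Pommerenke identifies chord-arc domains with those simply connected domains whose Riemann map satisfies $\log\psi'\in BMOA$, equivalently $|(\log\psi')'(x+iy)|^{2}\,y\,dxdy$ is a Carleson measure on the upper half plane. I would run a stopping-time / corona construction in $\bH$ adapted to the $\cU_{j}$, partitioning $\bH$ into Carleson boxes on each of which $\psi$ behaves, up to the ambient quasiconformal distortion, like the Riemann map to one of the chord-arc pieces; summing the local Carleson estimates and using the Carleson packing of the boxes inherited from the Bishop--Jones decomposition yields the required global Carleson bound for $|\mu_{f}|^{2}/y$.

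For the reverse direction, suppose $|\mu_{f}|^{2}/y\,dxdy$ is a Carleson measure. Fix $z_{0}\in\Omega$ at scale $r\approx\dist(z_{0},\d\Omega)$; to establish the Bishop--Jones property I need to produce a chord-arc subdomain $\cU\subseteq\Omega$ containing $z_{0}$ with $\cH^{1}(\d\cU\cap\d\Omega)\gec r\gec\cH^{1}(\d\cU)$. A stopping-time argument on dyadic subintervals of the preimage $I=B(x_{0},r)\cap\bR$, halting where the averaged $L^{2}$-norm of $\mu_{f}$ over the corresponding Carleson box first exceeds a small threshold, produces a subset $E\subseteq I$ of proportion close to $1$ on which $\mu_{f}$ is uniformly small at all scales. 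A Semmes-type argument then converts this uniform smallness into bi-Lipschitz behavior of $f$ restricted to $E$; the image $f(E)$ is chord-arc, and thickening it slightly inside $\Omega$ gives the desired chord-arc subdomain touching $\d\Omega$ along a definite portion.

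The main obstacle is the reverse direction, specifically the passage from Carleson control, which is an averaged statement over Carleson boxes, to the concrete chord-arc subdomains demanded by the Bishop--Jones definition. This is exactly the kind of Carleson-to-bi-Lipschitz transfer that motivates the present paper: without a good \emph{bi-Lipschitz pieces from small Beltrami data} selection theorem, one cannot produce subdomains with $\cH^{1}(\d\cU\cap\d\Omega)$ comparable to $\dist(z,\d\Omega)$, and the averaged smallness of $\mu_{f}$ alone is insufficient.
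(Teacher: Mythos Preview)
This theorem is not proved in the paper; it is quoted from \cite{AZ91,MacManus94} as background and motivation, and the paper supplies no argument for it. There is therefore no ``paper's own proof'' to compare your proposal against. The role of \Theorem{astala} in the manuscript is purely expository: it illustrates, in the planar codimension-one setting, the kind of equivalence between a Carleson condition on a conformal-type distortion and the existence of large rectifiable pieces in the image that the paper's main result (\Theorem{main}) generalizes to quasisymmetric maps $f:\bR^{d}\to\bR^{D}$.

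As for the sketch you wrote on its own merits: the forward direction is roughly in the spirit of Astala--Zinsmeister, who do pass through the $BMOA$ characterization of chord-arc boundaries and a corona-type packing. The reverse direction you outline, however, is not how MacManus proceeds, and your own last paragraph correctly flags the gap: you need to manufacture, from averaged Carleson smallness of $\mu_{f}$, an honest chord-arc subdomain $\cU\subseteq\Omega$ with $\cH^{1}(\d\cU\cap\d\Omega)\gec\dist(z,\d\Omega)$, and a stopping-time selection of a set $E\subseteq\bR$ on which $f$ is bi-Lipschitz does not by itself produce such a $\cU$. The image $f(E)$ lies in $\d\Omega$, not in $\Omega$, and ``thickening it slightly inside $\Omega$'' is exactly the step that requires the hard geometric work (controlling how $f(\bH)$ sits against $f(E)$) carried out in \cite{MacManus94}. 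So your proposal identifies the right obstacle but does not overcome it.
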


Observe that $|\mu_{f}(z)|=\frac{K_{f}(z)-1}{K_{f}(z)+1}$, so that if $f$ is $K$-quasiconformal, then  $\frac{\tilde{K}_{f}(z)}{K+1}\leq |\mu_{f}(z)|\leq \tilde{K}_{f}(z)$, so one is a Carleson measure exactly when the other is. See \cite[Chapters 2 and 3]{EPDEQCM} for these facts about planar quasiconformal maps and their Beltrami coefficients $\mu$, and \cite{Semmes94} for similar results. 

The above results don't establish whether when $f:\bR^{d+1}\rightarrow \bR^{d+1}$ is bi-Lipschitz on a subset of $\bR^{d}$, only that their images are rectifiable. For showing a map is bi-Lipschitz on a large piece quantitatively, one typically requires some sort of quantitative differentiability result. To explain this notion, we go by way of a classic example due to Dorronsoro.

\begin{theorem}[\cite{Dorronsoro}]
Let $f\in L^{2}(\bR^{d})$. For $x\in \bR^{d}$, $r>0$, define
\[\Omega_{f}(x,r)=\inf_{A} \ps{\avint_{B(x,r)} \ps{\frac{|f-A|}{r}}^{2}}^{\frac{1}{2}}\]
where the infimum is over all affine maps $A:\bR^{d}\rightarrow \bR$. Then $f\in W^{1,2}(\bR^{d})$ if and only if 
\[\Omega(f):=\int_{\bR^{d}}\int_{0}^{\infty}\Omega_{f}(x,r)^{2}\frac{dr}{r}dx<\infty,\]
in which case, $||\grad f||_{2}^{2}\sim_{d}\Omega(f)$.
\label{t:dorronsoro}
\end{theorem}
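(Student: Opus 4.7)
The plan is to establish both directions via Littlewood--Paley-type square function estimates, matching $\Omega_{f}(x,r)$ to a quadratic oscillation of $\nabla f$ at scale $r$.

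For the direction $\Omega(f)\lec\nrm{\nabla f}_{2}^{2}$, assume $f\in W^{1,2}(\bR^{d})$ and take on each ball $B=B(x,r)$ the competitor $A_{B}(y)=\bar f_{B}+(\nabla f)_{B}\cdot(y-x)$, where $(\nabla f)_{B}$ is the componentwise average. A second-order Poincar\'e inequality, derived by applying the usual Poincar\'e inequality to $g(y)=f(y)-(\nabla f)_{B}\cdot(y-x)$ (whose mean over $B$ is $\bar f_{B}$ and whose gradient is $\nabla f-(\nabla f)_{B}$), gives
\[\Omega_{f}(x,r)^{2}\lec \avint_{B(x,r)}\av{\nabla f-(\nabla f)_{B(x,r)}}^{2}dy.\]
Up to replacing the sharp ball average by a smooth convolution with a Schwartz bump $\psi$ of mean zero, the right-hand side integrated against $dx\,\frac{dr}{r}$ becomes a classical Littlewood--Paley $g$-function of $\nabla f$. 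Plancherel together with the elementary identity $\int_{0}^{\infty}|\hat\psi(r\xi)|^{2}\frac{dr}{r}<\infty$ (uniform in $\xi\neq 0$, using $\hat\psi(0)=0$ and Schwartz decay at infinity) yields the desired bound by $\nrm{\nabla f}_{2}^{2}$.

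For the reverse direction, suppose $\Omega(f)<\infty$ and, for each $x\in \bR^{d}$ and $r>0$, pick a near-minimizer $A_{x,r}$ in the definition of $\Omega_{f}(x,r)$. On the finite-dimensional space of affine functions on $B(x,r)$, the $L^{2}$-norm is comparable to the parameter norm $\ps{|A(x)|^{2}+r^{2}|\nabla A|^{2}}^{1/2}$, so a triangle inequality in $L^{2}(B(x,r/2))$ yields
\[|\nabla A_{x,r}-\nabla A_{x,r/2}|^{2}\lec \Omega_{f}(x,r)^{2}+\Omega_{f}(x,r/2)^{2}.\]
For $f$ smooth and rapidly decaying one has $\nabla A_{x,r}\to \nabla f(x)$ as $r\to 0$ and $\nabla A_{x,r}\to 0$ as $r\to\infty$, so telescoping over dyadic scales expresses $\nabla f(x)$ as a sum of these gradient differences. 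Cauchy--Schwarz with geometric weights, integration in $x$, and Fubini to convert the resulting discrete sum into $\int_{0}^{\infty}\Omega_{f}(x,r)^{2}\frac{dr}{r}$ give $\nrm{\nabla f}_{2}^{2}\lec \Omega(f)$ in the smooth case.

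The main obstacle is upgrading this to general $f\in L^{2}(\bR^{d})$ with $\Omega(f)<\infty$, since a priori such $f$ need not possess a weak derivative. The remedy is to apply the smooth-case argument to mollifications $f_{\ve}=f*\varphi_{\ve}$, exploiting that convolution with a symmetric $\varphi_{\ve}$ preserves affine functions so that $f_{\ve}-A=(f-A)*\varphi_{\ve}$; Jensen's inequality and Fubini then give $\Omega_{f_{\ve}}(x,r)^{2}\lec \Omega_{f}(x,r+\ve)^{2}$, whence $\Omega(f_{\ve})\lec\Omega(f)$ uniformly in $\ve$. This produces a uniform $L^{2}$ bound on $\nabla f_{\ve}$, and weak compactness in $L^{2}$ together with uniqueness of distributional limits identifies the weak limit with the distributional gradient of $f$, establishing $f\in W^{1,2}(\bR^{d})$ and the norm equivalence.
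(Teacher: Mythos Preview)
Your forward direction has a genuine gap. After the Poincar\'e step you obtain
\[
\Omega_{f}(x,r)^{2}\lec \avint_{B(x,r)}\bigl|\grad f - (\grad f)_{B(x,r)}\bigr|^{2},
\]
but the right-hand side, integrated against $dx\,\frac{dr}{r}$, diverges: by Fubini and Plancherel it equals $\int|\widehat{\grad f}(\xi)|^{2}\int_{0}^{\infty}\bigl(1-|\hat\chi(r\xi)|^{2}\bigr)\frac{dr}{r}\,d\xi$ (with $\chi$ the normalized indicator of the unit ball), and since $\hat\chi(r\xi)\to 0$ as $r\to\infty$ the inner $r$-integral is $+\infty$. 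Your proposed fix of ``replacing the sharp ball average by a smooth convolution with a mean-zero $\psi$'' does not repair this: a mean-zero bump cannot stand in for $(\grad f)_{B}$ in the Poincar\'e argument, while a mean-one bump leaves the same large-$r$ divergence. The paper avoids the Poincar\'e step entirely, taking the competitor $A_{x,r}(y)=f*\phi_{r}(x)+\grad f*\phi_{r}(x)\cdot(y-x)$ and computing $\int\int\avint|f-A_{x,r}|^{2}r^{-2}\,\frac{dr}{r}\,dx$ directly on the Fourier side; the cancellation that makes the $r$-integral converge at infinity comes from the full second-order Taylor structure of $e^{-2\pi i y\cdot\xi}-\hat\phi(r\xi)(1-2\pi i y\cdot\xi)$, not from a zero-order oscillation of $\grad f$.

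The reverse direction also does not close as written. Telescoping gives $\grad f(x)=\sum_{k\in\bZ}d_{k}(x)$ with $|d_{k}(x)|\lec\Omega_{f}(x,2^{-k})$, but squaring and integrating yields only $\|\grad f\|_{2}\leq\sum_{k}\|d_{k}\|_{2}$, which is not controlled by $\bigl(\sum_{k}\|d_{k}\|_{2}^{2}\bigr)^{1/2}\sim\Omega(f)^{1/2}$ without almost-orthogonality of the $d_{k}$. No ``Cauchy--Schwarz with geometric weights'' produces this over $k\in\bZ$: one would need weights $w_{k}$ with both $\sum_{k}w_{k}^{2}<\infty$ and $\sup_{k}w_{k}^{-2}<\infty$, which is impossible. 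The paper's route is to fix a mean-zero bump $\psi$, observe $|\grad f*\psi_{r}(x)|\lec\Omega_{1,f}(x,r)$, and then invoke Plancherel to get $\int\int|\grad f*\psi_{r}|^{2}\frac{dr}{r}\,dx=c_{\psi}\|\grad f\|_{2}^{2}$; this identity \emph{is} the orthogonality your telescoping lacks. Your mollification step is right in spirit, though the claimed bound $\Omega_{f_{\ve}}(x,r)\lec\Omega_{f}(x,r+\ve)$ fails for $r\ll\ve$ (the volume ratio $(1+\ve/r)^{d+2}$ blows up) and needs a separate Taylor-type estimate at small scales, as the paper provides.
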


This isn't the exact phrasing of his result, and the original theorem is far more general, but this special case has been more than sufficient for many applications. For the reader's convenience, we provide a well-known proof in Section \ref{s:dorronsoro} of the appendix.

While Rademacher's theorem, for example, says that at almost every $x\in \bR^{d}$ $f$ is approximately an affine function in small balls around the point $x$, it doesn't tell us how soon $f$ is within $\ve$, say, of some affine map. Using Dorronsoro's result and Chebyshev's inequality, however, shows that the largest scale $r>0$ for which $\Omega_{f}(x,r)<\ve$ can be estimated from below in terms of $||\grad f||_{2}$, $d$, and $\ve>0$. Results like this (that quantify how soon a function achieves a certain threshold of regularity, or bounds how often it doesn't) are examples of {\it quantitative differentiation} or {\it coarse differentiation}. 

Quantitative differentiation results have been used for embedding problems (\cite{Cheeger-QD}, \cite{Li-lip-bi-lip}), geometric group theory (\cite{EFW07}) and the theory of uniform rectifiability (see \cite{David88}, \cite{Jones-lip-bilip}, \cite[Lemma 10.11]{DS}, \cite[IV.2.2]{of-and-on}, and the references therein). While the latter results are more concerned with finding out when a function is approximately affine, there are situations involving, say, a metric space \cite{AS12prep}, or Carnot groups \cite{Li-lip-bi-lip}, where ``affine" is replaced with some other form of regularity. 

In \cite{Jones-lip-bilip}, for example, the author shows that if $f$ is $1$-Lipschitz, then for every $\delta>0$ one can partition $[0,1]^{d}$ into sets $G,K_{1},...,K_{M}$, where $M\leq M(\delta)$, such that $\cH_{d}^{\infty}(f(G))<\delta$ and $f$ is $\frac{2}{\delta}$-bi-Lipschitz on each $K_{j}$. To prove this, one can use something like \Theorem{dorronsoro} and a clever algorithm to sort the domain of $f$ into the desired sets $G,K_{1},...,K_{M}$ (see also \cite{David-wavelets}, p. 62). We won't replicate this method, but the condition in our main result will resemble Dorronsoro's theorem. In particular, instead of $\Omega_{f}$, we will use a similar quantity: define
\begin{equation}
\omega_{f}(x,r)=\inf_{A}\ps{\avint_{B(x,r)} \ps{\frac{|f-A|}{|A'|r}}^{2}}^{\frac{1}{2}}
\label{e:omega}
\end{equation}
where the infimum is over affine maps $A:\bR^{d}\rightarrow \bR^{D}$ with $|A'|\neq 0$. Here, $A'$ is the derivative of the mapping $A$, so that $A(x)=A'(x)+A(0)$. The appeal of this quantity, as opposed to $\Omega_{f}$, is that it is invariant under dilations in the domain {\it and} scaling the function $f$ in its image: if $s,t>0$ and $b\in \bR^{d}$, then
\[\omega_{f}(tx+b,tr)= \omega_{g}(x,r) \mbox{ if } g(y)=sf(tx+b).\]
Thus, if $\omega_{f}(x,r)$ is small, then $f$ is well-approximated by a {\it nontrivial} affine map inside $B(x,r)$, even if the image of $f(B(x,r))$ is very small. \\

In the main result below, much like \Theorem{astala}, we don't give a sufficient condition for when the image of $f$ is rectifiable, but when it contains a uniform amount of rectifiable parts within it in a sense we make precise in the following definition.

\begin{definition}
We'll say a set $\Sigma$ contains {\it big pieces of $d$-dimensional bi-Lipschitz images} with constants $\kappa>0$ and $L\geq 1$ (or BPBI($\kappa,L,d$) for short) if, for all $\xi\in \Sigma$ and $s>0$, there is $E\subseteq B(\xi,s)\cap \Sigma$ with $\cH^{d}(E)\geq \kappa s^{d}$ and $g:E\rightarrow \bR^{d}$ $L$-bi-Lipschitz. We will simply write BPBI($\kappa,L$) if the dimension $d$ is understood from context.
\end{definition}

Note that this ``big pieces" terminology is already prevalent in the literature (see \cite{DS} and \cite{of-and-on}), but usually includes the assumption that $\Sigma$ is Ahlfors regular, meaning that $\cH^{d}(\Sigma\cap B(x,r))$ is comparable to $r^{d}$. We emphasize, however, that the sets we'll be dealing with will not necessarily be $\cH^{d}$-finite, let alone regular. \\

We can now state our main result, which obtains a classification of all quasisymmetric mappings with uniformly rectifiable image in terms of the behavior of $\omega_{f}$, and can be considered as high dimensional analogue of \Theorem{astala}:

\begin{theorem}
Let $f:\bR^{d}\rightarrow \bR^{D}$ be quasisymmetric, $d\geq 2$. Then the following are equivalent:
\begin{enumerate}
\item The measure $\omega_{f}(x,r)^{2}\frac{dxdr}{r}$ is a $C$-Carleson measure on $\bR^{d}\times (0,\infty)$.
\item For all $\tau>0$ there is $L>0$ such that for all $x\in \bR^{d}$, $r>0$, there is $E \subseteq B(x,r)$ such that $|B(x,r)\backslash E|<\tau |B(x,r)|$ and $\ps{ \frac{\diam f(B(x,r))}{\diam B(x,r)}}^{-1} f|_{E}$ is $L$-bi-Lipschitz.
\item There are $c,L>0$ such that for all $x\in \bR^{d}$ and $r>0$, there is $E\subseteq B(x,r)$ such that $|E|\geq c|B(x,r)|$ and $\ps{ \frac{\diam f(B(x,r))}{\diam B(x,r)}}^{-1} f|_{E}$ is $L$-bi-Lipschitz.
\item The set $f(\bR^{d})$ has BPBI($\kappa,L$).
\end{enumerate} 
The equivalences are quantitative in the sense that, the constants in each item depend (in addition to $D$ and $\eta)$ only upon those in the other items. 

If $d=1$, then we just have $(1)\Rightarrow (2)\Rightarrow (3)\Rightarrow (4)$.
\label{t:main}
\end{theorem}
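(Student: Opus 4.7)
The plan is to establish the cyclic chain $(1)\Rightarrow(2)\Rightarrow(3)\Rightarrow(4)\Rightarrow(1)$. Two links are easy. $(2)\Rightarrow(3)$ is immediate upon taking $\tau=1/2$. For $(3)\Rightarrow(4)$ one uses quasisymmetry directly: given $\xi=f(x)\in f(\bR^d)$ and $s>0$, pick $r>0$ so that $f(B(x,r))\subseteq B(\xi,s)$ with $\diam f(B(x,r))\gec s$ (possible by standard egg-yolk estimates for $\eta$-quasisymmetries), then apply (3) on $B(x,r)$ to get $E\subseteq B(x,r)$ of definite proportion whose image $f(E)\subseteq B(\xi,s)\cap f(\bR^d)$ is the bi-Lipschitz piece required for BPBI with parameters depending only on $c$, $L$, and $\eta$.

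For $(4)\Rightarrow(1)$ I would convert BPBI into a Carleson estimate by a David--Semmes style corona decomposition. Given a ball $B(x,r)$, BPBI at scale $\diam f(B(x,r))$ around $f(x)$ produces a bi-Lipschitz piece of the image of definite proportion; its pre-image under $f^{-1}$ is then a bi-Lipschitz piece in $B(x,r)$ of definite proportion, and Dorronsoro's theorem applied to this piece yields a bounded Carleson integral for $\omega_f^2\frac{dyds}{s}$ localized to it. A stopping-time argument sorting the remaining ``bad'' regions into a Carleson-summable family of top cubes then globalizes the estimate. The crucial ingredient comparing domain-side and image-side scales is quasisymmetric area distortion of $f$, which genuinely requires $d\geq 2$; this is where the $d=1$ case of the theorem fails.

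The main obstacle is $(1)\Rightarrow(2)$, the true quantitative-differentiation direction. Fix $\tau>0$ and $B_0=B(x_0,r_0)$. Chebyshev applied to the Carleson bound on $\omega_f^2\frac{dxdr}{r}$ produces a subset $F\subseteq B_0$ with $|B_0\setminus F|<\tau|B_0|$ on which $\int_0^{r_0}\omega_f(y,s)^2\frac{ds}{s}$ is bounded uniformly by a constant depending on the Carleson norm and $\tau$. For $y\in F$ and all but a small set of scales $s\in(0,r_0)$, $\omega_f(y,s)<\varepsilon$, which forces $|A_{y,s}'|s\sim\diam f(B(y,s))$ and ensures that the optimal affine approximants $A_{y,s}$ for nearby $(y,s)$ differ by $O(\varepsilon)$. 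A dyadic chaining argument in the spirit of \cite{Jones-lip-bilip} and \cite{DS} then shows that, after rescaling by $\diam f(B_0)/\diam B_0$, $f|_F$ is uniformly close to a single affine map $A_0$ whose linear part $A_0'$ is quantitatively nondegenerate. To upgrade this ``approximately bi-Lipschitz'' information to a genuine bi-Lipschitz estimate on a subset of proportion $1-\tau$, I would compose with an orthogonal projection $\pi\colon\bR^D\to\bR^d$ aligned with the $d$-plane $A_0'(\bR^d)$, observe that $\pi\circ f$ is quasisymmetric on $F$ and close to the bi-Lipschitz affine map $\pi\circ A_0\colon\bR^d\to\bR^d$, and invoke the Semmes-type theorem announced in the abstract. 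Verifying that $\pi\circ f$ retains quasisymmetry on enough of $F$ after projection, and that the chaining errors survive the projection uniformly in $B_0$, is where the bulk of the technical work lies, and is what the adjustment to Semmes is designed to accommodate.
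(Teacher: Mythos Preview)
Your easy links $(2)\Rightarrow(3)$ and $(3)\Rightarrow(4)$ match the paper. The two hard links, however, have genuine gaps, and in fact you have inverted the role of the Semmes theorem.

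For $(4)\Rightarrow(1)$: the sentence ``its pre-image under $f^{-1}$ is then a bi-Lipschitz piece in $B(x,r)$ of definite proportion'' is the crux and is unjustified. BPBI gives $A\subseteq f(B(x,r))$ with $\cH^d(A)\gec(\diam f(B(x,r)))^d$ and an $L$-bi-Lipschitz $g:A\to\bR^d$, but nothing says $|f^{-1}(A)|\gec|B(x,r)|$, nor that the rescaled $f|_{f^{-1}(A)}$ is bi-Lipschitz. This is exactly where the paper deploys the strengthened Semmes result: the composition $g\circ f:f^{-1}(A)\to\bR^d$ \emph{is} quasisymmetric into $\bR^d$, and Semmes' machinery (metric doubling measures, strong $A_\infty$ weights, hence $d\ge2$) yields a subset $E_0$ of definite domain measure on which $g\circ f$, and therefore $f$, is bi-Lipschitz. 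From there the paper extends $f|_{E_0}$ to a global bi-Lipschitz $F:\bR^d\to\bR^{2D}$ via MacManus, applies Dorronsoro to $F$, controls $|f-F|$ on cubes meeting $E_0$ by H\"older continuity plus a Whitney summation, and globalizes with John--Nirenberg and the $\tfrac13$-trick rather than a corona decomposition.

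For $(1)\Rightarrow(2)$: the plan to project by $\pi:\bR^D\to\bR^d$ and apply Semmes to $\pi\circ f$ cannot work as stated, because $\pi\circ f$ is in general \emph{not} quasisymmetric: a linear projection can collapse $|f(x)-f(z)|$ while leaving $|f(x)-f(y)|$ untouched, destroying the defining inequality. Closeness of $f$ to a nondegenerate affine $A_0$ on $F$ only yields $|\pi f(x)-\pi f(y)|\sim|x-y|$ when $|x-y|\gec\delta\,\diam B_0$, not at all scales, so neither bi-Lipschitz nor quasisymmetry of $\pi\circ f$ follows. The paper takes an entirely different route here: a corona decomposition of $\Delta(Q_0)$ into bad cubes $\cB$ and $(\ve,\tau)$-stopping-time regions $S$, with the key estimate $|M_2(S)|<\tfrac12|Q(S)|$ obtained by building a smooth patching $F_S$ of the affine approximants $A_Q$ and applying Dorronsoro to $F_S-A_{Q(S)}$. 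A layer-counting argument then bounds, for most $x\in Q_0$, the number $N$ of top cubes $Q(S)$ and bad cubes containing $x$; the bi-Lipschitz constant on the resulting set $E$ comes from chaining the ratios $\diam f(MQ)/\diam MQ$ through at most $N$ layers. No Semmes, no projection.
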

There is no equivalence in the case of $d=1$ (that is , $(4)\not\Rightarrow (1)$), since there are quasisymmetric maps of the real line that are uniformly oscillatory at every scale and location. We will give a counter-example in \Proposition{counter}.

We also mention that one can construct a single rectifiable piece in the image (or bi-Lipschitz part of $f$) without using the full strength of the Carleson measure; indeed, we prove a local version of (1)$\Rightarrow$(2) in \Theorem{1implies4} below. 

A similar result appears in \cite{ABT}, where the authors show that if $f:\bR^{D}\rightarrow \bR^{D}$ is quasisymmetric, $2\leq d <D$, $\tilde{H}_{f}(w,t)^{2}\frac{dwdt}{t}$ is a Carleson measure on $\bR^{d}\times (0,\infty)$ where
\[\tilde{H}_{f}(w,t)=\sup\ck{\frac{|f(x)-f(y)|}{|f(x)-f(z)|}: x,y,z\in B(w,t) \mbox{ are distinct and }|x-z|\leq |x-y|},\]
then $f(\bR^{d})$ has big pieces of bi-Lipschitz images, though the implication only holds with $d\geq 2$ and doesn't have a reverse implication. Also, while $\omega_{f}(x,r)$ is perhaps not as simple or ideal a quantity to compute than $K_{f}$ and $\tilde{H}_{f}$ mentioned above, it does handle a broader class of mappings (maps that are not restrictions of maps $f:\bR^{d}\rightarrow\bR^{d}$ to the sphere $\bS^{d-1}$, for example) and, more importantly, classifies those quasisymmetric mappings that have BPBI in their image. Moreover, the advantage in \cite{BGRT12} and \cite{ABT} is that $\tilde{H}_{f}$ has the monotonicity property that $\tilde{H}_{f}(x,r)\leq \tilde{H}_{f}(y,s)$ whenever $B(x,r)\subseteq B(y,s)$, which doesn't hold for $\omega_{f}$. On the other hand, \cite{ABT} has its own unique challenges: the main tool in our paper is Dorronsoro's theorem, for which $\omega_{f}(x,t)$ is naturally suited, but it's not clear whether we can apply this using only information about the values $\tilde{H}_{f}(x,r)$. \\

Our final result in the vein of finding bi-Lipschitz pieces of quasisymmetric maps is a  generalization of the following result of Semmes.

\begin{theorem}[\cite{Semmes-question-of-heinonen}]
Suppose $E\subseteq \bR^{d}$, $d\geq 2$, and $f:E\rightarrow \bR^{d}$ is $\eta$-quasisymmetric for some $\eta$. Then $|E|>0$ if and only if $|f(E)|>0$.
\label{t:semmes}
\end{theorem}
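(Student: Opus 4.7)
By considering the inverse $f^{-1}:f(E)\to E$, which is $\tilde{\eta}$-quasisymmetric with $\tilde{\eta}(t)=1/\eta^{-1}(1/t)$, the statement is symmetric, so it suffices to prove the forward direction $|E|>0\Rightarrow|f(E)|>0$. The plan is to invoke the Lebesgue density theorem to pick a density point $x_{0}\in E$, work at small scales $r$ where $|E\cap B(x_{0},r)|\geq(1-\tau)|B(x_{0},r)|$ for any prescribed $\tau>0$, and show that $f(E\cap B(x_{0},r))$ has positive measure for some such $r$.

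Concretely, set $R=R(x_{0},r):=\sup\{|f(y)-f(x_{0})|:y\in E,\,|y-x_{0}|\leq r\}$. Quasisymmetry places $f(E\cap B(x_{0},r))\subseteq B(f(x_{0}),R)$, and the quantitative target I would aim at is $|f(E\cap B(x_{0},r))|\gec R^{d}$ with an implicit constant depending only on $\eta$ and $d$. My approach would use a Tukia--V\"ais\"al\"a-type extension to pass from $f$ on (a large piece of) $E$ to a genuine quasisymmetric homeomorphism $\tilde{f}:\bR^{d}\to\bR^{d}$. Once such an extension is in hand, the classical equivalence (see \cite{Vaisala}) that quasisymmetric self-maps of $\bR^{d}$ are quasiconformal when $d\geq 2$---hence lie in $W^{1,d}_{\mathrm{loc}}$ and are absolutely continuous with respect to Lebesgue measure, with Jacobian satisfying a reverse H\"older inequality---immediately yields $|\tilde{f}(E\cap B(x_{0},r_{0}))|>0$, completing the proof.

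The main obstacle is carrying out the extension cleanly: the set $E$ carries no regularity beyond being measurable, so one cannot invoke Tukia--V\"ais\"al\"a off the shelf. I would resolve this by first using the density of $E$ at $x_{0}$, together with a stopping-time or Vitali-type argument, to extract a subset $E'\subseteq E$ of comparable measure near $x_{0}$ on which $f$ is quantitatively bi-Lipschitz (or at least satisfies a modulus of continuity compatible with extension), and then extend from $E'$ using the standard machinery. The dimensional restriction $d\geq 2$ is essential both for the extension (it lets us stay inside $\bR^{d}$ rather than embedding into $\bR^{d+1}$) and for the quasiconformal-implies-absolutely-continuous step; the failure of the latter in dimension one, via singular quasisymmetric maps of $\bR$, is precisely why the theorem genuinely requires $d\geq 2$.
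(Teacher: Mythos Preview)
Your reduction via $f^{-1}$ and the Lebesgue density point are fine, and you have correctly located the crux: extending $f$ from (a large piece of) $E$ to something global. But your proposed resolution of that step does not go through.

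First, the plan to ``extract a subset $E'\subseteq E$ of comparable measure on which $f$ is quantitatively bi-Lipschitz'' is circular. A bi-Lipschitz map preserves sets of positive Lebesgue measure, so if you could produce such an $E'$ by elementary means you would already be finished, with no need for any extension or quasiconformal theory. In fact, producing such a bi-Lipschitz piece \emph{is} the content of the quantitative strengthening in \Proposition{strong-semmes}, and its proof requires the full Semmes machinery rather than just a stopping-time argument on $E$. Second, there is no ``standard machinery'' that extends a quasisymmetric map from an arbitrary compact set $E\subseteq\bR^{d}$ to a quasisymmetric homeomorphism of $\bR^{d}$; Tukia--V\"ais\"al\"a-type results require the domain to have substantial structure (a sphere, a hyperplane, a quasiarc), and in general such extensions are obstructed.

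The route Semmes actually takes (summarized in Section~\ref{s:semmes}) avoids extending the \emph{map} globally. One passes to a serious subset $E_{0}\subseteq E$ (\Lemma{very-serious}), then extends $f$ quasisymmetrically only to a \emph{strong set} $\tilde{E}\supseteq E_{0}$, not to all of $\bR^{d}$ (\Lemma{semmes-lemmas}). The pushforward of Lebesgue measure under this extension is a metric doubling measure on $\tilde{E}$, and it is this \emph{measure} that one extends to a metric doubling measure $\nu$ on $\bR^{d}$. The dimensional hypothesis $d\geq 2$ enters here, through \Lemma{strongA�}: metric doubling measures on $\bR^{d}$ are absolutely continuous with $A_{\infty}$ density. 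The $A_{\infty}$ property then yields $|f(E_{0})|=\nu(E_{0})>0$. So the key conceptual move you are missing is to work with measures rather than maps once you leave $\tilde{E}$.
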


While this is a beautiful result, with just a  bit more work one can actually achieve a quantitative version that bounds how small we can make $|f(E)|$ in terms of only $\eta$, $d$, and the density of $E$.

\newtheorem*{semmes}{\Proposition{strong-semmes}}

\begin{proposition}
Let $E\subseteq Q_{0}\subseteq \bR^{d}$, $\rho\in(0,\frac{1}{2})$, and set $\delta=\frac{|E|}{|Q_{0}|}>0$. Let $f:E\rightarrow \bR^{d}$ be $\eta$-quasisymmetric. Then there is $E'\subseteq E$ compact with $|E'|\geq (1-\rho)|E|$ and $\ps{\frac{\diam f(E')}{\diam E'}}^{-1}f|_{E'}$ is $L$-bi-Lipschitz for some $L$ depending on $\eta$, $d$, $\rho$, and $\delta$. 
\label{p:strong-semmes}
\end{proposition}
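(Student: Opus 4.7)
The plan is to extend $f$ globally and then exploit the higher-integrability theory of quasiconformal maps via the Hardy--Littlewood maximal function. First, since $d \geq 2$, the Tukia--V\"ais\"al\"a extension theorem extends the $\eta$-quasisymmetric map $f: E \to \bR^{d}$ to an $\eta'$-quasisymmetric homeomorphism $F: \bR^{d} \to \bR^{d}$ with $\eta' = \eta'(\eta, d)$, and such an $F$ is $K$-quasiconformal for $K = K(\eta, d)$. After translating and dilating in both the domain and target (which affects neither the quasisymmetry of $f$ nor the normalized bi-Lipschitz conclusion), we may assume $Q_{0} = [0,1]^{d}$ and $|F(Q_{0})| = 1$.

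Next, invoke Gehring's higher-integrability theorem: both $J_{F}$ and $J_{F^{-1}}$ lie in $L^{p}_{\loc}$ for some exponent $p = p(K, d) > 1$, with norms over $Q_{0}$ and $F(Q_{0})$ respectively controlled by $K$ and $d$. Equivalently, $|DF| \in L^{dp}(Q_{0})$ and $|DF^{-1}| \in L^{dp}(F(Q_{0}))$. By the weak-type $(dp, dp)$ bound for the Hardy--Littlewood maximal function $\cM$, the set $\{x \in Q_{0}: \cM|DF|(x) > M\}$ has measure at most $CM^{-dp}$, which we force below $\tfrac{\rho \delta}{2}$ by choosing $M = M(\eta, d, \rho, \delta)$ large. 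To control $\cM|DF^{-1}|(F(x))$ on a large subset of $E$, we use change of variables: the pushforward $F_{\#}(\cH^{d}|_{E})$ has density $J_{F^{-1}}$ on $F(E)$, so combining the $L^{p}$ estimate for $J_{F^{-1}}$ with a Chebyshev bound produces $M' = M'(\eta, d, \rho, \delta)$ with $|\{x \in E: \cM|DF^{-1}|(F(x)) > M'\}| \leq \tfrac{\rho}{2}|E|$. Intersecting, we obtain a compact $E' \subseteq E$ with $|E \setminus E'| \leq \rho|E|$ on which both $\cM|DF|$ and $\cM|DF^{-1}| \circ F$ are bounded by $\max(M, M')$.

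Finally, we deduce bi-Lipschitz behavior via the classical pointwise inequality
\[|F(x) - F(y)| \lec_{d} |x - y|\bigl(\cM|DF|(x) + \cM|DF|(y)\bigr),\]
valid for $F \in W^{1,1}_{\loc}$ (proved by integrating $|DF|$ along chains of dyadic balls joining $x$ and $y$ and invoking the Poincar\'e inequality, as in Haj\l asz's characterization of Sobolev functions). This shows $F|_{E'}$ is $\lec M$-Lipschitz; applying the same inequality to $F^{-1}$ at the points $F(x), F(y) \in F(E')$ gives the matching lower bound $|F(x) - F(y)| \gec (M')^{-1}|x - y|$. Together these yield the desired bi-Lipschitz estimate with constant $L = L(\eta, d, \rho, \delta)$, which translates after dividing by $\diam F(E')/\diam E'$ into the normalized statement of the proposition.

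The main obstacle is the change-of-variables step controlling $\cM|DF^{-1}| \circ F$: unlike for $|DF|$, we cannot simply appeal to a pointwise $L^{dp}$ bound because the bad set must be measured inside $E$ (the domain) rather than inside $F(E)$. What replaces it is the quantitative comparison between $F_{\#}(\cH^{d}|_{E})$ and Lebesgue measure on $F(E)$, which is precisely Gehring's reverse H\"older inequality applied to $F^{-1}$. Notably, no pointwise lower bound on $J_{F}$ along $E'$ is required --- only the integrability of $J_{F^{-1}}$ enters, and this is exactly the quantitative content of Semmes' Theorem \ref{t:semmes} in this setting.
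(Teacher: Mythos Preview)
Your argument has a genuine gap at the very first step. There is no ``Tukia--V\"ais\"al\"a extension theorem'' that takes an $\eta$-quasisymmetric map $f: E \to \bR^{d}$ defined on an \emph{arbitrary} compact set $E \subseteq \bR^{d}$ and extends it to a global quasisymmetric homeomorphism $F: \bR^{d} \to \bR^{d}$. The Tukia--V\"ais\"al\"a extension results concern maps defined on all of $\bR^{n}$ (extended to $\bR^{n+1}$), or on sets with additional geometric structure; they do not cover general compact $E$. Indeed, if such an extension were available, Semmes' qualitative theorem ($|E|>0 \iff |f(E)|>0$) would follow in one line from Gehring's result that the Jacobian of a quasiconformal self-map of $\bR^{d}$ is an $A_{\infty}$ weight, and the entire apparatus of serious sets, strong sets, and metric doubling measures developed in \cite{Semmes-question-of-heinonen} would be superfluous.

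The paper's proof confronts exactly this obstruction. It first passes to a serious subset $\hat{E} \subseteq E$ with controlled seriousness constant (\Lemma{very-serious}), then invokes Semmes' machinery (\Lemma{semmes-lemmas}) to extend $f$ quasisymmetrically not to $\bR^{d}$ but only to a strong set $\tilde{E} \supseteq \hat{E}$; what gets extended to all of $\bR^{d}$ is the associated metric doubling \emph{measure} $\nu$, not the map, and it is this $\nu$ that is an $A_{\infty}$ weight. From that point on the argument is close in spirit to what you propose: the paper's \Lemma{A-lemma} is a BMO stopping-time substitute for your maximal-function bound on $\cM|DF|$, and the metric-doubling relation \eqn{nusimG} plays the role of your Haj\l asz pointwise inequality. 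You flag the change-of-variables step for $\cM|DF^{-1}|\circ F$ as the main obstacle, but the real difficulty lies earlier --- producing any global quasiconformal $F$ in the first place.
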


We will cite several tools from \cite{Semmes-question-of-heinonen}, and with them, the modifications required to obtain Proposition \ref{p:strong-semmes} aren't too difficult,  hence the above proposition should really be credited to Semmes; in addition to Dorronsoro's theorem, however, it is a cornerstone to our paper, so we find it worth mentioning.

\subsection{Outline of proof}
Below we indicate where in the paper to find the proofs of each link in the chain of implications implying \Theorem{main}.
\begin{description}
\item[$(1)\Rightarrow (2)$] We prove this in \Theorem{1>2} in Section \ref{s:1>2}.
\item[$(2)\Rightarrow (3)$] This case is trivial.
\item[$(3)\Rightarrow (4)$] Although brief, we prove this implication in \Theorem{3>4} in Section \ref{s:3>4}.
\item[$(4)\Rightarrow (1)$] This is proven in \Theorem{4>1} in Section \ref{s:4>1}.
\end{description}
Section \ref{s:semmes} is devoted to showing Proposition \ref{p:strong-semmes}, a prerequisite for \Theorem{3>4}. Some basic preliminaries and notation are covered in Section \ref{s:prelims}, although a few tools will appear throughout whose proofs are delayed to the appendix in Section \ref{s:appendix}.

\subsection{Acknowledgements}
The author would like to thank Xavier Tolsa for his help in understanding Dorronsoro's theorem, Tatiana Toro for providing the inception for this project, and Robert Shukrallah and Michael Lacey for their helpful discussions. Part of this work was done while the author was attending the Interactions Between Analysis and Geometry program at the Institute for Pure and Applied Mathematics. The author is also very grateful for the anonymous referee for suffering through a poorly written draft.

\section{Preliminaries}
\label{s:prelims}

\subsection{Notation}

Many of the techniques and notation in this paper, if not mentioned or proven here, can be found in \cite{Heinonen}, \cite{Mattila}, and \cite{big-stein}. 

For nonnegative numbers or functions $A$ and $B$, we will write $A\lec B$ to mean $A\leq CB$ where $C$ is some constant, and $A\lec_{t} B$ if $C$ depends on some parameter $t$. Similarly, we will write $A\sim B$ if $A\lec B\lec A$ and $A\sim_{t} B$ if $A\lec_{t} B\lec_{t} A$. The Euclidean norm will be denoted by $|\cdot|$ and the ball centered at $x$ of radius $r$ by $B(x,r)=\{y:|x-y|\leq r\}$. Let $\Delta(\bR^{d})$ denote the collection of dyadic cubes in $\bR^{d}$ of the form
\[\Delta(\bR^{d})=\bigcup_{n\in \bZ}\ck{\prod_{i=1}^{d}[2^{n}j_{i},2^{n}(j_{i}+1)]\subseteq \bR^{d}: (j_{1},...,j_{d})\in \bZ^{d}}\]
and for $Q_{0}\in \Delta(\bR^{d})$, let $\Delta(Q_{0})$ the set of dyadic cubes contained in a dyadic cube $Q_{0}$. We will simply write $\Delta=\Delta(\bR^{d})$ if the dimension is clear from the context. For $Q\in \Delta$, set $Q^{1}$ to be the {\it parent} of $Q$, that is, the smallest dyadic cube properly containing $Q$, and inductively, for $N>1$, define $Q^{N}$ to be the smallest dyadic cube properly containing $Q^{N-1}$ (so $Q^{N}$ is the {\it $N$th generation ancestor of $Q$}). We will also refer to any cube $R$ with $R^{1}=Q^{1}$ as a {\it sibling} of $Q$.  We will denote the side length of a cube $Q$ by $\ell(Q)$ and its center by $x_{Q}$. For $\lambda>0$, $\lambda Q$ will denote the cube with center $x_{Q}$ and side length $\lambda \ell(Q)$

For a subset $A\subseteq \bR^{d}$, we will let $|A|$ denote the Lebesgue measure of $A$, $A^{\circ}$ its interior, $\d A$ its boundary, and $\one_{A}$ the indicator function for $A$ (that is, it is exactly one on $A$ and zero on the complement of $A$). For a Lebesgue measurable function $f$ and a measurable set $A$ of positive measure, we set $\avint_{A}f=|A|^{-1}\int_{A}f$. For $\delta>0$ and $A\subseteq \bR^{d}$, set
\[\cH^{d}_{\delta}(A)=w_{d}\inf\ck{\sum r_{i}^{d}: A\subseteq \bigcup B(x_{i},r_{i}), r_{i}<\delta},\]
where $w_{d}=|B(0,1)|$ and define the {\it (spherical) $d$-dimensional Hausdorff measure} $\cH^{d}(A)=\lim_{\delta\rightarrow 0} \cH^{d}_{\delta}(A)$.

If $A,B\subseteq \bR^{d}$, we set 
\[\diam(A)=\sup\{|x-y|:x,y\in A\},\]
\[ \dist(A,B)=\sup\{|x-y|:x\in A,y\in B\},\]
and for $x\in \bR^{d}$,
\[\dist(x,A)=\dist(\{x\},A).\]

For an affine transformation $A:\bR^{d}\rightarrow \bR^{D}$, we will write $A(x)=A'(x)+A(0)$, where $A'$ is a linear transformation (and the derivative of the map $A$), and we'll let $|A'|$ denote its operator norm.

\subsection{Basic facts about $\Omega_{f}$ and $\omega_{f}$}

Let $\Omega\subseteq \bR^{d}$ and $f:\Omega \rightarrow \bR^{D}$ be a locally bounded continuous function. It will be more convenient throughout the paper to work with dyadic versions of $\omega_{f}$ and $\Omega_{f}$: for $Q\subseteq \Omega$ a cube, define
\[\omega_{f}(Q)=\inf_{A}\ps{\avint_{Q}\frac{|f-A|}{|A'|\diam Q}^{2}}^{\frac{1}{2}}\;\; \mbox{ and }\;\;\Omega_{f}(Q)=\inf_{A}\ps{\avint_{Q}\frac{|f-A|}{\diam Q}^{2}}^{\frac{1}{2}}\]
where again the infimums are over all nonzero affine maps $A$. We will use the following monotonicity property often and without mention: if $R\subseteq Q$ and $\ell(R)\geq \delta \ell(Q)$, then $\omega_{f}(R)\leq \delta^{-d}\omega_{f}(Q)$. This is easily proven using the definition of $\omega_{f}$.

Moreover,  for any cube $Q$,
\begin{equation}
\omega_{f}(Q)\leq \frac{1}{2} .
\label{e:w<1/2}
\end{equation}
To see this, let $A_{j}=jA+f(x_{Q})$ where $A:\bR^{d}\rightarrow\bR^{D}$ is a fixed nonzero affine map. Then
\begin{align*}
\omega_{f}(Q)
& \leq \liminf_{j\rightarrow\infty} \ps{\avint_{Q}\ps{\frac{|f-A_{j}|}{|A_{j}'|\diam Q}}^{2}}^{\frac{1}{2}} \\
& \leq  \liminf_{j\rightarrow\infty}\ps{\avint_{Q}\ps{\frac{|f-f(x_{Q})|}{j\diam Q}}^{2}}^{\frac{1}{2}}+ \liminf_{j\rightarrow\infty}\ps{\avint_{Q}\ps{\frac{|A_{j}-A_{j}(x_{Q})|}{|A_{j}'|\diam Q}}^{2}}^{\frac{1}{2}}\\
& \leq  \liminf_{j\rightarrow\infty}\frac{\diam f(Q)}{j\diam Q}+ \liminf_{j\rightarrow\infty}\ps{\avint_{Q}\ps{\frac{|A_{j}'||x-x_{Q}|}{|A_{j}'|\diam Q}}^{2}}^{\frac{1}{2}}
\leq 0+ \frac{1}{2}.
\end{align*}

\begin{lemma}
Let $\delta>0$.  If $f$ is an $\eta$-quasisymmetric embedding of a cube $Q\subseteq \bR^{d}$ into $\bR^{D}$, then there is $\ve_{1}=\ve_{1}(\eta,d,\delta)>0$ so that if
\begin{equation}\avint_{Q}\frac{|f-A|}{|A'|\diam Q}<\ve_{1}
\label{e:f-a<e}
\end{equation}
then
\[|f(x)-A(x)| <\delta |A'|\diam Q \;\;\; \mbox{ for }x\in Q.\]
Moreover, 
\[ (1-2\sqrt{d}\delta)|A'|\ell(Q) \leq  \diam f(Q)  \leq (1+2\sqrt{d}\delta)|A'|\diam Q.\]
\label{l:epsilondelta}
\end{lemma}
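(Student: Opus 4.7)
The plan is to first establish the pointwise estimate $|f(x) - A(x)| < \delta|A'|\diam Q$ for all $x \in Q$, and then derive the ``Moreover'' clause as a short corollary. For the pointwise estimate I would use a standard quasisymmetric bootstrap: Chebyshev upgrades the integral hypothesis \eqref{e:f-a<e} to a pointwise bound on a large ``good'' subset of $Q$, and then the $\eta$-quasisymmetry of $f$ propagates that bound to every point of $Q$.

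Concretely, with $M \geq 1$ a parameter to be chosen, set
\[G = \ck{x \in Q : |f(x) - A(x)| < M\ve_1 |A'|\diam Q}.\]
Chebyshev applied to \eqref{e:f-a<e} gives $|Q \setminus G| \leq |Q|/M$. Fix $x_0 \in Q$; if $x_0 \in G$ the bound is immediate as soon as $M\ve_1 \leq \delta$, so assume $x_0 \notin G$. I would then choose a radius $r > 0$ and select $z, w \in G$ with $|x_0 - z| \leq r$ and $|x_0 - w| \geq \ell(Q)/4$. Both points exist for $M$ large enough, since $|Q \cap B(x_0, r)| \geq c_d r^d$ while $|Q \setminus B(x_0, \ell(Q)/4)|$ is a definite fraction of $|Q|$, and $|Q \setminus G| \leq |Q|/M$ is arbitrarily small.

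Writing $T = |f(x_0) - A(x_0)|/(|A'|\diam Q)$, quasisymmetry yields
\[|f(x_0) - f(z)| \leq \eta\ps{\tfrac{|x_0 - z|}{|x_0 - w|}} |f(x_0) - f(w)| \leq \eta\ps{\tfrac{4r}{\ell(Q)}} |f(x_0) - f(w)|,\]
while the triangle inequality gives $|f(x_0) - f(w)| \leq (T + 1 + M\ve_1)|A'|\diam Q$. Combining these with $|f(x_0) - A(x_0)| \leq |f(x_0) - f(z)| + |f(z) - A(z)| + |A(z) - A(x_0)|$ and dividing by $|A'|\diam Q$ produces the self-referential estimate
\[T \leq \eta\ps{\tfrac{4r}{\ell(Q)}}(T + 1 + M\ve_1) + M\ve_1 + \tfrac{r}{\diam Q}.\]
I would then choose parameters in sequence: first $r/\ell(Q)$ small enough (depending on $\eta, d, \delta$) that $\eta(4r/\ell(Q)) \leq \delta/8$ and $r/\diam Q \leq \delta/8$; next $M$ large enough (depending on $r, d$) that both $z$ and $w$ exist; finally $\ve_1 \leq \delta/(8M)$. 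Since $\eta(4r/\ell(Q)) \leq 1/2$, the $T$-term on the right is absorbed on the left, yielding $T < \delta$.

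The ``Moreover'' clause is then routine from the pointwise bound: for the upper estimate, $|f(y) - f(x)| \leq 2\delta|A'|\diam Q + |A'||y - x| \leq (1 + 2\sqrt{d}\delta)|A'|\diam Q$ for all $x,y \in Q$; for the lower estimate, pick a unit vector $v$ realizing $|A'|$ and $x, y \in Q$ with $y - x = \ell(Q) v$ (which fits in any axis-aligned cube of side $\ell(Q)$), so that $|A(y) - A(x)| = |A'|\ell(Q)$ and hence $|f(y) - f(x)| \geq |A'|\ell(Q) - 2\delta|A'|\diam Q = (1 - 2\sqrt{d}\delta)|A'|\ell(Q)$, using $\diam Q = \sqrt{d}\ell(Q)$. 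The principal obstacle in the argument is the circularity in bounding $T$ via $|f(x_0) - f(w)|$, which itself depends on $T$; this is resolved precisely because the quasisymmetric coefficient $\eta(4r/\ell(Q))$ can be made arbitrarily small by shrinking $r$, so the offending term can be absorbed on the left-hand side.
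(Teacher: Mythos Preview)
Your argument is correct and complete, but it differs from the paper's route in the key step of upgrading the bound from the Chebyshev ``good'' set $G$ to all of $Q$. The paper's proof invokes the H\"older estimate of \Corollary{holder} to bound $|f(x)-f(x')|\lec (|x-x'|/\diam Q)^{\alpha}\diam f(Q)$ for $x'\in G$ nearest to $x$; this introduces $\diam f(Q)$ on the right-hand side, which is then shown to satisfy $\diam f(Q)\leq 4|A'|\diam Q$ by a separate contradiction argument. You instead apply the raw quasisymmetry inequality directly, using a third point $w\in G$ far from $x_{0}$; this replaces $\diam f(Q)$ by $|f(x_{0})-f(w)|$, which you bound via the triangle inequality through $A$, yielding a self-referential inequality in $T$ that is absorbed because $\eta(4r/\ell(Q))$ can be made small. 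Your approach is a bit more self-contained (it does not need \Corollary{holder} or the auxiliary contradiction step), while the paper's approach isolates the comparison $\diam f(Q)\sim |A'|\diam Q$ as a standalone fact, which is reused elsewhere. Both the parameter-ordering (first $r/\ell(Q)$, then $M$, then $\ve_{1}$) and the ``Moreover'' clause (choosing $x,y\in Q$ with $y-x=\ell(Q)v$ for $v$ a unit vector realising $|A'|$; this fits in $Q$ since $|v_{i}|\leq 1$ for each coordinate) are fine and match the paper's endgame up to cosmetic differences.
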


We postpone the proof to Section \ref{s:epsilondelta-proof} in the appendix, and now use it  to show that the infimum in the definition of $\omega_{f}(Q)$ is actually achieved by a nonzero affine map if $\omega_{f}(Q)$ is small enough.

%
%
%

\begin{lemma}
Let $\eta:(0,\infty)\rightarrow (0,\infty)$ be an increasing homeomorphism, and $1\leq d\leq D$ integers. There is $\ve'=\ve'(\eta,d)>0$ so that if $Q\in\Delta(\bR^{d})$ and  $f:Q\rightarrow \bR^{D}$ is $\eta$-quasisymmetric with $\omega_{f}(Q)<\ve'$, then there is an affine transformation $A:\bR^{d}\rightarrow \bR^{D}$ so that
\begin{equation}
\omega_{f}(Q)^{2}=\avint_{Q}\ps{\frac{|f-A|}{|A'|\diam Q}}^{2}.
\label{e:AQ}
\end{equation}
\label{l:AQ}
\end{lemma}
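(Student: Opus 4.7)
The plan is to produce a minimizer directly from a minimizing sequence, using Lemma~\ref{l:epsilondelta} as the compactness engine. The danger with the infimum in the definition of $\omega_f(Q)$ is that the family of nonzero affine maps is noncompact in three ways: $|A'|$ could blow up, $|A'|$ could collapse to $0$, or $|A(x_Q)|$ could escape to infinity. Lemma~\ref{l:epsilondelta} will rule out all three simultaneously once we are close enough to optimal.

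First, I would fix $\delta = 1/(4\sqrt{d})$ (so that $1-2\sqrt d \delta > 0$) and let $\ve_1 = \ve_1(\eta,d,\delta)$ be the constant furnished by Lemma~\ref{l:epsilondelta}. Setting $\ve' = \ve_1$, I assume $\omega_f(Q) < \ve'$ and pick a sequence of nonzero affine maps $A_j$ with
\[\alpha_j := \ps{\avint_Q \ps{\frac{|f-A_j|}{|A_j'|\diam Q}}^2}^{1/2} \searrow \omega_f(Q).\]
For $j$ large, $\alpha_j < \ve_1$, and Jensen's inequality then gives $\avint_Q |f-A_j|/(|A_j'|\diam Q) < \ve_1$, triggering Lemma~\ref{l:epsilondelta} for each such $A_j$.

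The lemma then yields two useful controls. First, the pointwise bound $|f(x)-A_j(x)| < \delta|A_j'|\diam Q$ on $Q$, evaluated at $x_Q$, bounds $|A_j(x_Q)|$ in terms of $|A_j'|$ and $f(x_Q)$. Second, the two-sided estimate
\[(1-2\sqrt d \delta)|A_j'|\ell(Q) \le \diam f(Q) \le (1+2\sqrt d \delta)|A_j'|\diam Q\]
sandwiches $|A_j'|$ between two positive constants, because $\diam f(Q) > 0$: indeed, $f$ is an $\eta$-quasisymmetric embedding of the nondegenerate cube $Q$, so $f|_Q$ is injective and nonconstant. Combining both bounds, the tail of $\{A_j\}$ lives in a compact subset of the space of nonzero affine $\bR^d \to \bR^D$ maps (equivalently, a compact set of coefficient vectors in $\bR^{D(d+1)}$ bounded away from the degenerate locus $\{|A'| = 0\}$).

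I would then pass to a subsequence $A_{j_k} \to A$ uniformly on $Q$. The lower bound on $|A_{j_k}'|$ persists in the limit, so $|A'| > 0$ and $A$ is a legitimate competitor. Uniform convergence of $A_{j_k}$ on $Q$ together with $|A_{j_k}'| \to |A'| > 0$ implies convergence of the integrand $(|f-A_{j_k}|/(|A_{j_k}'|\diam Q))^2$ uniformly on $Q$, hence convergence of the averaged integrals to the $A$-version, which must therefore equal $\omega_f(Q)^2$, yielding \eqref{e:AQ}. The only nontrivial point in the whole argument is arranging the parameters so that an $L^2$ near-minimizer satisfies the $L^1$ hypothesis of Lemma~\ref{l:epsilondelta}, and this is immediate from Jensen's inequality. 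The rest is standard finite-dimensional compactness.
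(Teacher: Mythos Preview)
Your proof is correct and follows essentially the same route as the paper's own argument: a minimizing sequence, Jensen to pass from the $L^2$ near-minimality to the $L^1$ hypothesis of Lemma~\ref{l:epsilondelta}, that lemma to trap $|A_j'|$ between positive constants and bound $A_j(x_Q)$, then finite-dimensional compactness to extract a limiting affine map. Your choice $\delta=1/(4\sqrt d)$ and $\ve'=\ve_1$ is slightly cleaner than the paper's (which takes $\delta=1/(2\sqrt d)$ and $\ve'=\ve_1/2$), but the mechanism is identical.
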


\begin{proof}
Assume $\omega_{f}(Q)<\ve':= \ve_{1}(\eta,d,d^{-1/2}/2)/2$. Let $A_{i}$ is a sequence of affine maps such that
\[\avint_{Q}\ps{\frac{|f-A_{i}|}{|A_{i}'|\diam Q}}^{2} \rightarrow \omega_{f}(Q)^{2}.\]
For $i$ large enough,  we know
\[
\avint_{Q}\frac{|f-A_{i}|}{|A_{i}'|\diam Q} 
 \ps{\avint_{Q}\ps{\frac{|f-A_{i}|}{|A_{i}'|\diam Q}}^{2}}^{\frac{1}{2}} \leq 2\omega_{f}(Q)  \leq \ve_{1}\ps{\eta,d,\frac{1}{2\sqrt{d}}}.
\]
Hence, by \Lemma{epsilondelta}, $|A_{i}'|\sim \diam f(Q)/\diam Q$, so $|A_{i}'|$ is uniformly bounded above and below. Moreover, there is $x\in Q$ so that $|f(x)-A_{i}(x)|\leq \ve |A_{i}'| \omega_{f}(Q) \diam Q$. Hence, the sequence $A_{i}$ is uniformly bounded on $Q$ and uniformly bi-Lipschitz, and by Arzela-Ascoli, we may pick a subsequence converging uniformly to a nonconstant affine map $A$ on $Q$ satisfying \eqn{AQ}.
\end{proof}

\subsection{A counter example}

Here, we show that if $d=1$, then $(4)\not\Rightarrow (1)$ in \Theorem{main}.

\begin{proposition}
There is a quasisymmetric map $f:\bR\rightarrow \bR$ such that $\omega_{f}(Q)\gec 1$ for all $Q\in \Delta(\bR)$ with $\ell(Q)\leq 1$.
\label{p:counter}
\end{proposition}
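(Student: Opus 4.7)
The plan is to exhibit a concrete self-similar quasisymmetric homeomorphism $f:\bR\to\bR$ for which $\omega_f$ takes the same positive value on every dyadic interval of length at most one. Fix $\alpha\in(0,1)\setminus\{1/2\}$ and define $f$ on $[0,1]$ by the midpoint rule: set $f(0)=0$, $f(1)=1$, and recursively
\[
\frac{f(m)-f(a)}{f(b)-f(a)}=\alpha
\]
for every dyadic subinterval $[a,b]\subseteq[0,1]$ with midpoint $m$. The values on dyadic rationals extend by continuity to a strictly increasing homeomorphism of $[0,1]$; extend further to $\bR$ by $f(x+k)=f(x)+k$, $k\in\bZ$. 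This $f$ is the distribution function of a self-similar dyadic doubling probability measure on $[0,1]$, hence is $\eta$-quasisymmetric on $[0,1]$ for some $\eta$ depending only on $\alpha$. The periodic extension remains quasisymmetric because $|f(x)-f(y)|-|x-y|$ is uniformly bounded, so $f$ is bi-Lipschitz at scales $\geq 1$, while at scales $<1$ every dyadic interval of length $\leq 1$ sits inside some $[k,k+1]$ on which $f$ is a translate of the $[0,1]$ map.

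Next I would reduce the problem to computing $\omega_f$ on $[0,1]$. Given $Q\in\Delta(\bR)$ with $\ell(Q)\leq 1$, the fact that $Q$ is dyadic of side length at most one forces $Q\subseteq[k,k+1]$ for some $k\in\bZ$, and the periodicity lets me assume $k=0$. The two self-similarity relations $f(x/2)=\alpha f(x)$ and $f(x/2+1/2)=\alpha+(1-\alpha)f(x)$, iterated at each dyadic scale, show that if $A_Q:[0,1]\to Q$ and $B_Q:[0,1]\to f(Q)$ are the orientation-preserving affine bijections, then
\[
f|_Q \;=\; B_Q\circ f\circ A_Q^{-1}.
\]
Applying the scale invariance $\omega_f(tx+b,tr)=\omega_g(x,r)$ with $g(y)=sf(ty+b)$ recalled in the introduction, together with the invariance of $\omega_f$ under translations in the image, yields $\omega_f(Q)=\omega_f([0,1])$.

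It therefore suffices to show $\omega_f([0,1])>0$. I would argue by contradiction: suppose $A_n(x)=a_nx+b_n$ with $a_n\neq 0$ satisfy $\avint_{[0,1]}|f-A_n|^{2}/a_n^{2}\to 0$. If $|a_n|$ stays in a compact subset of $(0,\infty)$, then Arzel\`a--Ascoli produces an affine limit $A$ with $f\equiv A$ a.e., contradicting non-affinity (forced by $\alpha\neq 1/2$). If $|a_n|\to\infty$, dividing through forces $x+b_n/a_n\to 0$ in $L^2([0,1])$, impossible since $x$ has positive variance there. If $|a_n|\to 0$, the numerator $\|f-A_n\|_{L^2([0,1])}$ remains bounded below while the denominator vanishes, again a contradiction. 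Combining with the previous paragraph gives $\omega_f(Q)=\omega_f([0,1])\gec 1$ for every dyadic $Q$ with $\ell(Q)\leq 1$. The main obstacle is verifying quasisymmetry of the periodic extension across integer boundaries; the self-similarity reduction and the non-vanishing of $\omega_f([0,1])$ are essentially routine once the construction is in place.
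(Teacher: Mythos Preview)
Your self-similarity idea is clean, but the construction fails at the most basic step: the biased dyadic Bernoulli measure with parameter $\alpha\neq\tfrac12$ is \emph{not} a doubling measure on $[0,1]$, so your $f$ is not quasisymmetric even before you worry about the periodic extension. Concretely, from your own relations $f(x/2)=\alpha f(x)$ and $f((x+1)/2)=\alpha+(1-\alpha)f(x)$ one computes $f(2^{-k})=\alpha^{k}$ and $1-f(1-2^{-k})=(1-\alpha)^{k}$, hence
\[
\frac{f(\tfrac12+2^{-n})-f(\tfrac12)}{f(\tfrac12)-f(\tfrac12-2^{-n})}
=\frac{(1-\alpha)\,\alpha^{n-1}}{\alpha\,(1-\alpha)^{n-1}}
=\Bigl(\frac{\alpha}{1-\alpha}\Bigr)^{n-2},
\]
which tends to $0$ or $\infty$ as $n\to\infty$. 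This violates the three-point quasisymmetry condition at $x=\tfrac12$. The difficulty is not at the integer boundaries but already at the interior dyadic points where a long run of right children meets a long run of left children.

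This is exactly why the paper uses a \emph{triadic} Kahane construction with symmetric outer weights $\rho,\,1-2\rho,\,\rho$: the left-right symmetry is what forces the measure to be genuinely doubling. The price is that triadic self-similarity does not match the dyadic grid $\Delta(\bR)$, so one cannot conclude $\omega_f(Q)=\omega_f([0,1])$ directly. Instead the paper finds, inside $3Q$, a triadic interval $I$ of comparable length and argues by contradiction: if $\omega_f(3Q)$ were small, \Lemma{epsilondelta} would force $f$ to be uniformly close to an affine map on $3Q$, while the exact relation $\mu(I_\ell)=\rho\,\mu(I)$ with $\rho\neq\tfrac13$ makes this impossible. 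If you want to salvage your approach, you need a construction that is simultaneously doubling \emph{and} exactly self-similar under the dyadic grid; no such nontrivial example exists on $\bR$ for the reason above.
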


\begin{proof}
To see this, let $\cI$ be the set of triadic half-open intervals in $[0,1)$ obtained inductively by taking an interval $I$ already in $\cI$, dividing it into three half-open subintervals $I_{\ell},I_{m}$, and $I_{r}$ (the left, middle, and right intervals) of equal size so that $I_{m}$ is between the other two, and adding these to $\cI$. Now let $\rho\in (0,1/3)$ and $\mu$ be the measure on $\bR$ satisfying $\mu([0,1))=1$, $\mu(I_{\ell})=\mu(I_{r})=\rho\mu(I)$ for all $I\in \cI$, and for any $n\in \bZ$ and $A\subseteq [n,n+1)$, set $\mu(A)=\mu(A-n)$.  This is the so-called {\it Kahane measure} on $\bR$ (although not his exact construction in \cite{trois-notes}), and is singular with respect to Lebesgue measure. This is a {\it doubling measure}, meaning there is $C>0$ such that $\mu(B(x,2r))\leq C\mu( B(x,r))$ and singular with respect to Lebesgue measure (see \cite{Garnett-Killip-Schul} for a proof of these facts). 

Define $f:\bR\rightarrow \bR$ by setting $f(x)=\mu([0,t])$ for $x\geq 0$ and $\mu([x,0])$ for $x\leq 0$. It is not hard to show this is an increasing quasisymmetric mapping since $\mu$ is doubling (c.f. \cite[Remark 13.20(b)]{Heinonen}). For any $Q\in \Delta(\bR)$ with $\ell(Q)\leq 1$, we may find a triadic interval $I\subseteq 3Q$ of length at least $\ell(Q)/3$, and if $a,b$ are the endpoints of $I$ and $a,c$ of $I_{\ell}$, then 
\[|f(a)-f(c)|=\mu(I_{\ell})=\rho\mu(I)=\rho |f(a)-f(b)|.\] 
Let $\delta>0$ and suppose we may find $x\in \bR$ and $Q\in \Delta(\bR)$ with $\ell(Q)\leq 1$ so that $\omega_{f}(3Q)<\ve_{1}(\eta,d,\delta)$. We will show this results in a contradiction if $\delta>0$ is small enough, proving the proposition. By \Lemma{epsilondelta}, there is $A$ a nonconstant affine map such that 
\begin{equation}
|A'|\sim \frac{\diam f(Q)}{\diam Q}=\frac{\mu(Q)}{\diam Q}
\label{e:kah1}
\end{equation}
and
\begin{equation}
||f-A||_{L^{\infty}(3Q)}<\delta|A'|\diam 3Q.
\label{e:kah2}
\end{equation}
Hence
\begin{align*}
\rho|f(a)-f(b)|
& = |f(a)-f(c)|
\stackrel{\eqn{kah2}}{>} |A(a)-A(c)|-2\delta|A'|\diam 3Q \\
& =\frac{1}{3}|A(a)-A(b)|-2\delta|A'|\diam 3Q\\
& \stackrel{\eqn{kah2}}{>} \frac{1}{3}|f(a)-f(b)|-\frac{8}{3}\delta|A'|\diam 3Q.
\end{align*}
Thus,
\[ \ps{\frac{1}{3}-\rho}
\mu(I)= \ps{\frac{1}{3}-\rho}|f(a)-f(b)|
< \frac{8}{3}\delta|A'|\diam 3Q
\stackrel{\eqn{kah1}}{\sim} \delta \mu(Q)\]
and since $\mu$ is doubling, we know $\mu(Q)\lec_{\mu} \mu(I)$, hence we have 
\[\ps{\frac{1}{3}-\rho}
\mu(I)\lec_{\mu} \delta \mu(I),\] 
which is a contradiction for $\delta$ small enough.
\end{proof}

\subsection{Dyadic Carleson conditions}

Suppose now $f:\bR^{d}\rightarrow\bR^{D}$ is a quasisymmetric mapping such that $\omega_{f}(x,r)^{2}\frac{dxdr}{r}$ is a Carleson measure, meaning there is an infimal $C>0$ (the Carleson norm of this measure) such that
\begin{equation}
\int_{B(z,t)} \int_{0}^{t} \omega_{f}(x,r)^{2}\frac{dr}{r}dx\leq C|B(z,t)| \mbox{ for }z\in \bR^{d}\mbox{ and }t>0.
\label{e:carleson}
\end{equation}

If $M>1$, \eqn{carleson} is quantitatively equivalent to the condition that there is an infimal $C_{M}$ such that
\begin{equation}
\sum_{Q\subseteq Q_{0}}\omega_{f}(M Q)^{2}|Q|\leq C_{M}|Q_{0}|
\label{e:dyadic-carleson}
\end{equation}
for any dyadic cube $Q_{0}$. We show this in the following lemma.

\begin{lemma}
If $M>1$ and either \eqn{carleson} or \eqn{dyadic-carleson} hold, then the other holds, and $C\sim_{d,M} C_{M}$.
\label{l:dyadic-version}
\end{lemma}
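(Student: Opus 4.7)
The plan is to compare ball averages $\omega_f(x,r)$ and dyadic cube averages $\omega_f(MQ)$ at comparable scales. The key observation, obtained directly from the definitions, is: if $B = B(y, r')$ and $Q$ is a dyadic cube with $B \subseteq MQ$ and $|MQ| \sim_{M,d} |B|$, then $\diam(MQ) \sim_{M,d} r'$ as well, and
\[
\omega_f(y, r')\;\lec_{M,d}\;\omega_f(MQ);
\]
symmetrically, if $MQ \subseteq B$ with $|MQ| \sim_{M,d} |B|$, then $\omega_f(MQ) \lec_{M,d} \omega_f(y, r')$. Both bounds follow by plugging the affine minimizer for the larger domain into the infimum over the smaller and using the containment to compare averages; the ratio $\diam(MQ)/r'$ absorbs the mismatched denominators $|A'|\diam(MQ)$ versus $|A'|r'$ in the two definitions of $\omega_f$.

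For $\eqref{e:dyadic-carleson}\Rightarrow\eqref{e:carleson}$, fix $B(z,t)$. For each $(x, r) \in B(z,t) \times (0, t)$ assign the unique dyadic cube $Q(x, r) \ni x$ with $\ell(Q) \in [2r/(M-1), 4r/(M-1))$; since $x \in Q$, a coordinatewise check gives $B(x,r) \subseteq MQ$ with $|MQ| \sim_{M,d} |B(x,r)|$ (any $y \in B(x,r)$ satisfies $|y_i - (x_Q)_i| \leq r + \ell(Q)/2 \leq M\ell(Q)/2$). Grouping pairs $(x,r)$ by their assigned cube and integrating $dr/r$ over an interval of bounded logarithmic length yields
\[
\int_{B(z,t)}\int_0^t \omega_f(x,r)^2 \frac{dr}{r}\, dx\;\lec_{M,d}\;\sum_Q \omega_f(MQ)^2 |Q|,
\]
the sum being over dyadic cubes $Q$ meeting $B(z,t)$ with $\ell(Q) \lec_M t$. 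Each such $Q$ is contained in one of at most $O_d(1)$ dyadic ancestors $Q_0$ of side length $\sim_M t$ meeting $B(z,t)$, so \eqref{e:dyadic-carleson} applied to each such $Q_0$ bounds the right-hand side by $\lec_{M,d} C_M |B(z,t)|$.

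For the reverse implication $\eqref{e:carleson}\Rightarrow\eqref{e:dyadic-carleson}$, fix $Q_0$ dyadic and set $\alpha := (M+1)\sqrt{d}/2$, $\beta := 2\alpha$. For each dyadic $Q$ and each $(x, r) \in Q \times (\alpha\ell(Q), \beta\ell(Q))$, a direct check gives $MQ \subseteq B(x, r)$ with $|B(x,r)| \sim_{M,d} |MQ|$, hence $\omega_f(MQ) \lec_{M,d} \omega_f(x, r)$. Integrating against $dx\,dr/r$ over the tent $T_Q := Q \times (\alpha\ell(Q), \beta\ell(Q))$ produces
\[
\omega_f(MQ)^2 |Q|\;\lec_{M,d}\;\int_Q\int_{\alpha\ell(Q)}^{\beta\ell(Q)} \omega_f(x,r)^2 \frac{dr}{r}\, dx.
\]
The choice $\beta = 2\alpha$ makes the tents $\{T_Q\}$ pairwise disjoint (vertical intervals at distinct dyadic generations tile $(0,\infty)$ exactly, and the $x$-projections at a fixed generation are disjoint), while $\bigcup_{Q \subseteq Q_0} T_Q \subseteq B(x_{Q_0}, C\ell(Q_0)) \times (0, C\ell(Q_0))$ for some $C = C(M,d)$. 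Summing and invoking \eqref{e:carleson} yields the bound.

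The main obstacles are purely bookkeeping: selecting constants so that the containments, the volume comparisons, and the disjointness of the tents all hold simultaneously, and handling the fact that $B(z,t)$ may not sit inside any single dyadic cube of comparable size by covering it with $O_d(1)$ dyadic ancestor cubes. No analytic difficulty beyond these routine checks arises.
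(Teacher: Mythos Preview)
Your proof is correct and follows essentially the same strategy as the paper's: compare $\omega_f(MQ)$ with $\omega_f(x,r)$ via containment at comparable scales, then integrate over disjoint ``tents'' $Q\times(\alpha\ell(Q),2\alpha\ell(Q))$ whose union sits inside a Carleson box. The paper only writes out the direction $C_M\lec_d C$ and only for $M=3$ (declaring the rest similar); your version is more explicit---handling general $M$, spelling out both directions, and noting the $O_d(1)$ dyadic cubes needed to cover $B(z,t)$---but the underlying argument is the same.
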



\begin{proof}
We will only show this lemma for $M=3$, as the general case is similar, and we'll only show $C_{M}\lec_{d} C$ as the opposite inequality is proven similarly.

Let $A$ be a  nonconstant affine map. Then, for $Q\in \Delta$, $x\in Q$, and $r\in [2\diam Q,4\diam Q]$,
\begin{align*}
\omega_{f}(3Q)^{2}
& \leq \avint_{3Q}\ps{\frac{|f-A|}{|A'|\diam 3Q}}^{2}
\leq \frac{r^{2}|B(x,r)|}{|3Q|(\diam 3Q)^{2}} \avint_{B(x,r)}\ps{\frac{|f-A|}{|A'|r}}^{2}\\
& \lec_{d} \avint_{B(x,r)}\ps{\frac{|f-A|}{|A'|r}}^{2}
\end{align*}
and infimizing over non constant affine maps $A$ gives
\[\omega_{f}(3Q)^{2}\lec_{d}\omega_{f}(x,r) \mbox{ for } x\in Q, \;\;r\in [2\diam Q,4\diam Q].\]
Thus, for any $Q_{0}\in \Delta$,
\begin{align*}
\sum_{Q\subseteq Q_{0}} \omega_{f}(3Q)^{2}|Q|
& \lec\sum_{Q\subseteq Q_{0}}\int_{Q}\int_{2\diam Q}^{4\diam Q} \omega_{f}(3Q)^{2}\frac{dr}{r}dx \\
& \lec_{d} \sum_{Q\subseteq Q_{0}}\int_{Q} \int_{2\diam Q}^{4\diam Q}\omega_{f}(x,r)^{2}\frac{dr}{r}dx \\
& =\sum_{n\geq 0}\sum_{Q^{n}=Q_{0}}\int_{Q}\int_{2^{-n+1}\diam Q_{0}}^{2^{-n+2}\diam Q_{0}}\omega_{f}(x,r)^{2}\frac{dr}{r}dx\\
& \leq \int_{Q_{0}}\int_{0}^{4\diam Q_{0}}\omega_{f}(x,r)^{2}\frac{dr}{r}dx \\
& \leq C|B(x_{Q_{0}},4\diam Q_{0})|\lec_{d} C|Q_{0}|.
\end{align*}

\end{proof}

We can prove a similar relation for $\Omega_{f}$.

\begin{lemma}
For $\Omega\subseteq \bR^{d}$, $f:\Omega\rightarrow\bR^{D}$, and $Q\subseteq \Omega$, define
\[\Omega_{f}(Q)^{2} = \inf_{A}\avint_{Q}\ps{\frac{|f-A|}{\diam Q}}^{2}\]
where the infimum is over all affine maps $A:\bR^{d}\rightarrow \bR^{D}$. If $M>1$ and $f\in W^{1,2}(\bR^{d},\bR^{D})$, then
\[\sum_{Q\in\Delta}\Omega_{f}(MQ)^{2}|Q| \sim_{D,M} ||D f||_{2}.\]
\label{l:dyadicdorronsoro}
\end{lemma}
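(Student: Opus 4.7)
The strategy is to deduce this from Dorronsoro's theorem (\Theorem{dorronsoro}) by comparing the dyadic sum to the continuous Carleson integral
\[I(f) := \int_{\bR^{d}}\int_{0}^{\infty}\Omega_{f}(x,r)^{2}\frac{dr}{r}\,dx.\]
The vector-valued infimum defining $\Omega_{f}(x,r)$ decouples across the coordinates of $f$, giving $\Omega_{f}(x,r)^{2}=\sum_{i=1}^{D}\Omega_{f_{i}}(x,r)^{2}$, so applying \Theorem{dorronsoro} to each coordinate $f_{i}$ and summing yields $\|Df\|_{2}^{2}\sim_{d} I(f)$. It therefore suffices to prove
\[\sum_{Q\in \Delta}\Omega_{f}(MQ)^{2}|Q|\sim_{d,M} I(f),\]
and the argument is essentially the same template as \Lemma{dyadic-version}.

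For the direction $\sum \lec I(f)$, I would fix $Q\in \Delta$ and observe that whenever $x\in Q$ and $r\in [(M+1)\diam Q,\, 2(M+1)\diam Q]$, then $MQ\subseteq B(x,r)$ with $|B(x,r)|\sim_{d,M}|MQ|$; for any nonzero affine $A$, comparing $\avint_{MQ}(|f-A|/\diam MQ)^{2}$ with $\avint_{B(x,r)}(|f-A|/r)^{2}$ via these volume and radius ratios and then taking infimum over $A$ gives $\Omega_{f}(MQ)^{2}\lec_{d,M}\Omega_{f}(x,r)^{2}$. Integrating in $r$ over this interval (which has logarithmic length $\log 2$) and in $x$ over $Q$, then summing over $Q\in \Delta$---using that the $r$-intervals over different dyadic scales partition $(0,\infty)$ with bounded overlap and that for each scale the cubes $Q$ tile $\bR^{d}$---yields $\sum_{Q}\Omega_{f}(MQ)^{2}|Q|\lec_{d,M}I(f)$.

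For the reverse inequality, to each $(x,r)$ I would assign the unique dyadic cube $Q\ni x$ whose sidelength $\ell(Q)\sim_{d,M} r$ is taken just large enough that $B(x,r)\subseteq MQ$ with $|MQ|\sim_{d,M}|B(x,r)|$; the same averaging estimate now in the opposite direction gives $\Omega_{f}(x,r)^{2}\lec_{d,M}\Omega_{f}(MQ)^{2}$. Partitioning $(0,\infty)$ into the corresponding dyadic scales and summing produces $I(f)\lec_{d,M}\sum_{Q\in \Delta}\Omega_{f}(MQ)^{2}|Q|$. There is no real obstacle here: the calculation is entirely parallel to the proof of \Lemma{dyadic-version}, the only bookkeeping being the choice of constants relating $r$ to $\ell(Q)$ so that both the containment and the volume comparability hold with constants depending only on $d$ and $M$.
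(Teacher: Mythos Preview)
Your proposal is correct and follows essentially the same approach as the paper: the paper's proof simply notes that \Theorem{dorronsoro} extends to $\bR^{D}$-valued functions (your coordinate-decoupling observation) and then says the dyadic--continuous comparison ``is similar to \Lemma{dyadic-version}, so we omit it,'' which is exactly the argument you have spelled out.
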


\begin{proof}
Note that  \Theorem{dorronsoro} holds for functions $f:\bR^{d}\rightarrow \bR^{D}$ (with $D$ not necessarily equal to one) if we replace $\grad f$ with $Df$. The proof now is similar to \Lemma{dyadic-version}, so we omit it.
\end{proof}

\section{Carleson condition implies $f$ is bi-Lipschitz on a very large set}
\label{s:1>2}

In this section, we prove the first part of \Theorem{main} by establishing that (1) implies  (2). We state this implication as a theorem below.

\begin{theorem}
Suppose $f:\bR^{d}\rightarrow \bR^{D}$ is $\eta$-quasisymmetric and $\omega_{f}(x,r)^{2}\frac{dr}{r}dx$ is a Carleson measure. Then for all $\tau>0$ there is $L>1$ such that for all $x\in \bR^{d}$, $r>0$, there is $E \subseteq B(x,r)$ such that $|B(x,r)\backslash E|<\tau |B(x,r)|$ and $\ps{ \frac{\diam f(B(x,r))}{\diam B(x,r)}}^{-1} f|_{E}$ is $L$-bi-Lipschitz.
\label{t:1>2}
\end{theorem}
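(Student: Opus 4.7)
The plan is a stopping-time/corona-type decomposition driven by the Carleson condition, combined with the local bi-Lipschitz criterion in \Lemma{epsilondelta}. By \Lemma{dyadic-version}, the hypothesis is equivalent to the dyadic Carleson estimate $\sum_{Q\subseteq Q_{0}}\omega_{f}(3Q)^{2}|Q|\leq C|Q_{0}|$ for every dyadic cube $Q_{0}$, and it suffices to prove the conclusion for a dyadic cube $Q_{0}$ with $\ell(Q_{0})\sim r$ containing $B(x,r)$ (a dilated dyadic cube covers $B(x,r)$ up to dimensional constants). After rescaling the image, I may normalize so that $\diam f(Q_{0})/\diam Q_{0}=1$.

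Next, fix $\tau>0$ and choose a small $\ve_{0}>0$ and a large $M>1$, both depending on $\tau,\eta,d$ and the Carleson norm. I would form the family $\cS$ of maximal dyadic sub-cubes $Q\subseteq Q_{0}$ at which either $\omega_{f}(3Q)\geq\ve_{0}$, or -- when $\omega_{f}(3Q)<\ve_{0}$, so that \Lemma{AQ} furnishes an optimal affine $A_{Q}$ -- we have $|A_{Q}'|\notin[M^{-1},M]$. Set $E=Q_{0}\setminus\bigcup_{Q\in\cS}Q$. The cubes stopped by the first criterion contribute at most $\ve_{0}^{-2}C|Q_{0}|$ via Chebyshev and the dyadic Carleson bound. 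Cubes stopped by the second criterion are controlled using the step estimate
\[|A_{Q'}'-A_{Q}'|\lec (\omega_{f}(Q)+\omega_{f}(Q'))|A_{Q}'|\]
for $Q'\subset Q$ of half-size with both $\omega_{f}$'s small (derivable from \Lemma{epsilondelta} by comparing $A_{Q}$ and $A_{Q'}$ on $Q'$), chained against the $L^{2}$-sum $\sum\omega_{f}^{2}$ along dyadic ancestors. Tuning $\ve_{0}$ and $M$ gives $|\bigcup\cS|<\tau|Q_{0}|$.

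On $E$, bi-Lipschitz behavior follows by a chaining argument. For $x,y\in E$ pick a dyadic cube $Q$ (possibly a sibling cube) containing both with $\ell(Q)\sim|x-y|$. Since $\omega_{f}(3Q)<\ve_{0}$, \Lemma{epsilondelta} yields $\|f-A_{Q}\|_{L^{\infty}(3Q)}<\delta|A_{Q}'|\diam(3Q)$ with $\delta=\delta(\ve_{0})$ small, so
\[|f(x)-f(y)|=|A_{Q}'(x-y)|\pm O(\delta|A_{Q}'|\diam Q).\]
Combined with $|x-y|\gec\ell(Q)$ (handled by passing to a sibling if $x,y$ sit close to a dyadic boundary) and the stopping condition $|A_{Q}'|\in[M^{-1},M]$, this gives $|f(x)-f(y)|\sim|x-y|$ with bi-Lipschitz constants $L=L(\ve_{0},M)=L(\tau,\eta,d,C)$.

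The main obstacle I anticipate is controlling the multiplicative drift of $|A_{Q}'|$ along dyadic chains of "good" cubes. A naive Cauchy-Schwarz bound on the step estimate gives $|\log(|A_{Q}'|/|A_{Q_{0}}'|)|\lec\sqrt{NK}$, where $N$ is the number of generations between $Q_{0}$ and $Q$ and $K$ bounds $\sum\omega_{f}^{2}$ along the chain; this blows up as $N\to\infty$. Overcoming this likely requires either refining the stopping-time family so the log-drift is kept locally bounded by further stopping on the increments $\log(|A_{Q'}'|/|A_{Q}'|)$, or exploiting cancellation in these signed increments via a dyadic-martingale (John--Nirenberg-type) argument applied to $\log|A_{Q}'|$ as a Carleson-controlled dyadic process on $Q_{0}$. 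Balancing $\ve_{0}$, $M$, and the auxiliary stopping parameter against the target $\tau$ is the technical heart of the proof.
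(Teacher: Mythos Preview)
Your overall framework---a corona/stopping-time decomposition governed by the Carleson condition---matches the paper's, and you have correctly isolated the central difficulty: controlling the measure of cubes that are stopped because of drift in $|A_Q'|$ rather than because of accumulated $\omega_f^2$. However, neither of the two fixes you sketch actually closes the gap, and the paper uses a genuinely different mechanism here.

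Your first suggestion (further stopping on the increments $\log(|A_{Q'}'|/|A_Q'|)$) is exactly what the paper does in defining its $(\ve,\tau)$-regions: one stops whenever either the running sum $\sum_{Q\subseteq R\subseteq Q(S)}\omega_f(MR)^2$ exceeds $\ve^2$ \emph{or} $|A_Q'-A_{Q(S)}'|>\tau|A_{Q(S)}'|$. But this just relocates the problem: one must now bound $\sum_{S}|Q(S)|$, and in particular bound the measure $|M_2(S)|$ of minimal cubes stopped for the drift reason alone. Your second suggestion (a John--Nirenberg/martingale argument on $\log|A_Q'|$) does not work as stated: the step estimate only gives $|\Delta_Q|\lesssim \omega_f(MQ)+\omega_f(MQ^1)$, and a square Carleson bound on $\omega_f$ does \emph{not} control linear sums $\sum_{Q\ni x}\omega_f(MQ)$ along chains (take $\omega_f\equiv c$ on $N$ generations with $c^2 N$ bounded; the linear sum is $\sim\sqrt{N}$). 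Nor is there any reason for the signed increments $\Delta_Q$ to have a martingale structure---$\log|A_Q'|$ is not an average of anything.

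The paper's resolution is different and is the technical core of the argument. Inside a fixed $(\ve,\tau)$-region $S$, one builds a smooth extension $F_S:\bR^d\to\bR^D$ by patching the affine maps $A_{Q_j}$ via a Whitney partition of unity adapted to $S$ (so $F_S=f$ on the ``limit set'' $z(S)$ and $F_S$ is piecewise affine away from it). Two estimates are then proved: (i) near each $Q\in m_2(S)$ one has $|DF_S-A_{Q(S)}'|\gtrsim\tau|A_{Q(S)}'|$ on a definite fraction of $Q$, so $\|DF_S-A_{Q(S)}'\|_2^2\gtrsim\tau^2|A_{Q(S)}'|^2|M_2(S)|$; and (ii) $\sum_{Q}\Omega_{F_S}(2Q)^2|Q|\lesssim\ve^2|A_{Q(S)}'|^2|Q(S)|$. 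Dorronsoro's theorem converts (ii) into $\|DF_S-A_{Q(S)}'\|_2^2\lesssim\ve^2|A_{Q(S)}'|^2|Q(S)|$, and comparing with (i) yields $|M_2(S)|\lesssim(\ve/\tau)^2|Q(S)|<\tfrac12|Q(S)|$ for $\ve$ small. This is what makes the corona packing estimate go through.

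A smaller issue: your bi-Lipschitz step uses $|f(x)-f(y)|\approx|A_Q'(x-y)|$, but $|A_Q'(x-y)|$ is only bounded below by the \emph{smallest} singular value of $A_Q'$ times $|x-y|$, whereas the stopping condition controls only the operator norm $|A_Q'|$. The paper avoids this by not using $A_Q$ for the lower bound at all: once one knows $\diam f(MQ)/\diam MQ\in[\beta^{-N},\beta^N]$ along the tree (which \emph{does} follow from $|A_Q'|\sim 1$ via \Lemma{epsilondelta}), the lower bi-Lipschitz bound comes directly from quasisymmetry via \Lemma{QS-comp}.
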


 \subsection{Stopping-time regions}

The ideas behind this section are taken from the theory of uniform rectifiability (see \cite{DS} and \cite{of-and-on}, for example). 
Let 
\[M=30000d.\]
We will keep $M$ fixed throughout the rest of Section \ref{s:1>2}.

\begin{definition}
(\cite[I.3.2]{of-and-on}) A {\it stopping-time region} $S\subseteq \Delta$ is  a collection of cubes such that
\begin{enumerate}
\item all cubes $Q\in S$ are contained in a maximal cube $Q(S)\in S$;
\item $S$ is {\it coherent}, meaning $R\in S$ for all $Q\subseteq R \subseteq Q(S)$ whenever $Q\in S$;
\item for all $Q\in S$, each of its siblings of $Q$ are also in $S$.
\end{enumerate}
\label{d:stopping-time}
We let $m(S)$ denote the set of minimal cubes of $S$, i.e. those cubes $Q\in S$ such that there are no cubes $R\in S$ properly contained in $Q$. We also set 
\[z(S)=Q(S)\backslash \bigcup\{Q:Q\in m(S)\}\]
which a the set of points in $Q(S)$ that are contained in infinitely many cubes in $S$. \\
\end{definition}

For an $\eta$-quasisymmetric map $f:\Omega \rightarrow \bR^{D}$ defined on a domain $\Omega\subseteq \bR^{d}$ and $Q\in \Delta$, if $MQ\subseteq \Omega$ and $\omega_{f}(MQ)<\ve'(\eta,d)$, by \Lemma{AQ} we may assign to $Q$ an affine map $A_{Q}:\bR^{d}\rightarrow \bR^{D}$ such that 
\[\omega_{f}(MQ)^{2}=\avint_{MQ}\ps{\frac{|f-A_{Q}|}{|A_{Q}'|\diam MQ}}^{2}.\]

\begin{definition}
For $\Omega\subseteq \bR^{d}$ and $f:\Omega\rightarrow \bR^{D}$ $\eta$-quasisymmetric, $\ve \in(0,\ve'(\eta,d))$, $\tau\in(0,1)$, we will call a stopping-time region $S$ an {\it $(\ve,\tau)$-region} for $f$ if $MQ(S)\subseteq\Omega$ and if for any $Q\in S$,
\begin{enumerate}
\item $\sum_{Q\subseteq R\subseteq Q(S)} \omega_{f}(MR)^{2}<\ve^{2}$,
\item $|A_{Q(S)}'-A_{Q}'| \leq \tau |A_{Q(S)}'|$, and
\item all siblings of $Q$ in $S$ satisfy (1) and (2).
\end{enumerate}
\label{d:vt-region}
\end{definition}

Note that, if $Q$ is in a $(\ve,\tau)$-region $S$, then (2) implies
\begin{equation}
(1-\tau)|A_{Q(S)}'|\leq |A_{Q}'|\leq (1+\tau)|A_{Q(S)}'| \mbox { for all }Q\in S.
\label{e:A<AS}
\end{equation}

The first major step toward proving \Theorem{1>2} is the following.

\begin{theorem}
Let $\tau\in (0,1)$, $C_{M}>0$, and $\eta:(0,\infty)\rightarrow (0,\infty)$ be an increasing homeomorphism. There is $\ve_{0}=\ve_{0}(\eta,D,\tau,C_{M})>0$  so that the following holds. If $\Omega\subseteq \bR^{d}$, $f:\Omega\rightarrow \bR^{D}$ is $\eta$-quasisymmetric, $0<\ve<\ve_{0}$, $Q_{0}\in\Delta$, $MQ_{0}\subseteq \Omega$, and
\begin{equation}
\sum_{Q\subseteq Q_{0}}\omega_{f}(MQ)^{2}|Q|\leq C_{M}|Q_{0}|
\label{e:sumomegamq}
\end{equation}
then we may partition $\Delta(Q_{0})$ into a set of ``bad" cubes $\cB$ and a collection $\cF$ of $(\ve,\tau)$-stopping time regions so that
\begin{equation}
\sum_{Q\in \cB} |Q|\leq \frac{C_{M}}{\ve^{2}} |Q_{0}|
\label{e:sumB}
\end{equation}
and
\begin{equation}
\sum_{S\in \cF}|Q(S)|\leq  \ps{4+\frac{2^{d+1}C_{M}}{\ve^{2}}}|Q_{0}|.
\label{e:sumF}
\end{equation}
\label{t:corona}
\end{theorem}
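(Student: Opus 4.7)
The plan is a standard corona-type decomposition, following the template developed in the uniform rectifiability literature (cf.\ \cite{DS}, \cite{of-and-on}). First I would define the bad set $\cB := \{Q \in \Delta(Q_0) : \omega_f(MQ) \geq \ve\}$. Chebyshev applied to hypothesis \eqref{e:sumomegamq} immediately yields
\[
\sum_{Q \in \cB} |Q| \leq \ve^{-2} \sum_{Q \in \cB} \omega_f(MQ)^2 |Q| \leq C_M \ve^{-2} |Q_0|,
\]
which is \eqref{e:sumB}. I would then build $\cF$ by a top-down greedy procedure: starting with $Q_0$ (or its children if $Q_0 \in \cB$), each non-bad candidate cube becomes the top $Q(S)$ of a new region, which is enlarged downward by admitting every cube whose full family of $2^d$ siblings jointly satisfies (1) and (2) of \Definition{vt-region}, thereby enforcing (3) automatically. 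Growth halts at each minimal cube $R \in m(S)$; the non-bad children of $R$ re-enter the queue as fresh tops, while bad children are absorbed into $\cB$. By construction, $\Delta(Q_0) = \cB \sqcup \bigsqcup_{S \in \cF} S$.

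To address \eqref{e:sumF}, every top $Q(S) \neq Q_0$ is a non-bad child either of some $B \in \cB$ or of some $R \in m(S')$, and the non-bad children of any cube $P$ have total measure at most $|P|$, so
\[
\sum_{S \in \cF} |Q(S)| \leq |Q_0| + |\cB| + \sum_{S \in \cF} \sum_{R \in m(S)} |R|.
\]
For each $R \in m(S)$ I pick a witnessing child $P_R$ violating (1) or (2), and partition $m(S) = m_1(S) \sqcup m_2(S)$ by failure type; since $|R| = 2^d |P_R|$, the problem reduces to bounding $\sum_{(S,R)}|P_R|$ separately for each class.

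For $m_1$, the witnesses are pairwise disjoint dyadic cubes satisfying $\sum_{P_R \subseteq T \subseteq Q(S)} \omega_f(MT)^2 \geq \ve^2$. Swapping summation together with disjointness gives
\[
\ve^2 \sum_{(S,R) \in m_1} |P_R| \leq \sum_{T \subseteq Q_0} \omega_f(MT)^2 \sum_{P_R \subseteq T} |P_R| \leq \sum_T \omega_f(MT)^2 |T| \leq C_M |Q_0|,
\]
so $\sum_{(S,R) \in m_1} |R| \leq 2^d C_M \ve^{-2} |Q_0|$.

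The $m_2$ estimate is the main obstacle. Here $|A_{P_R}' - A_{Q(S)}'| > \tau |A_{Q(S)}'|$ while $|A_R' - A_{Q(S)}'| \leq \tau |A_{Q(S)}'|$. The key linking estimate, proved by bounding both $A_T$ and $A_{T^1}$ by $f$ on $MT$ and invoking the $L^2$--$L^\infty$ equivalence on the finite-dimensional space of affine functions restricted to a cube, is
\[
|A_T' - A_{T^1}'| \lec_d (\omega_f(MT) + \omega_f(MT^1))|A_{Q(S)}'|, \qquad T \in S,
\]
where $|A_T'| \sim |A_{Q(S)}'|$ is inherited from condition (2). Telescoping along the chain of length $N_{P_R}$ from $P_R$ to $Q(S)$ and applying Cauchy--Schwarz produces $\tau^2 \lec N_{P_R} \sum_{T \supseteq P_R,\, T \in S} \omega_f(MT)^2$, which together with condition (1) forces every $m_2$-witness to lie at depth $N_{P_R} \gtrsim \tau^2/\ve^2$. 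This unavoidable chain-length factor $\sqrt{N}$ is the heart of the difficulty and is precisely what forces $\ve_0$ to depend on $\tau$: a direct Carleson estimate on $\sum |P_R|$ fails because the double sum $\sum_R |P_R| N_{P_R}$ is divergent. To close the bound I would treat $T \mapsto A_T'$ as an approximate martingale along chains in $S$, whose quadratic variation is controlled by $\sum_{T \in S}(\omega_f(MT) + \omega_f(MT^1))^2 \leq C\ve^2$, and via a Doob-type maximal argument show that the set $\bigcup_{R \in m_2(S)} R$, which is where this martingale first exceeds $\tau$, has measure at most $C(\ve/\tau)^2 |Q(S)|$. Summing over $S$ and choosing $\ve_0 = \ve_0(\eta, D, \tau, C_M)$ small enough that $C(\ve_0/\tau)^2 \leq 1/2$ allows this $m_2$ contribution to be absorbed into the left-hand side, giving $\sum_S |Q(S)| \leq (4 + 2^{d+1} C_M \ve^{-2})|Q_0|$ as claimed.
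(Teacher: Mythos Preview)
Your construction of $\cB$ and $\cF$, the Chebyshev bound \eqref{e:sumB}, and the treatment of $m_1(S)$ all match the paper's argument essentially verbatim. Your bookkeeping identity $\sum_S|Q(S)|\le |Q_0|+\sum_{B\in\cB}|B|+\sum_S\sum_{R\in m(S)}|R|$ is a legitimate alternative to the paper's split into $\cF_1=\{S:|z(S)|\ge|Q(S)|/4\}$ and $\cF_2=\{S:|M_1(S)|\ge|Q(S)|/4\}$; both reduce the problem to the same estimate $|M_2(S)|\le \tfrac12|Q(S)|$ (your absorption step) for $\ve$ small.

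The gap is in your $m_2$ argument. You assert that a ``Doob-type maximal argument'' gives $|M_2(S)|\lesssim(\ve/\tau)^2|Q(S)|$, but $T\mapsto A_T'$ is \emph{not} a martingale, and you have not shown it is close to one. Your linking estimate only bounds the \emph{size} of the increments $|A_T'-A_{T^1}'|$; it says nothing about their conditional expectation. Bounded quadratic variation along every chain is compatible with deterministic drift: nothing you have written rules out $A_T'-A_{Q(S)}'$ growing by roughly $\ve^2/\tau$ at each of $\sim\tau^2/\ve^2$ generations, reaching $\tau$ on \emph{all} of $Q(S)$ while keeping the chain-wise square sum below $\ve^2$. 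Doob's inequality fails without the orthogonality of increments, and your telescoping-plus-Cauchy--Schwarz only yields the depth bound $N_{P_R}\gtrsim\tau^2/\ve^2$, which you yourself note is insufficient.

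The paper's substitute for the missing martingale structure is the real content of the proof: one builds an auxiliary Lipschitz function $F_S:\bR^d\to\bR^D$ by patching the affine maps $A_{Q_j}$ together via a Whitney decomposition adapted to the ``distance'' $D_S$ from $S$, then proves two estimates: $\|DF_S-A_{Q(S)}'\|_2^2\gtrsim\tau^2|A_{Q(S)}'|^2|M_2(S)|$ (since $F_S$ locally equals $A_{Q_i}$ near each $R\in m_2(S)$) and $\sum_{Q}\Omega_{F_S}(2Q)^2|Q|\lesssim\ve^2|A_{Q(S)}'|^2|Q(S)|$. Dorronsoro's theorem converts the second into an $L^2$ bound on $DF_S-A_{Q(S)}'$, and comparing yields $|M_2(S)|\lesssim(\ve/\tau)^2|Q(S)|$. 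In effect, Dorronsoro supplies the cancellation you tried to get from Doob; making the ``approximate martingale'' heuristic rigorous appears to require an argument of comparable depth.
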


\begin{proof}
{\it Step 1:} We first show that for any $Q_{1}\in \Delta(Q_{0})$, if $\omega_{f}(MQ_{1})<\ve$, we may construct a $(\ve,\tau)$-region $S(Q_{1})$ with $Q(S(Q_{1}))=Q_{1}$. First, enumerate the cubes in $\Delta(Q_{1})$  as $\{Q_{j}\}_{j=1}^{\infty}$ so that $\ell(Q_{i})>\ell(Q_{j})$ implies $i<j$. Set $S_{1}=\{Q_{1}\}$, and for $j>1$, set $S_{j}=S_{j-1}\cup \{Q_{j}\}$ if the following hold:
\begin{enumerate}[(a)]
\item $Q_{j}^{1}\in S$,
\item $\sum_{Q\subseteq R\subseteq Q_{1}}\omega_{f}(MR)^{2}<\ve^{2}$,  
\item $|A_{Q_{1}}'-A_{Q}'| \leq \tau |A_{Q_{1}}'|$, and
\item all siblings of $Q_{j}$ in $S$ satisfy the above properties.
\end{enumerate}

Otherwise, set $S_{j}=S_{j-1}$. Define $S(Q_{1})=\bigcup_{j=1}^{\infty} S_{j}$. Clearly, it's a stopping-time region and satisfies (1), (2), and (3) in \Definition{vt-region}. Observe that, when constructed in this way, for $Q\in m(S_{1})$, there is a child $R$ of $Q$ such that either (1) or (2) fails.

{\it Step 2:} Next, we define the sets $\cB$ and $\cF$. Set 
\[
\cB=\{Q\subseteq Q_{0}: \omega_{f}(MQ)\geq \ve\}\]
and enumerate the cubes $\Delta(Q_{0})\backslash \cB$ as $\{Q(j)\}_{j=1}^{\infty}$ so that $\ell(Q(j))<\ell(Q(i))$ implies $i<j$. We let $\cF=\bigcup_{j=1}^{\infty} \cF_{j}$ where the sets $\cF_{j}$ are defined inductively as follows:  set $\cF_{1}=\{S(Q_{1})\}$ and let $\cF_{j+1}=\cF_{j}\cup \{S(Q(j))\}$ if $Q(j)\not\in S$ for any $S\in \cF_{j}$; otherwise, set $\cF_{j+1}=\cF_{j}$. Note that if $\cF_{j+1}\neq\cF_{j}$, then $Q(j+1)\in \cB$ or in $S$ for some $S\in \cF_{j}$.

{\it Step 3:} We now set out to verify \eqn{sumB} and \eqn{sumF} for the sets $\cB$ and $\cF$. The first inequality follows easily, since
\[\sum_{Q\in \cB}|Q|\leq \ve^{-2}\sum_{Q\in \cB}\omega_{f}(MQ)^{2}\leq C_{M}\ve^{-2}|Q_{0}|\]
so now we focus on \eqn{sumF}.

For $S\in\cF$, set 
\[m_{1}(S)=\ck{Q\in m(S): \sum_{Q'\subseteq R\subseteq Q(S)}\omega_{f}(MR)^{2}\geq \ve^{2} \mbox{ for some child } Q'\subseteq Q}\]
and
\begin{multline}
m_{2}(S)=m(S)\backslash m_{1}(S) \\
=\left\{ Q\in m(S):\sum_{Q'\subseteq R\subseteq Q(S)}\omega_{f}(MR)^{2}<\ve^{2} \mbox{ for all children } 
 Q'\subseteq Q\right.  \\
 \left. \mbox{ but }\frac{|A_{Q'}'-A_{Q(S)}'|}{|A_{Q(S)}'|}>\delta \mbox{ for some child of }Q'\mbox{ of }Q\right\}.
 \label{e:m2}
\end{multline}
Also set
\[M_{j}(S) = \bigcup_{Q\in m_{j}(S)}Q, \;\;\; j=1,2.\]
Then
\begin{equation}
Q(S)=M_{1}(S)\cup M_{2}(S)\cup z(S).
\label{e:m1m2z}
\end{equation}

\begin{lemma}
There is $\upsilon=\upsilon(D)>0$ so that if 
 \begin{equation}
0<\ve<\ve_{0}:=\min\ck{ \ve'(d,\eta), \upsilon C_{M}^{-\frac{1}{2}}\tau},
\label{e:ve0} 
\end{equation}
and $S$ is an $(\ve,\tau)$-region $S$ for an $\eta$-quasisymmetric map $f:\Omega\rightarrow \bR^{D}$ where $MQ(S)\subseteq\Omega\subseteq\bR^{d}$, then
\begin{equation}
|M_{2}(Q)|<|Q(S)|/2.
\label{e:m2-estimate}
\end{equation}
\label{l:m2-estimate}
\end{lemma}

Let us assume this lemma and finish the proof of \Theorem{corona}. Let
\begin{equation}
\cF_{1}=\{S\in \cF: |z(S)|\geq |Q(S)|/4\}
\label{e:F1}
\end{equation}
and
\begin{equation}\cF_{2}=\{S\in \cF: |M_{1}(S)|\geq |Q(S)|/4\}.
\label{e:F2}
\end{equation}
Note that the sets $z(S)$ intersect only at the boundaries of dyadic cubes. To see this, observe that if $S$ and $S'$ were such that they intersected in the interior of a cube, then the interiors of $Q(S)$ and $Q(S')$ intersect, so one must be contained in the other. Suppose $Q(S)\subseteq Q(S')$. Then $Q(S)$ is contained inside a minimal cube of $S'$ (since otherwise $Q(S)\in S\cap S'=\emptyset$), but $z(S)$ is the complement of these minimal cubes and so $z(S')\cap Q(S)=\emptyset$, and thus $z(S)\cap z(S')=\emptyset$, a contradiction. Thus, the $z(S)$ intersect only at the boundaries of dyadic cubes, which have measure zero, hence the $z(S)$ are essentially disjoint. Since they are contained in $Q(S)$, 
\begin{equation}
\sum_{S\in \cF_{1}}|Q(S)|\leq 4\sum_{S\in \cF_{1}}|z(S)|\leq 4|Q_{0}|.
\label{e:F1<4Q0}
\end{equation}
If $Q\in m_{1}(S)$, there is a child $Q'$ of $Q$ so that 
\begin{align*}
\ve^{2}
& \leq \sum_{Q'\subseteq R\subseteq Q(S)}\omega_{f}(MR)^{2} 
 \leq \omega_{f}(MQ')+\sum_{Q\subseteq R\subseteq Q(S)}\omega_{f}(MR)^{2}
\end{align*}
If $\omega_{f}(MQ')^{2}<\frac{\ve^{2}}{2}$, this implies
\[\frac{\ve^{2}}{2}<\sum_{Q\subseteq R\subseteq Q(S)}\omega_{f}(MR)^{2}\leq \ve^{2},\]
and if $\omega_{f}(MQ')^{2}\geq \ve^{2}/2$, then 
\[\ve^{2} \geq \sum_{Q\subseteq R\subseteq Q(S)}\omega_{f}(MR)^{2}\geq \omega_{f}(MQ)^{2}\geq 2^{-d}\omega_{f}(MQ')^{2}\geq \frac{\ve^{2}}{2^{d+1}},\]
so that in any case,
\begin{equation}
\ve^{2}\geq \sum_{Q\subseteq R\subseteq Q(S)}\omega_{f}(MR)^{2} \geq  \frac{\ve^{2}}{2^{d+1}} \mbox{ for all }Q\in m_{1}(S).
\label{e:sumsime^2}
\end{equation}
Hence, since the $Q\in m_{2}(S)$ have disjoint interiors,
\begin{align}
\sum_{S\in \cF_{2}} |Q(S)|
& \leq 4\sum_{S\in \cF_{2}} |M_{1}(S)|
 = 4\sum_{S\in \cF_{2}}\sum_{Q\in m_{1}(S)}|Q| \notag \\
& \stackrel{\eqn{sumsime^2}}{\leq} \frac{2^{d+1}}{\ve^{2}}\sum_{S\in \cF_{2}}\sum_{Q\in m_{1}(S)} \sum_{Q\subseteq R\subseteq Q(S)} \omega_{f}(MR)^{2}|Q| \notag \\
& =\frac{2^{d+1}}{\ve^{2}}\sum_{S\in \cF_{2}}\sum_{R\in S}\omega_{f}(MR)^{2}\sum_{Q\subseteq R\atop Q\in m_{1}(S)}|Q| \notag \\
& \leq \frac{2^{d+1}}{\ve^{2}}\sum_{S\in \cF_{2}}\sum_{R\in S} \omega_{f}(MR)^{2}|R| \notag  \\
& \leq \frac{2^{d+1}}{\ve^{2}}\sum_{R\subseteq Q_{0}}\omega_{f}(MR)^{2} |R| 
 \leq\frac{2^{d+1}C_{M}}{\ve^{2}}|Q_{0}|.
 \label{e:F2<CM/e^2Q0}
\end{align}

By \eqn{m2-estimate}, $\cF=\cF_{1}\cup \cF_{2}$, so that 
\[\sum_{S\in \cF}|Q(S)|
= \sum_{i=1,2}\sum_{S\in \cF_{i}}|Q(S)|\stackrel{\eqn{F1<4Q0}\atop \eqn{F2<CM/e^2Q0}}{\leq} 4|Q_{0}|+\frac{2^{d+1}C_{M}}{\ve^{2}} \leq \ps{4+\frac{2^{d+1}C_{M}}{\ve^{2}}}|Q_{0}|.\]
This finishes the proof of the lemma, so long as we show \Lemma{m2-estimate}, which will be the focus of the next few sections.

\end{proof}

\subsection{Whitney cubes for stopping-time regions}

Before attacking \Lemma{m2-estimate}, we prove some general properties about stopping-time regions. The reader may just want to familiarize themselves with the notation and lemmas, move on to Section \ref{s:extensions}, and return to the actual proofs on second reading. Many of these estimates can be found in \cite[Section 8]{DS}\\

Let $S$ be a stopping-time region as in \Definition{stopping-time}. For $x\in \bR^{d}$, define
\[D_{S}(x)=\inf \{\dist(x,Q)+\diam Q: Q\in S\}.\]
For $Q\in \Delta$, let
\[D_{S}(Q)=\inf_{x\in Q}D_{S}(x).\]
Let $R_{j}$ be the set of maximal dyadic cubes in $\bR^{d}\backslash \cnj{z(S)}$ such that 
\begin{equation}
\diam R_{j}\leq  \frac{1}{20}D_{S}(R_{j}).
\label{e:Rj}
\end{equation}
The $R_{j}$ are essentially Whitney cubes (see \cite[Chapter IV]{little-stein}), though rather than having diameter comparable to their distance from some prescribed set (as is usually how a Whitney decomposition is tailored), they have diameter comparable to their ``distances" $D_{S}$ from $S$ (see \eqn{RD} below).

For each $R_{j}$, pick $\tilde{Q}_{j}\in S$ such that 
\begin{equation}
\dist (x_{R_{j}}, \tilde{Q}_{j})+\diam \tilde{Q_{j}} \leq \frac{3}{2} D_{S}(x_{R_{j}}).
\label{e:tildeQ}
\end{equation}
Note that since the $R_{j}$ has positive diameter, $D_{S}(x_{R_{j}})> 0$, so the above makes sense. Next, pick  a maximal parent $Q_{j}\in S$ of $\tilde{Q_{j}}$ so that 
\begin{equation}
\diam Q_{j} \leq 3D_{S}(R_{j}).
\label{e:Q<3D}
\end{equation}

\begin{lemma}\label{l:david-lemma}
Let $S$ be a stopping-time region, and define $R_{j}$ and $Q_{j}$ as in \eqn{Rj}, \eqn{tildeQ}, and \eqn{Q<3D}.
\begin{enumerate}
\item If $x\in R_{j}$, then
\begin{equation}
20\diam R_{j}\leq D_{S}(x)\leq 60\diam R_{j}\mbox{  for all }x\in R_{j}.
\label{e:RD}
\end{equation}
\item If $2R_{i}\cap 2R_{j}\neq\emptyset$, then 
\begin{equation}
 \diam R_{i}\leq 2\diam R_{j}.
 \label{e:Ri<2Rj}
 \end{equation}
 \item The cubes $2R_{j}$ have bounded overlap, in the sense that 
 \begin{equation}
 \one_{\bR^{d}\backslash\cnj{z(S)}}\leq \sum_{j} \one_{2R_{j}}\lec_{d} \one_{\bR^{d}\backslash\cnj{z(S)}}.
 \label{e:bounded-overlap}
 \end{equation}
\item The cubes $R_{j}$ and $Q_{j}$ are close, in the sense that 
\begin{equation}
\dist(x_{Q_{j}},R_{j})\leq 180\diam R_{j}.
\label{e:dxQR}
\end{equation}
\item For all $j$,
\begin{equation}
\diam Q_{j}\leq 180\diam R_{j}.
\label{e:Q<R}
\end{equation}

\item If $\diam R_{j}\leq  2\diam Q(S)$, then 
\begin{equation}
\diam R_{j}\leq 2\diam Q_{j}.
\label{e:R<Q}
\end{equation}
\item  If $\diam R_{j}\geq \diam Q(S)/60$, then $Q_{j}=Q(S)$.
\end{enumerate}
\end{lemma}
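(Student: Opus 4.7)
The key tool throughout is that $D_S(x)=\inf_{Q\in S}(\dist(x,Q)+\diam Q)$ is $1$-Lipschitz in $x$, and that $D_S\equiv 0$ on $\overline{z(S)}$ because every point of $z(S)$ lies in cubes of $S$ of arbitrarily small diameter.

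For (1), the lower bound is immediate from the selection condition: if $x\in R_j$, then $D_S(x)\geq D_S(R_j)\geq 20\diam R_j$. For the upper bound, first observe the dyadic parent $R_j^{1}$ is contained in $\bR^{d}\setminus\overline{z(S)}$: otherwise some $z\in R_j^{1}\cap\overline{z(S)}$ would yield $D_S(x)\leq |x-z|\leq 2\diam R_j$ for $x\in R_j$, contradicting the lower bound. Then the maximality of $R_j$ forces $\diam R_j^{1}>\tfrac{1}{20}D_S(R_j^{1})$, i.e.\ $D_S(R_j^{1})<40\diam R_j$; picking $y\in R_j^{1}$ near-minimizing $D_S$ and applying the $1$-Lipschitz property gives $D_S(x)\leq D_S(y)+|x-y|\leq 42\diam R_j\leq 60\diam R_j$.

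For (2), suppose $2R_i\cap 2R_j\neq\emptyset$ with (by symmetry) $\diam R_j>2\diam R_i$, hence $\diam R_j\geq 4\diam R_i$ by dyadicity. The plan is to show $R_i^{1}$ itself satisfies the Whitney condition, contradicting maximality of $R_i$. By (1), $D_S\geq 20\diam R_j\geq 80\diam R_i$ on $R_j$, and the distance from any $x\in R_i^{1}$ to any $y\in R_j$ is $\lesssim \diam R_j$ (since $2R_i\cap 2R_j\neq\emptyset$ and $\diam R_i^{1}=2\diam R_i\leq \tfrac{1}{2}\diam R_j$), so the $1$-Lipschitz property yields $D_S(R_i^{1})$ large compared to $\diam R_i^{1}$, giving the contradiction. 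The inclusion $R_i^{1}\subseteq \bR^{d}\setminus\overline{z(S)}$ is obtained exactly as in (1). Then (3) follows from (2) by the standard bounded-overlap argument: at any point $p$, the cubes $2R_j$ containing $p$ all have diameters within a factor of $2$ of each other, and only dimensionally many dyadic cubes of comparable size can contain a given point.

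Parts (4)--(7) are pure bookkeeping on top of (1). By construction, $\dist(x_{R_j},\tilde Q_j)+\diam\tilde Q_j\leq \tfrac{3}{2}D_S(x_{R_j})\leq 90\diam R_j$, which combined with $\tilde Q_j\subseteq Q_j$ and the triangle inequality yields (4). The bound $\diam Q_j\leq 3D_S(R_j)\leq 3\cdot 60\diam R_j=180\diam R_j$ is exactly (5). For (6), if $Q_j=Q(S)$ then $\diam Q_j=\diam Q(S)\geq \tfrac{1}{2}\diam R_j$ directly from the hypothesis; otherwise, coherence of $S$ puts $Q_j^{1}\in S$, and the maximality defining $Q_j$ gives $\diam Q_j^{1}>3D_S(R_j)\geq 60\diam R_j$, which implies $\diam Q_j\geq 30\diam R_j$, still stronger than needed. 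For (7), if $Q_j\subsetneq Q(S)$ then coherence again provides $Q_j^{1}\in S$, and the same maximality forces $\diam Q_j^{1}>3D_S(R_j)\geq 60\diam R_j\geq \diam Q(S)$, contradicting $Q_j^{1}\subseteq Q(S)$. The main obstacle is item (2): tracking that $R_i^{1}$ lies outside $\overline{z(S)}$ so that maximality can be invoked, and carrying out the Lipschitz estimate of $D_S$ on $R_i^{1}$ anchored at a point of $R_j$ with sufficient slack in the constants; once this case analysis is handled, the rest is direct consequence of $1$-Lipschitz-ness of $D_S$ together with the defining inequalities for $R_j$, $\tilde Q_j$, and $Q_j$.
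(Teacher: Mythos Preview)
Your proof is correct and follows essentially the same approach as the paper: both rely on the $1$-Lipschitz property of $D_S$, the defining inequality for the $R_j$, and the maximality of $R_j$ and $Q_j$. The one noteworthy variation is in part~(2): the paper argues directly, comparing $D_S(x_{R_i})$ to $D_S(x_{R_j})$ via the Lipschitz bound to obtain $\diam R_i\leq \tfrac{61}{19}\diam R_j<4\diam R_j$ and then invoking dyadicity, whereas you argue by contradiction, showing that $\diam R_j\geq 4\diam R_i$ would force $R_i^{1}$ itself to satisfy the Whitney selection inequality. Both routes are short and yield the same conclusion; your extra care in verifying $R_j^{1},R_i^{1}\subseteq\bR^d\setminus\overline{z(S)}$ is not strictly necessary (if the parent met $\overline{z(S)}$ then $D_S$ would vanish somewhere on it, which already violates the selection inequality), but it does no harm.
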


\begin{proof}
\begin{enumerate}
\item The lower bound in \eqn{RD} follows by definition, so we focus on the upper bound. Observe that, since $R_{j}$ is maximal, that means there is $y\in R_{j}^{1}$ so that 
\[ \diam R_{j}^{1}> \frac{1}{20}D_{S}(y).\]
Let $x\in R_{j}$ be the point closest to $y$. Since $D_{S}$ is $1$-Lipschitz, we have 
\[D_{S}(y)\geq D_{S}(x)-|x-y|\geq D_{S}(x)-\diam R_{j}\]
and because $\diam R_{j}^{1}=2\diam R_{j}$, we have by the maximality of $R_{j}$ that
\[2\diam R_{j}=\diam R_{j}^{1} > \frac{1}{20}D_{S}(y)\geq \frac{1}{20}(D_{S}(x)-\diam R_{j})\]
and thus
\[D_{S}(x)\leq 20(2+\frac{1}{20})\diam R_{j}< 60\diam R_{j}.\]

\item If $z\in 2R_{i}\cap 2R_{j}$ then 
\[|x_{R_{i}}-x_{R_{j}}| \leq |x_{R_{i}}-z|+|z-x_{R_{j}}| \leq \diam R_{i}+\diam R_{j}\]
so that 
\begin{align*} 20\diam R_{i} 
& \stackrel{\eqn{RD}}{\leq} D_{S}(x_{R_{i}})\leq D_{S}(x_{R_{j}})+|x_{R_{i}}-x_{R_{j}}|\\
& \leq 60 \diam R_{j}+\diam R_{i}+\diam R_{j}.
\end{align*}
Hence,
\[\diam R_{i}\leq \frac{61}{19}\diam R_{j}<4\diam R_{j}.\]
Since $R_{i}$ and $R_{j}$ are dyadic cubes, $\frac{\diam R_{i}}{\diam R_{j}}$ is a power of two, so in fact, $\diam R_{i}\leq 2\diam R_{j}$, which implies \eqn{Ri<2Rj}.

\item Note that for any $R_{i}$ and $z\in z(S)$, there are infinitely many $Q\in S$ containing $z$, so $D_{S}(R_{i})\leq |y-z|+\diam Q$ for all such $Q$, and so $D_{S}(R_{i})\leq |y-z|$ for all $z\in z(S)$, and this implies
\[\dist(R_{i},z(S))\geq D_{S}(R_{j})\stackrel{\eqn{RD}}{\geq} 20\diam R_{i}\]
and so we have $2R_{i}\subseteq \bR^{d}\backslash \cnj{z(S)}$.  The rest now follows from this and  \eqn{Ri<2Rj}.

\item For any $j$, if $z\in Q_{j}$ is closest to $R_{j}$, then
\begin{align*}
\dist (x_{Q_{j}},R_{j})
& \leq \dist(z,R_{j})+|x_{Q_{j}}-z|\leq \dist(x_{R_{j}},\tilde{Q_{j}})+\frac{\diam Q_{j}}{2}\\
& \stackrel{\eqn{tildeQ} \atop \eqn{Q<3D}}{\leq}3D_{S}(R_{j})\stackrel{\eqn{RD}}{\leq} 180\diam R_{j}
\end{align*}

\item This follows from \eqn{RD} and \eqn{Q<3D}.

\item This is trivial in the case $Q_{j}=Q(S)$, so we assume $Q_{j}\neq Q(S)$, in which case, since $Q_{j}^{1}\in S$ and since $Q_{j}$ is a maximal cube for which $\diam Q_{j}\leq 3D_{S}(R_{j})$, we have
\[ 3D_{S}(R_{j}) < \diam Q_{j}^{1}=2\diam Q_{j} \]
so that
\[\diam Q_{j} > \frac{3}{2}D_{S}(R_{j})\stackrel{\eqn{RD}}{\geq} 30\diam R_{j}.\]

\item Observe that if $Q_{j}\neq Q(S)$, then any cube $Q\in S$ properly containing $Q_{j}$ satisfies $\diam Q>3D_{S}(x_{R_{j}})$, so in particular, $\diam Q(S)>3D_{S}(x_{R_{j}})$
Thus,
\[ \diam R_{j}\leq \frac{1}{20}D_{S}(x_{R_{j}})<\frac{1}{60}\diam Q(S).\]
\end{enumerate}

\end{proof}

\begin{lemma}
For all $i$
\begin{enumerate}
\item $Q_{i}\subseteq MR_{i}$.

\item If $\diam R_{i}\leq 2\diam Q(S)$, then
\begin{enumerate}[(a)]
\item $R_{i}\subseteq B(x_{Q_{i}},M\ell(Q_{i})) \subseteq MQ_{i}$, and
\item for all $j$, if $2R_{i}\cap 2R_{j}\neq\emptyset$, we have $R_{i} \subseteq MQ_{j}$ and
\begin{equation}
\diam Q_{i}\sim \diam Q_{j}\sim \diam R_{i}\sim \diam R_{j}.
\label{e:all-sim}
\end{equation}
\end{enumerate}
\end{enumerate}
\label{l:2Rcap2R}
\end{lemma}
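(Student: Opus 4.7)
The plan is to derive all three items from the quantitative bounds assembled in \Lemma{david-lemma}: the proximity estimate $\dist(x_{Q_j}, R_j) \leq 180 \diam R_j$, the upper bound $\diam Q_j \leq 180 \diam R_j$, the lower bound $\diam R_j \leq 2 \diam Q_j$ (valid when $\diam R_j \leq 2\diam Q(S)$), and the fact that $Q_j = Q(S)$ whenever $\diam R_j \geq \diam Q(S)/60$. The decisive observation is that $M = 30000 d$ is enormous compared to the universal constant $180$: since $MQ$ contains the $\ell^2$-ball $B(x_Q, M\ell(Q)/2) = B(x_Q, 15000\sqrt{d}\,\diam Q)$, any distance that we can bound by an absolute constant times $\diam R_j$ (or $\diam Q_j$) will automatically fit inside the appropriately dilated cube.

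For part (1), I would fix $y \in Q_i$ and expand $|y - x_{R_i}| \leq \tfrac12 \diam Q_i + \dist(x_{Q_i}, R_i) + \tfrac12 \diam R_i$, which is $\lec \diam R_i$ by \eqn{Q<R} and \eqn{dxQR}. This places $y$ inside $B(x_{R_i}, M\ell(R_i)/2) \subseteq MR_i$. For part (2a), the hypothesis $\diam R_i \leq 2 \diam Q(S)$ activates \eqn{R<Q}, so $\ell(Q_i) \geq \tfrac12 \ell(R_i)$; a symmetric triangle estimate then bounds $|y - x_{Q_i}| \lec \diam R_i \sim \diam Q_i$ for $y \in R_i$, delivering both the ball and cube containments.

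For part (2b), the size comparability \eqn{all-sim} falls out of combining \eqn{Ri<2Rj} (to compare $\diam R_i$ with $\diam R_j$) with \eqn{Q<R} and \eqn{R<Q} applied at each index. The containment $R_i \subseteq MQ_j$ is then obtained by fixing $p \in 2R_i \cap 2R_j$ and chaining
\[ |y - x_{Q_j}| \leq |y - x_{R_i}| + |x_{R_i} - p| + |p - x_{R_j}| + \dist(x_{Q_j}, R_j) \lec \diam R_j \sim \diam Q_j \]
for $y \in R_i$, after which $M$ once again swallows the absolute constant.

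The main obstacle is a corner case in (2b) where $\diam R_j > 2\diam Q(S)$, so \eqn{R<Q} is not directly available at the index $j$ and I cannot immediately conclude $\diam Q_j \gec \diam R_j$. I plan to handle this via item (7) of \Lemma{david-lemma}: by \eqn{Ri<2Rj}, $\diam R_j \leq 2 \diam R_i \leq 4 \diam Q(S)$, which is comfortably above the threshold $\diam Q(S)/60$, so $Q_j = Q(S)$ and therefore $\diam Q_j = \diam Q(S) \gec \diam R_j$ automatically. This small case split is the only step beyond routine bookkeeping.
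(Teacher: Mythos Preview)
Your proposal is correct and follows essentially the same route as the paper: both arguments reduce everything to the numerical bounds of \Lemma{david-lemma} via triangle-inequality bookkeeping, and both handle the corner case in (2b) where $\diam R_{j}>2\diam Q(S)$ by invoking item (7) to force $Q_{j}=Q(S)$ and then using $\diam R_{j}\leq 2\diam R_{i}\leq 4\diam Q(S)=4\diam Q_{j}$ to recover the missing lower bound on $\diam Q_{j}$. The paper's write-up tracks explicit constants (e.g.\ $300$, $181$, $736$) rather than your $\lec$ shorthand, but the underlying estimates are identical.
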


\begin{proof}
Before beginning the proof, we recall that we chose $M=30000d$.

\begin{enumerate}
\item If $\tilde{Q_{i}}$ is as in \eqn{tildeQ}, then

\begin{equation}
\dist(x_{R_{i}},Q_{i})+\diam Q_{i}
\stackrel{\eqn{Q<3D}}{\leq} \dist(x_{R_{i}},\tilde{Q}_{i})+3D_{S}(x_{R_{i}})\stackrel{\eqn{tildeQ}}{\leq} 5D_{S}(x_{R_{i}})\stackrel{\eqn{Rj}}{\leq} 300\diam R_{i}
\label{e:dRQ}
\end{equation}
so that
\begin{equation}
Q_{i}\subseteq B(x_{R_{i}},300\diam R_{i})\subseteq  MR_{i}.
\end{equation}

\item Assume $\diam R_{i}\leq 2\diam Q(S)$.
\begin{enumerate}[(a)]
\item By \Lemma{david-lemma},
\begin{align}
R_{i} &  \subseteq B(x_{Q_{i}},\dist(x_{Q_{i}},R_{i})+\diam R_{i}) \stackrel{\eqn{dxQR}}{\subseteq} B(x_{Q_{i}},181\diam R_{i}) \notag \\
& \stackrel{\eqn{R<Q}}{\subseteq} B(x_{Q_{i}}, 362\sqrt{d} \ell(Q_{i})).
 \subseteq MQ_{i}  \label{e:RinBall}  
\end{align}

\item If $2 R_{i}\cap 2 R_{j}\neq\emptyset$, then $\dist (R_{i},R_{j})\leq \diam R_{i}+\diam R_{j}$ and $\diam R_{j}\leq 2\diam R_{i}$ by \eqn{Ri<2Rj}, and so

\begin{align*}
\dist (x_{Q_{j}},R_{i})
& \leq \dist(x_{Q_{j}},R_{j})+\diam R_{j}+\dist (R_{j},R_{i})\\
& \stackrel{\eqn{dxQR}}{\leq} 180\diam R_{j}+\diam R_{j}+(\diam R_{i}+\diam R_{j})\\
& \stackrel{\eqn{Ri<2Rj}}{\leq} (180+1+2+1)\diam R_{j}=184\diam R_{j}.
\end{align*}
If $\diam R_{j}\leq 2\diam Q(S)$, then
\[184\diam R_{j}\stackrel{\eqn{R<Q}}{\leq} 368 \diam Q_{j}\]
and
\[\diam R_{i}\leq 2\diam R_{j}\leq 4\diam Q_{j};\]
if $\diam R_{j}>2\diam Q(S)>\frac{1}{60}\diam Q(S)$, \Lemma{david-lemma} implies $Q_{j}=Q(S)$, and since $\diam R_{i}\leq 2\diam Q(S)=2\diam Q_{j}$ by assumption,
\begin{align*}
184\diam R_{j} 
& \stackrel{\eqn{Ri<2Rj}}{\leq}368
\diam R_{i} \leq 736\diam Q(S)=736\diam Q_{j},
\end{align*}
so that in any case, we have 
\[\dist (x_{Q_{j}},R_{i})\leq 184\diam R_{j}\leq 736\diam Q_{j}\]
and
\begin{equation}
\diam R_{i}\leq 4\diam Q_{j}.
\label{e:Ri<4Qj}
\end{equation}
Hence,
\begin{align*}
R_{i} & 
\subseteq B(x_{Q_{j}},\dist(x_{Q_{j}},R_{i})+\diam R_{i})\\
& \subseteq B(x_{Q_{j}},736\diam Q_{j}+4\diam Q_{j}) 
 \subseteq MQ_{j}.
\end{align*}
Furthermore, \eqn{all-sim} follows since $\diam R_{i}\sim_{d} Q_{i}$ by (1) and (2a), and
\[\diam Q_{j}\stackrel{\eqn{Q<R}}{\lec} \diam R_{j} \stackrel{\eqn{Ri<2Rj}}{\lec} \diam R_{i} \stackrel{\eqn{Ri<4Qj}}{\lec} \diam Q_{j}.\]

\end{enumerate}
\end{enumerate}
\end{proof}

\subsection{Controlling the distances between affine maps}

In this section, we show how if $\omega_{f}$ over two intersecting cubes is small, the approximating affine maps in those cubes are approximately the same.

\begin{lemma}
If $A_{1},A_{2}$ are two affine maps and $R$ is any cube, then 
\begin{equation}
|A_{1}'-A_{2}'|\lec_{d} \avint_{R}\frac{|A_{1}-A_{2}|}{\diam R}
\end{equation}
and
\begin{equation}
|A_{1}(x)-A_{2}(x)| \lec_{d} \ps{\avint_{R}\frac{|A_{1}-A_{2}|}{\diam R}}(\dist(x,R)+\diam R) \;\; \mbox{ for all }x\in \bR^{d}.
\label{e:A1-A2}
\end{equation}
\label{l:A1-A2}
\end{lemma}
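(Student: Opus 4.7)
The plan is to reduce the problem to the single affine map $B:=A_{1}-A_{2}$ and analyze its linear part $B'$ and constant term $B(x_{R})$ separately. Since $R$ is symmetric about its center $x_{R}$, the map $x\mapsto B(x)-B(x_{R})=B'(x-x_{R})$ has zero mean over $R$, giving the identity $\avint_{R}B=B(x_{R})$. Consequently
\[|B(x_{R})|\leq \avint_{R}|B| \;\;\mbox{ and }\;\; \avint_{R}|B(x)-B(x_{R})|\,dx\leq 2\avint_{R}|B|.\]

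For the first inequality, it suffices to prove $|B'|\diam R\lec_{d}\avint_{R}|B'(x-x_{R})|\,dx$. I would pick a unit vector $v\in\bR^{d}$ realizing $|B'v|=|B'|$ and a unit vector $e\in\bR^{D}$ with $e\cdot B'v=|B'|$, and set $u:=(B')^{T}e$; then $|u|=|B'|$ and
\[\avint_{R}|B'(x-x_{R})|\,dx\geq \avint_{R}|u\cdot(x-x_{R})|\,dx = |B'|\avint_{R}|\hat u\cdot(x-x_{R})|\,dx.\]
The remaining task is the purely dimensional estimate $\avint_{R}|\hat u\cdot(x-x_{R})|\,dx\gec_{d}\diam R$ for any unit vector $\hat u\in\bR^{d}$, which I would obtain by translating $x_{R}$ to the origin, selecting a coordinate $i$ with $|\hat u_{i}|\geq 1/\sqrt d$, and using the routine one-dimensional bound $\int_{-r}^{r}|\hat u_{i}x_{i}+c|\,dx_{i}\geq |\hat u_{i}|r^{2}$ with the other coordinates frozen and then integrated out. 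This yields (1).

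For the second inequality, the decomposition $B(x)=B'(x-x_{R})+B(x_{R})$ together with the triangle inequality gives
\[|B(x)|\leq |B'|\,|x-x_{R}|+|B(x_{R})|\leq |B'|(\dist(x,R)+\diam R)+\avint_{R}|B|.\]
Substituting (1) to bound $|B'|$ and writing $\avint_{R}|B|=\ps{\avint_{R}\frac{|B|}{\diam R}}\diam R$ shows that the last term is absorbed by the first, since $\diam R\leq \dist(x,R)+\diam R$, which completes (2).

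The only mildly delicate bookkeeping is verifying that the constant in the dimensional lower bound $\avint_{R}|\hat u\cdot(x-x_{R})|\,dx\gec_{d}\diam R$ is independent of $D$. This is transparent from the reduction to the single component along $e\in\bR^{D}$ performed above, so there is no real geometric obstacle: the whole argument amounts to finite-dimensional norm equivalence made explicit by a one-line Fubini computation.
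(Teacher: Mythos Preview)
Your proof is correct and follows essentially the same strategy as the paper: reduce to the single affine map $B=A_{1}-A_{2}$, control the constant term by the average and the linear part by a finite-dimensional norm-equivalence estimate, then combine via the triangle inequality for the pointwise bound. The only cosmetic difference is that the paper selects a ``good'' point $y\in\frac{1}{2}R$ where $|B(y)|$ is below the average and then invokes abstract norm equivalence, whereas you exploit the symmetry $\avint_{R}B=B(x_{R})$ and carry out the norm comparison explicitly via a one-dimensional Fubini computation.
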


\begin{proof}
 There is $y\in \frac{1}{2}R$ such that 
\[|A_{1}(y)-A_{2}(y)| \leq \avint_{\frac{1}{2}R}|A_{1}-A_{2}| \leq 2^{d} \avint_{R}|A_{1}-A_{2}| \]
Without loss of generality, we may assume $y=0$. Then, since the norm $|||A|||:=\avint_{B(0,\ell(R))}\frac{|A(z)|}{\diam R}dz$ is a norm on the set of linear maps, it is comparable to the usual operator norm, and in a way that is independent of $\ell(R)$. Thus,
\begin{align*}
|A_{1}'-A_{2}'|
& \lec_{d} \avint_{R}\frac{|A_{1}'(z)-A_{2}'(z)|}{\diam R}dz\\
& \leq \avint_{R}\frac{|A_{1}(z)-A_{2}(z)|}{\diam R}dz+\avint_{R}\frac{|A_{1}(0)-A_{2}(0)|}{\diam R}dz\\
& \leq (1+2^{d}) \avint_{R}\frac{|A_{1}(z)-A_{2}(z)|}{\diam R}dz
\end{align*}
Hence, for $x\in \bR^{d}$,
\begin{align*}
|A_{1}(x)& -A_{1}(x)| 
 \leq |A_{1}'(x)-A_{2}'(x)|+|A_{1}(0)-A_{1}(0)| \\
& \leq |A_{1}'-A_{2}'||x|+2^{d}\avint_{R}|A_{1}-A_{2}|\\
& \lec_{d}  \avint_{R}\frac{|A_{1}(z)-A_{2}(z)|}{\diam R} |x|dz+ \avint_{R}\frac{|A_{1}(z)-A_{2}(z)|}{\diam R} \diam R dz\\
& =\ps{ \avint_{R}\frac{|A_{1}-A_{2}|}{\diam R}} (|x-y|+\diam R).
\end{align*}
\end{proof}

\begin{lemma}
Suppose $Q_{1},Q_{2}\in \Delta$, $f:MQ_{1}\rightarrow MQ_{2}\rightarrow \bR^{D}$ is an integrable function, $\max_{i=1,2}\{\omega_{f}(MQ_{i})\}<\ve$ and $R\subseteq MQ_{1}\cap MQ_{2}$. Then
\begin{equation}
|A_{Q_{1}}'-A_{Q_{2}}'|\lec \ve \ps{\frac{\max_{i}\{| Q_{i}|\} }{| R|}}^{\frac{d+1}{d}} \max_{i=1,2}\{|A_{Q_{i}}'|\}
\end{equation}
and for all $x\in \bR^{d}$,
\begin{equation}
|A_{Q_{1}}(x)-A_{Q_{2}}(x)| 
\lec_{d} \ve \ps{\frac{\max_{i}\{| Q_{i}|\} }{| R|}}^{\frac{d+1}{d}} \max_{i=1,2}\{|A_{Q_{i}}'|\} (\dist(x,R)+\diam R).
\end{equation}
\label{l:Q1-Q2}
\end{lemma}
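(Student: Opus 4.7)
The natural route is to reduce Lemma \ref{l:Q1-Q2} to Lemma \ref{l:A1-A2}, whose hypotheses are in terms of $\avint_R |A_1-A_2|/\diam R$. So the whole plan is to bound $\avint_R |A_{Q_1}-A_{Q_2}|/\diam R$ by the quantity
\[
\ve\,\Bigl(\tfrac{\max_i|Q_i|}{|R|}\Bigr)^{(d+1)/d}\max_i|A_{Q_i}'|,
\]
and then quote Lemma \ref{l:A1-A2}. Once this single inequality is in hand, both estimates in Lemma \ref{l:Q1-Q2} are immediate.

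\textbf{Step 1: triangle inequality through $f$.} Using $|A_{Q_1}-A_{Q_2}|\le |f-A_{Q_1}|+|f-A_{Q_2}|$ pointwise on $R$, I write
\[
\avint_R|A_{Q_1}-A_{Q_2}|\;\le\;\sum_{i=1,2}\avint_R|f-A_{Q_i}|.
\]

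\textbf{Step 2: enlarge the averaging set from $R$ to $MQ_i$.} Since $R\subseteq MQ_i$, I bound $\avint_R|f-A_{Q_i}| \le \frac{|MQ_i|}{|R|}\avint_{MQ_i}|f-A_{Q_i}|$. Then by Cauchy--Schwarz and the definition of $\omega_f(MQ_i)$,
\[
\avint_{MQ_i}|f-A_{Q_i}|\;\le\;\Bigl(\avint_{MQ_i}|f-A_{Q_i}|^{2}\Bigr)^{1/2}=\omega_f(MQ_i)\,|A_{Q_i}'|\,\diam(MQ_i)\;<\;\ve\,|A_{Q_i}'|\,M\diam Q_i.
\]

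\textbf{Step 3: convert diameters and volumes into the stated ratio.} Combining Steps 1--2 and absorbing constants depending only on $d$ and $M$,
\[
\avint_R\frac{|A_{Q_1}-A_{Q_2}|}{\diam R}\;\lec_{d}\;\ve\sum_{i=1,2}|A_{Q_i}'|\,\frac{|Q_i|}{|R|}\cdot\frac{\diam Q_i}{\diam R}.
\]
Since $R,Q_i$ are cubes, $\diam Q_i/\diam R=(|Q_i|/|R|)^{1/d}$, so each summand is at most $|A_{Q_i}'|(|Q_i|/|R|)^{(d+1)/d}$. Taking the maximum over $i$ yields the desired bound on $\avint_R|A_{Q_1}-A_{Q_2}|/\diam R$.

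\textbf{Step 4: apply Lemma \ref{l:A1-A2}.} Feeding this into Lemma \ref{l:A1-A2} with $A_1=A_{Q_1}$, $A_2=A_{Q_2}$, and the given $R$, produces both the operator-norm bound on $A_{Q_1}'-A_{Q_2}'$ and the pointwise bound on $|A_{Q_1}(x)-A_{Q_2}(x)|$ in terms of $\dist(x,R)+\diam R$. There is no real obstacle here: everything is a careful bookkeeping of $|MQ_i|/|R|=M^{d}|Q_i|/|R|$, of $\diam(MQ_i)=M\diam Q_i$, and of the Cauchy--Schwarz step that converts $L^{1}$ averages into $L^{2}$ ones; the only point requiring a line of attention is the algebraic identity $\frac{|Q_i|}{|R|}\cdot\frac{\diam Q_i}{\diam R}=(|Q_i|/|R|)^{(d+1)/d}$, which is what produces the precise exponent $(d+1)/d$ stated in the lemma.
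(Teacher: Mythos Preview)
Your proposal is correct and follows essentially the same approach as the paper: triangle inequality through $f$, enlarge the averaging region from $R$ to $MQ_i$, use Cauchy--Schwarz to pass to the $L^2$ average defining $\omega_f(MQ_i)$, convert $\frac{|Q_i|}{|R|}\cdot\frac{\diam Q_i}{\diam R}$ into $(|Q_i|/|R|)^{(d+1)/d}$, and then invoke Lemma~\ref{l:A1-A2}. Your write-up is in fact slightly more careful than the paper's, which tacitly uses Jensen's inequality at the step you flag.
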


\begin{proof}

We estimate
\begin{multline}
\avint_{R}|A_{Q_{1}}-A_{Q_{2}}|
\leq \sum_{i=1}^{2} \avint_{R}|A_{Q_{i}}-f|
\leq \sum_{i=1}^{2}  \frac{|MQ_{i}|}{|R|} \avint_{MQ_{i}}|f-A_{Q_{i}}| \\ 
= \sum_{i=1}^{2}  \frac{|MQ_{i}|}{|R|} \omega_{f}(MQ_{i}) |A_{Q_{i}}'| \diam MQ_{i}
< 2M^{d+1}\ps{\frac{\max_{i}\{| Q_{i}|\} }{| R|}}^{\frac{d+1}{d}} \diam R \max_{i=1,2}\{|A_{Q_{i}}'|\}\ve.
 \end{multline}
Now we invoke \Lemma{A1-A2}.
\end{proof}

\begin{lemma}
Let $f:\Omega\rightarrow\bR^{D}$ and $S$ be an $(\ve,\tau)$-region as in \Definition{vt-region}, and $\{R_{i}\}$ be as in \Lemma{david-lemma}. If $2R_{i}\cap 2R_{j}\neq\emptyset$, then
\begin{equation}
|A_{Q_{i}}'-A_{Q_{j}}'|\lec_{d} \ve |A_{Q(S)}'|
\label{e:Ai-Aj1}
\end{equation}
and
\begin{equation}
|A_{Q_{i}}(x)-A_{Q_{j}}(x)| \lec_{d} \ve|A_{Q(S)}'|(\dist(x,R_{i})+\diam R_{i})\mbox{ for all }x\in \bR^{d}.
\label{e:Ai-Aj2}
\end{equation}
\label{l:Ai-Aj}
\end{lemma}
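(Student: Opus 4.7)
The plan is to reduce this statement directly to \Lemma{Q1-Q2} applied to the pair $(Q_i, Q_j)$ with the ``overlap'' set taken to be $R_i$. First I would split into cases based on the size of $\diam R_i$. In the trivial case where $\diam R_i > 2\diam Q(S)$, by \eqn{Ri<2Rj} we also have $\diam R_j \geq \diam R_i/2 > \diam Q(S) \geq \diam Q(S)/60$, so part (7) of \Lemma{david-lemma} forces $Q_i = Q_j = Q(S)$. Then $A_{Q_i} = A_{Q_j} = A_{Q(S)}$ and both conclusions are immediate.

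In the main case $\diam R_i \leq 2\diam Q(S)$, I would invoke \Lemma{2Rcap2R}(2): part (a) yields $R_i \subseteq MQ_i$, and part (b), applied under the hypothesis $2R_i \cap 2R_j \neq \emptyset$, yields $R_i \subseteq MQ_j$ together with the four-way comparability
\[\diam Q_i \sim \diam Q_j \sim \diam R_i \sim \diam R_j\]
from \eqn{all-sim}. Thus $R_i \subseteq MQ_i \cap MQ_j$ and $|Q_i|, |Q_j| \sim_d |R_i|$.

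Next, since $Q_i, Q_j \in S$ and $S$ is an $(\ve,\tau)$-region, property (1) of \Definition{vt-region} gives
\[\omega_f(MQ_k)^2 \leq \sum_{Q_k \subseteq R \subseteq Q(S)} \omega_f(MR)^2 < \ve^2 \quad \text{for } k = i, j,\]
so the hypothesis of \Lemma{Q1-Q2} is satisfied. Applying that lemma with $R := R_i$, and noting that the ratio $\max_k |Q_k|/|R_i|$ is bounded by a constant depending only on $d$ (by the comparability above), I obtain
\[|A_{Q_i}' - A_{Q_j}'| \lec_d \ve \max_{k=i,j}\{|A_{Q_k}'|\}\]
and the analogous pointwise bound with the factor $(\dist(x, R_i) + \diam R_i)$ attached.

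Finally, to replace $\max_k |A_{Q_k}'|$ by $|A_{Q(S)}'|$, I would invoke \eqn{A<AS}, which gives $|A_{Q_k}'| \leq (1+\tau)|A_{Q(S)}'| \leq 2|A_{Q(S)}'|$ since $\tau \in (0,1)$. Substituting this bound yields both \eqn{Ai-Aj1} and \eqn{Ai-Aj2}. I do not anticipate a serious obstacle here; the only point requiring mild care is ensuring we are in the regime where the Whitney comparability estimates of \Lemma{2Rcap2R}(2) apply, which is precisely why the large-$R_i$ case is handled separately via \Lemma{david-lemma}(7).
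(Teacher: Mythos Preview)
Your proposal is correct and follows essentially the same route as the paper: split into a trivial case where both $Q_i$ and $Q_j$ collapse to $Q(S)$, and a main case where \Lemma{2Rcap2R} yields $R_i\subseteq MQ_i\cap MQ_j$ with all four diameters comparable, then apply \Lemma{Q1-Q2} and \eqn{A<AS}. The only cosmetic difference is your case threshold ($\diam R_i>2\diam Q(S)$ versus the paper's $\min\{\diam R_i,\diam R_j\}\geq\frac{1}{60}\diam Q(S)$), which is immaterial since either threshold feeds directly into the hypothesis of \Lemma{2Rcap2R}(2) or \Lemma{david-lemma}(7).
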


\begin{proof}
Note that if $\min\{\diam R_{i},\diam R_{j}\}\geq \frac{1}{60}\diam Q(S)$, then $Q_{i}=Q_{j}=Q(S)$ by \Lemma{david-lemma}, and so \eqn{Ai-Aj1} and \eqn{Ai-Aj2} hold trivially.

Otherwise, if $\diam R_{i}<\frac{1}{60}\diam Q(S)<2\diam Q(S)$, then \Lemma{2Rcap2R} implies $R_{i}\subseteq MQ_{i}\cap MQ_{j}$ and that $\diam Q_{i}\sim \diam R_{i}\sim \diam R_{j}\sim \diam Q_{j}$. Hence, the lemma follows from \Lemma{Q1-Q2} and \eqn{A<AS}.
\end{proof}

\subsection{Extensions and the proof of \Lemma{m2-estimate}}
\label{s:extensions}

This section is dedicated to the proof of \Lemma{m2-estimate}. 

\begin{proof}[Proof of \Lemma{m2-estimate}]
Let $f$ and $S$ be as in \Lemma{m2-estimate} and let $R_{i}, Q_{i}$ be as in \Lemma{david-lemma}. Note that if $R_{i}$ and $R_{j}$ are adjacent in the sense that their boundaries intersect, then by \eqn{Ri<2Rj}, 
\begin{equation}
\frac{9}{8}R_{i}\cap \frac{1}{2}R_{j}=\emptyset.
\label{e:9/8Rj}
\end{equation}
 This and the fact that $\sum_{j}R_{j}=\bR^{d}\backslash \cnj{z(S)}$ mean we can pick $\{\phi_{j}\}$ a partition of unity subordinate to the collection $\{2R_{j}\}$ so that 
\begin{equation}  \phi_{j}\leq \one_{\frac{9}{8}R_{j}} \leq \one_{2R_{j}},\end{equation}
\begin{equation}
\sum_{j}\phi_{j}\equiv \one_{\bR^{d}\backslash \cnj{z(S)}}, \;\;\; \sum_{j}\grad\phi_{j}\equiv 0 \mbox{ on } \bR^{d}\backslash \cnj{z(S)}.
\label{e:phi}
\end{equation}
and for all indices $\alpha$,
\begin{equation}|\d^{\alpha}\phi_{j}|\lec_{d} \diam(R_{j})^{-|\alpha|}\one_{2R_{j}}.
\label{e:phi-est}
\end{equation}
Observe that by \eqn{9/8Rj}, we know that
\begin{equation}
\one_{\frac{1}{2} R_{i}}\leq \phi_{i} \leq \one_{(\frac{1}{2}R_{j})^{c}}\mbox{ for all } i\neq j.
\label{e:phi-avoids-centers}
\end{equation}

Now, define a map $F_{S}:\bR^{d}\rightarrow \bR^{D}$ by
\begin{equation}
F_{S}(x)=\sum_{j}A_{Q_{j}}(x)\phi_{j}(x)\one_{\bR^{d}\backslash\cnj{z(S)}} + f(x)\one_{\cnj{z(S)}}.
\label{e:FS}
\end{equation}

The remainder of the proof depends on two lemmas: one showing that $DF_{S}$ deviates from $A_{S}'$ a lot near $M_{2}(S)$, and the other showing that $DF_{S}$ doesn't deviate from $A_{S}'$ much overall, thus $M_{2}(S)$ must have small measure.

\begin{lemma}
For $\ve<\ve'(d,\eta)$ , $f:\Omega\rightarrow \bR^{D}$ and $S$ an $(\ve,\tau)$-region as in \Lemma{m2-estimate},
\begin{equation}
||DF_{S}-A_{Q(S)}'||_{2}^{2}\gec_{d}|A_{Q(S)}'|^{2}\tau^{2}|M_{2}(S)|.
 \label{e:m2S<D-A}
 \end{equation}
\label{l:FS>tau}
\end{lemma}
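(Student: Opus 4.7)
The strategy is to convert the $L^{2}$ lower bound on $DF_{S} - A_{Q(S)}'$ into a local estimate on each $Q \in m_{2}(S)$. By the definition of $m_{2}(S)$, there is a child $Q' \subseteq Q$ with $\omega_{f}(MQ') < \ve$ and $|A_{Q'}' - A_{Q(S)}'| > \tau |A_{Q(S)}'|$. The plan is to show that $\avint_{Q'} DF_{S}$ lies within $\tfrac{\tau}{2}|A_{Q(S)}'|$ of $A_{Q'}'$; Jensen's inequality then gives $\avint_{Q'}|DF_{S} - A_{Q(S)}'|^{2} \gec \tau^{2}|A_{Q(S)}'|^{2}$, and summing over the disjoint minimal cubes $Q \in m_{2}(S)$ together with $|Q'| = 2^{-d}|Q|$ produces the claimed bound.

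The central step is a uniform pointwise estimate $|F_{S} - A_{Q'}| \lec \delta |A_{Q(S)}'|\diam Q$ on $Q'$, for some $\delta = \delta(\ve,\eta,d)$ with $\delta \to 0$ as $\ve \to 0$. The summand $|f - A_{Q'}|$ is controlled on $MQ' \supseteq Q'$ by applying \Lemma{epsilondelta} to $\omega_{f}(MQ') < \ve$. For $|F_{S} - f|$ I use $\sum_{k}\phi_{k}\equiv 1$ on $\bR^{d}\setminus\overline{z(S)}$ to write
\[
F_{S}(x) - f(x) = \sum_{k}(A_{Q_{k}}(x) - f(x))\phi_{k}(x);
\]
for $x$ in a Whitney cube $R_{j} \subseteq Q$, each term with $\phi_{k}(x) \neq 0$ has $x \in 2R_{k} \subseteq MQ_{k}$ and $\diam R_{k} \sim \diam R_{j}$ by \Lemma{2Rcap2R}, so \Lemma{epsilondelta} applied to $\omega_{f}(MQ_{k}) < \ve$ bounds each summand by $\lec \delta |A_{Q(S)}'|\diam R_{j}$. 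Summing using $\sum_k \phi_k = 1$ and $\diam R_{j} \leq \diam Q$ gives $|F_{S} - f| \lec \delta |A_{Q(S)}'|\diam Q$ on $Q$, and the triangle inequality concludes the pointwise estimate on $Q'$.

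With this in hand, the divergence theorem applied componentwise on $Q'$ yields
\[
\int_{Q'}(DF_{S} - A_{Q'}')\,dx = \int_{\partial Q'}(F_{S} - A_{Q'}) \otimes n\, d\sigma,
\]
whose norm is bounded by $\delta |A_{Q(S)}'|\diam Q \cdot |\partial Q'| \lec_{d} \delta |A_{Q(S)}'||Q|$. Dividing by $|Q'|$ gives $|\avint_{Q'}DF_{S} - A_{Q'}'| \lec_{d} \delta |A_{Q(S)}'|$; for $\ve$ small enough (which is available since the invocation of this lemma in \Lemma{m2-estimate} restricts to $\ve \lec \tau$), this quantity is at most $\tfrac{\tau}{2}|A_{Q(S)}'|$, and the reverse triangle inequality with $|A_{Q'}' - A_{Q(S)}'| > \tau|A_{Q(S)}'|$ finishes the plan. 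The main technical obstacle is the uniform pointwise proximity of $F_{S}$ and $f$ on every Whitney cube, since the cubes $Q_{k}\in S$ entering the partition-of-unity expansion for $F_{S}$ can a priori have scales very different from that of $R_{j}$; it is the comparability estimates in \Lemma{2Rcap2R} (and in \Lemma{Ai-Aj}) that keep all geometric quantities for the relevant $k$ comparable to $\diam R_{j}$, preventing the averaging from picking up divergent scale factors.
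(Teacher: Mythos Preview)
Your argument is correct, but it follows a genuinely different route from the paper's.

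The paper does not use the divergence theorem or an $L^\infty$ approximation on the child $Q'$. Instead, for each $Q\in m_{2}(S)$ it locates a single Whitney cube $R_{i}\subseteq Q$ (containing the center of $Q$, with $\diam R_{i}\sim_{d}\diam Q$) on whose inner half $\frac{1}{2}R_{i}$ the partition of unity collapses, so that $F_{S}\equiv A_{Q_{i}}$ and hence $DF_{S}\equiv A_{Q_{i}}'$ \emph{exactly} there. It then chains the $L^{2}$-average comparison \Lemma{Q1-Q2} through $A_{Q_{i}}'\to A_{Q}'\to A_{Q''}'$ (where $Q''$ is the offending child) to obtain $|A_{Q_{i}}'-A_{Q(S)}'|\geq \frac{\tau}{2}|A_{Q(S)}'|$ once $\ve\lesssim_{d}\tau$, giving a genuine pointwise lower bound on $|DF_{S}-A_{Q(S)}'|$ over $\frac{1}{2}R_{i}$.

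The trade-offs: the paper's approach never integrates by parts and never needs $F_{S}$ to be globally Lipschitz on $Q'$; it only uses that $F_{S}$ is \emph{exactly} affine on a definite fraction of $Q$. Its smallness requirement $\ve\lesssim_{d}\tau$ comes straight from \Lemma{Q1-Q2} and is independent of $\eta$. Your approach works directly on the child $Q'$ that witnesses the $m_{2}(S)$ condition, which is conceptually tidier, but the smallness threshold you need is $\ve<\ve_{1}(\eta,d,c_{d}\tau)$ from \Lemma{epsilondelta}, so it inherits an $\eta$-dependence. You should also note, for completeness, that the divergence theorem on $Q'$ is justified because $(Q')^{\circ}\subseteq Q^{\circ}\subseteq\bR^{d}\setminus\overline{z(S)}$ and $|DF_{S}|\lesssim_{d}|A_{Q(S)}'|$ there (by \eqn{phi} and \Lemma{Ai-Aj}), so $F_{S}$ is Lipschitz on $Q'$; the possible intersection $\overline{z(S)}\cap\partial Q'\subseteq\partial Q$ is harmless.
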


\begin{lemma}
For $\ve<\ve'(d,\eta)$ , $f:\Omega\rightarrow \bR^{D}$ and $S$ an $(\ve,\tau)$-region as in \Lemma{m2-estimate},
\begin{equation}
\sum_{Q\in \Delta}\Omega_{F_{S}}(2Q)^{2}|Q|\lec_{D}\ve^{2}|A_{Q(S)}'|^{2}.
\label{e:OmFsum}
\end{equation}
\label{l:OmFsum}
\end{lemma}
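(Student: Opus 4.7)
The plan is to reduce the left-hand side to an $L^2$ estimate for $DF_S - A_{Q(S)}'$ via dyadic Dorronsoro, and then bound the latter piecewise on the decomposition $\bR^d = \cnj{z(S)} \cup \bigcup_j R_j$.

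First I would verify that $G := F_S - A_{Q(S)}$ has bounded support. By \Lemma{david-lemma}(7), any Whitney cube $R_j$ with $\diam R_j \geq \diam Q(S)/60$ satisfies $Q_j = Q(S)$, so $F_S \equiv A_{Q(S)}$ on it, and via \eqn{all-sim} this cascades to its neighbors, confining $\supp G$ to a fixed dilate of $Q(S)$. Combined with the pointwise bound on $DF_S$ derived below, $G \in W^{1,2}(\bR^d)$. Since $\Omega_{F_S}(2Q) = \Omega_G(2Q)$ (the infimum ranges over all affine maps), \Lemma{dyadicdorronsoro} yields
\[
\sum_Q \Omega_{F_S}(2Q)^2 |Q| \lec_D \|DG\|_2^2 = \int_{\cnj{z(S)}} |Df - A_{Q(S)}'|^2 + \sum_j \int_{R_j} |DF_S - A_{Q(S)}'|^2.
\]

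On each Whitney cube $R_j$, differentiating \eqn{FS} and using $\sum_i \phi_i \equiv 1$ and $\sum_i \nabla \phi_i \equiv 0$ from \eqn{phi} gives
\[
DF_S(x) - A_{Q_j}' = \sum_i (A_{Q_i}' - A_{Q_j}')\phi_i(x) + \sum_i (A_{Q_i}(x) - A_{Q_j}(x))\nabla\phi_i(x),
\]
with only indices $i$ satisfying $2R_i \cap 2R_j \neq \emptyset$ contributing. For such $i$, \Lemma{Ai-Aj} gives $|A_{Q_i}' - A_{Q_j}'| \lec \ve|A_{Q(S)}'|$ and $|A_{Q_i}(x) - A_{Q_j}(x)| \lec \ve|A_{Q(S)}'| \diam R_j$ for $x \in R_j$; combined with \eqn{phi-est} and the bounded overlap \eqn{bounded-overlap}, this delivers $|DF_S - A_{Q_j}'| \lec_d \ve |A_{Q(S)}'|$ pointwise on $R_j$. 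It then remains to control $\sum_j |R_j|\,|A_{Q_j}' - A_{Q(S)}'|^2$ and $\int_{\cnj{z(S)}} |Df - A_{Q(S)}'|^2$; both reduce to Carleson packing by expanding $A_{Q_j}' - A_{Q(S)}'$ as a telescoping sum along the chain of $S$-ancestors of $Q_j$, applying the sharp form of \Lemma{Q1-Q2} (with $\omega_f(MR)$ in place of the crude bound $\ve$) to each consecutive link, and reorganizing via Fubini, using $\diam R_j \sim \diam Q_j$ and the bounded overlap. This yields
\[
\sum_j |R_j|\,|A_{Q_j}' - A_{Q(S)}'|^2 \lec |A_{Q(S)}'|^2 \sum_{R \in S,\, R \subseteq Q(S)} \omega_f(MR)^2 |R| \leq \ve^2 |A_{Q(S)}'|^2 |Q(S)|
\]
by hypothesis (1) of \Definition{vt-region}. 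A parallel Lebesgue-differentiation argument on $z(S)$, using that each $x \in z(S)$ lies in arbitrarily small $S$-cubes, controls $|Df - A_{Q(S)}'|^2$ on $\cnj{z(S)}$ by the same Carleson sum.

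The main obstacle is the Carleson packing step: the naive bound $|A_{Q_j}' - A_{Q(S)}'| \leq \tau |A_{Q(S)}'|$ from hypothesis (2) of \Definition{vt-region} only delivers a $\tau^2$ estimate on the right, which is strictly weaker than the desired $\ve^2$. Extracting the sharper $\ve^2$ bound requires exploiting the square-summability in hypothesis (1) via the telescoping/tree rearrangement, and this is also what tames the a priori only quasi-symmetric behavior of $f$ on $\cnj{z(S)}$.
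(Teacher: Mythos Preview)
Your route—apply Dorronsoro to reduce to $\|DF_S-A_{Q(S)}'\|_2^2$ and then bound the gradient pointwise on the Whitney pieces—is genuinely different from the paper's. The paper never passes through the global gradient bound; it partitions $\Delta$ into three regimes $\Delta_1,\Delta_2,\Delta_3$ (cubes at the scale of $S$, cubes sitting inside a single $R_j$, and cubes larger than $Q(S)$) and estimates $\Omega_{F_S}(2Q)$ in each regime by comparing $F_S$ on $2Q$ to a \emph{local} affine map $A_{\tilde Q}$ with $\tilde Q\in S$ of comparable size. That way each $Q$ contributes a single $\omega_f(M\tilde Q)^2$, and the whole sum collapses to $\sum_{R\in S}\omega_f(MR)^2|R|\le \ve^2|Q(S)|$ via \eqn{sumoverS}.

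The gap in your argument is exactly the step you flag as the main obstacle. Telescoping only gives the $\ell^1$ bound
\[
|A_{Q_j}'-A_{Q(S)}'|\lec |A_{Q(S)}'|\sum_{Q_j\subseteq R\subseteq Q(S)}\omega_f(MR),
\]
and after squaring and summing over $j$ you are left with off-diagonal terms of the form $\sum_{R\subsetneq R'}\omega_f(MR)\,\omega_f(MR')\,|R|$; no Fubini rearrangement reduces these to $\sum_R\omega_f(MR)^2|R|$. Concretely, your argument uses only the scalar bounds $|A_R'-A_{R^1}'|\lec\omega_f(MR^1)|A_{Q(S)}'|$ together with Whitney geometry, so it would equally apply to the model configuration of a full depth-$N$ tree with $\omega_f(MR)\equiv \ve/\sqrt{2N}$ and all increments $A_R'-A_{R^1}'$ aligned—where the left side of your displayed inequality is $\sim N\ve^2|A_{Q(S)}'|^2|Q(S)|$ while the right side is $\sim \ve^2|A_{Q(S)}'|^2|Q(S)|$. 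That configuration is not realizable by an actual $f$ (and indeed the inequality you wrote is true, as follows a posteriori from the paper's proof plus Dorronsoro), but the cancellation among the $A_R'-A_{R^1}'$ that rules it out is invisible to your bounds. The same $\ell^1$-versus-$\ell^2$ issue obstructs the $z(S)$ piece: hypothesis (1) of \Definition{vt-region} does not give $\sum_k\omega_f(MQ^{(k)})<\infty$ along a chain, so even the convergence of $A_{Q^{(k)}(x)}'$ to $Df(x)$ is not immediate from that hypothesis alone. The paper's local comparison sidesteps the telescoping entirely, which is why the $\ve^2$ comes out cleanly there.
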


We'll postpone their proofs to Sections \ref{s:FS>tau} and \ref{s:OmFsum} for now and complete the proof of \Lemma{m2-estimate}. By Lemmas \ref{l:dyadicdorronsoro}, \ref{l:FS>tau}, and \ref{l:OmFsum}, and since $\Omega_{F_{S}}=\Omega_{F_{S}-A_{Q(S)}}$, 

\begin{align*}
|M_{2}(S)|
& \lec_{d} \frac{||DF_{S}-A_{Q(S)}'||_{2}^{2}}{\tau^{2}|A_{Q(S)}'|^{2}}
\lec_{D}\frac{\sum_{Q\in \Delta}\Omega_{F_{S}-A_{S}}(2Q)^{2}|Q|}{{\tau^{2}|A_{Q(S)}'|^{2}}}
 \lec_{D}
 \ps{\frac{\ve}{\tau}}^{2}|Q(S)|,
 \end{align*}
so that for $\upsilon=\upsilon(D)>0$ small enough, if $\ve<\upsilon\tau$, we can guarantee that $|M_{2}(S)|<\frac{1}{2}|Q(S)|$. This proves \Lemma{m2-estimate}, so long as we prove Lemmas \ref{l:FS>tau} and \ref{l:OmFsum}, which will be the focus of the next two sections.

\end{proof}

\subsection{Bounding $M_{2}(S)$ and the proof of \Lemma{FS>tau}}
\label{s:FS>tau}

\begin{proof}[Proof of \Lemma{FS>tau}]

Let $N=\ceil{\log_{2}(40\sqrt{d})}+2$. If $Q\in m_{2}(S)$, let $R$ be the dyadic cube containing $x_{Q}$ such that $R^{N}=Q$. Note that if $Q'\in S$, then $Q'$ cannot be properly contained in  $Q$ since $Q\in m_{1}(S)\subseteq m(S)$, so either 
\begin{enumerate}
\item $Q'\supseteq Q$, in which case 
\[ \diam Q'+\dist(R,Q')\geq \diam Q,\] 
\item or $Q' \not\subseteq Q$, in which case $Q'$ and $Q$ have disjoint interiors, and since $R\subseteq Q$, we have
\[ \diam Q' + \dist(R,Q')> \dist(R,Q')\geq \ell(Q)-\ell(R)=(1-2^{-N})\ell(Q)>\frac{\ell(Q)}{2}.\]
\end{enumerate}
Thus, if we infimize over all such $Q'\in S$, we get $D_{S}(R)\geq \ell(Q)/2$. By our choice of $N$,
\begin{equation}
D_{S}(R)  \geq  \frac{\ell(Q)}{2}= 2^{N-1}\ell(R)=2^{N-1}d^{-\frac{1}{2}}\diam R > 20\diam R,
\label{e:D>20R}
\end{equation}
and hence there must be $R_{i}\supseteq R$. Since $D_{S}(Q')\leq \diam Q'$ for all $Q'\supseteq Q$ with $Q'\in S$, we know that $R_{i}\subseteq Q$ (otherwise \eqn{D>20R} wouldn't hold). Thus
\begin{equation}
2^{-N}\diam Q= \diam R\leq \diam R_{i}\leq \diam Q.
\label{e:2^-N<Rj}
\end{equation}
By \eqn{phi-avoids-centers},
\begin{equation}
F_{S}(x)=\sum_{j}A_{Q_{j}}(x)\phi_{j}(x) =A_{Q_{i}}(x) \mbox{ for }x\in \frac{1}{2}R_{i}.
\label{e:FSon1/2R}
\end{equation}
Hence,
\begin{equation}
DF_{S}(y)=A_{Q_{i}}' \mbox{ for all }y\in \frac{1}{2}R_{i}.
\label{e:DF=AQi}
\end{equation}

Note that $Q,Q_{i}\in S$, so that $\omega_{f}(MQ)<\ve$ and $\omega_{f}(MQ_{i})<\ve$. Since $R\subseteq Q\subseteq Q(S)$,  we have $R_{i}\subseteq Q\cap MQ_{i}$ by \Lemma{2Rcap2R}, and so
\begin{equation}
 \diam Q_{i}\sim \diam R_{i} \stackrel{\eqn{2^-N<Rj}}{\sim}_{d} \diam Q.
 \label{e:QisimRisimQ}
 \end{equation}
 Hence, \Lemma{Q1-Q2} implies 
\[|A_{Q_{i}}'-A_{Q}'|\leq  C_{1}\ve|A_{Q(S)}'|.\]
for some $C_{1}=C_{1}(d)>0$.
Since $Q\in m_{2}(S)$, by \eqn{m2} we know that there is a child $Q''$ of $Q$ for which
\begin{equation}
\omega_{f}(MQ'')<\ve\mbox{ and } |A_{Q''}'-A_{Q(S)}'|>\tau|A_{Q(S)}'|.
\label{e:wfMQ''}
\end{equation}
Hence, again by \Lemma{Q1-Q2}, there is $C_{2}=C_{2}(d)>0$ so that
\[|A_{Q}'-A_{Q''}'|\leq C_{2}\ve \max\{|A_{Q}'|,|A_{Q''}'|\}.\]
This means
\[|A_{Q''}'|\leq (1+C_{2}\ve)|A_{Q}'|\stackrel{\eqn{A<AS}}{\leq} (1+C_{2}\ve)(1+\tau)|A_{Q(S)}'|\]
since $Q\in S$, so that 
\[|A_{Q}'-A_{Q''}'|\leq \underbrace{C_{2}(1+C_{2}\ve)(1+\tau)}_{=:C_{3}}\ve|A_{Q(S)}'|.\]
Thus, for $\ve<2\tau^{-1}(C_{1}+C_{3})$ and $y\in \frac{1}{2}R_{i}$,
\begin{align*}
|DF_{S}(y)-A_{Q(S)}'|
& \stackrel{\eqn{FSon1/2R}}{=}|A_{Q_{i}}'-A_{Q(S)}'|\\
& \geq |A_{Q(S)}'-A_{Q''}'| -|A_{Q''}'-A_{Q}'|-|A_{Q}'-A_{Q_{i}}'|\\
& \stackrel{\eqn{wfMQ''}}{\geq} (\tau-(C_{3}+C_{1}) \ve) |A_{Q(S)}'| \geq \frac{\tau}{2}|A_{Q(S)}'|.
\end{align*}
Hence, 
\[
\int_{Q}|DF_{S}(y)-A_{Q(S)}'|^{2}dy
  \geq \int_{\frac{1}{2}R_{i}}\ps{\frac{\tau}{2} |A_{Q(S)}'|}^{2}
 =2^{-d}|R_{i}|\frac{\tau^{2}}{4}|A_{Q(S)}'|^{2}
 \stackrel{\eqn{QisimRisimQ}}{\gec_{d}} |Q|\tau^{2}|A_{Q(S)}'|^{2}.
\]
Thus,
\begin{align*}
||DF_{S}-A_{Q(S)}'||_{2}^{2}
& \geq \sum_{Q\in m_{2}(S)}\int_{Q} |DF_{S}-A_{Q(S)}'|^{2}\\
& \gec_{d}\tau^{2} \sum_{Q\in m_{2}(S)}|Q| |A_{Q(S)}'|^{2}
 =\tau^{2} |m_{2}(S)||A_{Q(S)}'|^{2}
\end{align*}

\end{proof}

\subsection{The proof of \Lemma{OmFsum}}
\label{s:OmFsum}

Throughout this section (and its subsections), we have the standing assumption that $0<\ve<\ve'(d,\eta)$, $\tau\in (0,1)$, $S$ is an $(\ve,\tau)$-region for an $\eta$-quasisymmetric map $f:\Omega\rightarrow\bR^{D}$ as in \Lemma{m2-estimate}, and $F_{S}$ is constructed as in \eqn{FS}.

To estimate \eqn{OmFsum}, we divide the sum into three parts

\begin{equation}
\sum_{Q\in \Delta}\Omega_{F_{S}}(2Q)^{2}|Q| = \sum_{i=1}^{3}\sum_{Q\in \Delta_{i}}\Omega_{F_{S}}(2Q)^{2}|Q|
\label{e:omega-parts}
\end{equation}
where
\begin{align}
\Delta_{1} & =\ck{Q \in \Delta: \frac{1}{20}D_{S}(Q)<\diam Q \leq \diam Q(S)} \supseteq S
\label{e:Delta1} \\
\Delta_{2}
& =\ck{Q\in \Delta:  \diam Q\leq \frac{1}{20}D_{S}(Q) }  = \bigcup_{j}\{Q\in \Delta: Q\subseteq R_{j}\} \label{e:Delta2} \\
\Delta_{3} & =\ck{Q\in \Delta:  \diam Q> \max\{\frac{1}{20}D_{S}(Q),\diam Q(S)\} }. \label{e:Delta3}
\end{align}
 We will estimate each one separately over the next three subsections.

\subsubsection{$\Delta_{1}$}

In this section, we focus on proving the following lemma.

\begin{lemma}
\begin{equation}
\sum_{Q\in \Delta_{1}}\Omega_{F_{S}}(2Q)^{2}|Q|\lec_{d} \ve^{2}|A_{Q(S)}'|^{2}.
\label{e:delta1}
\end{equation}
\label{l:delta1}
\end{lemma}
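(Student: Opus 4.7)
The plan is to bound $\Omega_{F_{S}}(2Q)$ for each $Q\in\Delta_{1}$ by using as the test affine map $A_{Q^{*}}$ for a suitable anchor cube $Q^{*}\in S$ close to $Q$, and then to sum the resulting estimates over $\Delta_{1}$. The defining inequality $\frac{1}{20}D_{S}(Q)<\diam Q$ together with the definition of $D_{S}$ immediately supplies some $Q^{*}\in S$ with $\dist(Q,Q^{*})+\diam Q^{*}\lec\diam Q$; in particular every Whitney cube $R_{j}$ meeting $2Q$ (and its associated $Q_{j}\in S$) lies in a fixed multiple of $Q$ with diameter $\lec\diam Q$.

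The main step is the pointwise estimate $|F_{S}(x)-A_{Q^{*}}(x)|\lec\ve |A_{Q(S)}'|\diam Q$ for a.e. $x\in 2Q$. For $x\notin\cnj{z(S)}$ I use \eqn{phi} to write $F_{S}(x)-A_{Q^{*}}(x)=\sum_{k}\phi_{k}(x)(A_{Q_{k}}(x)-A_{Q^{*}}(x))$ and apply \Lemma{Q1-Q2} to each pair $(Q_{k},Q^{*})$: because $M=30000d$ is large while $Q_{k},Q^{*}$ both lie in a bounded multiple of $Q$, the intersection $MQ_{k}\cap MQ^{*}$ contains a cube $R$ of side length comparable to $\max\{\ell(Q_{k}),\ell(Q^{*})\}$, so that lemma together with \eqn{A<AS} yields the desired bound on each summand; bounded overlap \eqn{bounded-overlap} then controls the sum. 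For $x\in\cnj{z(S)}$, where $F_{S}(x)=f(x)$, I exploit that $x$ lies in an infinite descending chain of cubes $Q'\in S$, apply \Lemma{epsilondelta} to each small $Q'$ to get $|f(x)-A_{Q'}(x)|\lec|A_{Q'}'|\diam Q'$, and chain back to $A_{Q^{*}}$ via \Lemma{Q1-Q2}; letting $\diam Q'\to 0$ finishes the bound.

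Integrating yields $\Omega_{F_{S}}(2Q)^{2}\lec\ve^{2}|A_{Q(S)}'|^{2}$. Because $\Delta_{1}$ is typically infinite (every dyadic cube meeting $\cnj{z(S)}$ belongs to it), the crude termwise bound does not sum, so to close the estimate I will refine the pointwise inequality by telescoping: each factor of $\ve$ really should be a partial sum $\bigl(\sum_{R\in S,\,R\supseteq Q^{*}}\omega_{f}(MR)^{2}\bigr)^{1/2}$, obtained by iterating \Lemma{Q1-Q2} along ancestors of $Q^{*}$ in $S$. Combining this with the chain bound $\sum_{R\in S,\,R\ni x}\omega_{f}(MR)^{2}<\ve^{2}$ from \Definition{vt-region} and integrating over $x\in Q(S)$ gives $\sum_{R\in S}\omega_{f}(MR)^{2}|R|\lec\ve^{2}|Q(S)|$, and then a bounded-multiplicity count for the correspondence $Q\mapsto Q^{*}$ produces \eqn{delta1} (the implicit factor $|Q(S)|$ being absorbed into the stated constant).

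The principal obstacle is the $\cnj{z(S)}$ case of the pointwise bound: the anchor $Q^{*}$ has fixed size while the approximating cubes $Q'\in S$ shrink to zero, so one must verify that the ratio factor $(|Q^{*}|/|R|)^{(d+1)/d}$ appearing in \Lemma{Q1-Q2} stays uniformly bounded along the approximating chain, which ultimately reduces to checking that $R_{j},Q_{j}$ and $Q^{*}$ always admit a common ``witness'' cube of comparable size inside their joint $M$-dilate. Once this is in place, the telescoping refinement and the Carleson-type bookkeeping in the final sum are routine.
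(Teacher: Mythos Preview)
Your overall strategy---anchor each $Q\in\Delta_1$ to some $Q^*\in S$ of comparable size, bound $\Omega_{F_S}(2Q)$ via the test map $A_{Q^*}$, and sum using $\sum_{R\in S}\omega_f(MR)^2|R|<\ve^2|Q(S)|$---matches the paper's. But your treatment of the case $x\notin\cnj{z(S)}$ has a genuine gap. You assert that $MQ_k\cap MQ^*$ contains a cube $R$ of side length comparable to $\max\{\ell(Q_k),\ell(Q^*)\}$; this is false. While \Lemma{Rj<Q} gives $\diam R_k\le\diam Q$ for every Whitney cube $R_k$ meeting $2Q$, there is no \emph{lower} bound on $\diam R_k$: whenever $2Q$ meets $\cnj{z(S)}$ (which happens for infinitely many $Q\in\Delta_1$ as soon as $|z(S)|>0$) the Whitney cubes near that portion of $2Q$ are arbitrarily small, so $\ell(Q_k)\sim\ell(R_k)\ll\ell(Q^*)\sim\ell(Q)$, and $MQ_k\cap MQ^*\subseteq MQ_k$ has side length only $\sim\ell(Q_k)$. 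The ratio factor $(\max_i|Q_i|/|R|)^{(d+1)/d}$ in \Lemma{Q1-Q2} then blows up, and your direct comparison of $A_{Q_k}$ to $A_{Q^*}$ does not yield the claimed bound.

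The paper avoids comparing affine maps across disparate scales by inserting $f$: it splits $|F_S-A_{\tilde Q}|\le|f-A_{\tilde Q}|+|f-F_S|$. The first term, integrated over $2Q\subseteq M\tilde Q$, is controlled by $\omega_f(M\tilde Q)$ directly from the definition; the second expands as $\sum_j\phi_j|f-A_{Q_j}|$, and each summand lives on $2R_j\subseteq MQ_j$ and is controlled by $\omega_f(MQ_j)$---no comparison between $A_{Q_j}$ and anything of scale $\sim\diam Q$ is ever made. This splitting also renders your separate $\cnj{z(S)}$ limiting argument unnecessary, since $F_S=f$ there. Finally, your proposed summation refinement---replacing $\ve$ by $(\sum_{R\supseteq Q^*}\omega_f(MR)^2)^{1/2}$---does not help, because that partial sum is itself $<\ve$ and so gives nothing better than the crude bound. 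What the splitting through $f$ actually supplies is the single term $\omega_f(M\tilde Q)$ for one piece (summed via bounded multiplicity of $Q\mapsto\tilde Q$) and the terms $\omega_f(MQ_j)$ with geometric weights $(\diam Q_j/\diam Q)^2$ for the other (summed over $j$ first, then over $Q$).
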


\begin{proof}

We first need a few technical lemmas.

\begin{lemma}
If $Q\in \Delta_{1}$ and $2R_{j}\cap 2Q\neq\emptyset$, then 
\begin{equation}
\diam R_{j}\leq  \diam Q\leq \diam Q(S).
\label{e:Rj<Q}
\end{equation}
\label{l:Rj<Q}
\end{lemma}

\begin{proof}
The second inequality follows from the definition of $\Delta_{1}$, so we focus on the first. Let $y\in Q$ be such that 
\begin{equation}
D_{S}(y)=D_{S}(Q)<20\diam Q
\label{e:DS<20Q}
\end{equation}
and let $x\in R_{j}$ be closest to $y$. If $z\in 2R_{j}\cap 2Q$, then 
\begin{equation}
|x-y|\leq |x-z|+|z-y|\leq 2\diam R_{j}+2\diam Q
\label{e:1/2R+2Q}
\end{equation}
Thus,
\begin{align*}
\diam R_{j} & \leq \frac{1}{20}D_{S}(x)
\leq \frac{1}{20}(D_{S}(y)+|x-y|)\\
& \stackrel{\eqn{DS<20Q}\atop \eqn{1/2R+2Q}}{\leq} \frac{1}{20}\ps{20\diam Q+2\diam R_{j}+2\diam Q}\\
& =\frac{11}{10}\diam Q+\frac{1}{10}\diam R_{j}
\end{align*}
A bit of arithmetic shows that
\[\diam R_{j}\leq \frac{11}{9}\diam Q<2\diam Q.\]
Since $\frac{\diam R_{j}}{\diam Q}$ is an integer power of two, we in fact know $\diam R_{j}\leq \diam Q$, which proves the lemma.
\end{proof}

\begin{lemma}
If $Q\in \Delta_{1}$, then either $Q\in S$ or $Q\supsetneq R_{j}$ for some $R_{j}$ with $120\sqrt{d}\diam R_{j}\geq \diam Q$.
\label{l:RinQ}
\end{lemma}

\begin{proof}
Let $Q\in \Delta_{1}\backslash S$ so that $D_{S}(Q)/20<\diam Q\leq \diam Q(S)$. 

{\it Step 1:} We first show that $Q$ is not contained in any $R_{j}$. If $Q\subseteq R_{j}$ for some $R_{j}$, then for all $x\in Q\subseteq R_{j}$,
\[\diam Q\leq \diam R_{j}\stackrel{\eqn{RD}}{\leq}\frac{1}{20}D_{S}(x)\]
and infimizing over all $x\in Q$, we get $\diam Q\leq \frac{1}{20}D_{S}(Q)$, a contradiction since $Q\in \Delta_{1}$.

{\it Step 2:} Next, we show there is $R_{i}$ so that $x_{Q}\in R_{i}\subsetneq Q$. If $Q^{\circ}\cap z(S)\neq\emptyset$, then $Q\subseteq Q(S)$ since $\diam Q\leq \diam Q(S)$, and there exists $z\in Q^{\circ}\cap z(S)$. Since $z\in z(S)$, there are arbitrarily small cubes in $S$ containing $z$ (otherwise the smallest one would be a minimal cube, implying $z\not\in z(S)$), infinitely many of which intersect $Q^{\circ}$, so $Q$ contains a cube in $S$ and by the coherence of $S$, $Q\in S$, a contradiction since we assumed $Q\in \Delta_{1}\backslash S$. Hence, we know $Q^{\circ}\cap z(S)=\emptyset$. Thus, $Q^{\circ}\subseteq \bR^{d}\backslash\cnj{z(S)}=\bigcup R_{j}$. Since $Q$ is not contained in any $R_{j}$, there is an $R_{i}$ such that $x_{Q}\in R_{i}\subsetneq Q$.

{\it Step 3:} Now we estimate the size of $R_{i}$. Let $Q'\in S$. If $Q'\subseteq Q$, then $Q\subseteq Q(S)$ since $\diam Q\leq \diam Q(S)$, and by the coherence of $S$, $Q\in S$, a contradiction since $Q\not\in S$. Thus, we know $Q'\not\subseteq Q$, so either $Q'$ and $Q$ have disjoint interiors (in which case $\dist(x_{Q},Q')\geq \frac{1}{2}\ell(Q)$) or $Q'\supsetneq Q$ (in which case $\diam Q'\geq 2\diam Q$). Hence,
\begin{align*}
 60\diam R_{j} 
 & \stackrel{\eqn{RD}}{\geq} D_{S}(x_{Q}) =\inf_{Q'\in S}\{\dist (x_{Q},Q')+\diam Q'\} 
  \geq \min\{ \ell(Q)/2,2\diam Q\} \\
&   =\frac{\ell(Q)}{2}
  = \frac{\diam Q}{2\sqrt{d}}
 \end{align*}
which implies the lemma.

\end{proof}

\begin{lemma}
For $Q\in \Delta_{1}$, pick a cube $\tilde{Q}\in S$ as follows. If $Q\in S$, set $\tilde{Q}=Q$. Otherwise, let $\tilde{Q}=Q_{j}$, where $R_{j}$ is as in \Lemma{RinQ}. Then $2Q\subseteq M\tilde{Q}$ and $\diam \tilde{Q}\leq 180\diam Q$.
\label{l:Qt}
\end{lemma}

\begin{proof}
The lemma is clearly true if $Q\in S$, since then $\tilde{Q}=Q$, so suppose $Q\not\in S$. Since $\diam R_{j}\leq \diam Q<2\diam Q(S)$, by Lemmas \ref{l:david-lemma}  and \ref{l:RinQ} we have 
\[\diam Q\leq 120\sqrt{d}\diam R_{j}\stackrel{\eqn{R<Q}}{\leq} 240 \sqrt{d} \diam Q_{j} \leq 240d\ell(Q_{j}).\]
and
\begin{align*}
\dist(x_{Q_{j}},Q)
& \stackrel{R_{j}\subseteq Q}{\leq} \dist(x_{Q_{j}},R_{j})
\stackrel{\eqn{dxQR}}{\leq} 180\diam R_{j}\stackrel{\eqn{R<Q}}{\leq} 360\diam Q_{j}   = 360 \sqrt{d} \ell(Q_{j})
\end{align*}
Hence, the above two inequalities give
\begin{align*}
2Q  &  \subseteq B(x_{Q_{j}},\dist(x_{Q_{j}},Q)+\diam 2Q)
\subseteq B(x_{Q_{j}},(360\sqrt{d} +240d)\ell(Q_{j})) 
 \subseteq MQ_{j}.
\end{align*}
For the last part of the lemma, observe that since $R_{j}\subseteq Q$,
\[ \diam\tilde{Q}=\diam Q_{j}\stackrel{\eqn{Q<R}}{\leq} 180\diam R_{j}\leq 180\diam Q.\]
\end{proof}

We now proceed with the proof of \Lemma{delta1}. For $Q\in \Delta_{1}$, let $\tilde{Q}\in S$ be as in \Lemma{Qt}.  Using the inequality $(a+b)^{2}\leq 2(a^{2}+b^{2})$, we get
\begin{multline}
\sum_{Q\in \Delta_{1}} \frac{\Omega_{F_{S}}(2Q)^{2}|Q|}{|A_{Q(S)}'|^{2}}
 \leq \sum_{Q\in \Delta_{1}}  \avint_{2Q} \ps{\frac{|F_{S}-A_{Q(S)}|}{|A_{Q(S)}'|\diam 2Q}}^{2}|Q| \\
    \leq \frac{1}{2}\sum_{Q\in \Delta_{1}} \avint_{2Q}\ps{\frac{|f-A_{Q(S)}|}{|A_{Q(S)}'|\diam Q}}^{2}|Q|+ \frac{1}{2}\sum_{Q\in \Delta_{1}} \avint_{2Q}\ps{\frac{|f-F_{S}|}{|A_{Q(S)}'|\diam Q}}^{2}|Q|.
\label{e:omega/AS1}
 \end{multline}
We'll estimate the two summands separately, starting with the first. 

Recall from \eqn{A<AS} that for $Q\in\Delta_{1}$, since $\tilde{Q}\in S$ and $\tau\in (0,1)$, we have $|A_{\tilde{Q}}'|\leq 2|A_{Q(S)}'|$, and by \Lemma{Qt}, we know $2Q\subseteq M\tilde{Q}$ and $\diam \tilde{Q}\leq 180\diam Q$.  Hence $\diam \tilde{Q}\sim \diam Q$ and so
 \begin{align}
\sum_{Q\in \Delta_{1}}  & \avint_{2Q}\ps{\frac{|f-A_{Q(S)}|}{|A_{Q(S)}'|\diam Q} }^{2}|Q| 
   \\
   &  \leq\sum_{Q\in \Delta_{1}}   \ps{\frac{|A_{\tilde{Q}}'|M\diam \tilde{Q}}{|A_{Q(S)}'|\diam Q}}^{2}  \frac{|M\tilde{Q}|}{|2Q|} \avint_{M\tilde{Q}} \ps{\frac{|f-A_{\tilde{Q}}|}{|A_{\tilde{Q}}'|\diam M\tilde{Q}}}^{2}|Q| \notag \\
&     \sim_{d} \sum_{Q\in \Delta_{1}} \omega_{f}(M\tilde{Q})^{2}|Q|
 \leq \sum_{R\in S}\sum_{Q\in \Delta_{1}: \tilde{Q}=R} \omega_{f}(MR)^{2}|Q| 
  \lec_{d}  \sum_{R\in S}\omega_{f}(MR)^{2}|R| 
\label{e:sumOmega/AS21}
\end{align}
where in the last line we used the fact that  if $R\in S$, then the number of cubes $Q\in\Delta_{1}$ such that $\tilde{Q}=R$ is uniformly bounded by a number depending only on $d$ (since all those cubes $Q$ have size  comparable to $\diam R$ and are contained in $MR$ by \Lemma{Qt}).

Next, since $S$ is a $(\ve,\tau)$-region, we have that $\sum_{Q\subseteq R\subseteq Q(S)}\omega_{f}(MR)^{2}<\ve^{2}$ for all $Q\in S$, thus

\begin{align}
\sum_{R\in S} & \omega_{f}(MR)^{2}|R|
 =\int_{Q(S)}\sum_{R\in S}\omega_{f}(MR)^{2}\one_{R} \notag \\
& = \int_{z(S)}\sum_{R\in S}\omega_{f}(MR)^{2}\one_{R} + \sum_{Q\in m(S)}\int_{Q}\sum_{R\in S}\omega_{f}(MR)^{2}\one_{R} \notag \\
& =\int_{z(S)}\sum_{x\in R\in S}\omega_{f}(MR)^{2}dx+\sum_{Q\in m(S)}\sum_{Q\subseteq R\in S}\omega_{f}(MR)^{2}|Q| \notag \\
& <\ve^{2}|z(S)|+\sum_{Q\in m(S)}\ve^{2}|Q|=\ve^{2}|Q(S)|. \notag \\
\label{e:sumoverS}
\end{align}
Thus, 
\begin{equation}
\sum_{Q\in \Delta_{1}}  \avint_{2Q}\ps{\frac{|f-A_{Q(S)}|}{|A_{Q(S)}'|\diam Q}}^{2}|Q|
\stackrel{\eqn{sumOmega/AS21}}{\lec}_{d}   \sum_{R\in S}\omega_{f}(MR)^{2}|R|  
\stackrel{\eqn{sumoverS}}{<}\ve^{2}|Q(S)|
\label{e:sumOmega/AS2}
\end{equation}
which shows the  first sum in \eqn{omega/AS1} is at most a constant (depending on $d$) times $\ve^{2}|Q(S)|$. 

For the second sum in \eqn{omega/AS1}, set 
\[I_{Q}=\{j:2R_{j}\cap 2Q\neq\emptyset\}.\] 
Recall that $\supp\phi_{j}\subseteq 2R_{j}$ and by \Lemma{david-lemma} we have $ \sum \one_{2R_{j}}\lec_{d} \one_{\bR^{d}\backslash \cnj{z(S)}}$. Hence, by the definition of $F_{S}$, \Lemma{2Rcap2R}, the fact that $f=F_{S}$ on $\cnj{z(S)}$, and because $|A_{Q_{j}}'|\leq 2 |A_{Q(S)}'|$ by \eqn{A<AS}, we have
\begin{align}
\avint_{2Q} & \ps{\frac{|f-F_{S}|}{|A_{Q(S)}'|\diam Q}}^{2}
 \leq \avint_{2Q}\ps{\sum_{j} \frac{|f-A_{Q_{j}}|}{|A_{Q(S)}'|\diam Q}\phi_{j}}^{2} \notag \\
 & \stackrel{\eqn{bounded-overlap}}{\lec_{d}} \sum_{j}  \avint_{2Q}\ps{\frac{|f-A_{Q_{j}}|}{|A_{Q(S)}'|\diam Q}\phi_{j}}^{2} 
  \stackrel{\eqn{A<AS}}{\lec}  \frac{1}{|2Q|}\sum_{j\in I_{Q}}\int_{2R_{j}}\ps{\frac{|f-A_{Q_{j}}|}{|A_{Q_{j}}'|\diam Q}}^{2} \notag \\
&=  \frac{M^{2}}{|2Q|}\sum_{j\in I_{Q}}  |MQ_{j}| \avint_{MQ_{j}}\ps{\frac{|f-A_{Q_{j}}|}{|A_{Q_{j}}'|\diam MQ_{j}}\frac{\diam Q_{j}}{\diam Q}}^{2} \notag \\
&  =\frac{M^{d+2}}{2^{d}} \frac{1}{|Q|}\sum_{j\in I_{Q}} \omega_{f}(MQ_{j})^{2}\ps{\frac{\diam Q_{j}}{\diam Q}}^{2}|Q_{j}| .
\label{e:omega/AS3}
\end{align}

Recall by \Lemma{Rj<Q} that if $Q\in \Delta_{1}$ and $2R_{j}\cap 2Q\neq\emptyset$, then $\diam R_{j}\leq \diam Q$. Hence, if $n\geq 0$ and $I_{j,n}$ is the set of such cubes $Q\in\Delta_{1}$ with $2R_{j}\cap 2Q\neq\emptyset$ and $\ell(Q)=2^{n}\ell(R_{j})$, then $\# I_{j,n}\lec_{d}1$. Thus, for a fixed $j$,
\begin{equation}
\sum_{Q\in \Delta_{1} \atop 3Q\cap 2R_{j}\neq\emptyset} \ps{\frac{\diam Q_{j}}{\diam Q}}^{2}
\stackrel{\eqn{Q<R}}{\leq}
\sum_{n\geq 0}\sum_{Q\in I_{j,n}}\ps{\frac{180\diam R_{j}}{\diam Q}}^{2}
\lec_{d} \sum_{n\geq 0}2^{-2n}\lec 1.
\label{e:Ijn}
\end{equation}
Therefore, 
\begin{align}
\sum_{Q\in \Delta_{1}} \avint_{2Q} & \ps{\frac{|f-F_{S}|}{|A_{Q(S)}'|\diam Q}}^{2}|Q| 
 \stackrel{\eqn{omega/AS3}}{\lec_{d}}
\sum_{Q\in \Delta_{1}}\sum_{j\in I_{Q}} \omega_{f}(MQ_{j})^{2}\ps{\frac{\diam Q_{j}}{\diam Q}}^{2}|Q_{j}| \notag \\
  & =\sum_{\diam R_{j}\leq \diam Q(S)} \omega_{f}(MQ_{j})^{2}|Q_{j}| \sum_{Q\in \Delta_{1} \atop 2R_{j}\cap 2Q\neq\emptyset}\ps{\frac{\diam Q_{j}}{\diam Q}}^{2}\notag \\
&  \stackrel{\eqn{Ijn}}{\lec_{d}} \sum_{\diam R_{j}\leq \diam Q(S)} \omega_{f}(MQ_{j})^{2}|Q_{j}| \notag \\
&  \leq \sum_{R\in S} \#\{j:Q_{j}=R, \diam R_{j}\leq \diam Q(S)\}\omega_{f}(MR)^{2}|R| \notag \\
&  \lec_{d} \sum_{R\in S}\omega_{f}(MR)^{2}|R|\leq  \ve^{2}|Q(S)|.
 \label{e:sumOmega/AS3}
\end{align}
where, to get to the last line, we used the fact that if $\diam R_{j}\leq \diam Q(S)$, then \Lemma{2Rcap2R} implies $\#\{j:Q_{j}=R,\diam R_{j}\leq Q(S)\}\lec_{d} 1$. 

Combining \eqn{omega/AS1}, \eqn{sumOmega/AS2}, and \eqn{sumOmega/AS3} together, we obtain
\[
\sum_{Q\in \Delta_{1}}\frac{\Omega_{F_{S}}(2Q)^{2}|Q|}{|A_{Q(S)}'|^{2}}
 \lec_{d}  \ve^{2}|Q(S)|
\]

\end{proof}

%
%

\subsubsection{$\Delta_{2}$}

In this section, we will focus on proving the following lemma.

\begin{lemma}
\begin{equation}
 \sum_{Q\in \Delta_{2}}\Omega_{F_{S}}(2Q)^{2}|Q|\lec_{d} \ve^{2}|A_{Q(S)}|^{2} |Q(S)|.
 \end{equation}
\label{l:Delta2}
\end{lemma}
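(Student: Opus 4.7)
The plan is to decompose the sum by the Whitney cubes $R_j$ and exploit that $F_S$ is essentially piecewise affine, with only the boundary regions between them contributing. First, by \eqn{Delta2}, every $Q\in\Delta_2$ lies in some $R_j$. On the core $\tfrac12 R_j$, \eqn{phi-avoids-centers} forces $\phi_j\equiv 1$ and $\phi_i\equiv 0$ for $i\neq j$, so $F_S\equiv A_{Q_j}$ is affine there; hence $\Omega_{F_S}(2Q)=0$ whenever $2Q\subseteq\tfrac12 R_j$. It thus suffices to control cubes $Q\subseteq R_j$ whose $2Q$ meets the boundary strip $R_j\setminus\tfrac12 R_j$.

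The key step will be a sharp pointwise bound
\[
\|D^2 F_S\|_{L^\infty(2R_j)}\lec_d \frac{\ve\,|A_{Q(S)}'|}{\diam R_j}.
\]
To derive it, I would write $F_S-A_{Q_j}=\sum_i (A_{Q_i}-A_{Q_j})\phi_i$ on $2R_j$ and note that the sum effectively runs over indices $i$ with $2R_i\cap 2R_j\neq\emptyset$; by \eqn{Ri<2Rj} and \eqn{bounded-overlap} these are bounded in number with $\diam R_i\sim_d \diam R_j$. For each such $i$, \Lemma{Ai-Aj} yields $|A_{Q_i}-A_{Q_j}|\lec_d \ve|A_{Q(S)}'|\diam R_j$ on $\supp\phi_i$ and $|A_{Q_i}'-A_{Q_j}'|\lec_d \ve|A_{Q(S)}'|$, which paired with the derivative bounds \eqn{phi-est} produce the estimate after two differentiations. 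Taking the first-order Taylor polynomial of $F_S$ at the centre of $2Q$ as an affine competitor in $\Omega_{F_S}(2Q)$, a direct Taylor expansion gives
\[
\Omega_{F_S}(2Q)^2 \lec_d (\diam Q)^2\|D^2 F_S\|_{L^\infty(2Q)}^2 \lec_d \frac{\ve^2 |A_{Q(S)}'|^2(\diam Q)^2}{(\diam R_j)^2}.
\]

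Next, summing over dyadic $Q\subseteq R_j$, the series $\sum_{Q\subseteq R_j}(\diam Q)^2|Q|$ is a convergent geometric sum in the scale, evaluating to $\sim_d (\diam R_j)^{d+2}$; hence
\[
\sum_{Q\subseteq R_j}\Omega_{F_S}(2Q)^2|Q| \lec_d \ve^2|A_{Q(S)}'|^2 |R_j|.
\]
Finally, only $R_j$ meeting a bounded enlargement of $Q(S)$ contribute: if $R_j$ lies outside, say, $5Q(S)$, then $D_S(x)\gec \diam Q(S)$ for $x\in R_j$, so $\diam R_j\gec \diam Q(S)$ and \eqn{Ri<2Rj} forces the same for every neighbour, whence \Lemma{david-lemma}(7) gives $Q_i=Q_j=Q(S)$ for all neighbours and $F_S\equiv A_{Q(S)}$ near $R_j$. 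The relevant $R_j$'s are pairwise interior-disjoint and contained in $5Q(S)$, so $\sum_j|R_j|\lec_d |Q(S)|$, closing the estimate.

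The main obstacle is the sharp $D^2F_S$ bound: the naive pointwise estimate $|F_S-A_{Q_j}|\lec_d \ve|A_{Q(S)}'|\diam R_j$ substituted directly into $\Omega_{F_S}(2Q)$ leads to a divergent sum at small scales, so one must exploit both partition-of-unity identities $\sum_i\phi_i\equiv 1$ and $\sum_i\nabla\phi_i\equiv 0$ to cancel leading-order terms and extract the crucial extra factor $1/\diam R_j$ before differentiating twice.
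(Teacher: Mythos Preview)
Your proposal is correct and follows essentially the same route as the paper: bound $|D^{2}F_{S}|$ on the Whitney cubes using the cancellations $\sum_{i}\phi_{i}\equiv 1$ and $\sum_{i}\nabla\phi_{i}\equiv 0$ together with \Lemma{Ai-Aj}, feed this into Taylor's theorem to control $\Omega_{F_{S}}(2Q)$, sum geometrically over $Q\subseteq R_{j}$, and localize to the $R_{j}$ near $Q(S)$ since $F_{S}\equiv A_{Q(S)}$ far away. The paper does exactly this (its \Lemma{F=AQS} is your localization step, with the ball $B_{S}=B(x_{Q(S)},3\diam Q(S))$ in place of your $5Q(S)$); your opening observation that $\Omega_{F_{S}}(2Q)=0$ when $2Q\subseteq\tfrac12 R_{j}$ is a correct refinement but the paper omits it, since the geometric sum over all $Q\subseteq R_{j}$ already converges.
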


The main idea is that near a cube $Q\in \Delta_{2}$, $F_{S}$ is smooth and so we can get better control of $\Omega_{F_{S}}$ using Taylor's theorem.

\begin{lemma}
For all $j$
\begin{equation}
\frac{1}{|R_{j}|}\sum_{Q\subseteq R_{j}}\Omega_{F_{S}}(2Q)^{2}|Q| \lec_{d} \ve^{2}|A_{Q(S)}'|^{2}.
\label{e:sumOinRj}
\end{equation}
\label{e:Delta2lemma}
\end{lemma}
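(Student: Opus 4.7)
My plan is to exploit the smoothness of $F_{S}$ on $\bR^{d}\setminus\cnj{z(S)}$, which contains a neighborhood of $R_{j}$ by \eqn{RD}, and then replace the naive affine test by the second-order Taylor polynomial of $F_{S}$. Concretely, for $Q\subseteq R_{j}$ the cube $2Q$ sits well inside $\bR^{d}\setminus \cnj{z(S)}$ (since $\dist(R_{j},z(S))\geq D_{S}(R_{j})\geq 20\diam R_{j}\geq 10\diam 2Q$), so the tangent affine map $T_{x_{Q}}$ of $F_{S}$ at $x_{Q}$ is well-defined, and by Taylor's theorem
\[
\Omega_{F_{S}}(2Q)^{2}\leq \avint_{2Q}\ps{\frac{|F_{S}-T_{x_{Q}}|}{\diam 2Q}}^{2}\lec_{d}(\diam Q)^{2}\|D^{2}F_{S}\|_{L^{\infty}(2Q)}^{2}.
\]

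The core of the argument is a pointwise bound $\|D^{2}F_{S}\|_{L^{\infty}(2Q)}\lec_{d}\ve |A_{Q(S)}'|/\diam R_{j}$. To get this, I rewrite
\[
F_{S}(x)=A_{Q_{j}}(x)+\sum_{k}(A_{Q_{k}}(x)-A_{Q_{j}}(x))\phi_{k}(x)
\]
on a neighborhood of $R_{j}$, using $\sum_{k}\phi_{k}\equiv 1$ there. Differentiating twice, and using that $A_{Q_{k}}-A_{Q_{j}}$ is affine, gives a sum of terms of the form $(A_{Q_{k}}'-A_{Q_{j}}')\otimes \nabla\phi_{k}$ and $(A_{Q_{k}}-A_{Q_{j}})\otimes D^{2}\phi_{k}$. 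Any $R_{k}$ with $2R_{k}\cap 2Q\neq\emptyset$ must satisfy $2R_{k}\cap 2R_{j}\neq\emptyset$ (since $2Q\subseteq 2R_{j}$ when $Q\subseteq R_{j}$ and $\ell(Q)\leq \ell(R_{j})$), so \Lemma{Ai-Aj} yields
\[
|A_{Q_{k}}'-A_{Q_{j}}'|\lec_{d}\ve|A_{Q(S)}'|,\qquad |A_{Q_{k}}(x)-A_{Q_{j}}(x)|\lec_{d}\ve|A_{Q(S)}'|\diam R_{j}
\]
for $x\in 2Q$. Combining with the derivative bounds \eqn{phi-est} $|\nabla\phi_{k}|\lec (\diam R_{j})^{-1}$ and $|D^{2}\phi_{k}|\lec(\diam R_{j})^{-2}$ plus the bounded overlap \eqn{bounded-overlap} of the $2R_{k}$, all three kinds of terms contribute at most $\ve|A_{Q(S)}'|/\diam R_{j}$ in absolute value.

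Inserting this bound back into Taylor gives $\Omega_{F_{S}}(2Q)^{2}|Q|\lec_{d}\ve^{2}|A_{Q(S)}'|^{2}(\diam Q/\diam R_{j})^{2}|Q|$. Summing over $Q\subseteq R_{j}$ dyadically: at scale $\ell(Q)=2^{-n}\ell(R_{j})$ there are $2^{nd}$ subcubes, each contributing a factor $2^{-2n}\cdot 2^{-nd}|R_{j}|$, giving the geometric series
\[
\sum_{Q\subseteq R_{j}}\Omega_{F_{S}}(2Q)^{2}|Q|\lec_{d}\ve^{2}|A_{Q(S)}'|^{2}|R_{j}|\sum_{n\geq 0}2^{-2n}\lec_{d}\ve^{2}|A_{Q(S)}'|^{2}|R_{j}|.
\]
Dividing by $|R_{j}|$ gives \eqn{sumOinRj}.

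The main obstacle is the careful bookkeeping of which $R_{k}$ can contribute to $F_{S}$ on $2Q$ and verifying the applicability of \Lemma{Ai-Aj} to all of them uniformly; the crucial geometric fact is that the Whitney-type separation $\dist(R_{j},z(S))\gtrsim \diam R_{j}$ keeps $2Q$ inside the smooth region and forces every relevant $R_{k}$ to have $2R_{k}\cap 2R_{j}\neq\emptyset$, so that the difference estimates from \Lemma{Ai-Aj} carry over with a uniform constant. Once that is in place, the Taylor estimate automatically beats the naive $1/\diam Q$ blow-up and delivers the required square-summability.
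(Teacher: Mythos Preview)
Your proposal is correct and follows essentially the same route as the paper: bound $\Omega_{F_S}(2Q)$ via the second-order Taylor remainder, estimate $|D^2 F_S|\lec_d \ve|A_{Q(S)}'|/\diam R_j$ by subtracting a fixed affine map (you anchor at $A_{Q_j}$, the paper anchors at $A_{Q_i}$ for $R_i\ni y$; since $2Q\subseteq 2R_j$ forces $2R_i\cap 2R_j\neq\emptyset$, these are equivalent), invoke \Lemma{Ai-Aj} together with \eqn{phi-est}, \eqn{Ri<2Rj}, and \eqn{bounded-overlap}, and then sum the resulting geometric series in $\ell(Q)/\ell(R_j)$. Your verification that $2Q\subseteq 2R_j\subseteq \bR^d\setminus\cnj{z(S)}$ and that every contributing $R_k$ satisfies $2R_k\cap 2R_j\neq\emptyset$ is exactly the bookkeeping the paper does implicitly.
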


\begin{proof}
For normed vector spaces spaces $U,V$, let $\cL(U,V)$ denote the set of bounded linear transformations from $U$ into $V$ and write $\cL(U)=\cL(U,U)$. Then $\cL(U,V)$ is also a normed space with the operator norm, which we will also denote $|\cdot|$. For vectors $u,v\in \bR^{d}$, $u\otimes v\in \cL(\bR^{d})$ is the linear transformation defined by $(u\otimes v)(x)=\ip{v,x}u$; for $A\in \bR^{d}$, $A\otimes v,v\otimes A\in \cL(\bR^{d},\cL(\bR^{d}))$ are the linear transformations $(A\otimes v)(x)=\ip{v,x}A$ and $(v\otimes A)(x)=v\otimes (A(x))$ respectively.

Let $y\in  \bR^{d}\backslash\cnj{z(S)}$. Since $F_{S}|_{\bR^{d}\backslash\cnj{z(S)}}$ is smooth, 

\begin{equation}
|D^2F_{S}(y)|=
 \sup_{|u|=|v|=1} \av{\ps{\sum_{m,n=1}^{d} u_{m}v_{n}\frac{\d^2 F_{S,l}}{\d x_{m} \d x_{n}}(y)}_{l=1}^{D}}
\label{e:DF^2}
\end{equation}
where $F_{S,l}$ is the $l$th component of the vector function $F_{S}$ and $D^{2}F_{S}(y)\in \cL(\bR^{d},\cL(\bR^{d}))$ is the derivative of the map $y\mapsto DF_{S}(y)\in \cL(\bR^{d},\bR^{D})$ at $y$, (so above, $|\cdot|$ also denotes this operator norm). Let $R_{i}$ be such that $y\in R_{i}$. Then, if $A$ denotes the first order Taylor approximation to $F_{S}$ at $x=x_{Q}$, then 
\begin{align}
\Omega_{F(S)} & (2Q)\diam 2Q
 \leq \sup_{|u|=1\atop y\in 2Q}u\cdot (F_{S}(y)-A(y)) =\sup_{|u|=1\atop y\in 2Q}\sum_{l=1}^{D}u_{l} (F_{S,l}(y)  -A(y)) \notag \\
& \sup_{|u|=1\atop y\in 2Q}\sum_{l=1}^{D} \int_{0}^{1}\sum_{m=1}^{d}u_{l}\frac{\d (F_{S,l}-A)}{\d x_{m}}(x+t(y-x))(y_{m}-x_{m})dt \notag \\
& \sup_{|u|=1\atop y\in 2Q}\sum_{l=1}^{D}  \int_{0}^{1}\int_{0}^{1}\sum_{m,n=1}^{d}t u_{l}\frac{\d^{2} F_{S,l}}{\d x_{m} \d x_{n}}(x+st(y-x))(y_{m}-x_{m})(y_{n}-x_{n}) dtds \notag \\
& \leq  \sup_{y\in 2Q}  \int_{0}^{1}\int_{0}^{1}t\av{\ps{\sum_{m,n=1}^{d} \frac{\d^{2} F_{S,l}}{\d x_{m} \d x_{n}}(x+st(y-x))(y_{m}-x_{m})(y_{n}-x_{n})}_{l=1}^{D}} dtds \notag \\
& \stackrel{\eqn{DF^2}}{\leq } (\diam Q)^{2}\sup_{y\in 2Q}|D^{2}F_{S}(y)|.
\label{e:taylor2}
\end{align}

For $y\in R_{i}$, if $\phi_{j}(y)\neq 0$, then $y$ is also in $2R_{j}$, and so we may use \Lemma{Ai-Aj} and the fact that $\d^{\alpha}\phi_{j}\lec_{d,\alpha}(\diam R_{j})^{-|\alpha|}$ to estimate
\begin{align}
| D^{2}  F_{S}(y)|
& =\av{\sum_{j}\ps{2A_{Q_{j}}' \otimes \grad\phi_{j}(y)+A_{Q_{j}}(y)\otimes D^{2}\phi_{j}(y)}} \notag \\
&\hspace{-5pt}  \stackrel{\eqn{phi}}{=}\av{\sum_{j}\ps{2(A_{Q_{j}}'-A_{Q_{i}}')  \otimes \grad\phi_{j}(y) + (A_{Q_{j}}(y)-A_{Q_{i}}(y))\otimes D^{2}\phi_{j}(y)}} \notag \\
& \stackrel{\eqn{Ai-Aj1} \atop \eqn{Ai-Aj2}}{\lec_{d}} \sum_{j}\ps{ \ve|A_{Q(S)}'|\grad \phi_{j}(y)|+\ve|A_{Q(S)}'| \diam Q_{i} |D^{2}\phi_{j}(y)|} \notag  \\
& \stackrel{\eqn{phi-est}}{\lec_{d}} \sum_{y\in 2R_{j}}\ps{ \frac{\ve|A_{Q(S)}'|}{\diam R_{j}}+\frac{\ve|A_{Q(S)}'| \diam Q_{i}}{(\diam R_{i})^{2}}} 
\stackrel{\eqn{bounded-overlap}\atop\eqn{Q<R}}{\lec}_{d} \frac{\ve|A_{Q(S)}'|}{\diam R_{i}}.
\label{e:taylor0}
\end{align}

Thus,
\begin{align*}
\sum_{Q\subseteq R_{i}}\Omega_{F_{S}}(2Q)^{2}|Q|
& \stackrel{\eqn{taylor2}}{\leq} \sum_{Q\subseteq R_{i}} (\diam Q)^{2}\ps{\sup_{y\in 2Q}|D^{2}F_{S}(y)|}^{2} |Q|\\
& \stackrel{\eqn{taylor0}}{\lec_{d}} \sum_{Q\subseteq R_{i}} \ps{\frac{\ve |A_{Q(S)}'| \diam Q}{\diam R_{i}}}^{2}|Q|\\
& =\ve^{2}|A_{Q(S)}'|^{2}\sum_{n=0}^{\infty}\sum_{Q\subseteq R_{i} \atop \ell(Q)=2^{-n}\ell(R_{i})}2^{-2n}|Q| 
 \lec \ve^{2}|A_{Q(S)}'|^{2}|R_{i}|
\end{align*}

\end{proof}

Define
\[B_{S}=B(x_{Q(S)},3\diam Q(S)).\]

\begin{lemma}
If $\dist(x,Q(S))\geq 2\diam Q(S)$, then $F_{S}(x)=A_{Q(S)}(x)$. In particular, if $2Q\cap B_{S}=\emptyset$, then $\Omega_{F_{S}}(2Q)=0$.
\label{l:F=AQS}
\end{lemma}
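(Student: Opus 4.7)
The plan is to show that for $x$ with $\dist(x,Q(S))\geq 2\diam Q(S)$, every Whitney cube $R_j$ contributing to the definition of $F_S$ at $x$ is forced to be ``large enough'' that \Lemma{david-lemma}(7) gives $Q_j=Q(S)$; then the formula for $F_S$ collapses by the partition of unity property \eqn{phi}. The second statement is a quick consequence of the first.

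First I would observe that $z(S)\subseteq Q(S)$, so the hypothesis forces $x\notin \cnj{z(S)}$, and therefore \eqn{FS} reads
\[
F_S(x)=\sum_j A_{Q_j}(x)\phi_j(x),
\]
with the sum effectively restricted to those $j$ for which $x\in \supp\phi_j\subseteq 2R_j$. The next step is a two-sided estimate on $D_S(x)$. Since every $Q\in S$ satisfies $Q\subseteq Q(S)$, we have $\dist(x,Q)\geq \dist(x,Q(S))$, and so
\[
D_S(x)=\inf_{Q\in S}\{\dist(x,Q)+\diam Q\}\geq \dist(x,Q(S))\geq 2\diam Q(S).
\]
On the other hand, for any $j$ with $x\in 2R_j$, picking $y\in R_j$ closest to $x$ gives $|x-y|\leq \diam R_j$, and by the $1$-Lipschitz property of $D_S$ together with the upper bound in \eqn{RD},
\[
D_S(x)\leq D_S(y)+|x-y|\leq 60\diam R_j+\diam R_j = 61\diam R_j.
\]
Combining these yields $\diam R_j\geq \tfrac{2}{61}\diam Q(S)>\tfrac{1}{60}\diam Q(S)$, so \Lemma{david-lemma}(7) forces $Q_j=Q(S)$. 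Using $\sum_j\phi_j\equiv 1$ on $\bR^d\setminus \cnj{z(S)}$ from \eqn{phi}, the sum collapses to $A_{Q(S)}(x)$.

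For the ``in particular'' clause, if $2Q\cap B_S=\emptyset$, then for every $y\in 2Q$ one has $|y-x_{Q(S)}|\geq 3\diam Q(S)$, and since $x_{Q(S)}\in Q(S)$ with all of $Q(S)$ within $\diam Q(S)$ of its center,
\[
\dist(y,Q(S))\geq |y-x_{Q(S)}|-\diam Q(S)\geq 2\diam Q(S).
\]
The first part applied pointwise on $2Q$ then gives $F_S\equiv A_{Q(S)}$ on $2Q$, so taking $A=A_{Q(S)}$ as the competitor in the infimum defining $\Omega_{F_S}(2Q)$ yields $\Omega_{F_S}(2Q)=0$.

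There is no real obstacle; the argument is bookkeeping with the Whitney estimates from \Lemma{david-lemma}. The only point requiring care is the numerical slack that $\tfrac{2}{61}>\tfrac{1}{60}$, which is precisely what allows the lower bound $D_S(x)\geq 2\diam Q(S)$ to meet the threshold in part (7) of that lemma.
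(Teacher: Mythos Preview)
Your proof is correct and follows essentially the same approach as the paper: both arguments show that any $R_j$ with $x\in 2R_j$ satisfies $\diam R_j\geq \tfrac{1}{60}\diam Q(S)$, then invoke \Lemma{david-lemma}(7) and the partition of unity to collapse the sum. The only cosmetic difference is that the paper routes through the cube $R_i$ containing $x$ and the neighbor comparison \eqn{Ri<2Rj}, whereas you obtain the size bound on $R_j$ directly via the Lipschitz property of $D_S$; either path works.
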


\begin{proof}
Let $x\in R_{i}$. If $\dist(x,Q(S))\geq 2\diam Q(S)$, then
\[\diam R_{i}\stackrel{\eqn{RD}}{\geq} \frac{1}{60}D_{S}(x) \geq \frac{1}{60}\dist(x,Q(S))\geq \frac{1}{30}\diam Q(S),\]
so if $x\in 2R_{j}$, then $2R_{i}\cap 2R_{j}\neq\emptyset$, and
\[\diam R_{j}\stackrel{\eqn{Ri<2Rj}}{\geq} \frac{1}{2}\diam R_{i}\geq \frac{1}{60}\diam Q(S),\]
hence $Q_{j}=Q(S)$ by \Lemma{david-lemma}. Since this holds for all $j$ with $2R_{j}\ni x$, we know  that
\[F_{S}(x)=\sum_{j} A_{Q_{j}}(x)\phi_{j}(x) = A_{Q(S)}(x) \mbox{ if }\dist(x,Q(S))\geq 2\diam Q(S).\] 
If $2Q\cap B_{S}=\emptyset$, then $\dist(x,Q(S))\geq 2\diam Q(S)$ for all $x\in 2Q$, hence $F_{S}|_{2Q}\equiv A_{Q(S)}$, so that $\Omega_{F_{S}}(2Q)=0$.  
\end{proof}

\begin{proof}[Proof of \Lemma{Delta2}]
We estimate
\begin{align*}
\sum_{Q\in \Delta_{2}}\Omega_{F_{S}}(2Q)^{2}|Q| 
& =\sum_{j} \sum_{Q\subseteq R_{j}}\Omega_{F_{S}}(2Q)^{2}|Q|  =\sum_{2R_{j}\cap B_{S}\neq \emptyset} \sum_{Q\subseteq R_{j}}\Omega_{F_{S}}(2Q)^{2}|Q| \\
& \stackrel{\eqn{sumOinRj}}{\lec_{d}} \ve^{2}|A_{Q(S)}'|^{2}  \sum_{2R_{j}\cap B_{S}\neq \emptyset}  |R_{j}|.
\end{align*}

The lemma will follow from the previous inequality once we verify
\begin{equation}
 \sum_{2R_{j}\cap B_{S}\neq \emptyset}  |R_{j}| \lec_{d} |Q(S)|.
 \label{e:sumRj}
 \end{equation}
\label{l:3Rin2BS}

If $2R_{j}\cap B_{S}\neq\emptyset$, then $\dist(2R_{j},Q(S))\leq 3\diam Q(S)$, so that
 \begin{align*}
 \diam 2R_{j}
&  =2\diam R_{j}
 \stackrel{\eqn{RD}}{\leq} \frac{1}{10}D_{S}(R_{j})
 \leq \frac{1}{10}(\dist(R_{j},Q(S))+\diam Q(S))\\
&  \leq \frac{1}{10}(\diam R_{j}+\dist(2R_{j},Q(S))+\diam Q(S))\\
&  \leq \frac{1}{10}(\diam R_{j}+4\diam Q(S))
  =\frac{1}{20}\diam 2R_{j}+\frac{2}{5}\diam Q(S)
\end{align*}
 which implies
 \begin{equation}
 \diam 2R_{j}\leq \frac{20}{19}\cdot \frac{2}{5}\diam Q(S)<\diam Q(S) \mbox{ if }2R_{j}\cap B_{S}\neq\emptyset
 \label{e:3RcapBS}
 \end{equation}
 hence, 
 \[2R_{j}\subseteq B(x_{Q(S)},3\diam Q(S)+\diam 2R_{j})\subseteq B(x_{Q(S)},4\diam Q(S)) \subseteq 2B_{S}.\]
This and the disjointness of the $R_{j}$ imply
\[
 \sum_{2R_{j}\cap B_{S}\neq \emptyset}  |R_{j}|\leq |2B_{S}|\lec_{d} |Q(S)|,
 \]
 which proves \eqn{sumRj}.

\end{proof}

\subsubsection{$\Delta_{3}$}

Finally, we estimate the third sum in \eqn{omega-parts}. 

\begin{lemma}
\begin{equation}
\sum_{Q\in \Delta_{3}}\Omega_{F_{S}}(2Q)^{2}|Q|\lec_{d} \ve^{2}|A_{Q(S)}'|^{2}|Q(S)|.
\end{equation}
\label{l:Delta3}
\end{lemma}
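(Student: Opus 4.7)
My plan is to exploit that for $Q \in \Delta_3$, $\diam Q > \diam Q(S)$, so $2Q$ is much larger than the support of $F_S - A_{Q(S)}$, which by \Lemma{F=AQS} lies in $B_S$. If $2Q \cap B_S = \emptyset$ then $\Omega_{F_S}(2Q) = 0$, so I only need to sum over $Q \in \Delta_3$ meeting $B_S$. For any such $Q$, using $A_{Q(S)}$ as a test affine map gives $\Omega_{F_S}(2Q)^{2}|Q| \lec_{d} (\diam Q)^{-2}\int_{B_S}|F_S - A_{Q(S)}|^{2}$. Since at each dyadic scale $>\ell(Q(S))$ only $O_d(1)$ cubes in $\Delta_3$ meet $B_S$, summing $(\diam Q)^{-2}$ geometrically yields a factor $\lec_{d} \diam Q(S)^{-2}$. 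This reduces the lemma to the single estimate
\[
\int_{B_S}|F_S - A_{Q(S)}|^{2} \lec_{d} \ve^{2} |A_{Q(S)}'|^{2}\diam Q(S)^{2}|Q(S)|.
\]

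To prove this, I will split $B_S$ into $\cnj{z(S)}$ and its complement. On $\cnj{z(S)} \subseteq Q(S) \subseteq MQ(S)$, $F_S = f$, and since $Q(S) \in S$ forces $\omega_f(MQ(S))<\ve$, the contribution is controlled by $\int_{MQ(S)}|f - A_{Q(S)}|^{2} = |MQ(S)|\omega_f(MQ(S))^{2}|A_{Q(S)}'|^{2}(\diam MQ(S))^{2} \lec_{d} \ve^{2}|A_{Q(S)}'|^{2}\diam Q(S)^{2}|Q(S)|$. On the complement, I aim for a pointwise bound $\|F_S - A_{Q(S)}\|_{L^{\infty}(B_S \setminus \cnj{z(S)})} \lec_{d} \ve|A_{Q(S)}'|\diam Q(S)$, which then integrates over a set of measure $\lec_{d}|Q(S)|$ to yield the claim. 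Writing $F_S - A_{Q(S)} = \sum_k \phi_k(A_{Q_k} - A_{Q(S)})$ and using bounded overlap \eqn{bounded-overlap}, it suffices to bound $|A_{Q_k}(x) - A_{Q(S)}(x)|$ for each $x \in 2R_k$. I will do this by telescoping along the chain $Q_k = P_0 \subsetneq P_1 \subsetneq \cdots \subsetneq P_N = Q(S)$ of dyadic ancestors of $Q_k$ (all in $S$ by coherence), applying \Lemma{Q1-Q2} to each adjacent pair with $R = P_i$ to obtain $|A_{P_i}(x) - A_{P_{i+1}}(x)| \lec_{d} \ve |A_{Q(S)}'|(\dist(x,P_i) + \diam P_i)$.

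The main obstacle is keeping the telescoped sum $\sum_{i=0}^{N-1}(\dist(x,P_i) + \diam P_i) \lec_{d} \diam Q(S)$, without a logarithmic loss in $N = \log_{2}(\diam Q(S)/\diam Q_k)$. The sum of diameters is geometric: $\sum_{i=0}^{N-1}\diam P_i = \diam Q_k(2^N - 1) \leq \diam Q(S)$. For the distances, I will use that $x \in 2R_k$ together with \eqn{dxQR} and \eqn{Q<R} to conclude $\dist(x,Q_k) \lec_{d} \diam Q_k$; since each $P_i \supseteq Q_k$, $\dist(x,P_i) \leq \dist(x,Q_k) \lec_{d} \diam Q_k$ for every $i$. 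The distance sum is then $\lec_{d} N\diam Q_k = N \cdot 2^{-N}\diam Q(S) \leq \diam Q(S)$, using $N/2^N \leq 1$ for $N \geq 0$. This cancellation between the depth $N$ of the chain and the geometric decay of the ancestors' sizes is the essential trick that prevents a log factor and keeps the estimate linear in $\ve$.
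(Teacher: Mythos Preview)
Your reduction to the single estimate $\int_{B_S}|F_S-A_{Q(S)}|^{2}\lec_{d}\ve^{2}|A_{Q(S)}'|^{2}(\diam Q(S))^{2}|Q(S)|$ is exactly what the paper does (this is \eqn{F-AS/Q}). Where you diverge is in how you prove this estimate. The paper splits the integrand via the triangle inequality as $|F_S-f|+|f-A_{Q(S)}|$, controls the second piece by $\omega_{f}(MQ(S))<\ve$, and controls the first piece by writing $F_S-f=\sum_{j}\phi_{j}(A_{Q_{j}}-f)$ and invoking $\omega_{f}(MQ_{j})<\ve$ for each $j$, then summing over the $R_{j}$ that meet $B_S$ using \eqn{sumRj}. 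You instead split the \emph{domain} into $\cnj{z(S)}$ (where $F_S=f$, handled as in the paper) and its complement, and on the complement you go after a pointwise $L^{\infty}$ bound on $F_S-A_{Q(S)}=\sum_{k}\phi_{k}(A_{Q_{k}}-A_{Q(S)})$ by telescoping $A_{Q_{k}}-A_{Q(S)}$ along the chain of dyadic ancestors in $S$ and using \Lemma{Q1-Q2} at each step. Your telescoping argument is correct and the $N\cdot 2^{-N}\leq 1$ device genuinely avoids the logarithmic loss (in fact one can streamline it slightly: since $\dist(x,Q_{k})\lec_{d}\diam Q_{k}\leq \diam P_{i}$, each term $\dist(x,P_{i})+\diam P_{i}\lec_{d}\diam P_{i}$ and the geometric sum of diameters already suffices). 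The payoff of your route is a stronger pointwise bound $\|F_S-A_{Q(S)}\|_{L^{\infty}(B_S\setminus\cnj{z(S)})}\lec_{d}\ve|A_{Q(S)}'|\diam Q(S)$; the paper's route is shorter and only uses the $\omega_{f}$ values directly, without any chain argument.

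One small correction: to get $\dist(x,Q_{k})\lec_{d}\diam Q_{k}$ from $\dist(x,Q_{k})\lec_{d}\diam R_{k}$ you need $\diam R_{k}\lec\diam Q_{k}$, which is \eqn{R<Q}, not \eqn{Q<R} (the latter goes the wrong way). The hypothesis $\diam R_{k}\leq 2\diam Q(S)$ required for \eqn{R<Q} holds because $x\in 2R_{k}\cap B_{S}$ forces $\diam 2R_{k}<\diam Q(S)$ by \eqn{3RcapBS}; you should make this explicit.
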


\begin{proof}
 Again, set  $B_{S}=B(x_{Q(S)},3\diam Q(S))$. For $n\geq 0$, let
\[B_{n}=\{Q\in \Delta_{3}: 2Q\cap B_{S}\neq\emptyset, \ell(Q)=2^{n}\ell(Q(S))\}.\]
Then by \Lemma{F=AQS},
\begin{align*}
\sum_{Q\in \Delta_{3}} \Omega_{F_{S}}(2Q)^{2}|Q|
& =\sum_{Q\in \Delta_{3} \atop  2Q\cap B_{S}\neq\emptyset}  \Omega_{F_{S}}(2Q)^{2}|Q|
\leq \sum_{Q\in \Delta_{3} \atop  2Q\cap B_{S}\neq\emptyset}  \avint_{2Q}\ps{\frac{|F_{S}-A_{Q(S)}|}{\diam 2Q}}^{2}|Q|\\
& = 2^{-d-2}\sum_{n\geq 0}\sum_{Q\in B_{n}}  \int_{2Q} \ps{\frac{|F_{S}-A_{Q(S)}|}{\diam Q}}^{2}
 \lec_{d}\sum_{n\geq 0} \int_{\bR^{d}} \ps{\frac{|F_{S}-A_{Q(S)}|}{2^{n} \diam Q(S)}}^{2}.\\
\end{align*}
We claim that
\begin{equation}  \int_{\bR^{d}} \ps{\frac{|F_{S}-A_{Q(S)}|}{\diam Q(S)}}^{2} \lec_{d} \ve^{2}|A_{Q(S)}'|^{2}|Q(S)|
\label{e:F-AS/Q}
\end{equation}
after which the lemma will follow from
\begin{align*}
\sum_{Q\in \Delta_{3}} \Omega_{F_{S}}(2Q)^{2}|Q|
& \lec_{d}\sum_{n\geq 0} \int_{\bR^{d}} \ps{\frac{|F_{S}-A_{Q(S)}|}{2^{n} \diam Q(S)}}^{2}\\
& \stackrel{\eqn{F-AS/Q}}{\lec_{d}}\sum_{n\geq 0} 2^{-2n} \ve^{2}|A_{Q(S)}'|^{2}|Q(S)|  \lec \ve^{2}|A_{Q(S)}'|^{2}|Q(S)|.
\end{align*}
Now we prove \eqn{F-AS/Q}. By \Lemma{F=AQS} and the $L^{2}$ triangle inequality
\begin{multline}
\ps{\int \ps{\frac{|F_{S}-A_{Q(S)}|}{\diam Q(S)}}^{2} }^{\frac{1}{2}}
 =\ps{\int_{B_{S}} \ps{\frac{|F_{S}-A_{Q(S)}|}{\diam Q(S)}}^{2}}^{\frac{1}{2}} \\
 \leq  \ps{\int_{B_{S}} \ps{\frac{|F_{S}-f|}{\diam Q(S)}}^{2}}^{\frac{1}{2}} 
 + \ps{\int_{B_{S}} \ps{\frac{|f-A_{Q(S)}|}{\diam Q(S)}}^{2}}^{\frac{1}{2}} .\\
 \label{e:F-AS/Qparts}
 \end{multline}
 We'll estimate the two parts separately. The second part we may bound as follows:
 \begin{multline*}
 \int_{B_{S}} \ps{\frac{|f-A_{Q(S)}|}{\diam Q(S)}}^{2}
 \leq |A_{Q(S)}'|^{2} M^{2} |MQ(S)|  \avint_{MQ(S)}\ps{\frac{|f-A_{Q(S)}|}{|A_{Q(S)}'|\diam MQ(S)}}^{2} \\
 = |A_{Q(S)}'|^{2}M^{d+2}|Q(S)|\omega_{f}(MQ(S))^{2}<|A_{Q(S)}'|^{2}M^{d+2}|Q(S)|\ve^{2}.
 \end{multline*}
 since $Q(S)\in S$ and hence $\omega_{f}(MQ(S))<\ve$ by definition. For the first part of \eqn{F-AS/Qparts}, recall that if $2R_{j}\cap B_{S}\neq\emptyset$, then \eqn{3RcapBS} implies $\diam R_{j}< \diam Q(S)$, so \Lemma{2Rcap2R} implies $R_{j}\subseteq MQ_{j}$. This, \Lemma{F=AQS}, and the fact that $\supp \phi_{j}\subseteq 2R_{j}$ (which have bounded overlap by \Lemma{david-lemma}) imply
 
\begin{multline}
\int_{B_{S}} \ps{\frac{|F_{S}-f|}{\diam Q(S)}}^{2}
\stackrel{\eqn{bounded-overlap}}{\lec_{d}}   \sum_{j}\int_{B_{S}}\ps{\frac{|A_{Q_{j}}-f|}{\diam Q(S)}\phi_{j}}^{2} \\
  \leq \sum_{2R_{j}\cap B_{S}\neq\emptyset} |A_{Q_{j}}'|^{2}\ps{\frac{\diam Q_{j}}{\diam Q(S)}}^{2}M^{2} |MQ_{j}| \avint_{MQ_{j}} \ps{\frac{|A_{Q_{j}}-f|}{|A_{Q_{j}}'|\diam MQ_{j}}}^{2}\\
  = M^{d+2} \sum_{2R_{j}\cap B_{S}\neq\emptyset} |A_{Q_{j}}'|^{2}\ps{\frac{\diam Q_{j}}{\diam Q(S)}}^{2}|Q_{j}|\omega_{f}(MQ_{j})^{2}
  \label{e:intBSFS-F}
  \end{multline}
  Next, recall from \eqn{Q<R}, the definition of $\Delta_{3}$, and \eqn{3RcapBS} that if $2R_{j}\cap B_{S}\neq\emptyset $, then
\begin{equation}
\diam Q(S) \stackrel{\Delta_{3}}{<}\diam Q_{j}\stackrel{\eqn{Q<R}}{\leq} 180\diam R_{j}\stackrel{\eqn{3RcapBS}}{<} 180\diam Q(S).
\label{e:RjQS}
\end{equation}
Moreover, since $Q_{j}\in S$, we know $\omega_{f}(MQ_{j})<\ve$ and $|A_{Q_{j}}'|\leq (1+\tau)|A_{Q(S)}'|$. These facts and \eqn{intBSFS-F} imply that 
\begin{align*}
\int_{B_{S}} \ps{\frac{|F_{S}-f|}{\diam Q(S)}}^{2}
& \lec_{d} \sum_{2R_{j}\cap B_{S}\neq\emptyset} |A_{Q(S)}'|^{2}\ve^{2} |Q_{j}| \leq 180^{d} \sum_{2R_{j}\cap B_{S}\neq\emptyset} |A_{Q(S)}'|^{2}\ve^{2} |R_{j}| \\
& \stackrel{\eqn{sumRj} \atop \eqn{RjQS}}{\lec_{d}} |A_{Q(S)}'|^{2}\ve^{2}|Q(S)|.
\end{align*}

\end{proof}

\subsection{Finding a bi-Lipschitz part}

In this section, we focus on the following theorem.

\begin{theorem}
Let $Q_{0}\in\Delta(\bR^{d})$ and $f:MQ_{0}\rightarrow\bR^{D}$ be $\eta$-quasisymmetric such that 
\[\sum_{Q\subseteq Q_{0}} \omega_{f}(MQ)^{2}|Q|\leq C_{M}|Q_{0}|.\]
Then for all $\theta>0$, there is $L=L(\eta,\theta,D,C_{M})$ and $E\subseteq Q_{0}$ such that $|E|\geq (1-\theta)|Q|$ and $\ps{\frac{\diam f(Q_{0})}{\diam  Q_{0}}}^{-1} f|_{E}$ is $L$-bi-Lipschitz. 
\label{t:1implies4}
\end{theorem}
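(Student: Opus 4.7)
The plan is to combine the corona decomposition of \Theorem{corona} with \Proposition{strong-semmes}. First, fix $\tau\in(0,1)$ small (depending on $\theta$) and set $\ve=\ve_{0}(\eta,D,\tau,C_{M})$ from \Theorem{corona}. By a standard stopping-time reduction---subdividing $Q_{0}$ into maximal subcubes where $\omega_{f}(MQ)<\ve$ and handling the residual via the bad-cube measure bound $\sum_{Q\in\cB}|Q|\le C_{M}\ve^{-2}|Q_{0}|$---we may assume $Q_{0}=Q(S_{0})$ for a top $(\ve,\tau)$-region $S_{0}\in\cF$. Then \Lemma{epsilondelta} gives $|A_{Q_{0}}'|\sim \diam f(Q_{0})/\diam Q_{0}$, which matches the rescaling in the statement.

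The second step is a pointwise affine approximation on $z(S_{0})$. For $x\in z(S_{0})$ and $Q_{n}(x)\in S_{0}$ the dyadic cube of side $2^{-n}\ell(Q_{0})$ containing $x$, \Lemma{epsilondelta} gives $f(x)=\lim_{n}A_{Q_{n}(x)}(x)$, while telescoping via \Lemma{Q1-Q2} applied to consecutive pairs $(Q_{n}(x),Q_{n+1}(x))$, combined with the stopping-time budget $\sum_{n}\omega_{f}(MQ_{n}(x))^{2}<\ve^{2}$ and the $(\ve,\tau)$-region comparison $|A_{Q_{n}(x)}'|\sim|A_{Q_{0}}'|$ from \eqn{A<AS}, yields
\[
|f(x)-A_{Q_{0}}(x)|\lec \ve\,|A_{Q_{0}}'|\diam Q_{0}\qquad\text{for every }x\in z(S_{0}).
\]

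The third step extends this to a large subset of $Q_{0}$ by iterating on minimal cubes. By \Lemma{m2-estimate}, $|m_{2}(S_{0})|<|Q_{0}|/2$, while the cubes in $m_{1}(S_{0})$ have total measure controlled as in the proof of \eqn{F2<CM/e^2Q0} by $\ve^{-2}\sum_{R\in S_{0}}\omega_{f}(MR)^{2}|R|$. Restart the construction inside each non-bad $Q\in m(S_{0})$ (moving to a child that tops a new $(\ve,\tau)$-region), producing a tree of regions; the $z(S)$-parts cover a $(1-\theta/2)$-fraction of $Q_{0}$ provided the $1/2$-factor in \Lemma{m2-estimate} drives a convergent geometric series and the aggregated $m_{1}$ contribution $\lec\ve^{-2}\sum_{Q\subseteq Q_{0}}\omega_{f}(MQ)^{2}|Q|\le C_{M}\ve^{-2}|Q_{0}|$ is $\le\theta/4$ (forcing a quantitative lower bound on $\ve$ in terms of $\theta$ and $C_{M}$). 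Within the tree, the local approximating affine map at any stage still satisfies $|A'|\sim|A_{Q_{0}}'|$, by \eqn{A<AS} applied at each generation.

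On the resulting good set $E_{0}\subseteq Q_{0}$ of measure $\ge(1-\theta/2)|Q_{0}|$, $f$ is uniformly $O(\ve)|A_{Q_{0}}'|\diam Q_{0}$-close to an affine map of rank $d$ and derivative $\sim|A_{Q_{0}}'|$. To upgrade this to bi-Lipschitz control, let $P$ be the orthogonal projection onto the $d$-plane $A_{Q_{0}}'(\bR^{d})$ (which is $d$-dimensional by the lower diameter bound in \Lemma{epsilondelta}); then $P\circ f\colon E_{0}\to\bR^{d}$ is $\eta'$-quasisymmetric with $\eta'$ depending only on $\eta$ and $\ve$, since $P\circ f$ differs from the injective affine $A_{Q_{0}}$ by at most $O(\ve)|A_{Q_{0}}'|\diam Q_{0}$ on $E_{0}$. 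Applying \Proposition{strong-semmes} to $P\circ f$ with density parameter $\delta=1-\theta/2$ and loss parameter $\rho=\theta/(2-\theta)$ produces $E\subseteq E_{0}$ with $|E|\ge(1-\theta)|Q_{0}|$ on which $P\circ f|_{E}$ is bi-Lipschitz, and the closeness of $f$ to $P\circ f$ then makes $(\diam f(Q_{0})/\diam Q_{0})^{-1}f|_{E}$ $L$-bi-Lipschitz with $L=L(\eta,\theta,D,C_{M})$. The main obstacle I anticipate is the bookkeeping coupling $\ve$, $\tau$, and the recursion depth so that the two measure losses ($\theta/2$ for affine approximation and $\theta/2$ for the Semmes step) both hold with the same final $L$; this is what dictates the choice $\ve=\ve_{0}(\eta,D,\tau,C_{M})$ and $\tau=\tau(\theta)$ at the outset.
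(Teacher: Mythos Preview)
Your proposal has a genuine gap in the fourth paragraph. You claim that on the good set $E_{0}$, the map $f$ is uniformly $O(\ve)|A_{Q_{0}}'|\diam Q_{0}$-close to a single affine map (namely $A_{Q_{0}}$). This is true on $z(S_{0})$, but once you iterate into minimal cubes and restart the construction, the local approximating affine map changes: on $z(S_{1})$ for a deeper region $S_{1}$, $f$ is close to $A_{Q(S_{1})}$, not to $A_{Q_{0}}$, and these maps can differ substantially---indeed, passing through a cube of $m_{2}(S_{0})$ forces $|A_{Q'}'-A_{Q_{0}}'|>\tau|A_{Q_{0}}'|$ for some child $Q'$. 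You correctly note that the derivative \emph{norms} stay comparable through the tree (this is exactly the content of the paper's \Lemma{betaN}, with constants $\beta^{\pm(N+1)}$ depending on the depth), but comparable norms do not make the affine maps themselves close, so there is no single $A$ approximating $f$ on all of $E_{0}$.

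Even granting that claim, the argument that $P\circ f$ is quasisymmetric on $E_{0}$ is insufficient. Closeness of $f$ to $A_{Q_{0}}$ at the single scale $\diam Q_{0}$ gives no control at small scales: for $x,y\in E_{0}$ with $|x-y|\ll\diam Q_{0}$, the error $O(\ve)|A_{Q_{0}}'|\diam Q_{0}$ swamps $|f(x)-f(y)|$, so the approximation says nothing about whether the projection $P$ collapses $f(x)-f(y)$. Quasisymmetry of $P\circ f$ would require $f(E_{0})$ to lie near the $d$-plane $A_{Q_{0}}'(\bR^{d})$ at \emph{every} scale, which you have not established (and which is false in general once several stopping-time regions are involved).

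The paper's proof avoids both issues by abandoning the single-affine-map picture and never invoking \Proposition{strong-semmes} here. It instead shows directly that $\diam f(MQ)/\diam MQ\in[\beta^{-N-1},\beta^{N+1}]$ for every cube $Q$ in the tree $\cM=T_{N}\cup\bigcup_{Q(S)\in T_{N}}S$, and then deduces bi-Lipschitzness on $E=\bigcup_{Q(S)\in T_{N}}z(S)$ from this scale-by-scale diameter control together with \Lemma{QS-comp}. Note also that your route through \Proposition{strong-semmes} would force $d\geq 2$, whereas the implication $(1)\Rightarrow(2)$ in \Theorem{main} is asserted for $d=1$ as well.
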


\begin{proof}[Proof of \Theorem{1implies4}]
Recall that $\omega_{f}$ is  invariant under dilations and translations in the domain of $f$ and under scaling of $f$ by a constant factor. Moreover, if $f$ is $\eta$-quasisymmetric, the map $x\mapsto rf(sx+b)$ is also $\eta$-quasisymmetric for any nonzero $r,s$ and any $b\in \bR^{d}$. Thus, it suffices to prove the theorem in the case  that  $\diam Q_{0}=\diam f(Q_{0})=1$ so $\diam f(Q_{0})/\diam Q_{0}=1$.

Let $\tau\in (0,1)$, $\delta<d^{-1/2}/4$, and 
\[0<\ve<\min\{\ve_{0}(\eta,D,\tau,C_{M}),\ve_{1}(\eta,d,\delta)\}\]
where $\ve_{1}$ is as in \Lemma{epsilondelta} and $\ve_{0}$ as in \Theorem{corona}. By \Theorem{corona}, we may partition $\Delta(Q_{0})$ into a set of ``bad" cubes $\cB$ and a collection of $(\ve,\tau)$-regions $\cF$ so that
\begin{equation}
\sum_{Q\in \cB} |Q|\leq \frac{C_{M}}{\ve^{2}} |Q_{0}|\;\;
\mbox{ and }
\sum_{S\in \cF_{S}}|Q(S)|\leq \ps{4+\frac{2^{d+1}C_{M}}{\ve^{2}}}|Q_{0}|.
\label{e:cF2}
\end{equation}
Let
\[T=\{Q(S):S\in \cF\}\cup \ps{\bigcup_{S\in \cF}m(S)}\cup \cB.\]
Observe that since, for each $S\in \cF$, the cubes in $m(S)$ have disjoint interiors, we know $\sum_{Q\in m(S)}|Q|\leq |Q(S)|$, and hence
\begin{equation}
\sum_{Q\in T} |Q|  = \sum_{Q\in\cB}|Q| + \sum_{S\in \cF}\ps{|Q(S)|+\sum_{Q\in m(S)}|Q|}  \stackrel{ \eqn{cF2}}{\leq} \ps{(1+2^{d+2})\frac{C_{M}}{\ve^{2}}+8}|Q_{0}|.
\label{e:sumQinT}
\end{equation}
Let $N$ be an integer. For $Q\in \Delta$, define
\[k(Q)=\#\{R\in T: R\supseteq Q\}, \;\;\; T_{N}=\{Q\in T:k(Q)\leq N\},\]
and
\[E=Q_{0}\backslash \ps{\bigcup_{Q\not\in T_{N}}Q}.\]
If $x\in E$, let $Q$ be the smallest cube in $T_{N}$ containing $x$. Then $Q=Q(S)$ for some $S\in \cF$, for otherwise, if $Q\in \cB$ or $Q\in m(S)$ for some $S\in \cF$, then the child $R$ of $Q$ containing $x$ is either of the form $Q(S')$ for some $S'\in \cF$ or is in $\cB$ and hence is also in $T$, but since $Q$ was minimal in $T_{N}$, $R\not\in T_{N}$, which means $k(R)\geq N+1$, implying $z\not\in E$, a contradiction. Thus,
\begin{equation}
E=\bigcup_{Q(S)\in T_{N}} z(S).
\label{e:E=UzS}
\end{equation}
Moreover,
\begin{align*}
|Q_{0}\backslash E| & =\av{\bigcup_{Q\in T_{N}}Q} 
 = \av{\bigcup_{k(Q)=N+1}Q}
=\int \sum_{k(Q)=N+1}\one_{Q}
\leq \int \frac{\sum_{Q\in T_{N+1}}\one_{Q}}{N+1} \\
& =\frac{\sum_{Q\in T_{N+1}}|Q|}{N+1}
\stackrel{\eqn{sumQinT}}{ \leq} \frac{\ps{(1+2C_{d})\frac{C_{M}}{\ve^{2}}+2C_{d}}}{N+1}|Q_{0}|<\theta|Q_{0}|
\end{align*}
if we set $N=\ceil{\theta^{-1}\ps{(1+2C_{d})\frac{C_{M}}{\ve^{2}}+2C_{d}}}$, so now it suffices to show that $f$ is bi-Lipschitz upon $E$.

Define 
\[\cM=T_{N}\cup \bigcup_{Q(S)\in T_{N}}S.\]

\begin{lemma}
Let $Q\in \cM$. Then
\begin{equation}
 \beta^{-N-1}\leq \frac{\diam f(Q)}{\diam Q} \leq \beta^{N+1} 
 \label{e:betaN}
 \end{equation}
where 
\[\beta=\max\ck{2,\eta(2),d^{\frac{1}{2}}\frac{1+2\sqrt{d}\delta}{1-2\sqrt{d}\delta }(1-\tau)^{-1}}.\]
\label{l:betaN}
\end{lemma}

\begin{proof}
First, we'll focus on the case $Q\in T_{N}$. Let $Q(j)\subsetneq Q(j-1)$ be any sequence of cubes in $T_{N}$ such that $k(Q(j))=j$ for $j=1,2,...,N$, so that $Q(1)=Q_{0}=[0,1]^{d}$. We claim that for $j=1,2,...,N$,
\begin{equation}
 \beta^{-j}\leq \frac{\diam f(MQ(j))}{\diam MQ(j)} \leq \beta^{j} .
 \label{e:betaj}
 \end{equation}
We'll prove this inductively using the following lemma:

\begin{lemma}(\cite[Proposition 10.8]{Heinonen}) If $\Omega\subseteq\bR^{d}$, $f:\Omega\rightarrow \bR^{D}$ is an $\eta$-quasisymmetric map, and $A\subseteq B$ are subsets such that $0<\diam A\leq \diam B<\infty$, then 
\begin{equation}
\frac{1}{2\eta\ps{\frac{\diam B}{\diam A}}} \leq \frac{\diam f(A)}{\diam f(B)}\leq \eta\ps{\frac{2\diam A}{\diam B}}.
\label{e:QS-comp}
\end{equation}
\label{l:QS-comp}
\end{lemma}

The lemma is stated more generally in \cite{Heinonen} for metric spaces, but this is all we'll need.

Let $1\leq j< N$ and assume we've shown $j$ satisfies \eqn{betaj} (also recall that we are assuming $\diam f(Q_{0})=\diam Q_{0}$, and so the $j=1$ case holds).
\begin{enumerate}
\item If $Q(j+1)\in \cB$ or $Q(j+1)=Q(S)$ for some $S\in \cF$, then $Q(j+1)$ is a child of $Q(j)$. Hence, $MQ(j+1)\subseteq MQ(j)$ and $\diam MQ(j)=2\diam MQ(j+1)$, so that by \eqn{QS-comp}
\begin{align*}
\frac{\diam f(MQ(j+1))}{\diam MQ(j+1)}
 & =2 \frac{\diam f(MQ(j+1))}{\diam MQ(j)}
 \geq 2  \frac{\ps{2\eta\ps{\frac{\diam MQ(j)}{\diam MQ(j+1)}}}^{-1}\diam f(MQ(j))}{\diam MQ(j)}\\
 & =\frac{2}{\eta(2)} \frac{\diam f(MQ(j))}{\diam MQ(j)} 
\geq \beta^{-1}  \frac{\diam f(MQ(j))}{\diam MQ(j)}\geq \beta^{-j-1}
\end{align*}
and since $MQ(j+1)\subseteq MQ(j)$,
\begin{align*}
\frac{\diam f(MQ(j+1))}{\diam MQ(j+1)}
& = \frac{2\diam f(MQ(j+1))}{\diam MQ(j)}
 \leq \beta \frac{\diam f(MQ(j))}{\diam MQ(j)}  \leq \beta^{j+1}.
\end{align*}
\item If $Q(j+1)\in m(S)$ for some $S$, then $Q(j)=Q(S)$, so in particular, $Q(j),Q(j+1)\in S$. By \Lemma{epsilondelta}, \Lemma{QS-comp}, and since $\frac{1}{1-\tau}>1+\tau$,
\begin{align*}
\frac{\diam f(MQ(j+1))}{\diam MQ(j+1)}
&  \leq   (1+2\sqrt{d}\delta) |A_{Q(j+1)}'|
 \leq   (1+2\sqrt{d}\delta) (1+\tau)|A_{Q(S)}'|\\
& \leq   \sqrt{d}\frac{1+2\sqrt{d}\delta }{1-2\sqrt{d}\delta }(1+\tau) \frac{\diam f(MQ(j))}{\diam MQ(j)} 
 \leq \beta\frac{\diam f(MQ(j))}{\diam MQ(j)}
  \leq \beta^{j+1}
\end{align*}
and
\begin{multline*}
  \frac{\diam f(MQ(j+1))}{\diam MQ(j+1)}
 \geq   d^{-\frac{1}{2}}(1-2\sqrt{d}\delta)  |A_{Q(j+1)}'| 
 \geq   d^{-\frac{1}{2}}(1-2\sqrt{d}\delta)  (1-\tau)|A_{Q(S)}'| \\
  \geq   d^{-\frac{1}{2}}\frac{1-2\sqrt{d}\delta }{1+2\sqrt{d}\delta}(1-\tau)\frac{\diam f(MQ(j))}{\diam MQ(j)}  
 \geq \beta^{-1}\frac{\diam f(MQ(j))}{\diam MQ(j)} \geq \beta^{-j-1}.
\end{multline*}
\end{enumerate}
This proves the induction step, and hence proves \eqn{betaj}.

Now we prove \eqn{betaN}. If $Q\in T_{N}$, this follows from \eqn{betaj}. If $Q\in S$ for some $Q(S)\in T_{N}$, let $Q(j)\in T_{N}$ be a nested chain of cubes so that $k(Q(j))=j$ for all $j<n:=k(Q(S)) $, so in particular $Q(n)=Q(S)$. Then, since $S$ is a $(\ve,\tau)$-region, \eqn{A<AS} applies, and by \Lemma{epsilondelta},
\begin{align*}
\frac{\diam f(MQ)}{\diam MQ}
& \leq (1+2\sqrt{d}\delta)|A_{Q}'|
\leq (1+2\sqrt{d}\delta)(1+\tau)|A_{Q(S)}'|\\
& \leq \sqrt{d}\frac{1+2\sqrt{d}\delta}{1-2\sqrt{d}\delta}(1+\tau)\frac{\diam f(MQ(n))}{\diam MQ(n)}
 \leq \beta^{n+1}\leq \beta^{N+1}
\end{align*}
and the lower bound follows similarly.
\end{proof}

Let $x,y\in E$ be distinct. We claim there is a chain of cubes  $Q_{j}$ such that $Q_{j}^{j}=Q_{0}$, and
 \begin{equation}
 x\in Q_{j}\in\cM \mbox{ for all }j\geq 0
 \label{e:Q(j)inTN}
 \end{equation}
 Since $x\in E$, $x\in z(S)$ for some $S\in \cF$ with $Q(S)\in T_{N}$ by \eqn{E=UzS}, and hence $x$ is contained in a chain of cubes $R_{j}\in S$ such that $R_{j}^{j}=Q(S)$. Let $n$ be such that $Q(S)^{n}=Q_{0}$ and define $Q_{j}=R_{j-n}$ for $j\geq n$ (so $Q_{j}^{j}=R_{j-n}^{j}=Q(S)^{n}=Q_{0}$) and for $j<n$ let $Q_{j}$ be the unique ancestor of $Q(S)$ with $Q_{j}^{j}=Q_{0}$. We now just need to show \eqn{Q(j)inTN}. For $j\geq n$, $Q_{j}=R_{j-n}\in S$; for $j<n$, note that since $\cB$ and the sets $S'\in \cF$ partition $\Delta(Q_{0})$, $Q_{j}$ is always in $\cB$ or in some $S'\in \cF$. If $Q_{j}\in \cB$ or $Q_{j}\in m(S')$ for some $S'\in \cF$, then $k(Q_{j})<k(Q(S))\leq N$ (note that $S'\neq S$), and so $Q_{j}\in T_{N}\subseteq \cM$; otherwise, if $Q_{j}\in S'$ for some $S'\in \cF$ and is not a minimal cube, then $k(Q_{j})\leq k(Q(S'))<k(Q(S))\leq N$, and so $Q_{j}\in \cM$. This proves \eqn{Q(j)inTN}.
 
Let $j$ is the largest integer for which $y\in 3Q_{j}$. since $y\in Q_{0}\subseteq 3Q_{0}$ and $x\neq y$, this integer is well defined. Moreover, 
\begin{equation}
|x-y|\geq \frac{\ell(Q_{j})}{2}
\label{e:xy>1/2Q}
\end{equation}
for otherwise, $|x-y|<\ell(Q_{j})/2=\ell(Q_{j+1})$ and $x\in Q_{j+1}$ imply $y\in 3Q_{j+1}$, contradicting the maximality of $j$. Then \Lemma{betaN} implies

\begin{equation}
|f(x)-f(y)|
 \leq \frac{\diam f(MQ_{j})}{\diam MQ_{j}}\diam MQ_{j}
 \stackrel{\eqn{betaN}}{\leq} \beta^{N+1}M\sqrt{d}\ell(Q_{j}) 
 \stackrel{\eqn{xy>1/2Q}}{ \leq}  \beta^{N+1}2M\sqrt{d}|x-y| 
\label{e:uplip}
\end{equation}
Furthermore, by \Lemma{QS-comp}, since $x,y\in 3Q_{j}\subseteq MQ_{j}$
\begin{align}
|f(x)-f(y)|
 & =\frac{\diam f(\{x,y\})}{\diam f(MQ_{j})} \frac{\diam f(MQ_{j})}{\diam MQ_{j}}{\diam MQ_{j}}
 \stackrel{\eqn{betaN} \atop \eqn{QS-comp}}{\geq} \frac{|x-y|}{2\eta\ps{\frac{\diam MQ_{j}}{|x-y|}}\beta^{N+1}} \notag \\
&   \stackrel{ \eqn{QS-comp} \atop \eqn{xy>1/2Q}}{\geq} \frac{|x-y|}{\eta(1)\beta^{N+1}}.
  \label{e:lowlip}
\end{align}
Thus  \eqn{uplip} and \eqn{lowlip} imply $f$ is $\beta^{N+1}\max\{2M\sqrt{d},\eta(1)\}$-bi-Lipschitz on $E$, and this finishes the proof.

\end{proof}

\subsection{The proof of \Theorem{1>2}}

We finally combine our estimates into a proof of \Theorem{1>2}, which will first require a lemma.

\begin{proof}[Proof of \Theorem{1>2}]

Let $\tau>0$ and suppose $f:\bR^{d}\rightarrow \bR^{D}$ is such that $\omega_{f}(x,r)^{2}\frac{dr}{r}dx$ is a $C$-Carleson measure. Let $B(x_{0},r_{0})\in \bR^{d}$ be any ball and let $Q_{0}=[0,1]^{d}$. Since $\omega_{f}$ is invariant under translations and dilations in the domain, the Carleson norm remains unchanged if we replace $f(x)$ with the function $f\ps{\frac{x-x_{Q_{0}}}{\frac{1}{2}\ell(Q_{0})}}$, so we may assume without loss of generality that $B(x_{0},r_{0})=B(x_{Q_{0}},\frac{1}{2})$ (that is, the largest ball contained in $Q_{0}$). By \Lemma{dyadic-version}, we know
\[\sum_{Q\subseteq Q_{0}}\omega_{f}(MQ)^{2}|Q|\leq C_{M} |Q_{0}|\]
where $C_{M}=C_{M}(C,d)$. \Theorem{1implies4} implies for all $\theta>0$ there is $E'\subseteq Q_{0}$ with $|E'|\geq (1-\theta)|Q_{0}|$ and $\ps{\frac{\diam f(Q_{0})}{\diam Q_{0}}}^{-1}f$ is $L$-bi-Lipschitz upon $E'$. By \Lemma{QS-comp}, it follows that
\[\frac{\diam f(Q_{0})}{\diam Q_{0}}\sim_{\eta,d} \frac{\diam f(B(x_{0},r_{0}))}{\diam B(x_{0},r_{0})}.\]
By picking $\theta$ small enough, we may guarantee that the set $E=E'\cap B(x_{0},r_{0})$ satisfies $|E|\geq (1-\tau)|B(x_{0},r_{0})|$. Since this holds for all $x_{0}\in \bR^{d}$ and $r_{0}>0$, \Theorem{1>2} is proven.
\end{proof}

\section{Finding bi-Lipschitz pieces of a general quasisymmetric map}
\label{s:semmes}

In this section, we focus on proving \Proposition{strong-semmes}. For the first few subsections, however, we will recall some basic facts about $A_{\infty}$ and BMO spaces and review some material from \cite{Semmes-question-of-heinonen}, as well as the technical modifications of Semmes' work we will need.

\subsection{$A_{\infty}$-weights}

For a locally integrable function $w$ on $\bR^{d}$, we will write, for any measurable subset $A$, $w(A)=\int_{A}w$, and $w_{A}=\frac{w(A)}{|A|}$.  We'll call $w$ an {\it $A_{\infty}$-weight} if it is nonnegative, locally integrable, and there is $q>0$ such that for all cubes $Q\subset\bR^{d}$ and measurable sets $E\subset Q$, 
\begin{equation}
w(E)\geq \frac{w(Q)}{1+\exp\left(q\frac{|Q|}{|E|}\right)}
\label{e:Afin}
\end{equation} 
This isn't how $A_{\infty}$ is described in most texts, but it is equivalent to the usual definition equivalent (see \cite{Hru84}).


 An important property we will use is that if $w\in A_{\infty}$, then $||\log w||_{BMO}\lec_{q}1$ (where $q$ is as in \eqn{Afin}). Recall that $\log w\in BMO(\bR^{d})$ implies there is an infimal number $||\log w||_{BMO}$ such that for all cubes $Q\subseteq \bR^{d}$,
\begin{equation}
\avint_{Q}|\log w-(\log w)_{Q}|\leq ||\log w||_{BMO}.
\label{e:w<elw}
\end{equation}
Another property is the reverse Jensen's inequality: for $w$ satisfying \eqn{Afin},
\[w_{Q}\leq C_{q} e^{(\log w)_{Q}}\]
where $C_{q}>0$ depends on $q$ and $d$.

For good references on $A_{\infty}$-weights and $BMO$ with proofs of these facts, see \cite[Chapter V]{big-stein} and \cite[Chapter VI]{BAF}.

One last technique we will need is following lemma, which is essentially known and is a good exercise with $A_{\infty}$-weight theory (a similar proof appears in \cite[Theorem 3.22]{FKP91}). 

\begin{lemma}
Let $w\in A_{\infty}(\bR^{d})$. For all $\tau\in (0,1)$, and $Q_{0}\in\Delta$, there is $E_{Q_{0}}\subseteq Q_{0}$ with
\begin{enumerate}
\item for all $Q\subseteq Q_{0}$ with $Q\cap E_{Q_{0}}\neq\emptyset$, we have  $M^{-1} \leq w_{Q}/w_{Q_{0}}\leq M$ where $M=\exp(C_{q}+2^{d}||\log w||_{BMO}/\tau)$, and
\item $|E_{Q_{0}}|\geq (1-\tau)|Q_{0}|$. 
\end{enumerate}
\label{l:A-lemma}
\end{lemma}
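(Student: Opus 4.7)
The plan is to apply a Calderón--Zygmund stopping-time argument on the dyadic subcubes of $Q_0$, exactly as in \cite{FKP91}. First reduce the ratio $w_Q/w_{Q_0}$ to a statement about oscillation of $\log w$: since $w \in A_\infty$ with constant $q$, Jensen's inequality gives $e^{(\log w)_Q} \leq w_Q$ and the reverse Jensen inequality gives $w_Q \leq C_q e^{(\log w)_Q}$, so for every cube $Q$,
\[
e^{(\log w)_Q - (\log w)_{Q_0}} \;\leq\; \frac{w_Q}{w_{Q_0}} \;\leq\; C_q \, e^{(\log w)_Q - (\log w)_{Q_0}}.
\]
Thus (1) will follow once I show that every dyadic subcube $Q \subseteq Q_0$ meeting $E_{Q_0}$ satisfies $|(\log w)_Q - (\log w)_{Q_0}| \leq 2^d\|\log w\|_{BMO}/\tau$.

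To construct $E_{Q_0}$, set $\lambda := 2^d\|\log w\|_{BMO}/\tau$ and let $\cS$ be the collection of dyadic cubes $R \subseteq Q_0$ that are maximal subject to
\[|(\log w)_R - (\log w)_{Q_0}| > \lambda.\]
Define $E_{Q_0} := Q_0 \setminus \bigcup_{R \in \cS} R$. The cubes in $\cS$ have pairwise disjoint interiors, and for each $R \in \cS$ the Chebyshev-type bound $\lambda |R| \leq \int_R |\log w - (\log w)_{Q_0}|$ holds, so summing gives
\[|Q_0 \setminus E_{Q_0}| \;\leq\; \lambda^{-1}\int_{Q_0}|\log w - (\log w)_{Q_0}| \;\leq\; \lambda^{-1}\|\log w\|_{BMO}\,|Q_0| \;=\; 2^{-d}\tau\,|Q_0|,\]
proving (2) (with room to spare).

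For (1), fix a dyadic $Q \subseteq Q_0$ with $Q \cap E_{Q_0} \neq \emptyset$. If $|(\log w)_Q - (\log w)_{Q_0}| > \lambda$ were to hold, then $Q$ would be contained in (or equal to) some $R \in \cS$ by maximality, giving $Q \cap E_{Q_0} \subseteq R \cap E_{Q_0} = \emptyset$, a contradiction. Hence $|(\log w)_Q - (\log w)_{Q_0}| \leq \lambda$, and combining with the double inequality above gives
\[C_q^{-1} e^{-\lambda} \;\leq\; \frac{w_Q}{w_{Q_0}} \;\leq\; C_q\, e^{\lambda}.\]
Renaming $C_q$ (if necessary) so that $\log C_q \leq C_q$ produces the required $M = \exp(C_q + 2^d\|\log w\|_{BMO}/\tau)$.

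There is no genuine obstacle here — the argument is the classical stopping-time proof. The only bookkeeping step is ensuring that the multiplicative constant from the reverse Jensen inequality fits inside the exponential form of $M$, which is the point of the final relabeling.
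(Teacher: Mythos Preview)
Your proof is correct and follows essentially the same route as the paper. The paper phrases the construction via the dyadic maximal function, setting $E_{Q_0}=\{x\in Q_0: M_\Delta(\log w-(\log w)_{Q_0})(x)\le \lambda\}$ and invoking the weak $(1,1)$ bound; your Calder\'on--Zygmund stopping-time is the same argument unpacked (your stopping condition $|(\log w)_R-(\log w)_{Q_0}|>\lambda$ is even slightly sharper than the maximal-function condition $\avint_R|\log w-(\log w)_{Q_0}|>\lambda$, but this makes no difference here), and the reduction via Jensen and reverse Jensen is identical.
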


\begin{proof}
Since $w\in A_{\infty}$, $g:=\log w\in BMO(\bR^{d})$. Let 
\[E_{Q_{0}} = \{x\in Q_{0}: M_{\Delta}(g-g_{Q_{0}})\leq 2^{d}||g||_{BMO}\tau^{-1}\} \]
where $M_{\Delta}$ is the dyadic maximal function
\[M_{\Delta}h(x):=\sup_{x\in Q\in \Delta} \avint_{Q}|h|.\]
Since $||M_{\Delta}||_{L^{1}\rightarrow L^{1,\infty}}\leq 2^{d}$, we have 
\begin{align*}
 |Q_{0}\backslash E_{Q_{0}}| & 
 = \{x\in Q_{0}:M_{\Delta}(g-g_{Q_{0}})>2^{d}||g||_{BMO}\tau^{-1}\}\\
&  \leq  2^{d}\frac{\int_{Q_{0}}|g-g_{Q_{0}}|}{2^{d}||g||_{BMO}\tau^{-1}} 
  \leq \tau |Q_{0}|.
 \end{align*}
Let $Q\subseteq Q_{0}$ be a dyadic cube such that $Q\cap E_{Q_{0}}\neq\emptyset$. If $x\in Q\cap E_{Q_{0}}$, then 
\begin{equation}
|g-g_{Q_{0}}|_{Q}\leq M_{\Delta}(g-g_{Q_{0}})(x)\leq 2^{d}||g||_{BMO}\tau^{-1}.
\label{e:g-g0}
\end{equation}
Moreover, since $w\in A_{\infty}$, we have by \eqn{w<elw} that 
\begin{equation}
\log w_{Q} \leq C_{q}+ (\log w)_{Q}.
\label{e:w<C+w}
\end{equation}
Using this and Jensen's inequality, we get
\begin{align*}
\log w_{Q}- \log w_{Q_{0}}
& \stackrel{\eqn{w<C+w}}{\leq} C_{q}+(\log w)_{Q}-(\log w)_{Q_{0}}
= C_{q}+(g-g_{Q_{0}})_{Q}
 \stackrel{\eqn{g-g0}}{\leq} C_{q}+2^{d}||g||_{BMO}\tau^{-1}
\end{align*}
and
\begin{align*}
\log w_{Q_{0}}-\log w_{Q}
& \stackrel{\eqn{w<C+w}}{\leq} C_{q}+(\log w)_{Q_{0}}-(\log w)_{Q}
= C_{q}-(g-g_{Q_{0}})_{Q}
 \stackrel{\eqn{g-g0}}{\leq} C_{q}+2^{d}||g||_{BMO}\tau^{-1}.
\end{align*}
Thus, 
\[\av{\log \frac{w_{Q}}{w_{Q_{0}}}}\leq C_{q}+2^{d}\tau^{-1}||g||_{BMO}.\]

\end{proof}

\subsection{Metric doubling measures and strong $A_{\infty}$-weights}

We recall the following definition from \cite{Semmes-strong-A°}.

\begin{definition}
We say a Borel measure $\mu$ on $\bR^{d}$ is {\it $C_{\mu}$-doubling} on its support if 
\[\mu (B(x,2r))\leq C_{\mu}\mu(B(x,r))\]
for all $x\in\supp \mu$ and $r>0$.  For  $E\subseteq \bR^{d}$ closed, we say that a doubling measure $\mu$ is a {\it metric doubling measure} on $E$ if $\supp\mu =E$ and 
\begin{equation}
\mu(B(x,|x-y|)\cup B(y,|x-y|))^{\frac{1}{d}}\sim \dist(x,y)
\label{e:metric-doubling}
\end{equation}
for some metric $\dist(x,y)$ on $E$.
\end{definition}

In \cite{Gehring73}, Gehring showed that the pullback of Lebesgue measure under a quasisymmetric map $f:\bR^{d}\rightarrow \bR^{d}$ is an $A_{\infty}$-weight; Semmes observed in \cite{Semmes-strong-A°} that this holds more generally for all metric doubling measures on $\bR^{d}$, with a proof essentially the same as Gehring's. 

\begin{lemma}(\cite[Proposition 3.4]{Semmes-strong-A°}
If $\nu$ is a metric doubling measure on $\bR^{d}$, $d\geq 2$, then $\nu$ is absolutely continuous and it is given by $w(x)dx$, where $w\in A_{\infty}$, and $q$ in \eqn{Afin} depends upon $d,C_{\nu}$ and the constants in \eqn{metric-doubling}. We call the weight $w$ a {\it strong $A_{\infty}$-weight}.
\label{l:strongA°}
\end{lemma}

If $f:\bR^{d}\rightarrow \bR^{d}$, then the pullback of Lebesgue measure under $f$ is an example of a metric doubling measure, where in this case $\dist(x,y)=|f(x)-f(y)|$, and \Lemma{strongA°} recovers Gehring's original result. 

Metric doubling measures and strong $A_{\infty}$-weights arose in studying the so-called ``quasiconformal Jacobian problem" (for discussions of this problem, see \cite{Semmes-strong-A°},\cite{Semmes-nonexistence}, and \cite{Bishop-A1}). While the aforementioned papers gradually demonstrated the intractability of this problem, its pursuit has developed many useful techniques (and counterexamples) in the theory of quasisymmetric mappings.

\subsection{Serious and strong sets}

Here we recall some definitions and results from \cite{Semmes-question-of-heinonen} about serious and strong sets.

\begin{definition}
Let $E_{0}\subseteq E\subseteq \bR^{d}$. We say $E_{0}$ is a {\it serious subset} of $E$ if there is $C>0$ so that if $x\in E_{0}$ and $0<t<\diam E_{0}$, then there is $y\in E$ such that 
\begin{equation}
\frac{t}{C}\leq |x-y|\leq t.
\end{equation}
We will call $C$ the {\it seriousness constant} of the pair $(E_{0},E)$. If $E_{0}=E$, we say $E$ is a {\it serious set}.
\end{definition}

In \cite[Lemma 1.8]{Semmes-question-of-heinonen}, Semmes shows that all compact subsets with positive measure contain a serious subset whose measure is as close to the measure of the original set as you wish, although there is no control given on the seriousness constant of this set. Without too much effort, though, this dependence can be determined, and allows us to make \Lemma{semmes-lemmas} depend quantitatively only on $d$, $\eta$, and the  density of $E$ inside a prescribed dyadic cube.

\begin{lemma}
Let $E\subseteq\bR^{d}$ be a compact set of positive measure contained in a dyadic cube $Q_{0}$. Then for each $\delta\in (0,1)$, there is $E_{0}\subseteq E$ compact such that
\begin{enumerate}
\item $|E_{0}|\geq (1-\delta)|E|$, and
\item $E_{0}$ is a $\ps{\delta|E|/|Q_{0}|}^{\frac{1}{d}}/(8d^{\frac{1}{2}}3^{d})$-serious subset of $E$.
\end{enumerate}
\label{l:very-serious}
\end{lemma}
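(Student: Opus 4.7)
The approach is a Calder\'on--Zygmund stopping-time argument based on local density of $E$ in dyadic subcubes of $Q_0$. Set $\alpha := \delta|E|/|Q_0|$, let $\cB$ denote the collection of maximal dyadic cubes $Q \subseteq Q_0$ with $|E \cap Q| < \alpha |Q|$, and define
\[E_0 := E \setminus \bigcup_{Q \in \cB} Q^\circ.\]
Because the cubes in $\cB$ have pairwise disjoint interiors contained in $Q_0$,
\[|E \setminus E_0| \leq \sum_{Q \in \cB}|E\cap Q| < \alpha \sum_{Q \in \cB}|Q| \leq \alpha|Q_0| = \delta|E|,\]
establishing $|E_0| \geq (1-\delta)|E|$. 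Moreover $E_0$ is the intersection of the compact set $E$ with the complement of an open set, hence itself compact, giving conclusion~(1).

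For conclusion~(2), I fix $x \in E_0$ and $t \in (0, \diam E_0)$, and take the dyadic cube $Q \ni x$ with $\ell(Q)$ as large as possible subject to $\ell(Q) \leq t/(2\sqrt{d})$, so that $t/(4\sqrt{d}) < \ell(Q) \leq t/(2\sqrt{d})$. Then $Q \subseteq B(x,t)$ (since $\diam Q \leq t/2$) and $\ell(Q) < \ell(Q_0)$ forces $Q \subsetneq Q_0$. The hypothesis $x \in E_0$ rules out $x$ lying in the interior of any cube in $\cB$; combined with the elementary observation that $R \subseteq R'$ implies $R^\circ \subseteq (R')^\circ$, this forbids $Q$ from being a descendant of any cube of $\cB$, so $|E \cap Q| \geq \alpha|Q|$.

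The seriousness estimate now reduces to a volume comparison. Let $C := 8\sqrt{d}\cdot 3^d /\alpha^{1/d}$ (the seriousness constant in the sense of the stated definition; the formula written in the lemma appears to be its reciprocal). If no $y \in E$ satisfied $t/C \leq |x-y| \leq t$, then $E \cap B(x,t) \subseteq B(x, t/C)$, and writing $v_d$ for the volume of the unit Euclidean ball,
\[\alpha\bigl(t/(4\sqrt{d})\bigr)^d \leq |E \cap Q| \leq v_d(t/C)^d,\]
which rearranges to $C \leq 4\sqrt{d}(v_d/\alpha)^{1/d}$, contradicting the choice of $C$ since $2\cdot 3^d > v_d^{1/d}$ trivially for every $d \geq 1$. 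The main technical obstacle is the Lebesgue-null set of points $x \in E_0$ lying on the boundary of the relevant dyadic cube (or of some cube in $\cB$), where the implication "$x \in Q$ and $Q \subseteq Q' \in \cB$ $\Rightarrow$ $x \in (Q')^\circ$" fails; this is routinely finessed either by slightly perturbing the dyadic grid, or by replacing each $Q \in \cB$ with a mild enlargement (absorbing a harmless $2^d$ factor into $\alpha$), neither of which affects the quantitative form of the conclusion.
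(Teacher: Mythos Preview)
Your argument is correct and follows the same Calder\'on--Zygmund stopping-time construction as the paper, with an identical proof of (1); for (2) you replace the paper's $(Q^{N+1}\setminus 3Q)\cap E\neq\emptyset$ step by an equivalent direct volume comparison between $|E\cap Q|\geq\alpha|Q|$ and $|B(x,t/C)|$. Your remark that the lemma as stated records the reciprocal of the seriousness constant is correct, and the dyadic-boundary issue you flag is present (and unaddressed) in the paper's own proof as well.
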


\begin{proof}
Let $Q_{j}$ be the collection of maximal cubes contained in $Q_{0}$ for which 
\[\frac{|E\cap Q_{j}|}{|Q_{j}|}<\delta\frac{|E|}{|Q_{0}|}\] 
and set 
\[E_{0}=E\backslash \bigcup Q_{j}^{\circ}.\]
Observe that this is a countable intersection of bounded closed sets and hence is compact. Since the $Q_{j}$ have disjoint interiors, we have
\[
|E\backslash E_{0}|=\sum_{j}|E\cap Q_{j}^{\circ}|
< \delta\frac{|E|}{|Q_{0}|}\sum |Q_{j}|\leq 
\delta\frac{|E|}{|Q_{0}|}|Q_{0}|=\delta|E|,\]
which implies the first item of the lemma. Next, set
\begin{equation}
N=\floor{ \frac{\log \ps{3^{d}\frac{|Q_{0}| }{\delta |E|}}}{d}}+1.
\label{e:seriousN}
\end{equation}
We claim that for any dyadic cube $Q$ intersecting $E_{0}$ such that $Q^{N+1}\subseteq Q_{0}$, we have
\begin{equation}
(Q^{N+1}\backslash 3Q)\cap E \neq \emptyset.
\label{e:Q^N+1}
\end{equation}
If not, then since $Q^{N+1}$ is not contained in any $Q_{j}$, 
\[\frac{|E|}{|Q_{0}|}\delta  \leq \frac{|E\cap Q^{N+1}|}{|Q^{N+1}|}\leq \frac{|E\cap 3Q|}{2^{d(N+1)}|Q|}\leq 3^{d} 2^{-d(N+1)}
\stackrel{\eqn{seriousN}}{\leq} \frac{|E|}{|Q_{0}|}\delta  2^{-d},\]
which is a contradiction, hence proving \eqn{Q^N+1} and the claim. 

Now let $x\in E_{0}$, $t\leq \diam E$. Let $Q\ni x$ be contained in $Q_{0}$ such that 
\[\diam Q^{N+1}\leq t<\diam Q^{N+2}.\]
Since $t\leq \diam E\leq \diam Q_{0}$, $Q^{N+1}\subseteq Q_{0}$, and by \eqn{Q^N+1} there is $y\in (Q^{N+1}\backslash 3Q)\cap E$, so that
\[|x-y|\leq \diam Q^{N+1}\leq t\]
and 
\[|x-y|\geq \ell(Q)=2^{-N-2}d^{-\frac{1}{2}}(\diam Q^{N+2})\geq 2^{-N-2}d^{-\frac{1}{2}}t \geq \frac{1}{8d^{\frac{1}{2}}3^{d}}\ps{\frac{\delta|E|}{|Q_{0}|}}^{\frac{1}{d}} t\]
and this finishes the second part of the lemma.
\end{proof}

\begin{definition}
A closed set $\tilde{E}\subseteq \bR^{d}$ is a {\it strong set} if there is a constant $C>0$ so that for each $x\in \bR^{d}\backslash \tilde{E}$, there is $y\in \tilde{E}$ so that 
\begin{equation}
|x-y|\leq C\dist(x,\tilde{E})
\label{e:y-to-x}
\end{equation}
and
\begin{equation}
\dist(y,\bR^{d}\backslash \tilde{E})\geq C^{-1}\dist(x,\tilde{E}).
\label{e:y-from-E^c}
\end{equation}
\end{definition}

Thus, to each point $x\in \tilde{E}^{c}$, we can assign a ball in $\tilde{E}$ with radius and distance to $x$ comparable to the distance from $x$ to $\tilde{E}$; in Semmes' words, this says $\tilde{E}$ is at least as big as its complement.

\begin{lemma}(\cite[Proposition 1.16]{Semmes-question-of-heinonen})
If $\tilde{E}\subseteq \bR^{d}$ is a $C$-strong set, then for all $x\in \tilde{E}$ and $r>0$,
\begin{equation}
|\tilde{E}\cap B(x,r)|\sim_{d,C}r^{d}.
\label{e:AD}
\end{equation}
\label{l:seriousAD}
\end{lemma}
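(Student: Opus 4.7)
The plan is to prove both bounds separately. The upper bound is immediate: $|\tilde E\cap B(x,r)|\leq|B(x,r)|=\omega_d r^d$. The content is the lower bound $|\tilde E\cap B(x,r)|\gec_{d,C}r^d$, and for this I plan a Whitney/Vitali argument that uses the strong-set property to convert measure in the complement of $\tilde E$ into measure inside $\tilde E$.

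First I will observe that it suffices to prove the following weaker statement: there is $\lambda=\lambda(C)\geq 1$ such that for every $x\in\tilde E$ and every $r>0$, $|\tilde E\cap B(x,\lambda r)|\gec_{d,C} r^d$. Replacing $r$ by $r/\lambda$ then gives the desired lower bound on $|\tilde E\cap B(x,r)|$.

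Fix $x\in\tilde E$ and $r>0$. If $|\tilde E\cap B(x,r)|\geq|B(x,r)|/2$ we are done, so assume $|B(x,r)\setminus\tilde E|\geq|B(x,r)|/2$. Take a Whitney decomposition $\{W_j\}$ of the open set $\bR^{d}\setminus\tilde E$, so that the cubes have disjoint interiors and $\diam W_j\sim\dist(W_j,\tilde E)$. For each $W_j$ meeting $B(x,r)$, pick $z_j\in W_j\cap B(x,r)$. Since $x\in\tilde E$, one has $\dist(z_j,\tilde E)\leq r$, hence $\diam W_j\lec r$. Apply \eqn{y-to-x}-\eqn{y-from-E^c} to $z_j$: there is $y_j\in\tilde E$ with $|z_j-y_j|\leq C\dist(z_j,\tilde E)$ and $B(y_j,C^{-1}\dist(z_j,\tilde E))\subseteq\tilde E$. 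Converting distances to diameters, there are constants $c_1,c_2$ depending only on $C$ and $d$ so that $B(y_j,c_1\diam W_j)\subseteq\tilde E$ and $|z_j-y_j|\leq c_2\diam W_j$. In particular each such ball lies in $\tilde E\cap B(x,\lambda r)$ with $\lambda:=1+c_1+c_2$.

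Next, apply the Vitali $5r$-covering lemma to the balls $\{B(y_j,c_1\diam W_j)\}$ to extract a pairwise disjoint subfamily $\{B(y_{j_i},c_1\diam W_{j_i})\}$ whose $5$-dilates still cover the original family. A routine triangle-inequality computation, using the fact that any non-selected $W_j$ corresponds to a ball meeting some $B(y_{j_i},c_1\diam W_{j_i})$ with $\diam W_{j_i}\gtrsim\diam W_j$, shows $B(x,r)\setminus\tilde E\subseteq\bigcup_i B(y_{j_i},K\diam W_{j_i})$ for some $K=K(C,d)$. Therefore
\[
\tfrac{1}{2}|B(x,r)|\leq|B(x,r)\setminus\tilde E|\leq\sum_i|B(y_{j_i},K\diam W_{j_i})|\lec_{d,C}\sum_i(\diam W_{j_i})^{d},
\]
while the disjoint balls $B(y_{j_i},c_1\diam W_{j_i})$ all lie in $\tilde E\cap B(x,\lambda r)$, giving
\[
|\tilde E\cap B(x,\lambda r)|\geq\sum_i|B(y_{j_i},c_1\diam W_{j_i})|\gec_{d,C}\sum_i(\diam W_{j_i})^{d}\gec_{d,C}r^{d}.
\]
This proves the reduced statement, and then the rescaling in the first paragraph finishes the argument. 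The main obstacle, and the only place any care is needed, is the bookkeeping for the constants: the strong-set property produces balls at distance $\sim\dist(z,\tilde E)$ from the original point, so the lower bound on measure naturally takes place in a slightly inflated ball $B(x,\lambda r)$, and one must recover the bound in $B(x,r)$ itself by the initial scale reduction.
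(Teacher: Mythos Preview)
The paper does not supply its own proof of this lemma; it simply cites \cite[Proposition 1.16]{Semmes-question-of-heinonen}. Your argument is correct and is essentially the standard one: the upper bound is trivial, and for the lower bound you convert mass in $B(x,r)\setminus\tilde E$ into mass in $\tilde E$ by attaching to each Whitney cube $W_j$ of $\bR^{d}\setminus\tilde E$ the ball in $\tilde E$ produced by the strong-set property, and then pass to a disjoint subfamily via Vitali. The bookkeeping you gloss over (``a routine triangle-inequality computation'') does go through: from Vitali one gets, for each unselected $j$, a selected $j_i$ with $\diam W_{j_i}\gtrsim\diam W_j$ and $|y_j-y_{j_i}|\lec c_1\diam W_{j_i}$, and then $W_j\subseteq B(z_j,\diam W_j)$ together with $|z_j-y_j|\leq c_2\diam W_j$ yields $W_j\subseteq B(y_{j_i},K\diam W_{j_i})$ for a $K$ depending only on $c_1,c_2$. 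One small point worth tightening: the claim $\diam W_j\leq r$ used to place the balls in $B(x,\lambda r)$ with $\lambda=1+c_1+c_2$ depends on the specific Whitney convention (generically one only gets $\diam W_j\lec_d r$), so $\lambda$ should be allowed to depend on $d$ as well; this is harmless for the conclusion.
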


\begin{lemma}(\cite[Proposition 1.15]{Semmes-question-of-heinonen}
If $\tilde{E}\subseteq \bR^{d}$ is a $C$-strong set and $g:\tilde{E}\rightarrow \bR^{d}$ is $\eta$-quasisymmetric, then $g(\tilde{E})$ is $C'$-serious with $C'$ depending on $\eta,C,$ and $d$.
\label{l:alsoserious}
\end{lemma}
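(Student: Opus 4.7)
The plan is to exploit the Ahlfors regularity of strong sets (\Lemma{seriousAD}) and transfer it through the quasisymmetry of $g$: since $\tilde E$ has points at every scale near any given point $x$, and quasisymmetry controls how those scales deform, $g(\tilde E)$ will inherit points at every scale around $g(x)$. Fix $x \in \tilde E$ and $s \in (0, \diam g(\tilde E))$; the task is to produce $z \in \tilde E$ with $|g(x) - g(z)| \in [s/C', s]$ for some $C'$ depending only on $\eta$, $C$, $d$.

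First I would use \Lemma{seriousAD} to choose $K = K(d,C) > 1$ large enough that $\tilde E \cap (B(x,t) \setminus B(x, t/K)) \neq \emptyset$ for every $t > 0$; this is immediate since $|\tilde E \cap B(x,t)| \sim t^d$ and $|\tilde E \cap B(x, t/K)| \sim (t/K)^d$ with constants independent of $t$. For each $k \in \mathbb{Z}$ pick $y_k \in \tilde E$ with $|x - y_k| \in [2^{-k}/K,\, 2^{-k}]$. Then $|x - y_k|/|x - y_{k+1}|$ and its reciprocal both lie in $[2/K,\, 2K]$, so quasisymmetry applied in both directions yields
\begin{equation*}
\frac{1}{\eta(2K)} \;\leq\; \frac{|g(x) - g(y_{k+1})|}{|g(x) - g(y_k)|} \;\leq\; \eta(2K).
\end{equation*}

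Next I would verify that $|g(x) - g(y_k)| \to 0$ as $k \to +\infty$ (since $|x-y_k| \to 0$ and $\eta$ is a homeomorphism onto $(0,\infty)$) while its supremum $M$ is comparable to $\diam g(\tilde E)$: pick $w \in \tilde E$ with $|g(x) - g(w)| \geq \tfrac{1}{4}\diam g(\tilde E)$ (obtained from any near-diametric pair via the triangle inequality) and choose $k$ with $2^{-k}$ comparable to $|x - w|$, so one application of quasisymmetry each way gives $|g(x) - g(y_k)| \sim_\eta |g(x) - g(w)|$. Then for any $s \in (0, \diam g(\tilde E))$: if $s \geq M$, take $z = y_k$ realizing $|g(x) - g(y_k)| \geq M/2$ (so $|g(x)-g(z)| \leq s$ and $|g(x)-g(z)| \gec_\eta \diam g(\tilde E) \gec s$); otherwise take $k^*$ smallest with $|g(x) - g(y_{k^*})| \leq s$, so $|g(x) - g(y_{k^*-1})| > s$ and the consecutive-ratio bound forces $|g(x) - g(y_{k^*})| > s/\eta(2K)$. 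Either way $z$ works with $C' = C'(\eta, C, d)$.

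The only real obstacle is bookkeeping around the two endpoints of the dyadic sequence: ensuring $\sup_k |g(x) - g(y_k)| \sim \diam g(\tilde E)$ (via comparison with a near-diametric pair) and that the sequence decays to $0$. These together with the consecutive-ratio bound force $\{|g(x) - g(y_k)|\}_{k \in \mathbb{Z}}$ to cover $(0, \diam g(\tilde E))$ with bounded geometric gaps, and the conclusion falls out immediately.
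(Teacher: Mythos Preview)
The paper does not give its own proof of this lemma; it simply cites \cite[Proposition~1.15]{Semmes-question-of-heinonen}. Your argument is a correct self-contained proof: the Ahlfors regularity from \Lemma{seriousAD} guarantees a point $y_k\in\tilde E$ in each dyadic annulus around $x$, quasisymmetry bounds consecutive ratios $|g(x)-g(y_{k\pm1})|/|g(x)-g(y_k)|$ by $\eta(2K)$, and the resulting sequence of image distances has bounded geometric gaps and spans $(0,\infty)$, so one of the $y_k$ lands in $[s/\eta(2K),\,s]$.

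One simplification worth noting: a $C$-strong set $\tilde E\subsetneq\bR^d$ is automatically unbounded (for $x$ far from $\tilde E$ the condition \eqn{y-from-E^c} forces $\tilde E$ to contain balls of arbitrarily large radius), and quasisymmetry then forces $g(\tilde E)$ to be unbounded as well, since $|g(x)-g(y_0)|/|g(x)-g(y_k)|\leq\eta(|x-y_0|/|x-y_k|)\to0$ as $k\to-\infty$. Hence $\diam g(\tilde E)=M=\infty$, your case ``$s\geq M$'' is vacuous, and the Step~4 comparison of $M$ with $\diam g(\tilde E)$ is not needed. The essential content is exactly your second case: pick the smallest $k^*$ with $|g(x)-g(y_{k^*})|\leq s$ (which exists since the sequence tends to $\infty$ as $k\to-\infty$ and to $0$ as $k\to+\infty$) and use the consecutive-ratio bound.
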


The next lemma is an amalgamation of Propositions 1.10, 1.14, 1.22, and 1.23 from \cite{Semmes-question-of-heinonen}. 

\begin{lemma}
Suppose $E$ is a compact subset of $\bR^{d}$, $E_{0}\subseteq E$ is a $C$-serious subset of $E$, and $f:E\rightarrow \bR^{d}$ is $\eta$-quasisymmetric. Then the following hold:
\begin{enumerate}
\item There is $\hat{E}\supseteq E_{0}$ that is $\hat{C}$-serious, with $\hat{C}>0$ depending only on $C$ and $d$.
\item There is $g:\hat{E}\rightarrow \bR^{d}$ that agrees with $f$ on $E_{0}$ and is $\hat{\eta}$-quasisymmetric, with $\hat{\eta}$ depending on $C,d,$ and $\eta$.
\item There is a $\tilde{C}$-strong set $\tilde{E}\supseteq \hat{E}$, where $\tilde{C}$ depends only on $\tilde{C}$ and $d$.
\item The map $g$ admits an $\tilde{\eta}$-quasisymmetric extension $G:\tilde{E}\rightarrow \bR^{d}$. Here, $\tilde{\eta}$ depends only on $\hat{\eta}$, $\tilde{C}$, and $d$.
\item The measure $\mu$ defined by $\mu(A)=|G(A)|$ is a metric doubling measure on $\tilde{E}$ , with data depending only on $\tilde{C}$, $\tilde{\eta}$, and $d$.
\item There is a metric doubling measure $\nu$ on $\bR^{d}$ such that $\nu(A)=|G(A)|$ for all $A\subseteq \tilde{E}$. The doubling constant $C_{\nu}$ and metric doubling constants of $\nu$ depend only on those for $\nu$, $d$, and $\tilde{C}$.
\end{enumerate}
\label{l:semmes-lemmas}
\end{lemma}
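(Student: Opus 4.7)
The plan is to assemble the six items by invoking the Whitney-extension constructions of \cite{Semmes-question-of-heinonen} while tracking constants carefully at each step, since the lemma is essentially a repackaging of Propositions 1.10, 1.14, 1.22, and 1.23 of that paper with quantitative dependencies made explicit. I would proceed in three stages corresponding to the pairs (1)--(2), (3)--(4), (5)--(6).

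For items (1) and (2), I would take a Whitney-type decomposition $\{R_j\}$ of $\bR^d \setminus E_0$ at scales $\diam R_j$ comparable to $\dist(R_j, E_0)$ up to $\diam E_0$, and set $\hat{E} = E_0 \cup \{x_{R_j}\}$. Given $x \in \hat{E}$ and $t < \diam \hat{E}$, a witness at scale $\sim t$ is produced by cases: either the $C$-seriousness of $E_0$ supplies one in $E_0 \subseteq \hat{E}$, or a nearby Whitney center $x_{R_j}$ with $\diam R_j \sim t$ serves as witness. To extend $f$ to $\hat{E}$, I would set $g(x_{R_j}) := f(y_j)$ for some nearest $y_j \in E_0$, perturbed by an amount comparable to $|f(y_j) - f(y_j')|$ at the correct scale to preserve injectivity; the $\hat{\eta}$-quasisymmetry of $g$ reduces, by case analysis on how many of the three test points are new centers, to the quasisymmetry of $f$ modulo bounded distortion from the Whitney geometry.

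For items (3) and (4), I would further enlarge $\hat{E}$ to a strong set $\tilde{E}$ by Whitney insertion at \emph{all} finer scales, so that every complementary point $x$ lies in a Whitney cube whose center $y \in \tilde{E}$ satisfies both \eqref{e:y-to-x} and \eqref{e:y-from-E^c}; the constants are dictated by the Whitney parameters and $\hat{C}$. The extension $G : \tilde{E} \to \bR^d$ of $g$ is built by the same nearest-point assignment scheme as in (2), now with denser centers ensuring $\tilde{\eta}$-quasisymmetry. For (5), \Lemma{seriousAD} gives Ahlfors $d$-regularity of $\tilde{E}$, so the pullback $\mu(A) = |G(A)|$ is doubling, and \Lemma{QS-comp} applied to $G$ gives $\mu(B(x, |x-y|) \cup B(y, |x-y|))^{1/d} \sim |G(x) - G(y)|$, verifying \eqref{e:metric-doubling} with the metric $\dist(x,y) = |G(x) - G(y)|$. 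For (6), I would extend $\mu$ to a measure $\nu$ on $\bR^d$ by spreading mass over each complementary Whitney cube proportionally to $\mu$ of a nearest ball in $\tilde{E}$; the doubling and metric-doubling conditions for $\nu$ transfer from $\mu$ because the strong-set property means every ball in $\bR^d$ is, up to a bounded factor, comparable to a ball in $\tilde{E}$.

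The main obstacle is purely quantitative bookkeeping: each of the four successive extensions multiplies the relevant constants by factors depending on the previous stage, and Semmes' propositions state these dependencies only qualitatively. I would therefore revisit each of the cited propositions and inspect every constant to confirm it depends only on the parameters claimed in the present statement. No new ideas beyond Semmes' are needed; the work lies in routing the dependencies through the four extensions in the correct order.
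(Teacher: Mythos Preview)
Your proposal is correct and aligns with the paper's approach: the paper does not give its own proof of this lemma but simply records it as an amalgamation of Propositions 1.10, 1.14, 1.22, and 1.23 of \cite{Semmes-question-of-heinonen}, which is exactly what you are sketching. Your outline of the Whitney-center enlargements and nearest-point extensions is a faithful summary of those constructions, so there is nothing to compare beyond noting that you have expanded what the paper leaves as a citation.
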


\begin{corollary}
If $E\subseteq Q_{0}\subseteq \bR^{d}$ has positive measure, and $f:E\rightarrow \bR^{d}$ is an $\eta$-quasisymmetric map, then \Lemma{semmes-lemmas} still holds and all the implied constants depend on $d$, $\eta$, and $\frac{|E|}{|Q_{0}|}$.
\end{corollary}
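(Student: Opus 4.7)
The plan is to reduce the corollary directly to \Lemma{semmes-lemmas} by producing a serious subset $E_{0}\subseteq E$ whose seriousness constant is controlled purely in terms of $d$ and the density $\delta_{0}:=|E|/|Q_{0}|$. Since \Lemma{semmes-lemmas} takes as input a compact set $E$ together with a $C$-serious subset $E_{0}\subseteq E$ and an $\eta$-quasisymmetric map $f:E\to\bR^{d}$, and produces all its constants from $(C,d,\eta)$, once we exhibit such an $E_{0}$ with $C=C(d,\delta_{0})$ we are done.

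First, I would replace $E$ by a compact subset $E'\subseteq E$ with $|E'|\geq \tfrac{1}{2}|E|$; since $E$ is assumed only measurable of positive measure, this step is a routine inner-regularity reduction, and the new density $|E'|/|Q_{0}|$ is still bounded below in terms of $\delta_{0}$. Since $f|_{E'}$ remains $\eta$-quasisymmetric, we may assume $E$ itself is compact without loss of generality. Next, I would apply \Lemma{very-serious} with the choice $\delta=\tfrac{1}{2}$ (the exact value is unimportant) to $E\subseteq Q_{0}$. This yields a compact $E_{0}\subseteq E$ with $|E_{0}|\geq \tfrac{1}{2}|E|$ which is $C$-serious in $E$, where
\[
C \;=\; \frac{8 d^{1/2}3^{d}}{\ps{\tfrac{1}{2}\delta_{0}}^{1/d}}
\]
depends only on $d$ and $\delta_{0}=|E|/|Q_{0}|$. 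This is the crucial input: the seriousness constant has been quantified in terms of $\delta_{0}$ alone.

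Finally, I would feed the triple $(E,E_{0},f)$ into \Lemma{semmes-lemmas}. Each of its six conclusions holds with constants depending on $C$, $d$, and $\eta$ (for items (1),(3),(5),(6)) or on $C$, $d$, $\eta$, and the auxiliary constants already produced in previous items (for items (2),(4)). Chasing the dependence through, every constant ultimately depends only on $d$, $\eta$, and $C$, and hence, by the previous paragraph, only on $d$, $\eta$, and $\delta_{0}$. This gives the corollary.

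I do not expect any serious obstacle here; the entire statement is a quantitative bookkeeping exercise. The only substantive step is \Lemma{very-serious}, which has already been proved. The role of this corollary in the rest of the paper is to remove the \emph{a priori} assumption that one starts with a serious set, replacing it with the quantitatively equivalent data of a positive-density subset of a cube, which is precisely the form needed for \Proposition{strong-semmes}.
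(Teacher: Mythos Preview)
Your proposal is correct and follows essentially the same approach as the paper's proof, which simply cites \Lemma{very-serious} and \Lemma{semmes-lemmas}. You add the (appropriate) inner-regularity reduction to a compact set and spell out the dependency-chasing in more detail, but the argument is the same.
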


\begin{proof}
This follows from Lemmas \ref{l:very-serious} and \ref{l:semmes-lemmas}.
\end{proof}

\begin{lemma}
With $\nu,\tilde{E}$, $f$, $\eta$, and $G$ as in \Lemma{semmes-lemmas}, we have that for all $x\in \tilde{E}$ and $r>0$,
\begin{equation}
\nu(B(x,r))\sim_{d,\tilde{\eta},\tilde{C},C_{\nu}} (\diam G(\tilde{E}\cap B(x,r)))^{d}.
\label{e:nusimG}
\end{equation}
where $C_{\nu}$ is the doubling constant of $\nu$. 
\end{lemma}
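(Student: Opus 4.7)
My plan is to prove the two-sided estimate \eqref{e:nusimG} by bounding $(\diam G(\tilde{E}\cap B(x,r)))^{d}$ above and below by $\nu(B(x,r))$ separately. The common tool for both directions is the metric doubling property of $\nu$ applied to pairs of points in $\tilde{E}$, where the witnessing metric is $(y,z)\mapsto|G(y)-G(z)|$. Combining metric doubling with the doubling of $\nu$ (used to absorb $B(z,|y-z|)\subseteq B(y,2|y-z|)$), I obtain the cleaner equivalence
\[|G(y)-G(z)|^{d}\;\sim\;\nu\bigl(B(y,|y-z|)\cup B(z,|y-z|)\bigr)\;\sim\;\nu(B(y,|y-z|))\qquad\text{for }y,z\in\tilde{E},\]
with constants depending on $d$, $\tilde{\eta}$, $\tilde{C}$, and $C_{\nu}$.

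For the upper bound I would fix arbitrary $y,z\in\tilde{E}\cap B(x,r)$ and note that $|y-z|\leq 2r$, so $B(y,|y-z|)\subseteq B(x,3r)$. The displayed equivalence together with doubling of $\nu$ then yields $|G(y)-G(z)|^{d}\lesssim\nu(B(x,3r))\lesssim\nu(B(x,r))$, and taking the supremum over such $y,z$ gives $(\diam G(\tilde{E}\cap B(x,r)))^{d}\lesssim\nu(B(x,r))$.

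For the matching lower bound I would invoke \Lemma{seriousAD}: since $\tilde{E}$ is $\tilde{C}$-strong, $|\tilde{E}\cap B(x,s)|\sim s^{d}$ for every $s>0$. Comparing the measures of $\tilde{E}\cap B(x,r)$ and $\tilde{E}\cap B(x,r/2)$ shows the annular set $\tilde{E}\cap(B(x,r)\setminus B(x,r/2))$ has positive Lebesgue measure, so I may choose $y\in\tilde{E}$ with $r/2\leq|x-y|\leq r$. Applying the displayed equivalence to the pair $(x,y)$ and using doubling of $\nu$ one more time gives
\[|G(x)-G(y)|^{d}\;\sim\;\nu(B(x,|x-y|))\;\gtrsim\;\nu(B(x,r/2))\;\gtrsim\;\nu(B(x,r)).\]
Since both $G(x)$ and $G(y)$ lie in $G(\tilde{E}\cap B(x,r))$, this lower bound on $|G(x)-G(y)|$ transfers to $\diam G(\tilde{E}\cap B(x,r))$ and closes the argument.

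I do not anticipate a serious obstacle: the only delicate issue is the existence of a point $y\in\tilde{E}$ at distance $\sim r$ from $x$ for every scale $r>0$, and this is supplied uniformly by the Ahlfors regularity of the strong set $\tilde{E}$ (which in particular forces $\tilde{E}$ to be unbounded, so the case $r\geq\diam\tilde{E}$ never occurs).
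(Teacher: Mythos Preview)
Your argument is essentially correct and takes a somewhat different route from the paper. The paper first uses that $\nu$ has an $A_\infty$ density together with the Ahlfors regularity of the strong set $\tilde{E}$ to obtain $\nu(B(x,r))\sim\nu(\tilde{E}\cap B(x,r))=|G(\tilde{E}\cap B(x,r))|$, and then relates this Lebesgue measure to $(\diam G(\tilde{E}\cap B(x,r)))^{d}$ by invoking the quasisymmetry of $G$ and the Ahlfors regularity of the image $G(\tilde{E})$. You instead work pointwise with the relation $|G(y)-G(z)|^{d}\sim\nu(B(y,|y-z|))$ and finish with a direct diameter argument; this avoids any appeal to the structure of $G(\tilde{E})$ on the image side, which is a pleasant simplification.

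Two small points deserve mention. First, your displayed equivalence is not quite immediate from \Lemma{semmes-lemmas} as stated: item~(5) gives the metric-doubling relation for the measure $\mu$ (supported on $\tilde{E}$, with witnessing metric $|G(y)-G(z)|$), not for $\nu$. To pass to $\nu$ you still need $\nu(B(y,s))\sim\nu(\tilde{E}\cap B(y,s))=\mu(B(y,s))$ for $y\in\tilde{E}$, and that step uses exactly the $A_\infty$-plus-strong-set ingredient the paper invokes; so the two proofs share this piece and you should make it explicit. Second, the annulus $\tilde{E}\cap(B(x,r)\setminus B(x,r/2))$ need not have positive measure for the particular implied constants in \Lemma{seriousAD}; simply replace $r/2$ by $\epsilon r$ with $\epsilon=\epsilon(d,\tilde{C})$ chosen small enough and the rest of your lower-bound argument goes through unchanged.
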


\begin{proof}
Let $x\in \tilde{E}$, $r>0$, and $Q$ be a cube containing $B(x,r)$ of side length $2r$. 

First, note that since $\nu$ is an $A_{\infty}$-weight and $\tilde{E}$ is strong, \eqn{Afin} and \eqn{AD} imply 
\[\nu(\tilde{E}\cap B(x,r))\sim \nu(Q)\sim \nu(B(x,r))\]
with implied constants depending on $d$, the $A_{\infty}$-data of $\nu$, and the constants in \eqn{AD}. By quasisymmetry, it is not hard to show that there is $\rho<1$ depending only on $\tilde{\eta}$ so that 
\[ G(\tilde{E})\cap B(G(x), \rho \diam G(B(x,r)\cap \tilde{E}))\subseteq G(\tilde{E}\cap B(x,r))\subseteq  B(G(x),\diam  G(\tilde{E}\cap B(x,r)))\]
and since $G(\tilde{E})$ is also serious by \Lemma{alsoserious}, \Lemma{seriousAD} and the above containments imply 
\[\nu(B(x,r))\sim \nu(B(x,r)\cap \tilde{E})=|G(\tilde{E}\cap B(x,r))|\sim \diam (G(\tilde{E}\cap B(x,r)))^{d}.\]
\end{proof}

\subsection{A slightly stonger Semmes theorem}

We are now in a position to prove \Proposition{strong-semmes}, which strengthens  Semmes' original result, \Theorem{semmes}. While Semmes shows that if $E\subseteq \bR^{d}$ and $f:E\rightarrow \bR^{d}$ is quasisymmetric, then $|E|=0$ if and only if $|f(E)|=0$, we show here that $f$ is in fact bi-Lipschitz on a large subset of $E$ quantitatively. We restate this below.

\begin{semmes}
Let $E\subseteq Q_{0}\subseteq \bR^{d}$, $\rho\in(0,\frac{1}{2})$, $d\geq 2$, and set $\delta=\frac{|E|}{|Q_{0}|}>0$. Let $f:E\rightarrow \bR^{d}$ be $\eta$-quasisymmetric. Then there is $E''\subseteq E$ compact with $|E''|\geq (1-\rho)|E|$ and $\ps{\frac{\diam f(E'')}{\diam E''}}^{-1}f|_{E''}$ is $L$-bi-Lipschitz for some $L$ depending on $\eta$, $d$, $\rho$, and $\delta$. 
\end{semmes}

\begin{proof}[Proof of \Proposition{strong-semmes}]

By \Lemma{very-serious}, there $\hat{E}\subseteq E$ that is $\hat{C}$-serious and 
\[ |\hat{E}|\geq \ps{1-\frac{\rho}{2}}|E|\]
with $\hat{C}$ depending on $d,\delta$, and $\rho$. According to \Lemma{semmes-lemmas}, $\hat{E}\subseteq \tilde{E}$ for some $\tilde{C}$-serious set $\tilde{E}$, to which $f$ has an $\tilde{\eta}$-quasisymmetric extension $G:\tilde{E}\rightarrow \bR^{d}$ and a metric doubling measure $\nu$ on $\bR^{d}$ with $\nu(A)=|G(A)|$ for all $A\subseteq \tilde{E}$. We can write $d\nu=wdx$ where $w$ is an $A_{\infty}$-density by \Lemma{strongA°}. Applying \Lemma{A-lemma} with $\tau=\frac{\rho}{2}$, there is  $M>1$ depending on $d,\rho$, and the $A_{\infty}$-data of $w$ and  $E'\subseteq Q_{0}$ with $|E'|\geq (1-\frac{\rho}{2})|Q_{0}|$ such that 
\begin{equation}
\frac{1}{M}\leq \frac{w_{Q}}{w_{Q_{0}}}\leq M
\label{e:w/wQ0}
\end{equation}
for all $Q\subseteq Q_{0}$ such that $Q\cap E\neq\emptyset$. Let $E''=E'\cap \hat{E}$, so that  $|E''|\geq (1-\rho)|E|$. We will now show $\ps{\frac{\diam f(E'')}{\diam E''}}^{-1}f$ is bi-Lipschitz upon $E''$. 

Let $x,y\in E''$ be distinct points and $Q\subseteq Q_{0}$ be a minimal dyadic cube containing $x$ so that $y\in 3Q$. Since $\tilde{E}$ is serious, and $G$ is $\tilde{\eta}$-quasisymmetric and $\{x,y\}$ and $B(x,|x-y|)\cap \tilde{E}$ have comparable diameters,
\begin{align}
 |f(x)-f(y)|
&   =|G(x)-G(y)| 
  \stackrel{\eqn{QS-comp}}{\sim}_{\tilde{\eta}} \diam G(B(x,|x-y|)\cap \tilde{E})    \stackrel{\eqn{nusimG}}{\sim}_{\tilde{\eta},d,\tilde{C}}   \nu(B(x,|x-y|))^{\frac{1}{d}} .
  \label{e:fsimvB}
 \end{align}
 Since $3Q$ is minimal, we know 
 \begin{equation}
 \frac{1}{2}\ell(Q) \leq |x-y|\leq \diam 3Q.
 \label{e:1/2Q<x-y<3Q}
 \end{equation}
 Hence, since $\nu$ is doubling, $\nu(Q)\sim_{d,C_{\nu}}\nu(B(x,|x-y|))$. Thus, continuing our chain of estimates, (and using the fact that $\ell(Q)^{n}=|Q|$) we have 
 \begin{align}
\eqn{fsimvB}   & \sim_{d,C_{\nu}} \nu(Q)^{\frac{1}{d}} =w(Q)^{\frac{1}{d}}
   = \ell(Q) \ps{w_{Q}}^{\frac{1}{d}} 
       \stackrel{\eqn{w/wQ0}}{\sim}_{M,d} \ell(Q) \ps{w_{Q_{0}}}^{\frac{1}{d}} \notag \\
&     \stackrel{\eqn{1/2Q<x-y<3Q}}{\sim}_{d}|x-y|\ps{w_{Q_{0}}}^{\frac{1}{d}} 
      = |x-y| \frac{\nu(Q_{0})^{\frac{1}{d}}}{\ell(Q_{0})} \notag \\
 & \sim_{C_{\mu},d} |x-y| \frac{\nu(B(x_{Q_{0}},\diam Q_{0}))^{\frac{1}{d}}}{\ell(Q_{0})} \notag \\
 & \sim_{\tilde{C},\tilde{\eta},d} |x-y| \frac{\diam G(B(x_{Q_{0}},\diam Q_{0})\cap \tilde{E})}{\ell(Q_{0})}. 
 \label{e:sim|x-y|G/Q}
 \end{align}
Since $E''\subseteq  Q_{0}\cap \tilde{E} \subseteq B(x_{Q_{0}},\diam Q_{0})$ and $|E''|\geq (1-\rho)|E|\geq \frac{\delta}{2}|Q_{0}|$, we know $\diam E''\sim_{d,\delta}\diam Q_{0}$, and so \Lemma{QS-comp} implies
\[ \eqn{sim|x-y|G/Q} \stackrel{\eqn{QS-comp}}{\sim}_{\tilde{\eta}} |x-y| \frac{\diam G(E'')}{\ell(Q_{0})}  \sim_{d,\delta,\tilde{\eta}} |x-y| \frac{\diam f(E'')}{\diam E''} 
\]
Combining this with \eqn{fsimvB} and \eqn{sim|x-y|G/Q}, we see $|f(x)-f(y)|\sim |x-y| \frac{\diam f(E'')}{\diam E''} $ with implied constants depending on $\tilde{\eta},\tilde{C},d,M$, and $C_{\nu}$. Finally, we recall that these constants depend only on $d,\eta, \rho$, and $\delta$. This finishes the proof of the proposition.
 
\end{proof}

In the last part of this section, we adapt \Proposition{strong-semmes} to the case that $f$ maps a set to a large bi-Lipschitz image of $\bR^{d}$, which is the case we will need later on.

\begin{lemma}
Suppose $B_{0}\subseteq \bR^{d}$, $f:B_{0}\rightarrow \bR^{D}$ is $\eta$-quasisymmetric, and there is $E'\subseteq B_{0}$ such that $\cH^{d}(f(E'))\geq c(\diam f(B_{0}))^{d}$, and there is $g:f(E')\rightarrow \bR^{d}$  that is $L$-bi-Lipschitz. Then there is $E_{0}\subseteq E'$ and $M=M(\eta,d,L,c)\geq 1$ such that $|E_{0}|\gec_{d,L,\eta,c}|B_{0}|$ and $\ps{\frac{\diam f(E_{0})}{\diam E_{0}}}^{-1}f|_{E_{0}}$ is $M$-bi-Lipschitz.
\label{l:one-piece}
\end{lemma}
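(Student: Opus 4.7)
The plan is to reduce to \Proposition{strong-semmes} by composing with $g$ so that the resulting map lies in $\bR^{d}$. Set
\[h=g\circ f\colon E'\to\bR^{d}.\]
Then $h$ is bijective onto $h(E')$ and $\tilde\eta$-quasisymmetric with $\tilde\eta$ depending only on $\eta$ and $L$, and its inverse $h^{-1}\colon h(E')\to E'$ is $\tilde\eta'$-quasisymmetric for a similar $\tilde\eta'$. Since $g$ is $L$-bi-Lipschitz,
\[|h(E')|\geq L^{-d}\cH^{d}(f(E'))\geq cL^{-d}(\diam f(B_{0}))^{d},\]
while $h(E')\subseteq\bR^{d}$ sits in a ball of radius $\leq L\diam f(E')$, so $|h(E')|\lec L^{d}(\diam f(E'))^{d}$. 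Combining, $\diam f(E')\gec_{c,L,d}\diam f(B_{0})$; then \Lemma{QS-comp} applied to $f$ gives $\diam E'\sim\diam B_{0}$.

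The key move is to apply \Proposition{strong-semmes} not to $h$ but to $h^{-1}$, since our large-measure hypothesis is about the image $f(E')$ and hence about $h(E')$, which is the \emph{domain} of $h^{-1}$. Pick a dyadic cube $\tilde Q_{0}\supseteq h(E')$ with $\ell(\tilde Q_{0})\sim\diam h(E')$. The estimates above give $|h(E')|\gec(\diam h(E'))^{d}\sim|\tilde Q_{0}|$, so $|h(E')|/|\tilde Q_{0}|\geq\delta$ for some $\delta=\delta(c,L,d)>0$. \Proposition{strong-semmes} applied with $\rho=\tfrac12$ then produces a compact $F\subseteq h(E')$ with $|F|\geq\tfrac12|h(E')|$ on which $\ps{\frac{\diam h^{-1}(F)}{\diam F}}^{-1}h^{-1}|_{F}$ is $M_{1}$-bi-Lipschitz, with $M_{1}=M_{1}(\eta,L,d,c)$.

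Put $E_{0}=h^{-1}(F)\subseteq E'$ and set $s=\diam E_{0}/\diam F$. Inverting yields $(sM_{1})^{-1}|a-b|\leq|h(a)-h(b)|\leq s^{-1}M_{1}|a-b|$ for $a,b\in E_{0}$, and composing with the $L$-bi-Lipschitz $g^{-1}$ (using $f=g^{-1}\circ h$) shows that $f|_{E_{0}}$ is bi-Lipschitz with upper constant $\lec Ls^{-1}M_{1}$ and lower constant $\gec(LsM_{1})^{-1}$. To identify the scale, set $\lambda=\diam f(B_{0})/\diam B_{0}$: from $|F|\gec\lambda^{d}|B_{0}|$ and $|F|\lec(\diam F)^{d}$ we deduce $\diam F\sim\lambda\diam B_{0}\sim\diam f(B_{0})$, hence $\diam f(E_{0})\geq L^{-1}\diam F\gec L^{-1}\diam f(B_{0})$, and \Lemma{QS-comp} applied to $f$ then forces $\diam E_{0}\sim\diam B_{0}$. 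Consequently $s\sim\lambda^{-1}\sim\diam E_{0}/\diam f(E_{0})$, which shows that $\ps{\frac{\diam f(E_{0})}{\diam E_{0}}}^{-1}f|_{E_{0}}$ is $M$-bi-Lipschitz for some $M=M(\eta,d,L,c)$. The measure bound follows because $h|_{E_{0}}$ is bi-Lipschitz between subsets of $\bR^{d}$ with upper constant $\sim s^{-1}M_{1}\sim\lambda M_{1}$, so $|F|\lec(\lambda M_{1})^{d}|E_{0}|$ and hence $|E_{0}|\gec_{M_{1}}\lambda^{-d}|F|\gec|B_{0}|$.

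The main obstacle is that nothing in the hypotheses gives a direct lower bound on $|E'|/|B_{0}|$, so \Proposition{strong-semmes} cannot be applied directly to $h|_{E'}$. The trick is to flip the roles: by working with $h^{-1}$ one converts the given bound $\cH^{d}(f(E'))\geq c(\diam f(B_{0}))^{d}$ into precisely the density assumption required by the proposition.
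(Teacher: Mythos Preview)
Your proof is correct and follows essentially the same route as the paper. The paper sets $E_{1}=g\circ f(E')$ (your $h(E')$) and applies \Proposition{strong-semmes} to the map $f^{-1}\circ g^{-1}:E_{1}\to\bR^{d}$, which is precisely your $h^{-1}$; your $F$ is the paper's $E_{1}'$, and both take $E_{0}=h^{-1}(F)$. Your closing remark about flipping the roles is exactly the point of the argument, though the paper does not state it so explicitly.
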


\begin{proof}
Let $E_{1}=g\circ f(E')$. If $B$ is a ball centered on $E_{1}$ with radius $\diam E_{1}$, then
\begin{align}
 c(\diam f(B_{0}))^{d}
& \leq \cH^{d}(f(E'))
\leq L^{d}|E_{1}|
\leq L^{d}|B|   =L^{d}w_{d}(\diam E_{1})^{d}
\leq L^{2d}w_{d}(\diam f(E'))^{d}
\label{e:f(B)^d>f(E')^d}
\end{align}
so that
\begin{equation}
\diam f(E')\geq \frac{c^{\frac{1}{d}}}{L^{2}w_{d}^{\frac{1}{d}}}\diam f(B_{0}).
\label{e:f(E')>f(B0)}
\end{equation}
Set 
\[h:=f^{-1}\circ g^{-1}:E_{1}\rightarrow \bR^{d}.\]
Since $f$ is $\eta$-quasisymmetric, $f^{-1}:f(\bR^{d})\rightarrow \bR^{d}$ is $\eta'$-quasisymmetric with 
\[\eta'(t)=\eta^{-1}(t^{-1})^{-1}\] 
(see \cite[Proposition 10.6]{Heinonen}), and it is not hard to show using the definition of quasisymmetry that $h$ is $\eta'(L^{2}\cdot)$-quasisymmetric.

Let $Q_{1}$ be a cube containing $E_{1}$ with $\ell(Q_{1})=\diam E_{1}$, so that 
\begin{align}
|E_{1}|
& \geq L^{-d}\cH^{d}(f(E')) 
\geq  \frac{c}{L^{d}}(\diam f(B_{0}))^{d}
\geq  \frac{c}{L^{d}}(\diam f(E'))^{d}  \geq \frac{c}{L^{2d}}(\diam E_{1})^{d} 
\label{e:E1>Q1}
\end{align}
By \Proposition{strong-semmes}, there is $E_{1}'\subseteq E_{1}$ with $|E_{1}'|\geq \frac{1}{2}|E_{1}|$ upon which $\ps{\frac{\diam h(E_{1}')}{\diam E_{1}'}}^{-1}h$ is $L'$-bi-Lipschitz, with $L'$ depending on $L,c,d$, and the function $L^{2}\eta'$. Let $E_{0}=h(E_{1}')\subseteq Q_{0}$. Using the facts that $\ps{\frac{\diam h(E_{1}')}{\diam E_{1}'}}^{-1}h$ is $L'$-bi-Lipschitz, $g^{-1}(E_{1}')=f(E_{0})$, and $g$ is $L$-bi-Lipschitz, it isn't hard to show that $\ps{\frac{\diam f(E_{0})}{\diam E_{0}}}^{-1}f$ is $L'L^{2}$-bi-Lipschitz upon $E_{0}$.

If $B'$ is a ball centered upon $E_{1}'$ with radius $\diam E_{1}'$, then
\[\omega_{d}(\diam E_{1}')^{d}
=|B'|
\geq |E_{1}'|
\geq \frac{1}{2}|E_{1}|
\stackrel{\eqn{E1>Q1}}{\geq} \frac{c}{2L^{2d}}(\diam E_{1})^{d}\]
and so
\begin{equation}
\diam E_{1}'\geq \frac{c^{\frac{1}{d}}}{2^{\frac{1}{d}}L^{2}w_{d}^{\frac{1}{d}}}\diam E_{1}.
\label{e:E1'>E1}
\end{equation}
Then
\begin{align}
\diam f(E_{0})
& \geq L^{-1}\diam g\circ f(E_{0})
=L^{-1}\diam E_{1}'
\stackrel{\eqn{E1'>E1}}{\geq} \frac{c^{\frac{1}{d}}}{2^{\frac{1}{d}}L^{3}w_{d}^{\frac{1}{d}}}\diam E_{1} \notag \\
& \geq  \frac{c^{\frac{1}{d}}}{2^{\frac{1}{d}}L^{4}w_{d}^{\frac{1}{d}}}\diam f(E')
 \stackrel{\eqn{f(E')>f(B0)}}{\geq}   \frac{c^{\frac{2}{d}}}{2^{\frac{1}{d}}L^{6}w_{d}^{\frac{2}{d}}}  \diam f(B_{0})
 \label{e:fe0>fb0}
\end{align}
where in the first and penultimate inequalities we used the fact that $g$ is $L$-bi-Lipschitz. By \Lemma{QS-comp},
\begin{equation}
\frac{\diam E_{0}}{\diam B_{0}}
 \geq \ps{2\eta'\ps{\frac{\diam f(B_{0})}{\diam f(E_{0})}}}^{-1}  \stackrel{\eqn{fe0>fb0}}{\geq} \ps{2\eta'(2^{-\frac{1}{d}}L^{-6}w_{d}^{-\frac{2}{d}}c^{\frac{2}{d}})}^{-1}.
\label{e:E0/Q0}
\end{equation}
Furthermore,
\begin{align*}
|E_{0}|
& =|h(E_{1}')|
\geq \ps{\frac{\diam h(E_{1}')}{\diam E_{1}'}}^{d}(L')^{-d}|E_{1}'| \geq  \frac{1}{2}\ps{\frac{\diam E_{0}}{\diam E_{1}}}^{d}(L')^{-d}|E_{1}|\\
& \stackrel{\eqn{E1>Q1}}{\geq}\frac{c}{2(L')^{d}L^{2d}}(\diam E_{0})^{d}
 \stackrel{\eqn{E0/Q0}}{\geq}
\frac{c}{2}\ps{L^{2}L' 2\eta'(2^{-\frac{1}{d}}L^{-6}w_{d}^{-\frac{2}{d}}c^{\frac{2}{d}})}^{-d} (\diam B_{0})^{d}\\
& \geq \frac{c}{2w_{d}}\ps{L^{2}L' 2\eta'(2^{-\frac{1}{d}}L^{-6}w_{d}^{-\frac{2}{d}}c^{\frac{2}{d}})}^{-d}|B_{0}|.
\end{align*}

\end{proof}

\section{Bi-Lipschitz parts imply big-pieces of bi-Lipschitz images}
\label{s:3>4}

The following theorem proves  (3) implies (4) in \Theorem{main}

\begin{theorem}
Suppose $f:\bR^{d}\rightarrow \bR^{D}$ is $\eta$-quasisymmetric and there are $c,L>0$ such that for all $x\in \bR^{d}$ and $r>0$, there is $E\subseteq B(x,r)$ such that $|E|\geq c|B(x,r)|$ and $\ps{ \frac{\diam f(B(x,r))}{\diam B(x,r)}}^{-1} f|_{E}$ is $L$-bi-Lipschitz. Then $f(\bR^{d})$ has big pieces of bi-Lipschitz images.
\label{t:3>4}
\end{theorem}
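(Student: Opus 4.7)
The plan is to turn the hypothesis, which gives bi-Lipschitz parts of $f$ on large subsets of \emph{domain} balls, into bi-Lipschitz parts of $f^{-1}$ on large subsets of \emph{image} balls, which is essentially what BPBI asks for.

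Fix $\xi \in f(\bR^{d})$ and $s > 0$, and write $\xi = f(x)$. The first step is to choose a radius $r>0$ in the domain such that $f(B(x,r))$ sits inside $B(\xi,s)$ yet has diameter comparable to $s$. Since $f$ is continuous and non-constant, by the intermediate value theorem there exists $r>0$ with
\[ \sup_{y \in B(x,r)} |f(y)-\xi| \;=\; \frac{s}{2\eta(1)}. \]
For any other $z \in B(x,r)$, quasisymmetry applied with the pair $(y^{*}, z)$ where $y^{*}$ realizes this supremum gives $|f(z)-\xi| \leq \eta(1) \cdot s/(2\eta(1)) = s/2$, so $f(B(x,r)) \subseteq B(\xi, s/2) \subseteq B(\xi, s)$, while $\diam f(B(x,r)) \geq s/(2\eta(1))$ and $\diam f(B(x,r)) \leq s$. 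Thus, setting
\[ \lambda \;:=\; \frac{\diam f(B(x,r))}{\diam B(x,r)}, \]
we have $\lambda \sim_{\eta} s/r$.

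Next, apply hypothesis (3) to the ball $B(x,r)$: there is $E \subseteq B(x,r)$ with $|E| \geq c|B(x,r)|$ and $\lambda^{-1} f|_{E}$ is $L$-bi-Lipschitz, i.e.
\[ L^{-1}\lambda\,|y-z| \;\leq\; |f(y)-f(z)| \;\leq\; L\lambda\,|y-z|, \qquad y,z \in E. \]
Set $E^{*} := f(E) \subseteq B(\xi,s) \cap f(\bR^{d})$ and define
\[ g \;:=\; \lambda \cdot (f|_{E})^{-1} \,:\, E^{*} \to \bR^{d}. \]
The displayed two-sided estimate immediately gives $L^{-1}|\xi_{1}-\xi_{2}| \leq |g(\xi_{1})-g(\xi_{2})| \leq L|\xi_{1}-\xi_{2}|$ for $\xi_{1},\xi_{2} \in E^{*}$, so $g$ is $L$-bi-Lipschitz.

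Finally, the size of $E^{*}$: since $f|_{E}$ has lower Lipschitz constant $\lambda/L$,
\[ \cH^{d}(E^{*}) \;=\; \cH^{d}(f(E)) \;\geq\; (\lambda/L)^{d}\cH^{d}(E) \;=\; (\lambda/L)^{d}|E| \;\geq\; c(\lambda/L)^{d}|B(x,r)| \;\gtrsim_{d,\eta}\; cL^{-d}\,s^{d}, \]
where in the last step we used $\lambda \sim_{\eta} s/r$ and $|B(x,r)| \sim_{d} r^{d}$. Hence $f(\bR^{d})$ is BPBI$(\kappa,L,d)$ with $\kappa \sim_{d,\eta} cL^{-d}$.

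There is no serious obstacle here; this is essentially a bookkeeping argument. The only point requiring any care is Step~1, where the calibration of $r$ to $s$ must be done so that both $f(B(x,r)) \subseteq B(\xi,s)$ and $\diam f(B(x,r)) \gtrsim s$ hold simultaneously, which is why the factor $1/(2\eta(1))$ is chosen. Everything else is a direct transfer of the bi-Lipschitz estimate from $f|_{E}$ to $(f|_{E})^{-1}$ together with the standard fact that bi-Lipschitz maps distort $\cH^{d}$ by at most a factor of the Lipschitz constant raised to the power $d$.
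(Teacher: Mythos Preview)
Your proof is correct and follows essentially the same route as the paper: choose a domain radius $r$ so that $f(B(x,r))\subseteq B(\xi,s)$ with $\diam f(B(x,r))\sim_{\eta} s$, apply the hypothesis to get a large set $E$ on which the rescaled $f$ is bi-Lipschitz, and then push forward the measure estimate to $f(E)$. The paper separates the first step into a standalone equivalence (\Lemma{ran-dom}) because that transfer is reused later, whereas you fold it directly into the argument; one small remark is that your invocation of quasisymmetry in Step~1 is unnecessary, since $|f(z)-\xi|\leq s/(2\eta(1))\leq s/2$ already follows from the definition of the supremum.
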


We first need the following lemma.

\begin{lemma}
Let : $f:\bR^{d}\rightarrow \bR^{D}$ be $\eta$-quasisymmetric. The following are equivalent:
\begin{enumerate}
\item The set $f(\bR^{d})$ has BPBI($\kappa,L$), that is, there is $\kappa>0$ such that for all $\xi\in f(\bR^{d})$ and $s>0$, there is $A\subseteq B(\xi,s)\cap f(\bR^{d})$ so that $\cH^{d}(A)\geq \kappa s^{d}$ and an $L$-bi-Lipschitz map $g:A\rightarrow \bR^{d}$. 
\item There is $c>0$ such that for all $x\in \bR^{d}$ and $r>0$, there is $E'\subseteq B(x,r)$ and an $L$-bi-Lipschitz map $g:f(E')\rightarrow \bR^{d}$ such that $\cH^{d}(f(E'))\geq c (\diam f(B(x,r)))^{d}$.
\end{enumerate}
\label{l:ran-dom}
\end{lemma}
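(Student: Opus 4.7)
The equivalence says that BPBI can equivalently be measured on the domain side (balls in $\bR^d$) or on the image side (balls in $\bR^D$ centered on $f(\bR^d)$). The plan is to use quasisymmetry, and especially \Lemma{QS-comp}, to match each domain ball $B(x,r)$ with an image ball $B(f(x),s)$ having $s\sim \diam f(B(x,r))$, in such a way that the bi-Lipschitz piece produced by one hypothesis transfers directly to a piece of the form demanded by the other. Throughout, $f^{-1}$ is well-defined on $f(\bR^d)$ because a quasisymmetric embedding is injective.

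\textbf{Proof of (2)$\Rightarrow$(1).} Fix $\xi\in f(\bR^d)$ and $s>0$, and set $x:=f^{-1}(\xi)$. Define $D(\rho):=\diam f(B(x,\rho))$; this is non-decreasing in $\rho$, tends to $0$ as $\rho\to 0$ by continuity of $f$, and tends to $\infty$ as $\rho\to\infty$ (since $\bR^d$ is unbounded and $f$ is quasisymmetric). Let $r:=\sup\{\rho>0:D(\rho)\leq s\}$. Because $B(x,r)=\bigcup_{\rho<r}B(x,\rho)$, we have $D(r)\leq s$, and since $\xi=f(x)\in f(B(x,r))$ this gives $f(B(x,r))\subseteq B(\xi,s)$. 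Applying \Lemma{QS-comp} to the pair $B(x,r)\subset B(x,2r)$ gives $D(r)\geq D(2r)/(2\eta(2))>s/(2\eta(2))$, since $D(2r)>s$ by definition of $r$. Now invoke hypothesis~(2) at $B(x,r)$ to obtain $E'\subseteq B(x,r)$ and an $L$-bi-Lipschitz map $f(E')\to\bR^d$ with $\cH^d(f(E'))\geq c\,D(r)^d\geq c(2\eta(2))^{-d}s^d$. The set $A:=f(E')\subseteq B(\xi,s)\cap f(\bR^d)$ witnesses (1) with $\kappa=c(2\eta(2))^{-d}$.

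\textbf{Proof of (1)$\Rightarrow$(2).} Fix $x\in\bR^d$, $r>0$, and set $\xi:=f(x)$. Pick $z\in\cnj{B(x,r/2)}$ maximizing $|f(z)-\xi|$; then $|f(z)-\xi|\geq \tfrac{1}{2}\diam f(B(x,r/2))$, and \Lemma{QS-comp} (with $B(x,r/2)\subset B(x,r)$) yields $\diam f(B(x,r/2))\geq \diam f(B(x,r))/(2\eta(2))$. For any $y\in\bR^d$ with $|y-x|\geq r$, quasisymmetry applied to the triple $(x,z,y)$ gives
\[\frac{|f(z)-f(x)|}{|f(y)-f(x)|}\leq \eta\!\left(\frac{|z-x|}{|y-x|}\right)\leq \eta(1/2),\]
so $|f(y)-\xi|\geq |f(z)-\xi|/\eta(1/2)\geq \diam f(B(x,r))/(4\eta(1/2)\eta(2))$. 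Setting $s:=\diam f(B(x,r))/(4\eta(1/2)\eta(2))$, this shows $f^{-1}(B(\xi,s)\cap f(\bR^d))\subseteq B(x,r)$. Applying hypothesis~(1) at $(\xi,s)$ produces $A\subseteq B(\xi,s)\cap f(\bR^d)$ and an $L$-bi-Lipschitz $g:A\to\bR^d$ with $\cH^d(A)\geq \kappa s^d$. Then $E':=f^{-1}(A)\subseteq B(x,r)$ satisfies $f(E')=A$ and $\cH^d(f(E'))\geq \kappa\,(4\eta(1/2)\eta(2))^{-d}\,(\diam f(B(x,r)))^d$, giving (2) with $c=\kappa(4\eta(1/2)\eta(2))^{-d}$.

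\textbf{Expected obstacle.} There is no essential difficulty here; this is a routine piece of quasisymmetric book-keeping whose constants depend only on $\eta$ and $d$. The only mild subtlety is the choice of $r$ in the direction (2)$\Rightarrow$(1): one cannot in general arrange $D(\rho)=s$ exactly, but the sup construction combined with the doubling estimate from \Lemma{QS-comp} sandwiches $D(r)$ in the interval $[s/(2\eta(2)),s]$, which is enough to carry the argument through.
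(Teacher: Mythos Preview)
Your proof is correct and follows essentially the same approach as the paper: in each direction you match a domain ball with an image ball of comparable scale using quasisymmetry, then transfer the bi-Lipschitz piece. The bookkeeping differs slightly---the paper defines the matching radius via a supremum of the form $s=\sup\{t:B(f(x),t)\cap f(\bR^d)\subseteq f(B(x,r))\}$ (respectively $r=\sup\{t:f(B(x,t))\subseteq B(\xi,s)\}$) and invokes \Lemma{QS-comp} or the supremality directly, whereas you use \Lemma{QS-comp} in one direction and a bare quasisymmetry estimate on a triple $(x,z,y)$ in the other---but these are interchangeable devices yielding constants of the same shape.
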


\begin{proof}
Let $f:\bR^{d}\rightarrow \bR^{D}$ be $\eta$-quasisymmetric.
\begin{description}
\item[$(1)\Rightarrow (2)$]  Let $x\in \bR^{d}$, $r>0$, and set
\[s=\sup\{t: B(f(x),t)\cap f(\bR^{d})\subseteq f(B(x,r))\}.\]
Then by \Lemma{QS-comp} and the fact that $f^{-1}(B(f(x),s))\subseteq B(x,r)$,
\begin{equation}
\frac{2s}{\diam f(B(x,r))} =
\frac{\diam f(f^{-1}(B(f(x),s)))}{\diam f(B(x,r))}
 \geq \frac{1}{2\eta\ps{\frac{\diam f^{-1}(B(x,s))}{\diam B(x,r)}}}\\
 \geq \frac{1}{2\eta(1)}.
 \label{e:2s<1/2eta}
\end{equation}
By assumption, we know there is $E\subseteq B(f(x),s)\cap f(\bR^{d})$ and $g:E\rightarrow \bR^{d}$ $L$-bi-Lipschitz such that 
\[\cH^{d}(E)\geq \kappa s^{d} \stackrel{\eqn{2s<1/2eta}}{\geq} \frac{\kappa}{4^{d}\eta(1)^{d}}(\diam f(B(x,r)))^{d}.\]
Letting $E'=f^{-1}(E)$ and $c=\frac{\kappa}{4^{d}\eta(1)^{d}}$ proves (2).
\item[$(2)\Rightarrow (1)$]  Let $\xi\in f(\bR^{d})$ and $s>0$, $x=f^{-1}(\xi)$, and set
\[r=\sup\{t:f(B(x,t))\subseteq B(\xi,s)\}.\]
Since $r$ is supremal, there is $y\in B(x,r)$ such that $|f(y)-f(x)|=s$. Also, by assumption, there is $E'\subseteq B(x,r)$ so that if $E=f(E')\subseteq B(\xi,s)\cap f(\bR^{d})$, we have
\[\cH^{d}(E)
\geq c(\diam f(B(x,r)))^{d}\geq c|f(x)-f(y)|^{d}=cs^{d},\]
and so (1) holds with $E=f(E')$ and $\kappa=c$. 
\end{description}

\end{proof}

\begin{proof}[Proof of \Theorem{3>4}]

By \Lemma{ran-dom}, it suffices to show that there is $c>0$ such that for all $x\in \bR^{d}$ and $r>0$, there is $E'\subseteq B(x,r)$ and an $L$-bi-Lipschitz map $g:f(E')\rightarrow \bR^{d}$ such that $\cH^{d}(f(E'))\geq c (\diam f(B(x,r)))^{d}$. Let $B(x,r)\subseteq \bR^{d}$. By assumption, there is $E'\subseteq B(x,r)$ such that $|E'|\gec|B(x,r)|$ and $\ps{\frac{\diam f(B(x,r))}{\diam B(x,r)}}^{-1}f$ is $L$-bi-Lipschitz on $E'$ for some $L$. Then
\begin{align*}
\cH^{d}(f(E'))
& \sim_{L} \ps{\frac{\diam f(B(x,r))}{\diam B(x,r)}}^{d}|E'|\gec \ps{\frac{\diam f(B(x,r))}{\diam B(x,r)}}^{d}|B(x,r)| \\
&  \gec_{d} (\diam f(B(x,r)))^{d}.
\end{align*}
\end{proof}

\section{Big Pieces implies a Carleson estimate}
\label{s:4>1}
\subsection{Preliminaries}

In this section, we focus on proving (4) implies (1) in \Theorem{main} by showing the following.

\begin{theorem}
Suppose $f:\bR^{d}\rightarrow \bR^{D}$ is $\eta$-quasisymmetric,  $d\geq 2$, and $f(\bR^{d})$ has BPBI($\kappa,L$). Then $\omega_{f}(x,r)^{2}\frac{dr}{r}dx$ is a Carleson measure, with Carleson constant depending on $D,\eta$, and the constants in the big pieces condition.
\label{t:4>1}
\end{theorem}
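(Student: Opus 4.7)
My plan is to prove Theorem \ref{t:4>1} via a David--Semmes-style corona decomposition: use the Lipschitz extensions produced by Lemma \ref{l:one-piece} to compare $\omega_{f}$ with $\Omega_{F}$ for certain auxiliary Lipschitz maps $F$, and then apply the dyadic form of Dorronsoro's theorem (Lemma \ref{l:dyadicdorronsoro}). By Lemma \ref{l:dyadic-version} it is enough to show
\[ \sum_{Q\subseteq Q_{0}}\omega_{f}(MQ)^{2}|Q|\lec |Q_{0}| \qquad\text{for every }Q_{0}\in\Delta(\bR^{d}), \]
with $M=30000d$ as in Section \ref{s:1>2}.

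First I would convert BPBI into a domain-side bi-Lipschitz piece for each dyadic cube. Lemma \ref{l:ran-dom}(2) gives, for each $R\in\Delta(Q_{0})$, a set $E_{R}'\subseteq R$ with $\cH^{d}(f(E_{R}'))\gec(\diam f(R))^{d}$ whose image admits an $L$-bi-Lipschitz map to $\bR^{d}$. Feeding this into Lemma \ref{l:one-piece}, I obtain $E_{R}\subseteq R$ with $|E_{R}|\gec|R|$ such that $\lambda_{R}^{-1}f|_{E_{R}}$ is $L_{0}$-bi-Lipschitz, where $\lambda_{R}=\diam f(R)/\diam R$. Extending component-wise by McShane I produce an $L_{1}$-Lipschitz map $F_{R}:\bR^{d}\to\bR^{D}$ that equals $\lambda_{R}^{-1}(f-f(x_{R}))$ on $E_{R}$. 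I would then set up the corona: put $Q_{0}\in\cF$, and for each $R\in\cF$ already placed let $\cB_{R}$ be the maximal dyadic $Q\subsetneq R$ with $|Q\cap E_{R}|<\delta_{0}|Q|$ for a threshold $\delta_{0}$ strictly less than the BPBI density constant; adjoin $\cB_{R}$ to $\cF$ and iterate. This partitions $\Delta(Q_{0})$ into trees $\cT_{R}=\{Q\in\Delta(R):Q\text{ is not strictly contained in any }Q'\in\cB_{R}\}$, and the packing $\sum_{Q\in\cB_{R}}|Q|\le|R\setminus E_{R}|/(1-\delta_{0})\le(1-c_{1})|R|$ yields $\sum_{R\in\cF}|R|\lec|Q_{0}|$ by iteration.

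The main step is the per-tree bound $\sum_{Q\in\cT_{R}}\omega_{f}(MQ)^{2}|Q|\lec |R|$. For $Q\in\cT_{R}\setminus\cB_{R}$, the density $|Q\cap E_{R}|\ge\delta_{0}|Q|$ forces $E_{R}\cap MQ$ to have diameter $\gec\diam MQ$, so the bi-Lipschitz lower bound on $F_{R}|_{E_{R}}$ forces the best $L^{2}$ affine approximant $B_{Q}$ to $F_{R}$ on $MQ$ to satisfy $|B_{Q}'|\gec 1$ whenever $\Omega_{F_{R}}(MQ)$ is below a fixed threshold. Taking $A_{Q}=\lambda_{R}B_{Q}$ so that $|A_{Q}'|\sim\lambda_{R}$, and splitting $\avint_{MQ}|f-A_{Q}|^{2}$ into its $E_{R}$-part (where $f=\lambda_{R}F_{R}$ exactly) and $MQ\setminus E_{R}$-part (where quasisymmetry bounds $|f-A_{Q}|\lec\lambda_{R}\diam MQ$), I arrive at
\[ \omega_{f}(MQ)^{2}\lec\Omega_{F_{R}}(MQ)^{2}+\frac{|MQ\setminus E_{R}|}{|MQ|}. \]
For the few cubes where $\Omega_{F_{R}}(MQ)$ exceeds the threshold I would instead use the universal bound $\omega_{f}(MQ)\le 1/2$ from \eqn{w<1/2} and absorb their contribution by a Chebyshev argument against the $\Omega_{F_{R}}$ sum. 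Summed, the Dorronsoro piece contributes $\lec\|DF_{R}\|_{2}^{2}\lec L_{1}^{2}|R|$ after a standard truncation of $F_{R}$ to $MR$, and combined with the root-packing this proves Theorem \ref{t:4>1}. I expect the main technical obstacle to be the density-error sum $\sum_{Q\in\cT_{R}}\frac{|MQ\setminus E_{R}|}{|MQ|}|Q|$: since $\cT_{R}$ is infinite, one must interchange the sum with an integral over $R\setminus E_{R}$ and, for each point $x$, bound the number of scales $Q\in\cT_{R}$ with $x\in MQ$ by a geometric series tied to the distance from $x$ to the nearest stopping cube in $\cB_{R}$, thereby absorbing the error into the root-packing bound.
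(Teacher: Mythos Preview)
Your overall architecture is sound and parallels the paper's: convert BPBI to a domain-side bi-Lipschitz piece via Lemma \ref{l:one-piece}, extend to a Lipschitz map, compare $\omega_{f}$ to $\Omega_{F}$, and invoke Dorronsoro. The paper does this with a single extension $F$ over $Q_{0}$, restricts to cubes meeting a fixed large set $E$, and then bootstraps via a John--Nirenberg lemma (Lemma \ref{l:JN}) rather than a corona; your corona is a legitimate alternative to that bootstrap.

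However, there is a genuine gap at exactly the point you flag. The bound $\omega_{f}(MQ)^{2}\lec \Omega_{F_{R}}(MQ)^{2}+|MQ\setminus E_{R}|/|MQ|$ is correct, but the error sum $\sum_{Q\in\cT_{R}}\frac{|MQ\setminus E_{R}|}{|MQ|}|Q|$ is \emph{not} controlled by $|R|$, and the root packing does not absorb it. Interchanging sum and integral gives $M^{-d}\int_{MR\setminus E_{R}}\#\{Q\in\cT_{R}:x\in MQ\}\,dx$, and for $x$ in a stopping cube $Q_{x}\in\cB_{R}$ this count is at least $\log_{2}(\ell(R)/\ell(Q_{x}))$ (every ancestor of $Q_{x}$ in $R$ lies in $\cT_{R}$ and contains $x$). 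Hence the error is at least $\sum_{Q'\in\cB_{R}}|Q'|\log(\ell(R)/\ell(Q'))$, and one can arrange $E_{R}$ so that all stopping cubes sit at depth $n$ while still covering a fixed fraction of $R$; the sum is then $\sim n|R|$, unbounded. Summing over all roots only multiplies by a convergent geometric factor in the corona generation and does not kill the $n$. So your proposed ``geometric series tied to the distance to the nearest stopping cube'' cannot work: the count is logarithmic, not geometric, in the relevant scale ratio.

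The missing idea is that the pointwise deviation $|f-A_{Q}|$ on $MQ\setminus E_{R}$ is not merely $\lec\lambda_{R}\diam MQ$ but has H\"older decay in $\dist(\cdot,E_{R})$. Since $f$ is $\eta$-quasisymmetric, Corollary \ref{c:holder} gives, for $y\in MQ$ with nearest point $y'\in E_{R}$,
\[
|f(y)-f(y')|\lec \Big(\frac{|y-y'|}{\diam MQ}\Big)^{\alpha}\diam f(MQ)\lec \Big(\frac{\dist(y,E_{R})}{\ell(Q)}\Big)^{\alpha}\lambda_{R}\,\diam MQ,
\]
and combining with $|A_{Q}(y)-A_{Q}(y')|\lec\lambda_{R}\dist(y,E_{R})$ yields $|f(y)-A_{Q}(y)|\lec (\ell(Q_{j})/\ell(Q))^{\alpha}\lambda_{R}\diam MQ$ where $Q_{j}$ is the Whitney cube of $E_{R}^{c}$ containing $y$. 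The resulting error sum is then $\sum_{Q}\sum_{Q_{j}\subseteq MQ}(\ell(Q_{j})/\ell(Q))^{d+2\alpha}|Q|$, which \emph{is} bounded by $|R|$ (this is the content of the paper's $\lambda_{K,\alpha}$ lemma, equation \eqn{lambda-sum}). With this correction your corona argument goes through; alternatively, once you have the H\"older-improved bound you can dispense with the corona entirely and use John--Nirenberg as the paper does.
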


%
%
%
%
%
%
%
%

\subsection{A reduction using John-Nirenberg and the $\frac{1}{3}$-trick}

In this section, we show how to reduce the proof of \Theorem{4>1} to the following lemma, which we will prove in the following section.

\begin{lemma}
Let $d\geq 2$, $f:\bR^{d}\rightarrow \bR^{D}$ be $\eta$-quasisymmetric, and suppose $f(\bR^{d})$ has BPBI($\kappa,L$). Then for any $v\in \bR^{d}$ and every $Q_{0}\in\Delta$, there is $E\subseteq Q_{0}$ such that $|E|\gec_{\eta,d,\kappa} |Q_{0}|$ and 
\begin{equation}
\sum_{R\subseteq Q_{0} \atop R\cap E\neq\emptyset }\omega_{f_{v}}(R)^{2}|R|\lec_{d,\eta,\kappa}|Q_{0}|.
\label{e:sumomega(R)}
\end{equation}
where $f_{v}$ is the function $f_{v}(x)=f(x+v)$.
\label{l:w-JN}
\end{lemma}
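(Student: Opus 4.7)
The plan is to locate inside $Q_0$ a large subset on which $f$ is quantitatively bi-Lipschitz (after normalization), extend that piece to a globally Lipschitz map, and then feed the extension into Dorronsoro's theorem to control the desired dyadic sum. Every ingredient (quasisymmetry, BPBI, Lipschitz extension, Dorronsoro) is translation invariant, so I may take $v=0$ and prove the estimate for $f$ itself; the general $v$ follows verbatim.

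First I will use \Lemma{ran-dom} to rephrase the BPBI hypothesis in the domain: there is $E'\subseteq Q_0$ and an $L$-bi-Lipschitz map $g:f(E')\to\bR^d$ with $\cH^{d}(f(E'))\gec (\diam f(Q_0))^d$. Feeding this into \Lemma{one-piece} produces a compact $E_0\subseteq Q_0$ with $|E_0|\gec_{\eta,d,L,\kappa}|Q_0|$ such that $c^{-1}f|_{E_0}$ is $M$-bi-Lipschitz, where $c=\diam f(E_0)/\diam E_0$ and $M=M(\eta,d,L,\kappa)$. Passing to a serious refinement of $E_0$ via \Lemma{very-serious}, I may further arrange that $|R\cap E_0|\gec |R|$ for every dyadic $R\subseteq Q_0$ that meets $E_0$ at scales $\leq \diam E_0$, without losing any constant factor of measure. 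I then extend $c^{-1}f|_{E_0}$ by Kirszbraun (or coordinatewise McShane) to a globally $M$-Lipschitz map on $\bR^d$, and rescale by $c$ to obtain $F:\bR^d\to\bR^D$ that coincides with $f$ on $E_0$ and has $\Lip F\lec_M c$. After a standard cutoff, $F$ lies in $W^{1,2}(\bR^d,\bR^D)$ with $\|DF\|_2^2\lec c^2 |Q_0|$, so \Lemma{dyadicdorronsoro} yields
\[\sum_{R\in\Delta(Q_0)}\Omega_F(3R)^2|R|\lec_{D,M} c^2 |Q_0|.\]

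Set $E=E_0$. For each dyadic $R\subseteq Q_0$ with $R\cap E\neq\emptyset$, let $A_R$ be an affine map realizing $\Omega_F(3R)$. On $R\cap E$ one has $f=F$, so the contribution from $R\cap E$ to $\avint_R |f-A_R|^2$ is dominated by $\Omega_F(3R)^2(\diam R)^2$. On the remainder $R\setminus E$, I will use quasisymmetry of $f$ together with the seriousness of $E$: every $y\in R$ is within $\diam R$ of a point of $R\cap E$, and the $\eta$-quasisymmetric control of $f$ lets me bound $|f(y)-f(x)|$ for such pairs by a multiple of $c\diam R$, hence also $|f(y)-A_R(y)|\lec c\diam R + \Omega_F(3R)\diam R$. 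Furthermore, the two-sided bi-Lipschitz bound on $E\cap R$ (which has density $\gec 1$ in $R$) forces the best affine approximation to satisfy $|A_R'|\gec c$. Combining these gives
\[\omega_f(R)^2\lec \frac{1}{(|A_R'|\diam R)^2}\avint_R|f-A_R|^2\lec c^{-2}\Omega_F(3R)^2 + \text{(harmless error)},\]
and summation over such $R$, together with the Dorronsoro bound above, yields $\sum_{R\subseteq Q_0,\,R\cap E\neq\emptyset}\omega_f(R)^2|R|\lec |Q_0|$ with constants depending only on $d$, $\eta$, and $\kappa$ (the $L$-dependence having been absorbed into $M$).

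The main obstacle is the last step: comparing $\omega_f(R)$ to $\Omega_F(3R)$ on cubes $R$ that merely intersect $E$ rather than lie inside it. The two essential inputs are (i) quasisymmetry plus seriousness of $E$, which propagate the $c$-scale of $f$ from $E\cap R$ to all of $R$ and control $|f-A_R|$ off $E$, and (ii) the two-sided bi-Lipschitz bound on $E$, which prevents the optimizer $A_R$ from collapsing and gives $|A_R'|\gec c$. These together reconcile the rescaled quantity $\omega_f$ (with $|A'|$ in the denominator) with the unrescaled $\Omega_F$, which is what ultimately closes the Carleson estimate.
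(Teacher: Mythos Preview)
Your high-level plan (find a large bi-Lipschitz piece $E_0$, extend, apply Dorronsoro, compare $\omega_f$ to $\Omega_F$) matches the paper's strategy, but two of the steps you treat as routine are exactly where the real work lies, and as written they do not go through.

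First, seriousness does not give what you claim. \Lemma{very-serious} produces $E\subseteq E_0$ such that for every $x\in E$ and scale $t$ there is a \emph{single companion point} $y\in E_0$ with $|x-y|\sim t$; it says nothing about $|R\cap E_0|\gec |R|$ for cubes $R$ meeting $E$. So your density claim is false, and with it your route to $|A_R'|\gec c$: to run the ``$F$ stretches two separated points, hence so does $A_R$'' argument you need those two points to lie in a set where $|F-A_R|$ is controlled pointwise, which you cannot arrange from a single companion point and an $L^2$ average. The paper sidesteps this entirely by using MacManus's theorem to extend $f|_{E_0}$ to a globally \emph{bi-Lipschitz} map $F:\bR^d\to\bR^{2D}$ (not merely Lipschitz via Kirszbraun). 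Then for any cube $Q$ one picks $x_1,x_2$ in well-separated grandchildren of $Q$ where $|F-A_Q|$ is below its average, and the lower bi-Lipschitz bound on $F$ forces either $\Omega_F(Q)\gec 1$ (a ``bad'' cube, handled by $\omega_f\leq 1/2$ and Dorronsoro) or $|A_Q'|\gec 1$.

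Second, and more seriously, your ``harmless error'' is not harmless. Your bound $|f(y)-A_R(y)|\lec c\,\diam R$ for $y\in R\setminus E$ gives, after dividing by $|A_R'|\diam R\sim c\,\diam R$, a contribution of order $1$ to $\omega_f(R)^2$ from every cube $R$ meeting $E$ but not contained in it; summing $|R|$ over all such cubes is hopeless. What is needed is a \emph{gain} in $\dist(y,E)/\diam R$. The paper gets this from the H\"older estimate of \Corollary{holder}: for $y\in R$ with nearest point $y'\in E$, the seriousness of $E\subseteq E_0$ controls $\diam f(R)\lec \diam R$, and then
\[
|f(y)-F(y)|\lec \Bigl(\tfrac{\dist(y,E)}{\ell(R)}\Bigr)^{\alpha}\ell(R).
\]
Whitney-decomposing $E^c$ and summing via the elementary geometric lemma
\[
\sum_{R\subseteq Q_0}\sum_{Q_j\subseteq R}\Bigl(\tfrac{\ell(Q_j)}{\ell(R)}\Bigr)^{d+2\alpha}|R|\lec_\alpha |Q_0\setminus E|
\]
is what makes the off-$E$ contribution Carleson. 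Without this H\"older decay and Whitney summation, the argument does not close.
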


\begin{proof}[Proof of \Theorem{4>1}]

Suppose $f:\bR^{d}\rightarrow \bR^{D}$ is $\eta$-quasisymmetric and the image of $f$ has big-pieces of bi-Lipschitz images of $\bR^{d}$.

First, we recall a version of the John-Nirenberg theorem.

\begin{lemma}(\cite[Section IV.1]{of-and-on}) Let $a:\Delta\rightarrow [0,\infty)$ be given, and suppose there are $N,\delta>0$ such that 
\begin{equation}
\av{\ck{x\in R:\sum_{Q\ni x \atop Q\subseteq R}a(Q)\leq N}}\geq \delta|R| \mbox{ for all }R\in \Delta.
\end{equation}
Then 
\begin{equation}
\sum_{Q\subseteq R} a(Q)|Q|\lec_{d,N,\delta}|R| \mbox{ for all }R\in \Delta.
\end{equation}
\label{l:JN}
\end{lemma}

If we assume \Lemma{w-JN}, then each cube $Q$ contains a set $E$ for which
\[|E|\gec_{d,\eta,\kappa} 
|Q|
\gec_{d,\eta,\kappa}  \sum_{R\cap E\neq\emptyset \atop R\subseteq Q}\omega_{f}(R)^{2}|R|
\gec \sum_{R\subseteq Q}\omega_{f}(R)^{2}|R\cap E|
= \int_{E}\sum_{R\subseteq Q}\omega_{f}(R)^{2}\one_{R}.\]
Hence, if $E'=\{x\in E: \sum_{x\in R\subseteq Q}\omega_{f}(R)^{2} \leq 2C\}$ where $C$ is the product of the implied constants in the above inequalities, we get that $|E'|\geq \frac{1}{2}|E|\gec |Q|$, and so \Lemma{JN} implies
\begin{equation}
\sum_{R\subseteq Q}\omega_{f}(R)^{2}\lec_{d,\eta,c,L}|Q| \mbox{  for all } Q\in\Delta.
\label{e:one-grid}
\end{equation}
\Theorem{4>1} doesn't follow just yet. We'd like to employ \Lemma{dyadic-version}, but this only works if we know
\[\sum_{R\subseteq Q}\omega_{f}(MR)^{2}\lec_{d,\eta,c,L}|Q|\]
for some $M>1$. However, \eqn{sumomega(R)} implies
\begin{equation}
\sum_{R\subseteq Q \atop R\in \Delta+v}\omega_{f}(R)^{2}\lec_{d,\eta,c,L}|Q| \mbox{ for all }Q\in \Delta+v, \;\; v\in \bR^{d}.
\label{e:any-grid}
\end{equation}
where $\Delta+v=\{Q+v:Q\in \Delta\}$ is the set of dyadic cubes translated by the vector $v$. 

We now invoke the so-called $\frac{1}{3}$-trick, which says that, for any cube $R$ with $\ell(R)=\frac{2^{-k}}{3}$, $k\in\{0,1,2,...\}$, there is $Q\in \Delta+v$ for some $v\in \{0,\frac{1}{3}\}^{d}$ such that $\ell(Q)=2^{-k}$ and $R\subseteq Q$. For a proof, see \cite[p. 339-40]{O-TSP}. Thus, if $R\in \Delta$ and $\ell(R)=2^{-k-2}$ for some $k\geq 0$, then $\ell(\frac{4}{3}R)= \frac{2^{-k}}{3}$, so there is 
\[Q_{R}\in\tilde{\Delta}:=\bigcup_{v\in \{0,\frac{1}{3}\}^{d}} (\Delta+v)\] 
with $\ell(Q_{R})=2^{-k}$ containing $\frac{4}{3}R$. Moreover, since $\ell(Q_{R})=4\ell(R)$ and $Q_{R}\supseteq R$, we know $Q_{R}\subseteq 12 R$ and there there is $C=C(d)>0$ such that for any $Q\in \tilde{\Delta}$, there are at most $C$ many cubes $R\in \Delta$ such that $Q_{R}=Q$. Thus, for any $Q_{0}\in \Delta$ with $\ell(Q_{0})\leq \frac{1}{4}$, 
\begin{align}
\sum_{R\subseteq Q_{0}}\omega_{f}\ps{\frac{4}{3}R}^{2}|R|
 & \lec_{d} \sum_{R\subseteq Q_{0}}\omega_{f}(Q_{R})^{2}|Q|
\lec_{d}\sum_{Q\in \tilde{\Delta} \atop Q\subseteq 12Q_{0}}\omega_{f}(Q)^{2}|Q|
=\sum_{v\in \{0,\frac{1}{3}\}^{d}} \sum_{Q\in \Delta+v \atop Q\subseteq 12 Q_{0}}\omega_{f}(Q)^{2}|Q| \notag \\
 & \stackrel{\eqn{any-grid}}{\lec}_{d,\eta,c,L}\sum_{v\in \{0,\frac{1}{3}\}^{d}}|Q_{0}|
\lec_{d} |Q_{0}|.
\label{e:sum4/3R}
\end{align}
Note that this holds for any $\eta$-quasisymmetric embedding of $\bR^{d}$ into $\bR^{D}$ whose image has BPBI($\kappa,L$), and since $\omega_{f}$ is dilation and translation invariant, we know that \eqn{sum4/3R} holds for any $Q_{0}\in\Delta$, not just those with $\ell(Q_{0})\leq \frac{1}{4}$. We can now employ \Lemma{dyadic-version} to finish the theorem, at least if we assume \Lemma{w-JN} holds.

\end{proof}

\subsection{Proof of \Lemma{w-JN}}

We now devote ourselves to the proof of \Lemma{w-JN}.

\begin{proof}[Proof of \Lemma{w-JN}]
Note that if $f(\bR^{d})$ has BPBI($\kappa,L$), then so does $f_{v}(\bR^{d})$ (where $f_{v}(x):=f(x+v)$), so without loss of generality, we will assume $v=0$, since the other cases have the same proof.

Let $Q_{0}\in \Delta$. By \Lemma{ran-dom}, we know there is 
\[E'\subseteq B_{0}:=B(x_{Q_{0}},\ell(Q_{0})/2)\subseteq Q_{0}\] 
and $g:f(E')\rightarrow \bR^{d}$ $L$-bi-Lipschitz such that $\cH^{d}(f(E'))\geq c(\diam f(B_{0}))^{d}$. By \Lemma{one-piece}, there is $E_{0}\subseteq B_{0}$ such that 
\begin{equation}
\frac{|E_{0}|}{|Q_{0}|}\gec_{d}\frac{|E_{0}|}{|B_{0}|}\gec_{\eta,d,L,c}1
\label{e:e0/q0}
\end{equation}
and $\ps{\frac{\diam g\circ f(E_{0})}{\diam E_{0}}}^{-1}g\circ f|_{E_{0}}$ is bi-Lipschitz and hence  $\ps{\frac{\diam  f(E_{0})}{\diam E_{0}}}^{-1} f|_{E_{0}}$  is $M$-bi-Lipschitz for some $M=M(d,\eta,L,c)>0$. Recall that $\omega_{f}$ (and hence \eqn{sumomega(R)}) are invariant under a scaling of $f$ in its image, thus we may assume $\diam f(E_{0})/\diam E_{0}=1$ without loss of generality, so that $f$ is $M$-bi-Lipschitz on $E_{0}$.

The following theorem of MacManus tells us that we can extend $f|_{E_{0}}$ to a bi-Lipschitz homeomorphism of $\bR^{2D}\rightarrow \bR^{2D}$.

\begin{theorem}(\cite{MacManus-bilip-extensions})
If $K$ is a compact subset of $\bR^{D}$ and $\Psi$ is an $M$-bi-Lipschitz map of $K$ into $\bR^{D}$, then $\Psi$ has an extension to a $CM^{2}$ bi-Lipschitz map from $\bR^{2D}$ onto itself, where $C$ is some universal constant.
\end{theorem}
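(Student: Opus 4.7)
The plan is to build the extension as an explicit composition of three bi-Lipschitz transformations of $\bR^{2D}$. The extra $D$ coordinates play the key role of providing enough room to ``swap'' the two copies of $\bR^{D}$ without running into the topological obstructions that prevent a bi-Lipschitz extension to $\bR^{D}$ itself (for example, if $d=1$ and $\Psi$ reverses the order of two points of $K$, no bi-Lipschitz extension to $\bR^{1}$ can exist).

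First I would apply Kirszbraun's theorem to extend $\Psi:K\to\bR^{D}$ and $\Psi^{-1}:\Psi(K)\to\bR^{D}$ to $M$-Lipschitz maps $F,G:\bR^{D}\to\bR^{D}$ on all of $\bR^{D}$. Note that $G\circ F=\id$ on $K$ but may fail badly elsewhere; the dimension doubling will make this irrelevant.

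Writing a point of $\bR^{2D}$ as $(x,y)$ with $x,y\in\bR^{D}$ and identifying $K$ with $K\times\{0\}$, I would then introduce the three maps
\begin{equation*}
H_{1}(x,y)=(x,\,y+F(x)),\qquad H_{2}(u,v)=(v,u),\qquad H_{3}(u,v)=(u,\,v-G(u)),
\end{equation*}
and set $H=H_{3}\circ H_{2}\circ H_{1}$. Each $H_{i}$ is plainly a bijection of $\bR^{2D}$: the shears have inverses of the same shape with $F,G$ negated, and $H_{2}$ is an involution. Tracing $(x,0)$ with $x\in K$ through the composition yields
\begin{equation*}
(x,0)\;\longmapsto\;(x,\Psi(x))\;\longmapsto\;(\Psi(x),x)\;\longmapsto\;(\Psi(x),0),
\end{equation*}
using $G(\Psi(x))=x$ at the last step, so $H$ is the desired homeomorphism of $\bR^{2D}$ extending $\Psi$.

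All that remains is the quantitative bi-Lipschitz estimate. I would observe that a shear $(x,y)\mapsto(x,y+\Phi(x))$ by an $L$-Lipschitz $\Phi$, along with its inverse, is $O(L)$-Lipschitz, while $H_{2}$ is an isometry. Composing the three factors, both $H$ and $H^{-1}$ are $O(M^{2})$-Lipschitz, which gives the claimed $CM^{2}$ bound. The only thing to be careful about is this bookkeeping of constants through the composition; no deeper geometric input is required, which is precisely what makes passing to $\bR^{2D}$ so convenient.
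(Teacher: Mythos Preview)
The paper does not actually prove this theorem; it is quoted from \cite{MacManus-bilip-extensions} as a black box and immediately applied, so there is no ``paper's own proof'' to compare against. That said, your argument is correct and is essentially the classical dimension-doubling shear trick that underlies this result. The verification that $H$ restricts to $\Psi$ on $K\times\{0\}$ is right, and your Lipschitz bookkeeping is sound: each shear $H_1,H_3$ and its inverse is $(1+M)$-Lipschitz (write it as identity plus an $M$-Lipschitz perturbation), $H_2$ is an isometry, so $H$ and $H^{-1}$ are each $(1+M)^2\le 4M^2$-Lipschitz, giving the $CM^2$ bound. Nothing further is needed.
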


Viewing $E_{0}$ as being a subset of $\bR^{D}$, we can extend $f$ from the set $E_{0}$ to a $CM^{2}$ bi-Lipschitz self-map of $\bR^{2D}$. Let $F:\bR^{d}\rightarrow \bR^{2D}$ be the restriction of this extension to $\bR^{d}$, so that $F$ is $CM^{2}$ bi-Lipschitz embedding of $\bR^{d}$ into $\bR^{2D}$ that agrees with $f$ on $E_{0}$. By \Lemma{very-serious}, we may find $E\subseteq E_{0}$ compact such that $|E|\geq \frac{1}{2}|E_{0}|\gec_{\eta,d,L,c}|Q_{0}|$ and $E$ is $\hat{C}$-serious for some constant $\hat{C}$ depending on the constants in \eqn{e0/q0}. We'll show $E$ is the desired set such that \eqn{sumomega(R)} holds.

For $Q\in \Delta$, let $A_{Q}$ be the orthogonal projection of $F|_{Q}\in L^{2}(Q)$ onto the finite dimensional subspace of $L^{2}(Q)$ consisting of linear $\bR^{D}$ valued functions. Then
\[\Omega_{F}(Q)=\ps{\avint_{Q}\ps{\frac{|F-A_{Q}|}{\diam Q}}^{2}}^{\frac{1}{2}}.\]
Let $Q_{1},Q_{2}\subseteq Q$ be such that $Q_{j}^{2}=Q$ and $\dist(Q_{1},Q_{2})=\frac{1}{2}\diam Q$. Then there are $x_{j}\in Q_{j}$ such that 
\begin{align*}
 |F(x_{j})-A_{Q}(x_{j})|^{2} 
 & \leq \avint_{Q_{j}} |F-A_{Q}|^{2}\leq 2^{2d}\avint_{Q}|F-A_{Q}|^{2} 
 =2^{2d}\ps{\Omega_{F}(Q)\diam Q}^{2}.
 \end{align*}
Then
\begin{multline}
\frac{1}{CM^{2}} 
 \leq \frac{|F(x_{1})-F(x_{2})|}{|x_{1}-x_{2}|}
 \leq \frac{|F(x_{1})-A_{Q}(x_{1})|+|A_{Q}(x_{1})-A_{Q}(x_{2})|+|A_{Q}(x_{2})-F(x_{2})|}{\frac{1}{2}\diam Q}\\
 \leq 2^{d+2}\Omega_{F}(Q)+2|A_{Q}'|.
 \label{e:fxj-axj}
\end{multline}
Set 
\[\cB=\ck{Q\in \Delta: \Omega_{F}(Q)\geq \frac{1}{2^{d+3}CM^{2}}}\]
and set 
\[\cG_{E}=\{Q\in\Delta\backslash \cB: Q\subseteq Q_{0}, Q\cap E\neq\emptyset\}\]
so that \eqn{fxj-axj} implies
\begin{equation}
|A_{Q}'|\geq \frac{1}{4CM^{2}}\mbox{ for all }Q\in \cG_{E}
\label{e:AQ'>1/M}
\end{equation}

We now begin the process of showing \eqn{sumomega(R)} holds for the set $E$:

\begin{align}
\sum_{Q\subseteq Q_{0} \atop Q\cap E\neq\emptyset} \omega_{f}(Q)^{2}|Q|
 & \leq \sum_{Q\subseteq Q_{0} \atop Q\in \cB}\omega_{f}(Q)^{2}|Q|
+\sum_{Q\in \cG_{E}} \omega_{f}(Q)^{2}|Q| \notag \\
&  \leq \sum_{Q\subseteq Q_{0} \atop Q\in \cB}\omega_{f}(Q)^{2}|Q|+ \sum_{Q\in \cG_{E}}  \avint_{Q}\ps{\frac{|f-A_{Q}|}{|A_{Q}'|\diam Q}}^{2}|Q| \notag \\
&  \leq  \sum_{Q\subseteq Q_{0} \atop Q\in \cB}\omega_{f}(Q)^{2}|Q|+\sum_{Q\in \cG_{E}} \int_{Q} \ps{\frac{|F-A_{Q}|+|f-F|}{|A_{Q}'|\diam Q}}^{2}  \notag \\
&  \leq \sum_{Q\subseteq Q_{0} \atop Q\in \cB}\omega_{f}(Q)^{2}|Q|+2 \sum_{Q\in \cG_{E}} \frac{\Omega_{F}(Q)^{2}}{|A_{Q}'|^{2}}|Q|    + 2\sum_{Q\in\cG_{E}} \int_{Q}\ps{\frac{|f-F|}{|A_{Q}'|\diam Q}}^{2}.
\label{e:sumQcapE}
\end{align}
We'll estimate the three summands separately, starting with the first.

Let $\phi_{Q_{0}}$ be a smooth bump function such that
\[\one_{3Q_{0}}\leq \phi_{Q_{0}}\leq \one_{4\phi_{Q_{0}}} \mbox{ and } |\d^{\alpha}\phi_{Q_{0}}|\lec_{d,\alpha}\ell(Q_{0})^{-|\alpha|}.\] 
Then by Dorronsoro's theorem, and since $\omega_{f}(Q)\leq 1$ for all $Q$,

\begin{align}
\sum_{Q\in Q_{0}  \atop Q\in \cB} & \omega_{f}(Q)^{2}|Q|
 \leq \sum_{Q\in Q_{0} \atop Q\in \cB}|Q|
\leq \sum_{Q\in Q_{0} \atop Q\in \cB} \ps{2^{d+3}CM^{2}}^{2}\Omega_{F}(Q)^{2}|Q| \notag \\
& \lec_{d,M} \sum_{Q\subseteq Q_{0}}\Omega_{F}(Q)^{2}|Q|
 = \sum_{Q\subseteq Q_{0}}\Omega_{\phi_{Q_{0}}(F-F(x_{Q_{0}}))}(Q)^{2}|Q| \notag \\
& \lec_{D} ||\grad (\phi_{Q_{0}} (F-F(x_{Q_{0}})))||_{2}^{2} \notag \\
& \leq || \grad \phi_{Q_{0}} (F-F(x_{Q_{0}})) + \phi_{Q_{0}}\grad (F-F(x_{Q_{0}}))||_{2}^{2} \notag \\
& \lec_{d} \frac{1}{\ell(Q_{0})^{2}} \int_{4Q_{0}}(F-F(x_{Q_{0}}))^{2}+\int_{4Q_{0}}|\grad (F-F(x_{Q_{0}}))|^{2}\notag  \\ 
& \leq \frac{1}{\ell(Q_{0})^{2}}\int_{4Q_{0}}(CM^2\diam 4Q_{0})^{2}+\int_{4Q_{0}}(CM^{2})^{2} 
 \lec_{d,M} |Q_{0}|.
 \label{e:sumomegaf1}
\end{align}

For the second summand in \eqn{sumQcapE}, we use \eqn{AQ'>1/M} and Dorronsoro's theorem to estimate

\begin{equation}
\sum_{Q\in \cG_{E}} \frac{\Omega_{F}(Q)^{2}}{|A_{Q}'|^{2}}|Q|
 \stackrel{\eqn{AQ'>1/M}}{ \leq} \sum_{Q\in \cG_{E}}(4CM^{2})^{2}\Omega_{F}(Q)^{2}|Q| 
 \lec_{d,M}\sum_{Q\subseteq Q_{0}}\Omega_{F}(Q)^{2}
 \stackrel{\eqn{sumomegaf1}}{\lec_{D,M}} |Q_{0}|.
 \label{e:sumomegaf2}
\end{equation}

Now we focus on the final sum in \eqn{sumQcapE}. For any $Q\subseteq Q_{0}$ such that $Q\cap E\neq\emptyset$, if $x\in Q\cap E$, then there is $y\in E_{0}$ such that $\hat{C}^{-1}\diam Q \leq |x-y|\leq \diam Q$ (because $E$ is a $\hat{C}$-serious subset of $E_{0}$). Hence, if $z\in Q$ is such that $|f(x)-f(z)|\geq \frac{1}{2}\diam f(Q)$,
\begin{align}
 \diam f(Q)
& \leq 2|f(x)-f(z)| 
=2\frac{|f(x)-f(z)|}{|f(x)-f(y)|}|f(x)-f(y)|
\notag \\
& \leq 2\eta \ps{\frac{|x-z|}{|x-y|}}|f(x)-f(y)|  \notag \\
& 
 \leq 2\eta\ps{\frac{\diam Q}{\hat{C}^{-1}\diam Q}} |F(x)-F(y)| \notag \\
& \leq 2\eta(\hat{C})CM^{2}|x-y|  \leq 2\eta(\hat{C})CM^{2}\diam Q.
\label{e:f(Q)<MQ}
\end{align}

We will require some estimates on the H\"older continuity of $f$.

\begin{corollary}
Let $f:\bR^{d}\rightarrow \bR^{D}$ be $\eta$-quasisymmetric and $K\subseteq \bR^{d}$ a bounded set. Then there are constants $C>0$ and $\alpha\in (0,1)$, depending only on $\eta$, such that for all $x,y\in K$ distinct,
\[ \frac{1}{2C}  \ps{\frac{|x-y|}{\diam K}}^{\frac{1}{\alpha}}\leq \frac{|f(x)-f(y)|}{\diam f(K)}\leq 2^{\alpha}C\ps{\frac{|x-y|}{\diam K}}^{\alpha}.\]
\label{c:holder}
\end{corollary}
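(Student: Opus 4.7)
The plan is to transmute the abstract control furnished by $\eta$ into a genuine power-law (H\"older) bound by iterating the quasisymmetry condition along a geometric chain of points. The pivotal observation is that, since $\eta$ is an increasing homeomorphism of $(0,\infty)$ with $\eta(0^{+})=0$, there is a constant $\lambda = \lambda(\eta)>1$ (concretely $\lambda = 1/\eta^{-1}(1/2)$) with $\eta(1/\lambda) \leq 1/2$. This $\lambda$ will govern the contraction factor per iteration, and the resulting H\"older exponent is $\alpha = \log 2 / \log \lambda$.

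First I would establish the upper bound. Fix distinct $x,y \in K$, set $t = |x-y|/\diam K \in (0,1]$, and pick $w\in K$ with $|x-w| \geq \diam K /2$; such a $w$ exists because at least one endpoint of any near-diameter-realizing pair in $K$ lies that far from $x$. Build a chain of auxiliary points $y = z_{0}, z_{1},\ldots, z_{n}\in \bR^{d}$ with $|x-z_{k}| = \lambda^{k} |x-y|$, stopping at the largest $n$ such that $\lambda^{n}|x-y| \leq |x-w|$; since $f$ is defined on all of $\bR^{d}$, the $z_{k}$ may be placed freely on the appropriate spheres about $x$. Applying the quasisymmetry inequality to each triple $(x, z_{k-1}, z_{k})$ and once more to $(x, z_{n}, w)$, the ratios telescope:
\[ |f(x)-f(y)| \leq 2^{-n}|f(x)-f(z_{n})| \leq 2^{-n}\eta(1)\,|f(x)-f(w)| \leq 2^{-n}\eta(1)\diam f(K). \]
Since $n$ satisfies $\lambda^{n} \geq |x-w|/(\lambda |x-y|) \geq 1/(2\lambda t)$, one gets $2^{-n} \lec_{\eta} t^{\alpha}$, which is the upper H\"older estimate.

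For the lower bound I would invoke the same machinery applied to the inverse map $f^{-1}\colon f(\bR^{d})\to\bR^{d}$, which is $\eta'$-quasisymmetric with $\eta'(s)=1/\eta^{-1}(1/s)$, a fact cited in the paper from \cite[Proposition 10.6]{Heinonen}. The upper-H\"older bound for $f^{-1}$ applied to the bounded set $f(K)\subseteq \bR^{D}$ reads $|x-y|/\diam K \lec_{\eta} (|f(x)-f(y)|/\diam f(K))^{\alpha'}$ for some $\alpha' = \alpha'(\eta) > 0$, and solving for $|f(x)-f(y)|$ produces a lower bound of the claimed form.

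The only bookkeeping subtlety is that the two steps produce a priori distinct exponents $\alpha_{+}$ and $\alpha_{-}$, whereas the statement asks for a single $\alpha$ appearing as $\alpha$ upstairs and $1/\alpha$ downstairs. I would reconcile this by replacing both exponents by $\alpha := \min(\alpha_{+}, \alpha_{-}) \in (0,1)$: since $|x-y|/\diam K \leq 1$, decreasing the upper exponent only enlarges the right-hand side, and increasing the lower exponent only shrinks it, so both bounds persist after inflating $C$ to absorb the resulting constants. The argument has no genuine obstacle; the only point of interest is the initial choice of $\lambda$, which is precisely the mechanism that converts the soft homeomorphism $\eta$ into an explicit power law.
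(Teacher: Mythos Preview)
Your argument is correct, but it takes a different route from the paper's. The paper does not rederive the power-type control on $\eta$ by iteration; instead it quotes \Lemma{holder} (Heinonen's Theorem 11.3), which already upgrades $\eta$ to the form $\tilde\eta(t)=C\max\{t^{\alpha},t^{1/\alpha}\}$ on a connected domain. With that in hand, the paper obtains both inequalities in one step each: for the upper bound it picks $y'\in K$ with $|x-y'|\ge\max\{\tfrac12\diam K,|x-y|\}$ and applies $\tilde\eta$ to the triple $(x,y,y')$; for the lower bound it picks $y''\in K$ with $|f(x)-f(y'')|\ge\tfrac12\diam f(K)$ and applies $\tilde\eta$ to $(x,y'',y)$, using the $t^{1/\alpha}$ branch when $|x-y''|/|x-y|\ge 1$. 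In particular the paper never passes to $f^{-1}$, and a single exponent $\alpha$ emerges automatically from the cited lemma, so no reconciliation step is needed.

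Your chain-telescoping argument is essentially the standard proof of \Lemma{holder} unpacked inline, so your approach is more self-contained at the cost of some length. Two small points deserve a sentence of care. First, when you apply the upper-bound machinery to $f^{-1}$, the chain points must be placed in the domain $f(\bR^{d})\subsetneq\bR^{D}$, not in all of $\bR^{D}$; this is fine because $f(\bR^{d})$ is connected and unbounded (the latter since quasisymmetry forces $|f(x_n)|\to\infty$ whenever $|x_n|\to\infty$), so the distance function $|\cdot-\xi|$ attains every value on it. Second, your assertion that $\min(\alpha_{+},\alpha_{-})\in(0,1)$ need not hold as stated (e.g.\ if $\eta$ is close to linear near $0$ one can get $\alpha_{+}\ge 1$); but since $|x-y|/\diam K\le 1$, decreasing $\alpha$ only weakens both inequalities, so you may simply take $\alpha=\min(\alpha_{+},\alpha_{-},\tfrac12)$ and absorb constants.
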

We will prove this in Section \ref{s:holderproof} in the appendix. 

Let $\{Q_{j}\}$ be the Whitney cube decomposition for $E^{c}$, comprised of those maximal dyadic cubes $Q_{j}\subseteq E^{c}$ for which $3Q_{j}\cap E=\emptyset$. Then it is not too hard to show that, for each $j$,
\begin{equation}
 \ell(Q_{j}) \leq \dist(x,E)\leq 4\diam Q_{j} \mbox{ for all }x\in Q_{j}.
 \label{e:whitney}
 \end{equation}

Let $Q\subseteq Q_{0}$. For $x\in Q$, let $x'$ denote a point in $E$ such that $|x-x'|=\dist(x,E)$, and pick $Q_{j}$ containing $x$. By \Corollary{holder}, and since $f=F$ on $E$, there are constants $\alpha\in (0,1)$ and $C_{\eta}>0$ such that 
\begin{align}
|f(x) & -F(x)|
 \leq |f(x)-f(x')|+|f(x')-F(x')|+|F(x')-F(x)|\\
& \leq 2^{\alpha}C_{\eta}\ps{\frac{|x-x'|}{\diam Q}}^{\alpha}\diam f(Q) + 0 + CM^{2}|x-x'| \notag \\
& \leq  C_{\eta}\ps{2\frac{\dist(x,E)}{\ell(Q)}}^{\alpha}\diam f(Q)+ CM^{2}\dist(x,E) \notag \\
& =C_{\eta}\ps{2\frac{\dist(x,E)}{\ell(Q)}}^{\alpha}\diam f(Q)+ CM^{2}\frac{\dist(x,E)}{\ell(Q)} \ell(Q) \notag \\
& \stackrel{\eqn{whitney}}{\leq}  \ps{\frac{ 4\sqrt{d}\ell(Q_{j})}{\ell(Q)}}^{\alpha}C_{\eta}\diam f(Q)+CM^{2}\frac{8\sqrt{d}\ell(Q_{j})}{\ell(Q)}\ell(Q) \notag \\
& \stackrel{\eqn{f(Q)<MQ} }{\leq} 8\sqrt{d} \ps{\frac{\ell(Q_{j})}{\ell(Q)}}^{\alpha}(2C_{\eta}\eta(\hat{C})CM^{2}+CM^{2})\ell(Q) \notag \\
& = (1+2C_{\eta}\eta(\hat{C}))8CM^{2}\ps{\frac{\ell(Q_{j})}{\ell(Q)}}^{\alpha} \ell(Q).
\label{e:f-F<qj/q^a}
\end{align}

Before proceeding, we will need the following geometric lemma.

 \begin{lemma}
Let $\alpha>0$, $K\subseteq Q_{0}\in\Delta(\bR^{d})$ be any compact subset, and $\{Q_{j}\}$ be a Whitney decomposition for $K^{c}$. For $Q\subseteq Q_{0}$, define
\[\lambda_{K,\alpha}(Q):=\sum_{Q_{j}\subseteq Q}\ps{\frac{\ell(Q_{j})}{\ell(Q)}}^{d+\alpha}.\]
Then, for all $Q\subseteq Q_{0}$, 
\begin{equation}
\lambda_{K,\alpha}(Q)\leq \frac{|Q\backslash K|}{|K|}\leq 1
\label{e:lambda<1}
\end{equation}
and
\begin{equation}
\sum_{Q\subseteq Q_{0}} \lambda_{K,\alpha}(Q)|Q| \leq  \frac{1}{1-2^{-\alpha}} |Q_{0}\backslash K|.
\label{e:lambda-sum}
\end{equation}
\end{lemma}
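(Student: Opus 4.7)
The plan is to dispatch both inequalities with straightforward dyadic bookkeeping; no substantive difficulty is anticipated (the ``$|K|$'' in the first displayed inequality appears to be a typo for ``$|Q|$'', which is what I will prove below).

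For the first inequality, since every $Q_{j}$ in the sum satisfies $\ell(Q_{j}) \le \ell(Q)$, I would use the trivial bound $\ps{\ell(Q_{j})/\ell(Q)}^{d+\alpha} \le \ps{\ell(Q_{j})/\ell(Q)}^{d} = |Q_{j}|/|Q|$ to get
\[
\lambda_{K,\alpha}(Q) \le \frac{1}{|Q|}\sum_{Q_{j} \subseteq Q} |Q_{j}|.
\]
Because $\{Q_{j}\}$ is a Whitney decomposition, the cubes have pairwise disjoint interiors and are contained in $K^{c}$, so $\sum_{Q_{j} \subseteq Q}|Q_{j}| \le |Q \setminus K|$, which yields the first inequality. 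The bound by $1$ is then immediate.

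For the second inequality, the plan is to swap the two sums (which is legal by Tonelli since everything is nonnegative):
\[
\sum_{Q\subseteq Q_{0}} \lambda_{K,\alpha}(Q)\,|Q|
= \sum_{Q_{j} \subseteq Q_{0}} \ell(Q_{j})^{d+\alpha} \sum_{Q\,:\,Q_{j} \subseteq Q \subseteq Q_{0}} \ell(Q)^{-\alpha}.
\]
For each fixed $Q_{j}$, the cubes $Q$ with $Q_{j} \subseteq Q \subseteq Q_{0}$ form the chain of dyadic ancestors of $Q_{j}$ up to $Q_{0}$, whose side lengths are $2^{k}\ell(Q_{j})$ for $k=0,1,\dots,\log_{2}(\ell(Q_{0})/\ell(Q_{j}))$. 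The inner sum is therefore bounded by the geometric series
\[
\ell(Q_{j})^{-\alpha}\sum_{k=0}^{\infty} 2^{-k\alpha} \le \frac{\ell(Q_{j})^{-\alpha}}{1-2^{-\alpha}}.
\]
Substituting this back cancels $\ell(Q_{j})^{\alpha}$ against $\ell(Q_{j})^{d+\alpha}$ and leaves $\ell(Q_{j})^{d} = |Q_{j}|$, so interior-disjointness of the Whitney cubes gives the desired bound
\[
\sum_{Q \subseteq Q_{0}} \lambda_{K,\alpha}(Q)\,|Q| \le \frac{1}{1-2^{-\alpha}}\sum_{Q_{j}\subseteq Q_{0}}|Q_{j}| \le \frac{|Q_{0}\setminus K|}{1-2^{-\alpha}}.
\]

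The only ``obstacle'' is the trivial bookkeeping point that one must restrict to those Whitney cubes actually contained in $Q_{0}$; since $Q_{0}$ is itself dyadic, any $Q_{j}$ meeting the interior of $Q_{0}$ is either contained in $Q_{0}$ or strictly contains it, and the latter cannot occur under the summation constraint $Q_{j} \subseteq Q \subseteq Q_{0}$. Both inequalities are really combinatorial identities once Fubini is applied, and the factor $(1-2^{-\alpha})^{-1}$ arises precisely from summing $\sum_{k\ge 0} 2^{-k\alpha}$.
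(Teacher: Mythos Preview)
Your proof is correct and follows essentially the same approach as the paper: both use the trivial bound $(\ell(Q_j)/\ell(Q))^{d+\alpha}\le(\ell(Q_j)/\ell(Q))^{d}$ together with disjointness of the Whitney cubes for the first inequality, and both swap the order of summation and sum the resulting geometric series over dyadic ancestors for the second. You also correctly flag the typo ($|K|$ should read $|Q|$), which the paper's own computation confirms.
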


\begin{proof}
Fix $\alpha>0$ and set $\lambda=\lambda_{K,\alpha}$. For the first part of the lemma, observe that since the $Q_{j}$ are disjoint and $\frac{\ell(Q_{j})}{\ell(Q)}\leq 1$ if $Q_{j}\subseteq Q$,
\[
\lambda(Q)
  =\sum_{Q_{j}\subseteq Q} \ps{\frac{\ell(Q_{j})}{\ell(Q)}}^{n+\alpha} 
  \leq \sum_{Q_{j}\subseteq Q} \ps{\frac{\ell(Q_{j})}{\ell(Q)}}^{n} 
  = \frac{1}{|Q|}\sum_{Q_{j}\subseteq Q}|Q_{j}| 
  = \frac{1}{|Q|}{|Q\backslash K|}\leq 1.
\]
 Now we show \eqn{lambda-sum}. By Fubini's theorem,
\begin{align*}
\sum_{Q\subseteq Q_{0}}\lambda(Q)|Q|
& =\sum_{Q\subseteq Q_{0}}\sum_{Q_{j}\subseteq Q} \ps{\frac{\ell(Q_{j})}{\ell(Q)}}^{n+\alpha}|Q|\\
& =\sum_{Q_{j}\subseteq Q_{0}}\ell(Q_{j})^{n+\alpha}\sum_{Q_{j}\subseteq Q\subseteq Q_{0}} \frac{|Q|}{\ell(Q)^{n+\alpha}}\\
& =\sum_{Q_{j}\subseteq Q_{0}}\ell(Q_{j})^{n+\alpha}\sum_{Q_{j}\subseteq Q\subseteq Q_{0}} \ell(Q)^{-\alpha}\\
& =\sum_{Q_{j}\subseteq Q_{0}}\ell(Q_{j})^{n+\alpha}\sum_{j=0}^{\log_{2}\frac{\ell(Q_{0})}{\ell(Q_{j})}}\ell(Q_{j})^{-\alpha}2^{-j\alpha} \\
&  \leq \sum_{Q_{j}\subseteq Q_{0}}\ell(Q_{j})^{n}\frac{1}{1-2^{-\alpha}} = \frac{1}{1-2^{-\alpha}} |Q_{0}\backslash K|.
\end{align*}
\end{proof}

We continue with the proof. Since $|f(x)-F(x)|=0$ on $E$ and $E^{c}=\bigcup Q_{j}$ since $E$ is closed,
\begin{align}
\sum_{Q\in \cG_{E}} \int_{Q} & \ps{\frac{|f(x)-F(x)|}{\diam Q} }^{2}dx
 = \sum_{Q\in \cG_{E}} \sum_{Q_{j}\subseteq Q} \int_{Q_{j}}\ps{\frac{|f(x)-F(x)|}{\diam Q}}^{2}dx \notag \\
& \stackrel{\eqn{f-F<qj/q^a}}{\lec}_{\eta,d,M} \sum_{Q\in \cG_{E}} \sum_{Q_{j}\subseteq Q} \ps{\frac{\ell(Q_{j})}{\ell(Q)}}^{2\alpha} |Q_{j}| 
 = \sum_{Q\in \cG_{E}} \ \sum_{Q_{j}\subseteq Q} \ps{\frac{\ell(Q_{j})}{\ell(Q)}}^{d+2\alpha} |Q| \notag \\
&  =\sum_{Q\in \cG_{E}} \lambda_{E,2\alpha}(Q)|Q|
  \stackrel{\eqn{lambda-sum}}{\lec}_{\alpha}  |Q_{0}|.
 \label{e:sumomegaf3}
\end{align}
and this bounds the third sum in \eqn{sumQcapE}. 

Combining \eqn{sumQcapE}, \eqn{sumomegaf1}, \eqn{sumomegaf2}, and \eqn{sumomegaf3}, we obtain 
\begin{align*}
\sum_{Q\subseteq Q_{0} \atop Q\cap E'\neq\emptyset} \omega_{f}(Q)^{2}|Q|
\lec_{d,M,\eta}|Q_{0}|.
\end{align*}
Finally, we recall that $M$ and $\alpha$ depend on $\eta,c,D,$ and $L$, and this finishes the proof.

\end{proof}

%
%
%

\section{Appendix}
\label{s:appendix}

\subsection{H\"older estimates: The proof of \Corollary{holder}}
\label{s:holderproof}

\begin{lemma}(\cite[Theorem 11.3]{Heinonen}) An $\eta$-quasisymmetric embedding $f:\Omega\rightarrow \bR^{D}$, where $\Omega\subseteq \bR^{d}$ is connected, is $\tilde{\eta}$-quasisymmetric with $\tilde{\eta}$ of the form
\begin{equation}
\tilde{\eta}=C\max\{t^{\alpha},t^{\frac{1}{\alpha}}\}
\label{e:t^at^1/a}
\end{equation}
where $C\geq 1$ and $\alpha\in (0,1]$ depend only on $\eta$.
\label{l:holder}
\end{lemma}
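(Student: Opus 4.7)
The plan is to exploit connectedness of $\Omega$ to run a chain argument that converts the arbitrary modulus $\eta$ into a power modulus, first on $t\in(0,1]$, and then on $t\geq 1$ by applying the same reasoning to $f^{-1}$. The key geometric consequence of connectedness is the following: for any $x\in\Omega$, the continuous function $w\mapsto|w-x|$ sends $\Omega$ to an interval, so for every $y,z\in\Omega$ with $|x-y|<|x-z|$ and every $r\in[|x-y|,|x-z|]$ there exists $w\in\Omega$ with $|x-w|=r$.

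\emph{Small ratios.} Since $\eta:(0,\infty)\to(0,\infty)$ is an increasing homeomorphism, $\eta(t)\to 0$ as $t\to0^+$, so I can fix $\lambda\in(0,1)$ with $\eta(\lambda)<1$. Given distinct $x,y,z\in\Omega$ with $t:=|x-y|/|x-z|\leq\lambda$, let $N$ be the largest integer with $\lambda^N\geq t$ and use the geometric fact to select $w_0=y,w_1,\ldots,w_{N-1},w_N=z$ in $\Omega$ with $|x-w_i|=\lambda^{-i}|x-y|$ for $0\leq i\leq N-1$; a direct check shows $|x-w_{i-1}|/|x-w_i|\leq \lambda$ for every consecutive pair, including the last one ending at $z$. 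Iterating $\eta$-quasisymmetry on the triples $(x,w_{i-1},w_i)$,
\[
\frac{|f(x)-f(y)|}{|f(x)-f(z)|}=\prod_{i=1}^{N}\frac{|f(x)-f(w_{i-1})|}{|f(x)-f(w_i)|}\leq \eta(\lambda)^{N}.
\]
Since $N>\log_\lambda(t)-1$ and $\log\eta(\lambda)<0$, this gives $\tilde\eta(t)\leq C_1 t^{\alpha}$ with $\alpha=\log\eta(\lambda)/\log\lambda>0$ for $t\in(0,\lambda]$; on $(\lambda,1]$ the universal bound $\tilde\eta\leq\eta(1)$ can be absorbed. Shrinking $\alpha$ if necessary, one obtains $\tilde\eta(t)\leq C_2 t^{\alpha}$ on all of $(0,1]$ with $\alpha\in(0,1]$.

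\emph{Large ratios via inversion.} The inverse $f^{-1}:f(\Omega)\to\Omega$ is $\eta_1$-quasisymmetric with $\eta_1(s)=1/\eta^{-1}(1/s)$ (\cite[Prop.~10.6]{Heinonen}), and $f(\Omega)$ is connected because $\Omega$ is and $f$ is continuous. Applying the previous paragraph to $f^{-1}$ yields $\tilde\eta_1(s)\leq C_3 s^{\beta}$ on $(0,1]$ for some $\beta\in(0,1]$. For $t\geq 1$ and distinct $x,y,z\in\Omega$ with $|x-y|/|x-z|=t$, set $a=f(x),b=f(y),c=f(z)$. If $|a-c|\leq|a-b|$, applying the $f^{-1}$ bound to $(a,c,b)$ gives $1/t=|x-z|/|x-y|\leq C_3(|a-c|/|a-b|)^{\beta}$, which rearranges to $|f(x)-f(y)|/|f(x)-f(z)|\leq C_3^{1/\beta} t^{1/\beta}$; in the opposite case the ratio is already $<1\leq t^{1/\beta}$.

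\emph{Combining.} Setting $\gamma=\min(\alpha,\beta)\in(0,1]$ and $C=\max(C_2,C_3^{1/\beta})$, one has $\tilde\eta(t)\leq C_2 t^{\alpha}\leq C t^{\gamma}$ for $t\leq 1$ and $\tilde\eta(t)\leq C_3^{1/\beta}t^{1/\beta}\leq C t^{1/\gamma}$ for $t\geq 1$, which is the required form $\tilde\eta(t)\leq C\max\{t^{\gamma},t^{1/\gamma}\}$. The only delicate point is the chain construction: one must check that the geometric consequence of connectedness really allows placing the $w_i$ at the prescribed radii inside $\Omega$, and that the last ratio $|x-w_{N-1}|/|x-z|$ is still $\leq \lambda$ so that every factor in the telescoping product is bounded by $\eta(\lambda)<1$. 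Once this bookkeeping is in place, the rest of the argument is a clean iteration plus a symmetric use of the inverse.
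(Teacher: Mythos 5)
The paper does not prove this lemma; it cites \cite[Theorem 11.3]{Heinonen} and remarks only that the cited result is stated for $A$-uniformly perfect spaces and that connected sets qualify. Your proof is a correct, self-contained reconstruction of the chain argument underlying Heinonen's theorem, specialized to the connected case where the intermediate-value theorem gives points $w$ with $|x-w|$ equal to any prescribed radius in $[|x-y|,|x-z|]$ (rather than only up to the uniformly-perfect constant). The bookkeeping all checks out: the radii $\lambda^{-i}|x-y|$, $0\le i\le N-1$, lie in $[|x-y|,|x-z|]$ because $t\le\lambda^N<\lambda^{N-1}$; the final ratio $\lambda^{-(N-1)}t\le\lambda$ follows from $t\le\lambda^N$; the telescoping product and the conversion $\eta(\lambda)^N\le\eta(\lambda)^{-1}t^{\alpha}$ with $\alpha=\log\eta(\lambda)/\log\lambda>0$ are correct; shrinking $\alpha$ into $(0,1]$ only weakens the bound on $(0,1]$; the large-ratio case is handled correctly by applying the same estimate to $f^{-1}$ (which is $\eta_1$-quasisymmetric with $\eta_1(s)=1/\eta^{-1}(1/s)$, and whose domain $f(\Omega)$ is connected), and the combination via $\gamma=\min(\alpha,\beta)$ gives the stated form. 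So you have in effect written out the proof the paper outsources to the reference; the only mathematical difference from Heinonen's statement is that you exploit exact connectedness where he only needs uniform perfectness, which slightly simplifies the selection of the chain points but does not change the structure of the argument.
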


The original lemma is stated for $A$-uniformly perfect spaces (metric spaces $X$ such that $B(x,r)\backslash B(x,r/A)$ is nonempty for all $x\in X$ and $r>0$), and the constants $C$ and $\alpha$ depend also on the constant associated with being uniformly perfect, but connected sets happen to be uniformly perfect with $A=1$ (see the beginning of Chapter 11 of \cite{Heinonen} for a discussion and the original statement). As a corollary, we have the following:

\newtheorem*{holder}{\Corollary{holder}}

\begin{holder}
Let $f:\bR^{d}\rightarrow \bR^{D}$ be $\eta$-quasisymmetric and $E\subseteq \bR^{d}$ a bounded set. Then for all $x,y\in E$ distinct,
\[ \frac{1}{2C}  \ps{\frac{|x-y|}{\diam E}}^{\frac{1}{\alpha}}\leq \frac{|f(x)-f(y)|}{\diam f(E)}\leq 2^{\alpha}C\ps{\frac{|x-y|}{\diam E}}^{\alpha}\]
where $\alpha$ and $C$ are as in \Lemma{holder}.
\end{holder}

\begin{proof}
Let $x,y\in E$. Pick $y'\in E$ so that $|x-y'|\geq \max\{\frac{1}{2}\diam E,|x-y|\}$. Then
\[
\frac{|f(x)-f(y)|}{\diam f(E)} 
\leq \frac{|f(x)-f(y)|}{|f(x)-f(y')|} 
\leq \eta\ps{\frac{|x-y|}{|x-y'|}}
 \leq C\ps{\frac{|x-y|}{|x-y'|}}^{\alpha}
  \leq C2^{\alpha}\ps{\frac{|x-y|}{\diam E}}^{\alpha}.
\]
Now, let $y''\in E$ be such that $|f(x)-f(y'')|\geq \frac{1}{2}\diam f(E)$. Then
\begin{equation}
\frac{\diam f(E)}{|f(x)-f(y)|}
\leq 2\frac{|f(x)-f(y'')|}{|f(x)-f(y)|}
\leq 2\eta\ps{\frac{|x-y''}{|x-y|}}.
\label{e:f(E)/f(x)-f(y)}
\end{equation}
If $|x-y''|\leq |x-y|$, then
\[\eqn{f(E)/f(x)-f(y)} \stackrel{\eqn{t^at^1/a}}{\leq} 2C\ps{\frac{|x-y''|}{|x-y|}}^{\alpha}
\leq 2C\ps{\frac{\diam E}{|x-y|}}^{\alpha} 
\leq 2C\ps{\frac{\diam E}{|x-y|}}^{\frac{1}{\alpha}}.\]
If $|x-y''|>|x-y|$, then
\[\eqn{f(E)/f(x)-f(y)} \stackrel{\eqn{t^at^1/a}}{\leq} 2C\ps{\frac{|x-y''|}{|x-y|}}^{\frac{1}{\alpha}}
\leq 2C\ps{\frac{\diam E}{|x-y|}}^{\frac{1}{\alpha}}.
\]
Hence, in either case,
\[\frac{\diam f(E)}{|f(x)-f(y)|}\leq 2C\ps{\frac{\diam E}{|x-y|}}^{\frac{1}{\alpha}}\]
which proves the lemma.
\end{proof}

\subsection{Proof of \Lemma{epsilondelta}}
\label{s:epsilondelta-proof}

\newtheorem*{epsilondelta}{\Lemma{epsilondelta}}
\begin{epsilondelta}
Let $\delta>0$.  If $f$ is $\eta$-quasisymmetric on a cube $Q\subseteq\bR^{d}$, then there is $\ve_{1}=\ve_{1}(\eta,d,\delta)>0$ so that if
\begin{equation}\avint_{Q}\frac{|f-A|}{|A'|\diam Q}<\ve_{1}.
\label{e:f-a<e}
\end{equation}
then
\[|f(x)-A(x)| <\delta |A'|\diam Q.\]
Moreover, 
\[ (1-2\sqrt{d}\delta)|A'|\ell(Q) \leq  \diam f(Q)  \leq (1+2\sqrt{d}\delta)|A'|\diam Q.\]
\end{epsilondelta}

\begin{proof}
Fix $K>0$ and let
\[E_{K}=\{x\in Q: |f(x)-A_{Q}(x)|\leq K\ve_{1} |A_{Q}'|\diam Q\} \]
so that by Chebichev's inequality,
\[|Q\backslash E_{K}|\leq \frac{1}{K}|Q|.\]
Note that if $B=B(x,r)$ is any ball contained in $E_{K}^{c}$, not necessarily contained in $Q$ but with center $x\in Q\backslash E_{K}$, then at least $\frac{1}{2^{d}}$ percent of it is contained in $Q$, and so
\[ w_{d}r^{d}=|B|\leq 2^{d} |Q\backslash E_{K}|\leq \frac{2^{d}}{K}|Q|,\]
so that 
\[r\leq 2(kw_{d})^{-d}\ell(Q).\]
Pick $K=w_{d}^{-1}2^{d}\ve_{1}^{-\frac{1}{d}}$ so that $r\leq \ve\ell(Q)$. Then 
\[\sup_{x\in Q\backslash E_{K}}\dist(x,E_{K}) \leq \ve_{1} \ell(Q).\]

For $x\in Q\backslash E_{K}$, let $x'\in E_{K}$ be such that $|x-x'|=\dist(x,E_{K})$. Then by \Corollary{holder}, \begin{multline*}
|f(x)-A(x)|
 \leq |f(x)-f(x')|+|f(x')-A(x')|+|A(x')-A(x)|\\
 \leq 2^{\alpha}C\ps{\frac{|x-x'|}{\diam Q}}^{\alpha}\diam f(Q)+K\ve_{1}|A'|\diam Q+|A'||x-x'|\\
 \leq 2^{\alpha}C \ps{\frac{\ve_{1}\ell(Q)}{\diam Q}}^{\alpha} \diam f(Q)+  w_{d}2^{d} \ve_{1}^{1-\frac{1}{d}} |A'| \diam Q+2|A'| \ve \ell(Q)\\
 \leq  2^{\alpha}C\ve_{1}^{\alpha} \diam f(Q)+ (w_{d}2^{d}+2)\ve_{!}^{1-\frac{1}{d}} |A'|\diam Q.
\end{multline*}

We claim that $|A'| \diam Q \geq \frac{1}{4}\diam f(Q)$ if $\ve_{1}$ is small enough. If $|A'| \diam Q < \frac{1}{4}\diam f(Q)$, pick $x_{0}\in Q$ so that $|f(x_{0})-A(x_{0})|\leq \ve|A'|\diam Q$ and pick $x\in Q$ so that $|f(x)-f(x_{0})|\geq \frac{1}{2}\diam f(Q)$. Then
\begin{align*}
|f(x)-A(x)|& \geq |f(x)-f(x_{0})|-|f(x_{0})-A(x_{0})|-|A(x_{0})-A(x)|\\
& \geq \frac{1}{2}\diam f(Q)-\ve_{1}|A'|\diam Q-|A'|\diam Q\\
& \geq \ps{\frac{1}{2}-\frac{(1+\ve_{1})}{4}}\diam f(Q)\geq \frac{1}{8}\diam f(Q)
\end{align*}
if $\ve_{1}<\frac{1}{2}$. However,
\begin{align*} 
|f(x)-A(x)| 
& \leq 2^{\alpha}C\ve_{1}^{\alpha} \diam f(Q)+ (w_{d}2^{d}+2)\ve_{1}^{1-\frac{1}{d}} |A'|\diam Q\\
& < (2^{\alpha}C\ve_{1}^{\alpha} + \frac{1}{4}(w_{d}2^{d}+2)\ve_{1}^{1-\frac{1}{d}})\diam f(Q) 
<\frac{1}{8}\diam f(Q)
\end{align*}
if $\ve>0$ is small enough, which is a contradiction. Thus, $|A'| \diam Q \geq \frac{1}{4}\diam f(Q)$, so that 
\[|f(x)-A(x)|\leq (2^{\alpha+2}C\ve^{\alpha} + (w_{d}2^{d}+2)\ve^{1-\frac{1}{d}}) |A'|\diam Q <\delta|A'|\diam Q\]
if $\ve>0$ is picked small enough. 

For the last part of the lemma, let $x,y\in Q$ be such that $\diam f(Q)=|f(x)-f(y)|$. Then
\begin{align*}
\diam f(Q)
& =|f(x)-f(y)|
\leq |A(x)-A(y)|+2\delta |A'|\diam Q\\
& \leq |A'||x-y|+2\delta\diam Q
  \leq (1+2\delta\sqrt{d})|A'|\diam Q.
\end{align*}
 For the opposite inequality, we may assume without loss of generality that $x_{Q}=0$. Pick $x\in \d B(x_{Q},\frac{\ell(Q)}{2})$ so that $|A(x)-A(-x)|=|A'|\diam B_{Q}$. Then
 \begin{align*}
 \diam f(B_{Q})
&  \geq |f(x)-f(y)|\geq |A(x)-A(y)|-2\delta \diam Q\\
&   =|A'|(1-2\delta\sqrt{d})\diam B_{Q}=|A'|(1-2\delta\sqrt{d})\ell(Q).
 \end{align*}
\end{proof}

\subsection{Dorronsoro's Theorem}
\label{s:dorronsoro}

Here we prove the following special case of Dorronsoro's theorem. We prove a more general version than what is stated in the introduction by showing we can replace $\Omega_{f}$ with a general $L^{p}$-type integral for $p\in [1,2]$ and obtain the same result. Throughout the paper, however, we only use the $p=2$ case and write $\Omega_{f}=\Omega_{2,f}$ for short.

\newtheorem*{dorronsoro}{\Theorem{dorronsoro}}

\begin{dorronsoro}[\cite{Dorronsoro}]
Let $f\in L^{2}(\bR^{d})$. For $x\in \bR^{d}$, $r>0$, and $p\in[1,2]$, define
\[\Omega_{p,f}(x,r)=\inf_{A} \ps{\avint_{B(x,r)} \ps{\frac{|f-A|}{r}}^{p}}^{\frac{1}{p}}\]
where the infimum is over all affine maps $A:\bR^{d}\rightarrow \bR$. Then $f\in W^{1,2}(\bR^{d})$ if and only if 
\[\Omega_{p}(f):=\int_{\bR^{d}}\int_{0}^{\infty}\Omega_{p,f}(x,r)^{2}\frac{dr}{r}dx<\infty,\]
in which case, 
\begin{equation}
||\grad f||_{2}^{2}\lec_{d} \Omega_{1}(f)\leq \Omega_{q}(f) \leq \Omega_{2}(f)\lec_{d}||\grad f||_{2}
\label{e:omegachain}
\end{equation}
 for all $q\in [1,2]$, so in particular, $||\grad f||_{2}^{2}\sim_{d} \Omega_{q}(f)$ for $q\in [1,2]$.
\label{t:dorronsoro-p}
\end{dorronsoro}

We should mention, of course, that the original result is far more general; in particular, Dorronsoro gives a characterization of the fractional Sobolev spaces $W^{\alpha,p}$ for all $\alpha>0$ and $p\in (1,\infty)$. We provide a proof of this special case for the interested reader, since the proof we supply avoids the interpolation theory and reference chasing in \cite{Dorronsoro}; only the basic properties of Sobolev spaces and the Fourier transform are needed. This proof is well known, but not completely written down anywhere to the author's knowledge (although hints at the proof are alluded to in \cite{David-park-city}); part of it is also explained in \cite{Christ}.

\begin{proof}

{\it Step 1:} We first show $||\grad f||_{2}^{2}\lec_{d} \Omega_{1}(f)$ supposing that $f\in W^{1,2}(\bR^{d})$ (we will show later that $\Omega_{1}(f)<\infty$ implies an $L^{2}$ function $f$ is actually in $W^{1,2}$, but we'll start with this case). Let $\phi$ be a radially symmetric $C^{\infty}$ function supported in $B(0,1)$ such that $\int\phi=1$. Set $\psi(x)=\phi(x)-2^{d}\phi(2x)$, so that it is also supported in $B(0,1)$. Then $\int \psi A=0$ for any affine function $A$. For $r>0$, set $\psi_{r}(x)=r^{-d}\psi(r^{-1}x)$. Then,
\begin{align*}
|\grad f*\psi_{r}(x)|
& =|\grad(f-A)*\psi_{r}(x)| 
= |(f-A)* \grad\psi_{r}(x)|\\
& \leq \av{\int_{B(x,r)}|f(y)-A(y)|  r^{-d-1}\grad\psi(r^{-1}(x-y))dy}\\
& \leq w_{d}||\grad\psi||_{\infty} \avint_{B(x,r)}\frac{|f-A|}{r}\\
\end{align*}
and infimizing over all affine maps $A$ gives
\begin{equation}
|\grad f*\psi_{r}(x)|\leq w_{d}||\grad\psi||_{\infty}\Omega_{1,f}(x,r).
\label{e:f*psi<Omega}
\end{equation}
Observe that by Fubini's theorem and Plancharel's theorem,
\begin{align*}
\int_{0}^{\infty}\int_{\bR^{d}} |\grad f* \psi_{r}(x)|^{2}dx\frac{dr}{r}
& =\int_{0}^{\infty}\int_{\bR^{d}}|\widehat{\grad f}(\xi)|^{2}|\hat{\psi}(r\xi)|^{2}d\xi \frac{dr}{r} \\
& =\int_{\bR^{d}}|\widehat{\grad f}(\xi)|^{2} \ps{\int_{0}^{\infty} |\hat{\psi}(r\xi)|^{2}\frac{dr}{r} } d\xi.
\end{align*}
Since $\psi$ is radially symmetric, so is $\hat{\psi}$, thus, if $e_{1}\in\bR^{d}$ denotes the first standard basis vector,
\[
\int_{0}^{\infty} |\hat{\psi}(r\xi)|^{2}\frac{dr}{r}
=\int_{0}^{\infty} |\hat{\psi}(r|\xi|e_{1})|^{2}\frac{dr}{r}
=\int_{0}^{\infty} |\hat{\psi}(re_{1})|^{2}\frac{dr}{r}=:c_{\psi}<\infty.\]
The reason this is finite is because  $\hat{\psi}$ is a Schwartz function, $\hat{\psi}(0)=\int\psi=0$, and $\hat{\psi}$ is differentiable at zero, so $|\psi(\xi)|\lec \frac{|\xi|}{|1+|\xi|^{3}}$. Thus,
\begin{align*}
\omega_{d}||\grad \psi||_{\infty}\int_{\bR^{d}}\int_{0}^{\infty}\Omega_{1,f}(x,r)^{2}\frac{dr}{r} 
& \geq \int_{0}^{\infty}\int_{\bR^{d}} |\grad f* \psi_{r}(x)|^{2}dx\frac{dr}{r}\\
&  =c_{\psi}\int_{\bR^{d}}|\widehat{\grad f}(\xi)|^{2}d\xi = c_{\psi}\int_{\bR^{d}}|\grad f(x)|^{2}dx.
\end{align*}
This proves the first inequality in \eqn{omegachain}.

{\it Step 2:} Now just suppose $f\in L^{2}(\bR^{d})$, we'll show that $\Omega_{1}(f)<\infty$ implies $f$ has a weak gradient $\grad f$ that is in $L^{2}$. Let $\phi$ be a nonnegative $C^{\infty}$ bump function supported in $B(0,1)$ with $\int \phi=1$. Observe that since $||\hat{\phi}||_{\infty}\leq \int\phi=1$, we have
\[
||f*\phi_{t}||_{2}=||\hat{f}\hat{\phi_{t}}||_{2}\leq ||\hat{f}||_{2}=||f||_{2}\]
and $||\hat{\grad\phi}||_{\infty}\leq ||\grad \phi||_{1}<\infty$, so that 
\begin{align*}
||\grad (f*\phi_{t})||_{2}^{2} & =||f*\grad\phi_{t}||_{2}^{2}
 =\int_{\bR^{d}}|\hat{f}(\xi)|^{2}|\widehat{\grad\phi_{t}}(\xi)|^{2}d\xi\\
& =\int_{\bR^{d}}|\hat{f}(\xi)|^{2}t^{-2}|\widehat{\grad \phi}(t\xi)|^{2}d\xi
  \leq t^{-2}||\grad\phi||_{1}\int_{\bR^{d}}|\hat{f}(\xi)|^{2}=\frac{||f||_{2}^{2}}{t^{2}}.
\end{align*}
Thus, $f*\phi_{t}\in W^{1,2}(\bR^{d})$, and so we know that
\[|| \grad f*\phi_{t}||_{2}^{2}\lec_{d} \int_{\bR^{d}}\int_{0}^{\infty}\Omega_{1,f*\phi_{t}}(x,r)^{2}\frac{dr}{r}dx.\]
Suppose $r\geq t$. Since $\int\phi=1$, and since affine functions are harmonic, we have $\phi_{t}*A=A$ for any affine function, thus
\begin{align*}
\avint_{B(x,r)} & \frac{|f*\phi_{t}-A|}{r} 
 = \avint_{B(x,r)}\frac{|(f-A)*\phi_{t}|}{r}
  \leq \avint_{B(x,r)}\int_{\bR^{d}} \frac{|f(z)-A(z)|}{r}\phi_{t}(y-z)dzdy\\
& =\frac{|B(x,2r)|}{|B(x,r)|}\int_{\bR^{d}} \avint_{B(x,2r)}\frac{|f(z)-A(z)|}{r}\phi_{t}(y-z)dydz
  =2^{d}\avint_{B(x,2r)}  \frac{|f(z)-A(z)|}{r}dz\\
\end{align*}
and infimizing over affine maps $A$ gives
\begin{equation}
\Omega_{1,f*\phi_{t}}(x,r) \leq 2^{d}\Omega_{1,f}(x,2r) \mbox{ for }r\geq t.
\end{equation}
Now suppose $r<t$. Then by Taylor's theorem, and since $\d^{\alpha}\phi_{t}*A=0$ for any affine map $A$ and $|\alpha|\geq 1$,
\begin{align*}
\frac{\Omega_{1,f*\phi_{t}}(x,r)}{r}
& \leq \frac{r^{2} \max_{|\alpha|=2} ||\d^{\alpha} f*\phi_{t}||_{L^{\infty}(B(x,t))}}{r^{2}}\\
& =\max_{|\alpha|=2} ||f*\d ^{\alpha}\phi_{t}||_{L^{\infty}(B(x,t))}\\
&  =\max_{|\alpha|=2}||(f-A)*\d ^{\alpha}\phi_{t}||_{L^{\infty}(B(x,t))}\\
& \leq \max_{|\alpha|=2}\sup_{z\in B(x,t)}\int |f(y)-A(y)||\d^{\alpha}\phi_{t}(z-y)|dy\\
& = t^{-2}\max_{|\alpha|=2}\sup_{z\in B(x,t)}\int_{B(x,r+t)} |f(y)-A(y)||(\d^{\alpha}\phi)_{t}(z-y)|dy\\
& \leq t^{-2}\max_{|\alpha|=2}\sup_{z\in B(x,t)}\int_{B(x,2t)} |f(y)-A(y)| \frac{||\d^{\alpha}\phi||_{\infty}}{t^{d}}dy\\
& =t^{-1}w_{d} \max_{|\alpha|=2}||\d^{\alpha}\phi||_{\infty} \avint_{B(x,2t)}\frac{|f(y)-A(y)}{t}dy
\end{align*}
and infimizing over affine maps $A$ gives
\[\Omega_{1,f*\phi_{t}}(x,r)\leq \frac{r}{t}w_{d} \max_{|\alpha|=2}||\d^{\alpha}\phi||_{\infty} \Omega_{1,f}(x,2t).\]
Thus,
\begin{align*}
||\grad f*\phi_{t}||_{2}^{2}
& \lec_{d}  \int_{\bR^{d}} \ps{\int_{0}^{t} \Omega_{1,f*\phi_{t}}(x,r)\frac{dr}{r} + \int_{t}^{\infty}\Omega_{1,f*\phi_{t}}(x,r)^{2}\frac{dr}{r}}dx\\
& \lec_{d} \int_{\bR^{d}}\ps{\int_{0}^{t}\frac{r}{t^{2}}\Omega_{1,f}(x,2t)^{2}dr+\int_{t}^{\infty}\Omega_{1,f}(x,2r)^{2}\frac{dr}{r}}dx\\
& \leq \frac{1}{2}\int_{\bR^{d}}\Omega_{1,f}(x,2t)^{2}dx+\Omega_{1}(f)^{2}.\end{align*}
Since $\Omega_{1,f}(x,t)\leq 2^{d}\Omega_{1,f}(x,t+s)$ for all $s\in [0,t]$, we have
\begin{align*}
\int_{\bR^{d}}\Omega_{1,f}(x,2t)^{2}dx
& \lec \int_{\bR^{d}}\int_{2t}^{4t} \Omega_{1,f}(x,2t)^{2}\frac{dr}{r}dx
 \lec_{d} \int_{\bR^{d}}\int_{2t}^{4t}\Omega_{1,f}(x,r)^{2}\frac{dr}{r}dx\\
& \leq  \int_{\bR^{d}}\int_{0}^{\infty}\Omega_{1,f}(x,r)^{2}\frac{dr}{r}dx=\Omega_{1}(f)^{2}.
\end{align*}
Hence, $||\grad f*\phi_{t}||_{2}^{2}\lec_{d} \Omega_{1}(f)^{2}$, and since $\hat{\phi}(t\xi)\rightarrow 1$ uniformly on compact subsets of $\bR^{d}$ as $t\rightarrow 0$, we have, for any $R>0$
\begin{align*}
\int_{B(0,R)}| \hat{f}(\xi)|^{2}|\xi|^{2}d\xi
& =\lim_{t\rightarrow 0}\int_{B(0,R)} | \hat{f}(\xi)|^{2}|\xi|^{2}|\hat{\phi}(t\xi)|^{2}d\xi
 =\lim_{t\rightarrow 0}\int_{\bR^{d}} |\hat{f}(\xi)|^{2}|\xi|^{2}|\hat{\phi}(t\xi)|^{2}d\xi\\
&  =\lim_{t\rightarrow 0}\int_{\bR^{d}} |\grad f*\phi_{t}|^{2}
 \lec_{d}\int_{\bR^{d}}\int_{0}^{\infty}\Omega_{1,f}(x,r)^{2}\frac{dr}{r}=\Omega_{1}(f)^{2}.
\end{align*}
Letting $R\rightarrow\infty$, we get $\int_{\bR^{d}}|\hat{f}(\xi)|^{2}|\xi|^{2}d\xi \lec_{d}\Omega_{1}(f)^{2}$, which implies $f\in W^{1,2}(\bR^{d})$. 

{\it Step 3:} Note that the second and third inequalities in \eqn{omegachain} follow from Jensen's inequality since $\Omega_{p,f}(x,r)\leq \Omega_{q,f}(x,r)$ if $p\leq q$, and hence $\Omega_{p}(f)\leq \Omega_{q}(f)$.

{\it Step 4:}  It remains to prove the last inequality $\Omega_{2}(f)\lec_{d}||\grad f||_{2}^{2}$. To do so, we follow the hint in \cite{David-park-city}. 

Assume $f\in W^{1,2}(\bR^{d})$ and let  $\phi$ be a radially symmetric nonnegative function supported in $B(0,1)$ such that $\int \phi=1$. Define an affine map
\[A_{x,r}(y)=\phi_{r}*\grad f (x)\cdot (y-x) + f*\phi_{r}(x).\]
Then by Tonelli's theorem, change of variables, Plancharel's theorem, and the fact that for $p\in [1,2]$
\[\Omega_{2,f}(x,r)^{2}\leq \avint_{B(x,r)}\frac{|f(y)-A_{x,r}|^{2}}{r^{2}}dy,\]
we have
\begin{align}
\omega_{d}\int_{\bR^{d}} \int_{0}^{\infty} & \Omega_{2,f}(x,r)^{2}\frac{dt}{t}dx
\leq \int_{\bR^{d}}\int_{0}^{\infty}\omega_{d}\avint_{B(x,r)}\frac{|f(y)-A_{x,r}|^{2}}{r^{2}}dy \frac{dr}{r}dx \notag \\
& = \int_{\bR^{d}}\int_{0}^{\infty}\int_{B(x,r)} |f(y)-\phi_{r}*\grad f (x)\cdot (y-x) -f*\phi_{r}(x)|^{2}dy\frac{dr}{r^{d+3}}dx \notag \\
& =\int_{B(0,r)}\int_{0}^{\infty}\int_{\bR^{d}}  |f(y+x)-\phi_{r}*\grad f (x)\cdot y -f*\phi_{r}(x)|^{2}dx\frac{dr}{r^{d+3}}dy \notag \\
& =\int_{B(0,r)}\int_{0}^{\infty}\int_{\bR^{d}}  |\hat{f}(\xi)e^{-2\pi i y\cdot \xi}-\hat{\phi}(r\xi) \hat{f} (\xi)(-2\pi i y\cdot \xi) \notag \\
& \qquad -\hat{f}(\xi) \hat{\phi}(r\xi)|^{2}d\xi\frac{dr}{r^{d+3}}dy \notag \\
& =\int_{\bR^{d}} |\hat{f}(\xi)|^{2} \int_{B(0,r)} \int_{0}^{\infty} |e^{-2\pi i y\cdot \xi}-\hat{\phi}(r\xi)(-2\pi i y\cdot \xi) - \hat{\phi}(r\xi)|^{2}\frac{dr}{r^{d+3}}dyd\xi.
\label{e:int_omega<}
\end{align}
If we show
\begin{equation}
 \int_{B(0,r)} \int_{0}^{\infty} |e^{-2\pi i y\cdot \xi}-\hat{\phi}(r\xi)(-2\pi i y\cdot \xi) - \hat{\phi}(r\xi)|^{2}\frac{dr}{r^{d+3}}dyd \lec |\xi|^{2}
 \label{e:xi^2}
 \end{equation}
then the theorem will follow since
\[
 \eqn{int_omega<}<
\int_{\bR^{d}}|\hat{f}(\xi)|^{2}|\xi|^{2}d\xi = \int_{\bR^{d}}|\grad f(\xi)|^{2}\d\xi = \int_{\bR^{d}}|\grad f|^{2}.
\]
We begin proving \eqn{xi^2}.  Again, since $\phi$ is radially symmetric, so is $\hat{\phi}$, and hence $\hat{\phi}(\xi)=\hat{\Phi}(|\xi|)$ for some function $\hat{\Phi}:[0,\infty)\rightarrow [0,\infty)$. We will abuse notation and write $\hat{\Phi}(r)=\hat{\phi}(r)$. By two changes of variables (once in $r$, then in $y$),
\begin{align}
\int_{B(0,r)}&  \int_{0}^{\infty} |e^{-2\pi i y\cdot \xi}-\hat{\phi}(r|\xi|)(-2\pi i y\cdot \xi) -\hat{\phi}(r|\xi|)|^{2}\frac{dr}{r^{d+3}}dy\notag \\
& =|\xi|^{d+2}\int_{B(0,t/|\xi|)} \int_{0}^{\infty} |e^{-2\pi i y\cdot \xi}-\hat{\phi}(t)(-2\pi i y\cdot \xi) -\hat{\phi}(t)|^{2}\frac{dt}{t^{d+3}}dy \notag \\
& =|\xi|^{2}\int_{B(0,t)} \int_{0}^{\infty} |e^{-2\pi i y\cdot  \frac{\xi}{|\xi|}}-\hat{\phi}(t)(-2\pi i y\cdot  \frac{\xi}{|\xi|}) -\hat{\phi}(t)|^{2}\frac{dt}{t^{d+3}}dy \notag \\
& =|\xi|^{2} \int_{0}^{\infty}\int_{B(0,t)} |e^{-2\pi i y\cdot  \frac{\xi}{|\xi|}}(1-\hat{\phi}(t))+\hat{\phi}(t)(e^{-2\pi i y \cdot \frac{\xi}{|\xi|}}  -2\pi i y\cdot  \frac{\xi}{|\xi|}-1)|^{2}dy\frac{dt}{t^{d+3}}
\label{e:int_omega<2}
\end{align}
Since $\hat{\phi}$ is a Schwartz function and $\hat{\phi}(0)=\int\phi=1$, and 
\[\frac{d}{dt}\hat{\phi}(0)=\widehat{(-2\pi i t \phi(t))} |_{t=0}=-\int 2\pi i t\phi(t)dt=0\]
since $\phi$ is radially symmetric, we thus have by Taylor's theorem
\[|e^{-2\pi i y\cdot \frac{\xi}{|\xi|}}(1-\hat{\phi}(t))| \lec \min\ck{t^2,\frac{1}{1+|t|^{2}}}.\]
Again by Taylor's theorem, since $1+a$ is the first two terms of the Taylor series for $e^{a}$, and since we always have $|y|\leq t$ in the domain of the integral, we get  
\[\av{\hat{\phi}(y)\ps{e^{-2\pi i y \cdot \frac{\xi}{|\xi|}}-2\pi i y\cdot  \frac{\xi}{|\xi|}-1}} \lec  \frac{1}{1+t^{4}}\av{-2\pi i y\cdot \frac{\xi}{|\xi|}}^{2}\leq \frac{t^{2}}{1+t^{4}}  \]
so that 
\begin{multline}
\eqn{int_omega<2}
\lec |\xi|^{2} \int_{0}^{\infty} \int_{B(0,t)} \ps{\min\ck{t,\frac{1}{1+|t|^{2}}}+\frac{t^{2}}{1+t^{4}}}^{2}dy\frac{dt}{t^{d+3}} \\
=|\xi|^{2}\int_{0}^{\infty} \ps{\min\ck{t^{2},\frac{1}{1+|t|^{2}}}+\frac{t^{2}}{1+t^{4}}}^{2} \frac{dt}{t^{3}}\lec |\xi|^{2}
\end{multline}
which proves \eqn{xi^2}.
\end{proof}

\def\cprime{$'$}


\begin{thebibliography}{99}
\parskip=0.7pt


\bibitem{ABL88}
J.~M. Anderson, J.~Becker, and F.~D. Lesley, \emph{On the boundary
  correspondence of asymptotically conformal automorphisms}, J. London Math.
  Soc. (2) \textbf{38} (1988), no.~3, 453--462. \MR{972130 (90g:30022)}





\bibitem{EPDEQCM}
K.~Astala, T.~Iwaniec, and G.~Martin, \emph{Elliptic partial differential
  equations and quasiconformal mappings in the plane}, Princeton Mathematical
  Series, vol.~48, Princeton University Press, Princeton, NJ, 2009. \MR{2472875
  (2010j:30040)}
  
  \bibitem{AZ91}
K.~Astala and M.~Zinsmeister, \emph{Teichm\"uller spaces and {BMOA}}, Math.
  Ann. \textbf{289} (1991), no.~4, 613--625. \MR{1103039 (92k:30031)}

\bibitem{ABT} 
J.~Azzam, M.~Badger, and T.~Toro, \emph{Quasiconformal planes with bi-Lipschitz pieces and extensions of almost affine maps}, to appear in Adv. in Mat.

\bibitem{AS12prep}
J.~Azzam and R.~Schul, \emph{A quantitative metric differentiation theorem}, to appear in Proc. Amer. Math. Soc.

\bibitem{BGRT12}
M.~Badger, J.~T. Gill, S.~Rohde, and T.~Toro, \emph{Quasisymmetry and
  rectifiability of quasispheres}, to appear in Trans. Amer. Math. Soc.

\bibitem{Bishop99}
C.~J. Bishop, \emph{A quasisymmetric surface with no rectifiable curves}, Proc.
  Amer. Math. Soc. \textbf{127} (1999), no.~7, 2035--2040. \MR{1610908
  (99j:30023)}

\bibitem{Bishop-A1}
\bysame, \emph{An {$A_1$} weight not comparable with any quasiconformal
  {J}acobian}, In the tradition of {A}hlfors-{B}ers. {IV}, Contemp. Math., vol.
  432, Amer. Math. Soc., Providence, RI, 2007, pp.~7--18. \MR{2342802
  (2008i:30021)}

\bibitem{Cheeger-QD}
J.~Cheeger, \emph{Quantitative differentiation: a general formulation}, Comm.
  Pure Appl. Math. \textbf{65} (2012), no.~12, 1641--1670. \MR{2982637}
  
  
  \bibitem{Christ}
F.M.~Christ, \emph{Lectures on singular integral operators}, Regional Conference Series in Mathematics, vol.~77, American Mathematical Society, Providence, RI, 1990.  
  
  

\bibitem{David-park-city}
G.~David, \emph{Uniform rectifiability, lecture notes from a course in park
  city}, to be published by the AMS.

\bibitem{David88}
\bysame, \emph{{Morceaux de graphes lipschitziens et integrales singuli{\`e}res
  sur une surface.}}, Rev. Mat. iberoamericana \textbf{4} (1988),
  no.~1, 73.

\bibitem{David-wavelets}
\bysame, \emph{Wavelets and singular integrals on curves and surfaces}, Lecture
  Notes in Mathematics, vol. 1465, Springer-Verlag, Berlin, 1991. \MR{1123480
  (92k:42021)}

\bibitem{Dorronsoro}
J.~R. Dorronsoro, \emph{A characterization of potential spaces}, Proc. Amer.
  Math. Soc. \textbf{95} (1985), no.~1, 21--31. \MR{796440 (86k:46046)}

\bibitem{DS}
G.~David and S.~W. Semmes, \emph{Singular integrals and rectifiable sets in
  {${\bf R}^n$}: {B}eyond {L}ipschitz graphs}, Ast\'erisque (1991), no.~193,
  152. \MR{1113517 (92j:42016)}

\bibitem{of-and-on}
\bysame, \emph{Analysis of and on uniformly rectifiable sets}, Mathematical
  Surveys and Monographs, vol.~38, American Mathematical Society, Providence,
  RI, 1993. \MR{1251061 (94i:28003)}

\bibitem{David-Toro-snowballs}
G.~David and T.~Toro, \emph{Reifenberg flat metric spaces, snowballs, and
  embeddings}, Math. Ann. \textbf{315} (1999), no.~4, 641--710. \MR{1731465
  (2001c:49067)}

\bibitem{EFW07}
A.~Eskin, D.~Fisher, and K.~Whyte, \emph{Quasi-isometries and rigidity of
  solvable groups}, Pure Appl. Math. Q. \textbf{3} (2007), no.~4, part 1,
  927--947. \MR{2402598 (2009b:20074)}

\bibitem{FKP91}
R.~A. Fefferman, C.~E. Kenig, and J.~Pipher, \emph{The theory of weights and
  the {D}irichlet problem for elliptic equations}, Ann. of Math. (2)
  \textbf{134} (1991), no.~1, 65--124. \MR{1114608 (93h:31010)}

\bibitem{BAF}
J.~B. Garnett, \emph{Bounded analytic functions}, first ed., Graduate Texts in
  Mathematics, vol. 236, Springer, New York, 2007. \MR{2261424 (2007e:30049)}

\bibitem{Gehring73}
F.W. Gehring, \emph{The $l^p$-integrability of the partial derivatives of a
  quasiconformal mapping}, Acta Mathematica \textbf{130} (1973), no.~1,
  265--277.

\bibitem{Garnett-Killip-Schul}
J.~Garnett, R.~Killip, and R.~Schul, \emph{A doubling measure on {$\Bbb R^d$}
  can charge a rectifiable curve}, Proc. Amer. Math. Soc. \textbf{138} (2010),
  no.~5, 1673--1679. \MR{2587452}

\bibitem{Heinonen}
J.~Heinonen, \emph{Lectures on analysis on metric spaces}, Universitext,
  Springer-Verlag, New York, 2001. \MR{1800917 (2002c:30028)}
  
  

\bibitem{Hru84}
S. Hru{\v{s}}{\v{c}}ev, \emph{A description of weights satisfying the $A_{\infty}$ condition of Muckenhoupt}, Proc. Amer. Math. Soc. \textbf{90} (1984), no. 2, 253--257. \MR{727244 (85k:42049)}
    

\bibitem{Jones-lip-bilip}
P.~W. Jones, \emph{Lipschitz and bi-{L}ipschitz functions}, Rev. Mat.
  Iberoamericana \textbf{4} (1988), no.~1, 115--121. \MR{1009121 (90h:26016)}

\bibitem{trois-notes}
J.-P. Kahane, \emph{Trois notes sur les ensembles parfaits lin\'eaires},
  Enseignement Math. (2) \textbf{15} (1969), 185--192. \MR{0245734 (39 \#7040)}

\bibitem{Li-lip-bi-lip}
S.~Li, \emph{Coarse differentiation and quantitative nonembeddability for
  {C}arnot groups}, arXiv preprint.

\bibitem{MacManus94}
P.~MacManus, \emph{Quasiconformal mappings and {A}hlfors-{D}avid curves},
  Trans. Amer. Math. Soc. \textbf{343} (1994), no.~2, 853--881. \MR{1202420
  (95d:30035)}

\bibitem{MacManus-bilip-extensions}
\bysame, \emph{Bi-{L}ipschitz extensions in the plane}, J. Anal. Math.
  \textbf{66} (1995), 85--115. \MR{1370347 (97b:30028)}

\bibitem{Mattila}
P.~Mattila, \emph{Geometry of sets and measures in {E}uclidean spaces},
  Cambridge Studies in Advanced Mathematics, vol.~44, Cambridge University
  Press, Cambridge, 1995, Fractals and rectifiability. \MR{1333890 (96h:28006)}

\bibitem{MV90}
P.~Mattila and M.~Vuorinen, \emph{Linear approximation property, {M}inkowski
  dimension, and quasiconformal spheres}, J. London Math. Soc. (2) \textbf{42}
  (1990), no.~2, 249--266. \MR{1083444 (92e:30011)}

\bibitem{NN12}
A. Naor and O. Neiman, \emph{Assouad's theorem with dimension independent of the snowflaking}, Rev. Math. Iber. \textbf{28}
  (2012), no.~4, 1123--1142.


\bibitem{O-TSP}
K.~Okikiolu, \emph{Characterization of subsets of rectifiable curves in {${\bf
  R}^n$}}, J. London Math. Soc. (2) \textbf{46} (1992), no.~2, 336--348.
  \MR{1182488 (93m:28008)}

\bibitem{Semmes94}
S.~W. Semmes, \emph{Quasiconformal mappings and chord-arc curves}, Trans. Amer.
  Math. Soc. \textbf{306} (1988), no.~1, 233--263. \MR{927689 (89j:30029)}

\bibitem{Semmes-strong-A°}
\bysame, \emph{Bi-{L}ipschitz mappings and strong {$A_\infty$} weights}, Ann.
  Acad. Sci. Fenn. Ser. A I Math. \textbf{18} (1993), no.~2, 211--248.
  \MR{1234732 (95g:30032)}

\bibitem{Semmes-nonexistence}
\bysame, \emph{On the nonexistence of bi-{L}ipschitz parameterizations and
  geometric problems about {$A_\infty$}-weights}, Rev. Mat. Iberoamericana
  \textbf{12} (1996), no.~2, 337--410. \MR{1402671 (97e:30040)}

\bibitem{Semmes-question-of-heinonen}
\bysame, \emph{Quasisymmetry, measure and a question of {H}einonen}, Rev. Mat.
  Iberoamericana \textbf{12} (1996), no.~3, 727--781. \MR{1435482 (97k:30029)}

\bibitem{little-stein}
E.~M. Stein, \emph{Singular integrals and differentiability properties of
  functions}, Princeton Mathematical Series, No. 30, Princeton University
  Press, Princeton, N.J., 1970. \MR{0290095 (44 \#7280)}

\bibitem{big-stein}
\bysame, \emph{Harmonic analysis: real-variable methods, orthogonality, and
  oscillatory integrals}, Princeton Mathematical Series, vol.~43, Princeton
  University Press, Princeton, NJ, 1993, With the assistance of Timothy S.
  Murphy, Monographs in Harmonic Analysis, III. \MR{1232192 (95c:42002)}

\bibitem{Vaisala}
J.~V{\"a}is{\"a}l{\"a}, \emph{Lectures on {$n$}-dimensional quasiconformal
  mappings}, Lecture Notes in Mathematics, Vol. 229, Springer-Verlag, Berlin,
  1971. \MR{0454009 (56 \#12260)}

\end{thebibliography}
\end{document}